\newtheorem{theorem}{Theorem}
\newtheorem{proposition}[theorem]{Proposition}
\newtheorem{lemma}[theorem]{Lemma}
\newtheorem{claim}[theorem]{Claim}
\newtheorem{corollary}[theorem]{Corollary}
\newtheorem{mainlemma}[theorem]{Main Lemma}
\newtheorem{fact}[theorem]{Fact}
\newenvironment{customlem}[1]
  {\innercustomlem}
  {\endinnercustomlem}
\theoremstyle{definition}
\newtheorem{definition}[theorem]{Definition}
\newtheorem{remark}[theorem]{Remark}
\newtheorem{question}[theorem]{Question}
\renewcommand\subset{\subseteq}
\renewcommand\emptyset{\varnothing}
\renewcommand{\geq}{\geqslant}
\renewcommand{\leq}{\leqslant}
\newcommand{\Z}{\mathbb{Z}}
\newcommand{\Zpos}{\Z_{\geq 0}}
\newcommand{\Q}{\mathbb{Q}}
\newcommand{\surf}{\mathfrak{S}}
\newcommand{\surfbord}{\widehat{\mathfrak{S}}}
\newcommand{\idealtriang}{\lambda}
\newcommand{\splitidealtriang}{\widehat{\lambda}}
\newcommand{\biang}{\mathfrak{B}}
\newcommand{\triang}{\mathfrak{T}}
\newcommand{\poly}{\mathfrak{D}}
\newcommand{\E}{\mathscr{E}}
\newcommand{\themap}[1]{\Phi_{#1}}
\newcommand{\invmap}[1]{\Psi_{#1}}
\newcommand{\invmaptilde}[1]{\widetilde{\Psi}_{#1}}
\newcommand{\Cone}{\mathscr{C}}
\newcommand{\KTcone}[1]{\Cone^+_{#1}}
\newcommand{\W}{\mathscr{W}}
\newcommand{\webbasis}[1]{\W_{#1}}
\newcommand{\FG}{\mathrm{FG}}
\title[Tropical coordinates for webs]{Tropical Fock-Goncharov coordinates \\ for $\mathrm{SL}_3$-webs on surfaces I: construction}
\author[D. C. Douglas]{Daniel C. Douglas}
\address{Department of Mathematics, Virginia Tech, 225 Stanger Street, Blacksburg, VA 24061, USA}
\email{dcdouglas@vt.edu}
\author[Z. Sun]{Zhe Sun}
\address{Key Laboratory of Wu Wen-Tsun Mathematics, Chinese Academy of Sciences;
School of Mathematical Sciences, University of Science and Technology of China, 96 Jinzhai Road, 230026 Hefei, Anhui, China}
\email{sunz@ustc.edu.cn}
\date{\today}
\thanks{
This work was partially supported by the U.S. National Science Foundation grants DMS-1107452, 1107263, 1107367 ``RNMS: GEometric structures And Representation varieties'' (the GEAR Network).  The first author was also partially supported by the U.S. National Science Foundation grants DMS-1406559 and 1711297, and the second author by the China Postdoctoral Science Foundation grant 2018T110084, the FNR AFR Bilateral grant COALAS 11802479-2, and the Huawei Young Talents Program at IHES}
\begin{document}
%	\layout % toggle for formatting  
\begin{abstract}
	For a finite-type surface $\surf$, we study a preferred basis for the commutative algebra $\mathbb{C}[\mathscr{R}_{\mathrm{SL}_3(\mathbb{C})}(\surf)]$ of regular functions on the $\mathrm{SL}_3(\mathbb{C})$-character variety, introduced by  Sikora-Westbury.  These basis elements come from the trace functions associated to certain tri-valent graphs embedded in the surface $\surf$.  We show that this basis can be naturally indexed by non-negative integer coordinates, defined by Knutson-Tao rhombus inequalities and modulo 3 congruence conditions.  These coordinates are related, by the geometric theory of Fock and Goncharov, to the tropical points at infinity of the dual version of the character variety.  
\end{abstract}
\maketitle

\section{Introduction}
For a finitely generated group $\Gamma$ and a suitable Lie group $G$, a primary object of study in low-dimensional geometry and topology is the character variety
\begin{equation*}
	\mathscr{R}_G(\Gamma) = \left\{  \rho : \Gamma \longrightarrow G  \right\} /\!\!/ \hspace{2.5pt} G
\end{equation*}
consisting of group homomorphisms $\rho$ from $\Gamma$ to $G$, considered up to conjugation.  Here, the quotient is taken in the algebraic geometric sense of Geometric Invariant Theory \cite{Mumford94}.  Character varieties can be explored using a wide variety of mathematical skill sets.  Some examples include the Higgs bundle approach of Hitchin \cite{HitchinTopology92}, the dynamics approach of Labourie \cite{LabourieInvent06}, and the representation theory approach of Fock-Goncharov \cite{FockIHES06}.

We are interested in the case where the group $G$ is the special linear group $\mathrm{SL}_n(\mathbb{C})$.  Adopting the viewpoint of algebraic geometry, one can study the $\mathrm{SL}_n(\mathbb{C})$-character variety $\mathscr{R}_{\mathrm{SL}_n(\mathbb{C})}(\Gamma)$ by means of its commutative algebra of regular functions $\mathbb{C}[\mathscr{R}_{\mathrm{SL}_n(\mathbb{C})}(\Gamma)]$.  An example of a regular function is the trace function $\mathrm{Tr}_\gamma : \mathscr{R}_{\mathrm{SL}_n(\mathbb{C})}(\Gamma) \to \mathbb{C}$ associated to an element $\gamma \in \Gamma$, sending a representation $\rho$ to the trace $\mathrm{Tr}(\rho(\gamma)) \in \mathbb{C}$ of the matrix $\rho(\gamma) \in \mathrm{SL}_n(\mathbb{C})$.  A theorem of Procesi \cite{ProcesiAdvMath76} implies that the trace functions $\mathrm{Tr}_\gamma$ generate the algebra of functions $\mathbb{C}[\mathscr{R}_{\mathrm{SL}_n(\mathbb{C})}(\Gamma)]$ as an algebra, and also identifies all of the relations.

Sikora \cite{SikoraTrans01} provided a more refined description of Procesi's result in the case where $\Gamma = \pi_1(\mathfrak{X})$ is the fundamental group of a topological space $\mathfrak{X}$; see also the earlier work of Bullock \cite{BullockCommentMathHelv97} for the case $G=\mathrm{SL}_2(\mathbb{C})$.  Sikora  extended the notion of a trace function to include functions $\mathrm{Tr}_W \in \mathbb{C}[\mathscr{R}_{\mathrm{SL}_n(\mathbb{C})}(\mathfrak{X})]$ on the character variety $\mathscr{R}_{\mathrm{SL}_n(\mathbb{C})}(\mathfrak{X}) := \mathscr{R}_{\mathrm{SL}_n(\mathbb{C})}(\pi_1(\mathfrak{X}))$ that are associated to homotopy classes of certain (ciliated) oriented $n$-valent graphs $W$, called webs, in the space $\mathfrak{X}$.  The trace functions $\mathrm{Tr}_W$ span the algebra of functions $\mathbb{C}[\mathscr{R}_{\mathrm{SL}_n(\mathbb{C})}(\mathfrak{X})]$ as a vector space, and the relations are described pictorially in terms of the associated graphs.  

In this article, we restrict attention to the case where the Lie group is $\mathrm{SL}_3(\mathbb{C})$ and the space $\mathfrak{X}=\surf$ is a punctured finite-type surface.  Sikora-Westbury \cite{SikoraAlgGeomTop07} proved that the collection of trace functions $\mathrm{Tr}_W$ associated to non-elliptic webs $W$, which are certain webs embedded in the surface $\surf $, forms a linear basis for the algebra of functions $\mathbb{C}[\mathscr{R}_{\mathrm{SL}_3(\mathbb{C})}(\surf)]$.  

An analogous result \cite{HosteKnotTheoryRam93} in the case of $\mathrm{SL}_2(\mathbb{C})$ says that the collection of trace functions $\mathrm{Tr}_\gamma$ associated to essential multi-curves $\gamma$ embedded in the surface $\surf $ forms a linear basis for the algebra of functions $\mathbb{C}[\mathscr{R}_{\mathrm{SL}_2(\mathbb{C})}(\surf)]$.  A well-known topological-combinatorial fact says that if the punctured surface $\surf$ is equipped with an ideal triangulation $\idealtriang$, then the geometric intersection numbers $\iota(\gamma, E)$ of a curve $\gamma$ with the edges $E$ of $\idealtriang$ furnish an explicit system of non-negative integer coordinates on the collection of essential multi-curves $\gamma$.  These coordinates can be characterized by finitely many triangle inequalities and parity conditions.  

The present work is part of a series of two papers, whose goal is to generalize these $\mathrm{SL}_2$-properties to the case $n=3$.  The main result of the current paper is the following.  

\begin{theorem}
\label{thm:first-theorem-intro}
	For a punctured finite-type surface $\surf$ equipped with an ideal triangulation $\idealtriang$, the Sikora-Westbury $\mathrm{SL}_3$-web basis for the algebra of functions $\mathbb{C}[\mathscr{R}_{\mathrm{SL}_3(\mathbb{C})}(\surf)]$ admits an explicit system of non-negative integer coordinates, which can be characterized by finitely many Knutson-Tao rhombus inequalities {\upshape\cite{KnutsonJAmerMathsoc99}} and modulo $3$ congruence conditions.  
\end{theorem}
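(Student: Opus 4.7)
The plan is to build, for every non-elliptic web $W$ in $\surf$, a tuple of non-negative integers indexed by the combinatorics of the triangulation $\idealtriang$, and to show that the resulting assignment is a bijection between the Sikora--Westbury basis and the set of tuples satisfying the required rhombus inequalities and mod-$3$ congruences. The target of the assignment will be essentially a Knutson--Tao honeycomb (equivalently, a hive) in each triangle of $\idealtriang$, with the honeycombs in adjacent triangles glued compatibly along common edges.

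The first step is to put each non-elliptic web into a \emph{honeycomb normal form} with respect to $\idealtriang$. I would first isotope $W$ so that it meets each edge of $\idealtriang$ transversely, with no trivalent vertex on an edge; then, using the defining non-elliptic moves (elimination of digon, square, and hexagonal faces), I would push $W$ inside each triangle into a canonical model consisting of three families of parallel ``corner arcs'' together with a single interior honeycomb piece. The coordinates then read off directly from such a normal form: the number of strands crossing each segment of each edge and the ``edge lengths'' of the central honeycomb together furnish the vertex labels of a Knutson--Tao hive on that triangle.

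Next, I would verify that these numbers satisfy the Knutson--Tao rhombus inequalities on each triangle and the mod-$3$ congruence conditions that reflect trivalence of webs, and that the boundary data match across each edge shared by two triangles, yielding a global coordinate tuple. Conversely, given a tuple of integers satisfying these constraints, I would produce the inverse non-elliptic web by assembling the corresponding honeycomb in each triangle (which exists precisely because the tuple lies in the Knutson--Tao hive polytope) and gluing across edges using the matching strand counts. Injectivity of the coordinate map follows from uniqueness of the normal form, and surjectivity is immediate from this inverse construction.

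The main obstacle I anticipate is the normal-form step. In the $\mathrm{SL}_2$ case the reduction to parallel arcs in each triangle is elementary because multi-curves have no trivalent vertices; for $\mathrm{SL}_3$ webs, trivalent vertices can a priori sit inside a triangle in many distinct ways, and different initial positions might lead to different hive data after reduction. Proving that the non-elliptic moves suffice to produce a unique honeycomb per triangle --- effectively a confluence and termination theorem for a web rewriting system on a triangle, together with a proof that the resulting global web is independent of the order in which the local reductions are performed --- is the combinatorial heart of the argument. A secondary, more technical, obstacle is getting the congruence conditions right: one must check that the mod-$3$ constraints on each triangle can be combined across the surface to cut out \emph{exactly} the image described in the theorem, with neither extraneous relations nor missing ones.
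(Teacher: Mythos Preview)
Your outline has the right overall shape---normal form on triangles, local hive-type coordinates, a Knutson--Tao cone as image, and an inverse construction by gluing---and this is indeed the paper's strategy. But two of your key claims are mis-stated in ways that would make the argument fail as written.

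First, your normal form step is underspecified and, as stated, not quite correct. The paper does not reduce $W$ to ``corner arcs plus a honeycomb'' inside each triangle of $\idealtriang$ directly. Instead it passes to a \emph{split} ideal triangulation $\splitidealtriang$, obtained by fattening every edge of $\idealtriang$ into a biangle. In each triangle of $\splitidealtriang$ the web restricts to a rung-less essential local web (a honeycomb plus corner arcs, as you say), but the biangles absorb the remaining structure as ladder-webs. Without this biangle device you cannot in general clear all H-pieces from every triangle simultaneously, and your ``non-elliptic moves (elimination of digon, square, and hexagonal faces)'' is not what happens: a non-elliptic web has no such faces to begin with, and hexagon faces are never removed. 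The actual moves are tightening-moves, H-moves (pushing an H across an edge into an adjacent triangle or biangle), and global parallel-moves.

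Second, and more seriously, you assert that injectivity follows from \emph{uniqueness} of the normal form. In fact the normal form is \emph{not} unique: in each triangle the local web is determined only up to \emph{corner-ambiguity} (permutations of parallel corner arcs), and globally two good positions of the same non-elliptic web differ by modified H-moves and global parallel-moves. The coordinates are constant along these ambiguities, so $\themap{\idealtriang}^{\mathrm{FG}}$ is well-defined; but for the inverse you must show that \emph{any} choice of corner orderings in the triangles, after gluing via ladders in the biangles and resolving the square-faces that can appear, yields parallel-equivalent non-elliptic webs. This is the paper's Main Lemma, and its proof is not a confluence argument for a rewriting system: it introduces a ``global picture'' (a multi-curve on a holed surface), proves a Fellow-Traveler Lemma matching travelers in the two pictures, studies shared-routes and their intersection points, and then gives an explicit procedure of modified H-moves bringing corresponding intersection points into the same shared-route-biangle. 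Your proposal does not anticipate this, and your ``surjectivity is immediate'' likewise skips the square-removing step needed to guarantee the glued web is non-elliptic.
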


In the companion article \cite{DouglasArxiv20b}, we prove that the web coordinates from Theorem \ref{thm:first-theorem-intro} are natural with respect to the action of the mapping class group of the surface $\surf$.

\begin{theorem}[{\cite{DouglasArxiv20b}}]
\label{thm:second-theorem-intro}
	  If another ideal triangulation $\idealtriang^\prime$ of $\surf$ is chosen, then the induced coordinate transformation takes the form of a tropicalized \hbox{$\mathcal{A}$-coordinate} cluster transformation (as opposed to $\mathcal{X}$-coordinate), in the language of Fock-Goncharov {\upshape\cite{FockIHES06, FominJAmerMathSoc02}}.
\end{theorem}

Strictly speaking, Theorems \ref{thm:first-theorem-intro} and \ref{thm:second-theorem-intro} have been stated assuming that the punctured surface $\surf$ has empty boundary.    In \S \ref{sec:webs-on-surfaces-with-boundary}, we give two different, but related, generalizations (Theorems \ref{thm:main-theorem-v1} and \ref{thm:main-theorem-2}) of Theorem \ref{thm:first-theorem-intro} valid in the boundary setting, $\partial \surf \neq \emptyset$; see also \cite{KimArxiv20}.  There we also provide applications to the geometry and topology of $\mathrm{SL}_3(\mathbb{C})$-character varieties, as well as to the representation theory of the Lie group $\mathrm{SL}_3(\mathbb{C})$.  In the companion article \cite{DouglasArxiv20b}, we likewise provide a version of Theorem \ref{thm:second-theorem-intro} valid in the boundary setting.  

This work drew much inspiration from papers of Xie \cite{XieArxiv13}, Kuperberg \cite{KuperbergCommMathPhys96}, and Goncharov-Shen \cite{GoncharovInvent15}. 

At its heart, Theorem \ref{thm:first-theorem-intro} simply describes how to assign tuples of numbers to pictures.  We have motivated these web pictures $W$ by their association with trace functions $\mathrm{Tr}_W$. As such, it is desirable to tie directly the coordinates to the trace functions.  Such a relationship is well-known for $\mathrm{SL}_2(\mathbb{C})$; see  \cite{Fock07}, for instance.  In that case, the trace functions $\mathrm{Tr}_\gamma$ for curves $\gamma$ can be expressed as Laurent polynomials $\mathrm{Tr}_\gamma = \mathrm{Tr}_\gamma(X_i)$ in variables $X_i$ where there is one variable per coordinate (that is, per edge $E_i$ of $\idealtriang$).  Moreover, the coordinates of a curve $\gamma$ can be read off as the exponents of the highest term of the trace polynomial $\mathrm{Tr}_\gamma(X_i)$, demonstrating the tropical geometric nature of these coordinates.  

There is a similar story for $\mathrm{SL}_3(\mathbb{C})$, and conjecturally for $\mathrm{SL}_n(\mathbb{C})$. The Fock-Goncharov theory tells us how to express the trace functions $\mathrm{Tr}_W$ for webs $W$ as Laurent polynomials $\mathrm{Tr}_W(X_i)$.  Here, the number of variables $X_i$ (called Fock-Goncharov coordinates) increases with $n$. In the case $n=3$, Kim \cite{KimArxiv20}, building on \cite{DouglasThesis20}, showed that the tropical coordinates of Theorem \ref{thm:first-theorem-intro} appear as the exponents of the highest term of the Fock-Goncharov trace polynomial $\mathrm{Tr}_W(X_i)$.  This idea was Xie's \cite{XieArxiv13} point of departure, and these coordinates were constructed following his lead.  

Kuperberg's landmark paper \cite{KuperbergCommMathPhys96} influenced \cite{SikoraAlgGeomTop07} and laid the topological foundation for the present work as well.  He proved that a certain collection of web pictures drawn on an ideal polygon $\poly_k$ indexes a linear basis for the sub-space  of $\mathrm{SL}_3(\mathbb{C})$-invariant tensors in a $k$-fold tensor product of finite-dimensional irreducible representations of $\mathrm{SL}_3(\mathbb{C})$.  Along the way, he showed how the pictures for the ideal polygon $\poly_k$ can be obtained by gluing together the more basic pictures for an ideal triangle $\poly_3$.  We apply Kuperberg's local pictorial ideas in order to analyze global web pictures drawn on a triangulated surface $(\surf, \idealtriang)$.  

Motivated by the Fock-Goncharov Duality Conjecture \cite{FockIHES06} (see also \cite{GrossJAmerMathSoc18, GoncharovAdvMath18}), Goncharov-Shen \cite{GoncharovInvent15} developed a theory by which bases of algebras of functions on moduli spaces, defined abstractly via the geometric Satake correspondence, can be indexed by positive integral tropical points, namely the preimage points mapping to $\Zpos$ under a  tropicalized potential function.  They showed that, for an ideal triangle $\poly_3$ equipped with a general linear symmetry group, the positive integral tropical points correspond to solutions of the Knutson-Tao rhombus inequalities.  In the $\mathrm{SL}_3$-setting of this article, Theorem \ref{thm:first-theorem-intro} also makes use of these inequalities in order to assign positive integer coordinates to webs.  We think of this result as a manifestation of Goncharov-Shen's ideas about duality; see \cite{DouglasArxiv20b} for a further discussion.  (For another geometric application of the Goncharov-Shen potential function, see \cite{MR4595283huangsun}.)

Frohman-Sikora \cite{MR4359515frohmansikora} independently constructed coordinates for the same $\mathrm{SL}_3$-web basis as that appearing in Theorem \ref{thm:first-theorem-intro}.  While their topological strategy is the same,  their coordinates are different from ours.  They do not characterize by inequalities the values taken by their coordinates, and they do not address the question of naturality under changing the triangulation. Their proof is algebraic, as it uses the Sikora-Westbury theorem (discussed above)  saying that the non-elliptic webs are linearly independent, which ultimately relies on the Diamond Lemma from non-commutative algebra.  On the other hand, we give a purely topological-combinatorial proof of Theorem \ref{thm:first-theorem-intro}, which does not require using this linear independence.  Moreover, we give an alternative geometric proof of this Sikora-Westbury theorem, by using Theorem \ref{thm:first-theorem-intro} together with the $\mathrm{SL}_3$-quantum trace map \cite{DouglasThesis20, KimArxiv20}.  (Ishibashi-Kano \cite{Ishibashi22} mimicked the construction and proof strategy of Theorem \ref{thm:first-theorem-intro} to define shearing coordinates for unbounded $\mathrm{SL}_3$-laminations.)

As another application, Kim \cite{KimArxiv20,kim2021mutation} used Theorems \ref{thm:first-theorem-intro} and \ref{thm:second-theorem-intro} to study a classical and quantum $\mathrm{SL}_3$-version of Fock-Goncharov duality, generalizing the $\mathrm{SL}_2$-case \hbox{\cite{FockIHES06, AllegrettiAdvMath17}}.  

For the underlying $\mathrm{SL}_3$-geometry, see \cite{FockAdvMath07, CasellaMathSocJapMem20}.  %For related studies in the $\mathrm{SL}_3$-setting, see \cite{MR4609753higgins, IshibashiArxiv21}.  %For progress generalizing this work to the $\mathrm{SL}_n$-case, see \cite{Sun1}.  

We are also interested in comparing our methods to other approaches to studying webs and related objects,  falling under the umbrella of so-called ``higher laminations''.  In addition to webs  \cite{SikoraAlgGeomTop05, FontaineAdvMath2012, CautisMathAnn14}, this includes  cluster algebras \cite{FominAdvMath16,MR4552137ishibashiyuasa}, buildings \cite{FontaineCompositio13, LeGeomTop16, MartoneMathZ19}, and spectral networks \cite{GaiottoAnnHenriPoincare13,neitzke2022quantum}.

\section*{Acknowledgements}
This research would not have been possible without the help and support of many people, whose time and patience often seem limitless.  In particular, we are profoundly grateful to Dylan Allegretti for his involvement during the early stages of this project; Charlie Frohman for his guidance from the very beginning; Francis Bonahon and Viktor Kleen for helping us refine our ideas and for technical assistance; Tommaso Cremaschi for his invaluable feedback after reading way too many drafts; as well as Vijay Higgins, Hyun Kyu Kim,  Linhui Shen, and Adam Sikora for helpful conversations.  Much of this work was completed during very enjoyable visits to Tsinghua University in Beijing (supported by a GEAR graduate internship grant) and the University of Southern California in Los Angeles.  We would like to take this opportunity to extend our enormous gratitude to these institutions for their warm hospitality and many tasty dinners (the first author was especially fond of the s\={o}ngsh\v{u} gu\`{i}y\'{u}).  We also thank the referee for helping us to improve the paper.

		\section{Global webs}
		\label{sec:global-webs}

We introduce the primary topological objects of study.

		\subsection{Topological setting}
		\label{ssec:topological-setting}

Let $\surf$ be an oriented \textit{punctured surface} of finite topological type, namely $\surf$ is diffeomorphic to the space obtained by removing a finite subset $P$, called the set of \textit{punctures}, from a closed oriented surface $\overline{\surf}$.   In particular, note that $\surf$ has empty boundary, $\partial \surf = \emptyset$.  We require that there is at least one puncture, and that the Euler characteristic $\chi(\surf)$ of the punctured surface $\surf$ is strictly less than zero, $\chi(\surf) < 0$.  These topological conditions guarantee the existence of an \textit{ideal triangulation} $\idealtriang$ of the punctured surface $\surf$, namely a triangulation $\overline{\idealtriang}$ of the closed surface $\overline{\surf}$ whose vertex set is equal to the set of punctures $P$.  See Figure \ref{fig:example-ideal-triangulations} for some examples of ideal triangulations.

To simplify the exposition, we always assume that $\idealtriang$ does not contain any \textit{self-folded triangles}, meaning that each triangle $\triang$ of $\idealtriang$ has three distinct edges.  Such a $\idealtriang$ always exists.  Our results should generalize, essentially without change, to allow for self-folded triangles.  

\begin{figure}[htb]
     \centering
     \begin{subfigure}{0.35\textwidth}
         \centering
         \includegraphics[width=.76\textwidth]{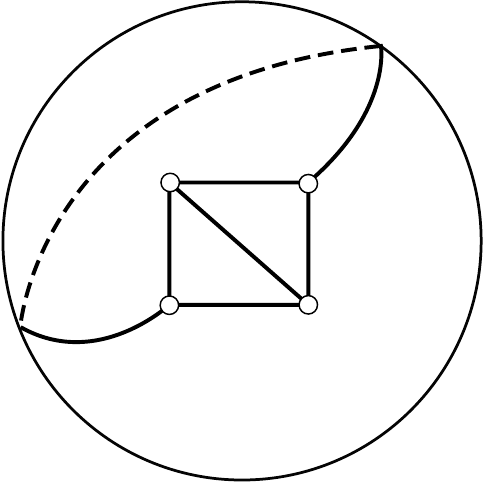}
         \caption{Four times punctured sphere}
         \label{fig:four-times-punctured-sphere-triang}
     \end{subfigure}     
\hfill
     \begin{subfigure}{0.35\textwidth}
         \centering
         \includegraphics[width=\textwidth]{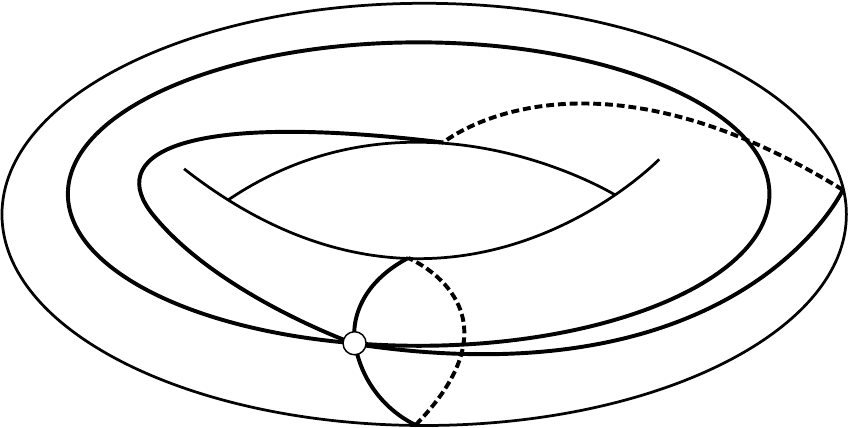}
         \caption{Once punctured torus}
         \label{fig:once-punctured-torus}
     \end{subfigure}
        \caption{Ideal triangulations}
        \label{fig:example-ideal-triangulations}
\end{figure}

		\subsection{Webs}
		\label{ssec:webs}

\begin{definition}
\label{def:curve}
	An \textit{immersed curve}, or just \textit{curve}, $\gamma$ in any surface (possibly with boundary) $\surfbord$ is an immersion into $\surfbord$ of the circle $S^1$ or the compact interval $I$.  In other words, a curve is either an oriented \textit{loop} (that is, a closed curve) or an oriented \textit{arc}, possibly with self-intersections.  
\end{definition}

We will often be working with \textit{embedded curves}, where there are no self-intersections.
	
\begin{definition}
\label{def:web-on-a-surface}
	An \textit{embedded global web}, or just \textit{global web} or \textit{web}, $W = \{ w_i \}_i$ on the surface $\surf$ is a finite collection of closed connected oriented tri-valent (finite) graphs or closed curves $w_i$ embedded in $\surf$, such that the (images of the) components $w_i$ are mutually disjoint, and such that each vertex of $w_i$ is either a \textit{source} or a \textit{sink}, namely the orientations either go all in or all out, respectively.  Note that the web $W$ has empty boundary, $\partial W = \emptyset$.  
\end{definition}

	For an example, in Figure \ref{fig:example-web} we show a web on the once punctured torus, which has four components consisting of two tri-valent graphs and two curves.  

\begin{figure}[t]
	\centering
	\includegraphics[scale=1.07]{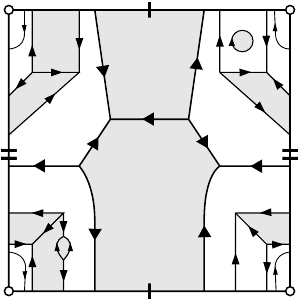}
	\caption{Web}
	\label{fig:example-web}
\end{figure}

\begin{definition}
\label{def:parallel-equivalent-webs}
	Two webs $W$ and $W^\prime$ on the surface $\surf$ are \textit{parallel-equivalent} if  $W$ can be taken to $W^\prime$, preserving orientation, by a sequence of moves of the following two types:
\begin{enumerate}
	\item an \textit{isotopy} of the web, namely a smoothly varying family of webs;
	\item a \textit{global parallel-move}, exchanging two loops that together form the boundary of an embedded annulus $A$ in the surface $\surf$; see Figure \ref{fig:parallel-move}.  
\end{enumerate}
In this case, we say that $W$ and $W^\prime$ belong to the same \textit{parallel-equivalence class} $[W] = [W^\prime]$. 
\end{definition}

Intuitively, we think of parallel-equivalent as meaning homotopic on the surface. 

\begin{figure}[htb]
	\centering
	\includegraphics[scale=1.11]{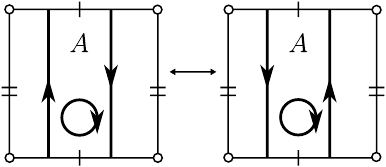}
	\caption{Global parallel-move}
	\label{fig:parallel-move}
\end{figure}

		\subsection{Faces}
		\label{ssec:faces}

\begin{definition}
\label{def:web-face-closed-surface}
	A \textit{face} $D$ of a web $W$ on the surface $\surf$ is a contractible component of the complement $W^c \subset \surf$ of the web.  A \textit{$n$-face} $D_n$ is a face with $n$-sides, counted with multiplicity.  An alternative name for a $0$-face $D_0$, $2$-face $D_2$, $4$-face $D_4$, and $6$-face $D_6$ is a \textit{disk-}, \textit{bigon-}, \textit{square-}, and \textit{hexagon-face}, respectively.   
\end{definition}

For an example, the web shown in Figure \ref{fig:example-web} above has one disk-face, one bigon-face, two square-faces, and two hexagon-faces; these faces are shaded in the figure.  Notice that one of the hexagon-faces consists of five edges of the web, one edge being  counted twice.  

 By orientation considerations, faces must have an even number of sides.  

Bigon- and square-faces always consist of exactly two and four edges, respectively, of $W$.  See Figure~\ref{fig:prohibited-4-face}.

 In figures, we often omit the web orientations, as in Figure \ref{fig:local-webs-examples}.

\begin{figure}[htb]
	\centering
	\includegraphics[scale=.35]{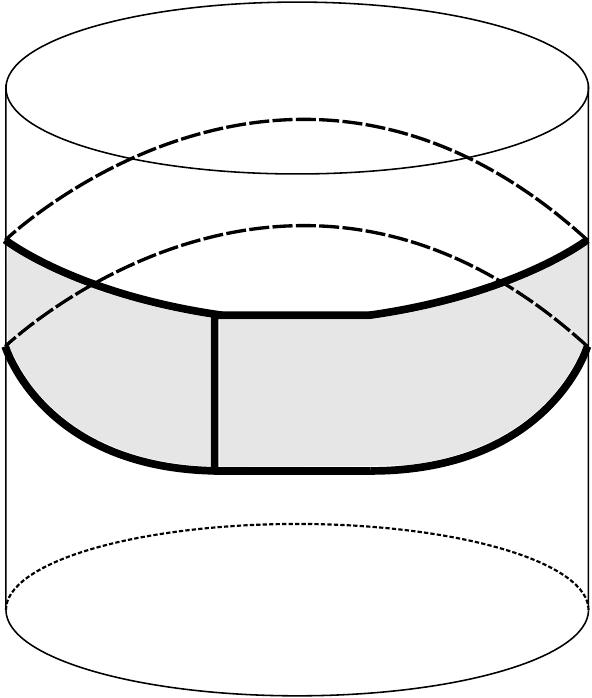}
	\caption{Prohibited square-face}
	\label{fig:prohibited-4-face}
\end{figure}

		\subsection{Non-elliptic webs}
		\label{ssec:essential-webs}

\begin{definition}
\label{def:essential-global-webs}
	A web $W$ on the surface $\surf$ is called \textit{non-elliptic} if it has no disk-, bigon-, or square-faces.  Otherwise, $W$ is called \textit{elliptic}.  
\end{definition}

If $W$ is non-elliptic, and if $W^\prime$ is parallel-equivalent to $W$, then $W^\prime$ is non-elliptic.  Denote the set of non-elliptic webs by $\webbasis{\surf}$, and the set of parallel-equivalence classes of non-elliptic webs by $[\webbasis{\surf}]$.  The \textit{empty web} $W = \emptyset$ represents a class with one element in $[\webbasis{\surf}]$.

		\section{Local webs}
		\label{sec:local-webs}
As a technical device, we study webs-with-boundary in the disk.

		\subsection{Ideal polygons}
		\label{ssec:ideal-polygons}

For a non-negative integer  $k \geq 0$, an \textit{ideal $k$-polygon} $\poly_k$ is the surface $\poly_0 - P$ obtained by removing $k$ punctures $P \subset \partial \poly_0$ from the boundary of the closed disk~$\poly_0$.  

Observe that, when $k > 0$, the boundary $\partial \poly_k$ of the ideal polygon consists of $k$ ideal arcs.

		\subsection{Local webs}	
		\label{ssec:local-webs}

Recall the notion of a curve (Definition \ref{def:curve}).  
	
\begin{definition}
\label{def:local-web-on-a-surface}
	An \textit{embedded local web}, or just \textit{local web}, $W = \{ w_i \}_i$ in an ideal polygon $\poly_k$ is a finite collection of connected oriented tri-valent graphs or curves $w_i$ embedded in $\poly_k$, such that the components $w_i$ are mutually disjoint, and such that each vertex of $w_i$ is either a source or sink.  Note that the local web $W$ may have boundary, in which case we require $\partial W = W \cap \partial \poly_k$ and we consider each point $v \in \partial W$ to be a mono-valent vertex.  
\end{definition}

For some examples of local webs, see Figure \ref{fig:local-webs-examples}.  There, $k=4$.  

\begin{figure}[htb]
	\centering
	\begin{subfigure}{.49\textwidth}
		\centering
		\includegraphics[scale=1.19]{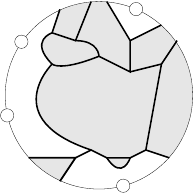}
		\caption{Connected}
		\label{web-with-boundary2}
	\end{subfigure}
	\begin{subfigure}{.49\textwidth}
		\centering
		\includegraphics[scale=1.19]{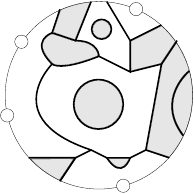}
		\caption{Not connected}
		\label{web-with-boundary1}
	\end{subfigure}
	\caption{Local webs}
	\label{fig:local-webs-examples}
\end{figure}

		\subsection{External faces}
		\label{ssec:external-faces}

\begin{definition}
\label{def:external-web-face-closed-surface}
A \textit{face} $D$ of a local web $W$ in an ideal polygon $\poly_k$ ($k \geq 0$) is a contractible component of the complement $W^c \subset \poly_k$ of $W$ that is \textit{puncture-free}, meaning that $D$ does not limit to any punctures $p \in P$.  A \textit{$n$-face} $D_n$ is a face with $n$-sides.  Here, a maximal segment $\alpha \subset (\partial \poly_k) \cap D_n$ of the boundary $\partial \poly_k$ contained in the face $D_n$ is counted as a side, called a \textit{boundary side}.  An \textit{external face} $D^\mathrm{ext}$ (resp. \textit{internal face} $D^\mathrm{int}$) of the local web $W$ is a face having at least one (resp. no) boundary side.  

In contrast to internal faces, external faces can have an odd number of sides.  An alternative name for an external $2$-face $D^\mathrm{ext}_2$, $3$-face $D^\mathrm{ext}_3$, $4$-face $D^\mathrm{ext}_4$ with one boundary side, and $5$-face $D^\mathrm{ext}_5$ with one boundary side is a \textit{cap-}, \textit{fork-}, \textit{H-}, and \textit{half-hexagon-face}, respectively; see Figure \ref{fig:external-small-faces}.  Also, as for global webs (see Definition \ref{def:web-face-closed-surface}), an alternative name for an internal $0$-face $D^\mathrm{int}_0$, $2$-face $D^\mathrm{int}_2$, $4$-face $D^\mathrm{int}_4$, and $6$-face $D^\mathrm{int}_2$ is a \textit{disk-}, \textit{bigon-}, \textit{square-}, and \textit{hexagon-face}.  
\end{definition}

For example, the connected local web in Figure \ref{web-with-boundary2} has one fork-face, two H-faces, one half-hexagon-face, one external 6-face, one bigon-face, one square-face, and one internal 8-face.  Also, the disconnected local web in Figure \ref{web-with-boundary1} has one cap-face, one fork-face, one H-face, one half-hexagon-face, one external 6-face, two disk-faces, one bigon-face, and one square-face.  

\begin{figure}[htb]
	\centering
	\includegraphics[scale=.74]{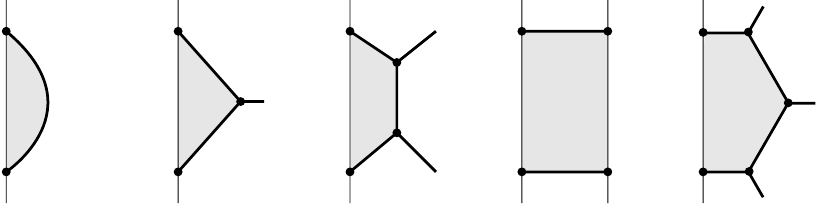}
	\caption{Cap-, fork-, H-, external 4-, and half-hexagon-face}
	\label{fig:external-small-faces}
\end{figure}

		\subsection{Combinatorial identity}
		\label{ssec:singular-flat-metric-associated-to-a-local-web}

\begin{proposition}[{compare \cite[\S 6.1]{KuperbergCommMathPhys96}} ]
\label{fact:GB}	
	Let $W$ be a connected local web in the closed disk $\poly_0$ with non-empty boundary $\partial W \neq \emptyset$.  Then,
\begin{equation*}	2 \pi = 
	\sum_{\textnormal{internal faces } D^\mathrm{int}_n} \left( 2\pi - \frac{\pi}{3} n
	\right)
\quad+\quad
	\sum_{\textnormal{external faces } D^\mathrm{ext}_n} \left( \pi - \frac{\pi}{3} \left( n-2 \right) \right). 
\end{equation*}
\end{proposition}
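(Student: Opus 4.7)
The approach is a combinatorial Gauss--Bonnet / Euler-characteristic-style computation.  Let $V_3$, $V_1$ denote the numbers of trivalent and monovalent vertices of $W$, let $E$ be its number of edges, and let $F_{\mathrm{int}}(n)$, $F_{\mathrm{ext}}(n)$ count the internal and external $n$-faces, with totals $F_{\mathrm{int}} := \sum_n F_{\mathrm{int}}(n)$ and $F_{\mathrm{ext}} := \sum_n F_{\mathrm{ext}}(n)$.  First I would record three elementary counting identities.  The handshake lemma applied to $W$ gives $2E = 3V_3 + V_1$.  Counting face-side incidences---each $W$-edge bounds two face-sides, and each of the $V_1$ arcs of $\partial\poly_0 \setminus \partial W$ is a boundary side of a unique external face---gives $\sum_n n \bigl( F_{\mathrm{int}}(n) + F_{\mathrm{ext}}(n) \bigr) = 2E + V_1 = 3V_3 + 2V_1$.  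Finally, Euler's formula for the planar graph $W \cup \partial\poly_0$, viewed as embedded in $S^2$, simplifies (using $V - E_{\Gamma} + F_{\Gamma} = 2$ with $V = V_3 + V_1$, $E_{\Gamma} = E + V_1$, $F_{\Gamma} = F_{\mathrm{int}} + F_{\mathrm{ext}} + 1$) to $F_{\mathrm{int}} + F_{\mathrm{ext}} = 1 + E - V_3 = 1 + (V_3 + V_1)/2$.

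The heart of the proof is the topological claim that \emph{each external face has exactly one boundary side}, the sole place where the connectedness hypothesis on $W$ is used.  I would argue by contradiction: suppose some external face $D$ had $b \geq 2$ boundary sides.  Since $D$ is contractible, $\bar D$ is a closed topological disk whose intersection with $\partial\poly_0$ is $b$ disjoint arcs $a_1, \ldots, a_b$, so the complement $\poly_0 \setminus \bar D$ decomposes into $b$ pairwise disjoint open disks $R_1, \ldots, R_b$, each $R_j$ bounded by one gap arc of $\partial\poly_0 \setminus \bar D$ and one maximal $W$-arc $P_j \subset \partial D$.  Because the boundary sides $a_j$ separate both the gaps on $\partial\poly_0$ and the $W$-portions $P_j$ on $\partial D$ from each other, the closures $\bar R_j$ are pairwise disjoint; hence $W$ splits as the disjoint union of the nonempty pieces $W \cap \bar R_j$ (each containing $P_j$), exhibiting at least $b \geq 2$ components of $W$ and contradicting connectedness.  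This gives $V_1 = F_{\mathrm{ext}}$.

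Substituting the counting identities into the right-hand side of the proposition and collecting terms yields
\[
	2\pi F_{\mathrm{int}} + \tfrac{5\pi}{3} F_{\mathrm{ext}} - \tfrac{\pi}{3}\bigl( 3V_3 + 2V_1 \bigr),
\]
after which the substitutions $F_{\mathrm{int}} = 1 + (V_3 + V_1)/2 - F_{\mathrm{ext}}$ and $V_1 = F_{\mathrm{ext}}$ collapse the expression to $2\pi$ by routine algebra.  I expect the topological claim about external faces to be the main obstacle, since the remainder is pure bookkeeping; disconnected examples (such as two disjoint parallel chords in $\poly_0$, which produce an external face with two boundary sides) readily violate the identity, confirming that connectedness must be used essentially at exactly that step.
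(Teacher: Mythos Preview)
Your proof is correct and takes a genuinely different route from the paper's. The paper constructs the dual graph of $W$, observes that its complementary regions are triangles, endows $\poly_0$ with the singular flat metric in which each dual triangle is Euclidean equilateral, and then reads off the identity from the Gauss--Bonnet theorem for flat cone surfaces with piecewise-geodesic boundary: internal faces become interior cone points with angle $\tfrac{\pi}{3}n$, external faces become boundary corners with angle $\tfrac{\pi}{3}(n-2)$. Your argument is purely combinatorial---handshake lemma, face--side incidence count, and Euler's formula for the connected planar graph $W\cup\partial\poly_0$ replace the geometric machinery. What your approach buys is an entirely elementary argument that isolates precisely where connectedness enters (the one-boundary-side lemma, with your nice check via two parallel chords that it fails otherwise); what the paper's approach buys is a conceptual picture that explains \emph{why} the weights $2\pi-\tfrac{\pi}{3}n$ and $\pi-\tfrac{\pi}{3}(n-2)$ arise, as curvature defects.

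Two minor points. First, your Euler-formula step tacitly uses that every component of $W^c$ is a face (i.e.\ contractible), which the paper argues explicitly; in your setup it follows because $W\cup\partial\poly_0$ is a connected graph embedded in $S^2$, so every complementary region is a disk. Second, in the key lemma the claim that $\bar D$ is a closed topological disk is not literally safe in general, since a face boundary walk can revisit vertices or edges. But you do not need it: just take an embedded arc $\alpha\subset D$ from the interior of one boundary side $a_1$ to the interior of another $a_2$; then $\alpha$ is a properly embedded arc in $\poly_0$ disjoint from $W$, so it separates $\poly_0$ into two half-disks, and each half-disk contains one endpoint of $a_1$ (a point of $\partial W$), contradicting connectedness of $W$.
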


\begin{proof}
Since $W$ is connected, its complement $W^c \subset \poly_0$ contains at most one annulus, which faces the boundary $\partial \poly_0$.  Such an annulus does not exist, since $\partial W \neq \emptyset$.  Thus, every component $D$ of $W^c$ is contractible, and of course puncture-free, so $D$ is a face.  

It follows that the closed disk $\poly_0$ can be tiled by the dual graph of $W$.  More precisely, the vertices of the dual graph are the faces of $W$, and the complement of the dual graph consists of triangles.    In Figure \ref{fig:example-web-tiling}, we demonstrate this tiling procedure for the local web $W$ that we saw in Figure \ref{web-with-boundary2} above (after forgetting the punctures).  

This triangular tiling gives rise to a flat Riemannian metric with conical singularities and piecewise-geodesic boundary on the closed disk $\poly_0$, by requiring that each triangle is Euclidean equilateral.  Apply the Gauss-Bonnet theorem to this singular flat surface.  
\end{proof}

\begin{figure}[htb]
	\centering
	\includegraphics[scale=1.10]{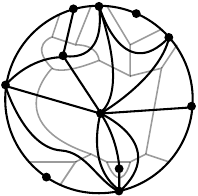}
	\caption{Tiling the closed disk with the dual graph of a local web}
	\label{fig:example-web-tiling}
\end{figure}

		\subsection{Non-ellipticity}
		\label{ssec:non-ellipticity}

\begin{definition}
\label{def:non-elliptic-web-with-boundary}
	As for global webs, a local web $W$ in an ideal polygon $\poly_k$ is \textit{non-elliptic} if $W$ has no disk-, bigon-, or square-faces.  Otherwise, $W$ is called \textit{elliptic}; see Figure \ref{fig:small-webs-with-boundary-in-the-disk}.
\end{definition}

\begin{lemma}
\label{fact:boundary-non-elliptic-web-regularity}
	Let $W$ be a non-elliptic local web in the closed disk $\poly_0$ such that $W$ is connected, has non-empty boundary $\partial W \neq \emptyset$, and has at least one tri-valent vertex.  Then $W$ has at least three fork- and/or H-faces.  
\end{lemma}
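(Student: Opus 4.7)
The plan is to apply the combinatorial Gauss-Bonnet identity of Proposition~\ref{fact:GB} to $W$, and then argue that the only external faces which can contribute positively to the right-hand side are (after ruling out some pathologies) fork-faces and H-faces. After rescaling by $3/\pi$, Proposition~\ref{fact:GB} becomes
\[
6 \;=\; \sum_{\textnormal{internal } D^\mathrm{int}_n} (6-n) \;+\; \sum_{\textnormal{external } D^\mathrm{ext}_n} (5-n).
\]
Since $W$ is non-elliptic, every internal face satisfies $n \geq 6$ (recall $n$ is even for internal faces), so the internal sum is nonpositive. External faces with $n \geq 5$ likewise contribute $\leq 0$, so only external $n = 2, 3, 4$ faces contribute positively, with weights $3, 2, 1$ respectively.

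Next I would classify the configurations of these small external faces by encoding the cyclic sequence of sides with symbols $B$ (boundary-side) and $E$ (web-side), where the totals $(b, w)$ satisfy $b + w = n$. The key combinatorial point is that two boundary-sides consecutive in this cyclic order, if not separated by a web-side between them around the face boundary, would actually merge into a single maximal boundary segment of $\partial \poly_0$ and hence be a single side. (Indeed, at such a would-be corner there is no mono-valent boundary vertex for the boundary to break at.) This rules out the patterns beginning $BB\ldots$ for a $3$-face and for a $4$-face with $(b,w) = (2,2)$. It follows that every external $3$-face is a fork-face (cyclic pattern $BEE$), and every external $4$-face is either an H-face (cyclic pattern $BEEE$) or an alternating rectangle with cyclic pattern $BEBE$.

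Then I would use the hypotheses on $W$ to rule out the remaining bad configurations: cap-faces and alternating $(2,2)$ external $4$-faces. At every corner of an external face where a boundary-side meets a web-side, the corresponding vertex lies on $\partial W$ and is therefore mono-valent by Definition~\ref{def:local-web-on-a-surface}. For a cap-face, the single web-side has both endpoints at such mono-valent corners, so it is an isolated edge component of $W$; this would contradict $W$ being connected and containing at least one tri-valent vertex. Applying the same reasoning to each of the two web-sides of an alternating $(2,2)$ external $4$-face would yield two isolated edge components of $W$, again a contradiction.

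Finally, letting $f$ and $h$ denote the numbers of fork-faces and H-faces respectively, the identity collapses to the inequality $6 \leq 2f + h$. If $f + h \leq 2$, then $2f + h \leq 2(f + h) \leq 4 < 6$, contradicting this inequality, so $f + h \geq 3$, as required. The main obstacle I anticipate is carrying out the small-face classification carefully: in particular, making rigorous both the merging principle for consecutive boundary-sides around a face and the local picture near a mono-valent boundary vertex, since these combinatorial facts are precisely where the hypotheses of connectedness and existence of a tri-valent vertex enter the argument.
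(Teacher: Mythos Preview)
Your argument is correct and follows the same Gauss--Bonnet strategy as the paper's proof: bound the internal contributions by zero using non-ellipticity, rule out cap-faces via the connectedness and tri-valent-vertex hypotheses, and then count fork- and H-face contributions. The paper's version is terser---it simply asserts that the positively contributing external faces are fork- and H-faces---whereas your $B$/$E$ cyclic-pattern analysis carefully justifies why an external $3$-face must be a fork and why an external $4$-face cannot have the alternating $BEBE$ pattern; this fills in a small step the paper leaves implicit.
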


\begin{proof}
	We apply the formula of Proposition \ref{fact:GB}.  For each internal face $D_n^\mathrm{int}$ of $W$, the internal angle $2\pi - (\pi/3) n \leq 0$ is non-positive, since $n \geq 6$ by non-ellipticity.  For each external face $D_n^\mathrm{ext}$, necessarily $n \geq 2$, and the external angle $\pi - (\pi/3)(n-2)$ is $\leq 0$ is non-positive if and only if $n \geq 5$.  By hypothesis, $W$ has no cap-faces (else $W$ would be an arc).  So, those external faces $D^\mathrm{ext}_n$ with a positive contribution satisfy $n =3,4$.  The result follows since fork- and $H$-faces contribute $2 \pi/3$ and $\pi/3$, respectively, in the formula.  
\end{proof}

\begin{lemma}
\label{fact:no-closed-non-elliptic webs}
	Non-elliptic local webs $W$ ($\neq \emptyset$) in an ideal polygon $\poly_k$ ($k \geq 0$) having empty boundary $\partial W = \emptyset$ do not exist.
\end{lemma}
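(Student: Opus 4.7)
The plan is to argue by contradiction using an innermost-component argument combined with an Euler characteristic (discrete Gauss--Bonnet) computation. Note that Proposition \ref{fact:GB} does not directly apply, since it requires $\partial W \neq \emptyset$, but the underlying counting will do the work. Suppose $W \neq \emptyset$ is a non-elliptic local web in $\poly_k$ with $\partial W = \emptyset$. Then $W$ lies in the interior of $\poly_k$, which is homeomorphic to an open disk, and each component $w_i$ of $W$ is a closed connected trivalent graph or simple closed curve embedded there.

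First I would identify an \emph{innermost component} $w^*$ of $W$, meaning a component none of whose bounded complementary regions in the interior of $\poly_k$ contains any other component of $W$. Existence follows by finite descent: if some bounded region of a candidate component contains another component $w'$, replace the candidate by $w'$ and iterate; this terminates since $W$ has finitely many components. By this choice, every bounded complementary region of $w^*$ is automatically a face of $W$ (contractible and puncture-free), and by non-ellipticity each such face has at least six sides.

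Next I would split on the nature of $w^*$. If $w^*$ is a simple closed curve, then it bounds a disk $D \subset \poly_k$ which is a $0$-face (disk-face) of $W$, immediately contradicting non-ellipticity. Otherwise $w^*$ has at least one trivalent vertex; viewing $w^*$ as a graph in the sphere $S^2$, Euler's formula $V - E + F = 2$ combined with the trivalence relation $2E = 3V$ gives $F = 2 + E/3$. All $F - 1$ faces of $w^*$ other than the one containing $\partial \poly_k$ are bounded faces of $W$ with at least $6$ sides each, so summing degrees,
\[
2E \;=\; \sum_{\text{faces of } w^*} \deg \;\geq\; 0 \;+\; 6(F - 1) \;=\; 6 + 2E,
\]
a contradiction.

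The main obstacle is formalizing the innermost argument in Step~1 so that \emph{all} bounded complementary regions of $w^*$ are free of other components, not merely the one used to locate $w^*$: components of $W$ can be variously nested inside one another, and the finite descent must be carried out carefully (for example, by ordering components by containment of their bounded complementary disks and taking a minimal element). Once $w^*$ is in hand, Step~2 is a clean Euler characteristic count, close in spirit to the proof of Lemma \ref{fact:boundary-non-elliptic-web-regularity}.
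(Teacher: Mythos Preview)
Your proof is correct and takes a somewhat different route from the paper's. The paper reduces to a connected $W$, rules out the loop case, and then cuts along the \emph{outer rim} of $W$ to obtain a sub-web $W' \subset W$ with non-empty boundary inside a smaller closed disk $\poly_0'$; it then invokes Lemma~\ref{fact:boundary-non-elliptic-web-regularity} to force a fork- or H-face of $W'$, and argues that either one contradicts non-ellipticity or the source/sink orientation condition when viewed back in $W$. You instead pass to an \emph{innermost} component and run the Euler/Gauss--Bonnet count directly on it, never invoking Lemma~\ref{fact:boundary-non-elliptic-web-regularity}. Your inequality $2E \geq 6(F-1) = 6 + 2E$ is the same curvature obstruction, just packaged without the intermediate lemma; the trade-off is that the paper reuses structure already established, while your argument is self-contained and avoids having to interpret what external faces of the cut sub-web $W'$ mean in the ambient $W$.

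Your worry about the innermost step is not a genuine obstacle. The descent ``if some bounded complementary region of $w$ contains another component $w'$, replace $w$ by $w'$'' strictly shrinks the filled hull $D(w)$ (the complement in the disk of the unbounded face of $w$), since $D(w') \subsetneq D(w)$; with finitely many components this terminates, and by construction the terminal $w^*$ has \emph{every} bounded complementary region free of other components.
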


\begin{proof}
	Suppose otherwise.  We may assume $W$ is connected.  Since $W$ is non-elliptic, $W$ is not a loop (this uses that $\poly_k$ is contractible).  Then, the outer rim of $W$ forms the boundary of a smaller closed disk $\poly_0^\prime \subset \poly_k$ containing a sub-web $W^\prime \subset W$ that has non-empty boundary $\partial W^\prime \neq \emptyset$.  By non-ellipticity, $W^\prime$ does not have a cap-face, so $W^\prime$ has a tri-valent vertex.  Applying Lemma \ref{fact:boundary-non-elliptic-web-regularity} to connected components of $W^\prime$, an analysis of inner-most components leads to the fact that $W^\prime$ has at least one fork- or H-face.  By non-ellipticity, $W^\prime$ does not have an H-face, and it does not have a fork-face by orientation considerations applied to~$W$.  
\end{proof}

Lemma \ref{fact:no-closed-non-elliptic webs} plus a small argument allows us to relax the hypotheses of Lemma \ref{fact:boundary-non-elliptic-web-regularity} as follows.  

\begin{proposition}  
\label{fact:strengthened-boundary-non-elliptic-web-regularity}
	Let $W$ be a non-elliptic local web in the closed disk $\poly_0$ such that $W$ is connected and has at least one tri-valent vertex.  Then $W$ has at least three fork- and/or H-faces.   If, in addition, $W$ is assumed not to have any cap-faces, then the connectedness hypothesis above is superfluous.  \qed
\end{proposition}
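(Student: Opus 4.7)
The plan is to treat the two claims of the proposition separately, handling both via the preceding lemmas plus a small sharpening of the counting in Proposition \ref{fact:GB}.  For the first claim (connected case), I would immediately invoke Lemma \ref{fact:no-closed-non-elliptic webs}: if $\partial W = \emptyset$ then the connected non-elliptic $W$ would contradict that lemma, so $\partial W \neq \emptyset$ and Lemma \ref{fact:boundary-non-elliptic-web-regularity} applies directly.

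For the second claim (no cap-faces, possibly disconnected), I would first verify that $W$ has no closed components.  Taking an innermost such component $W_0$, if $W_0$ is a loop its inside is a disk-face of $W$, contradicting non-ellipticity.  If instead $W_0$ is a closed tri-valent graph, then applying Euler's formula to $W_0$ alone (using $2|E(W_0)| = 3|V(W_0)|$) gives $1 + |V(W_0)|/2$ bounded inner faces whose total side-count is at most $2|E(W_0)| = 3|V(W_0)|$; if all had at least six sides one would need $3|V(W_0)| \geq 6 + 3|V(W_0)|$, which is absurd.  Hence some inner face has fewer than six sides, and by the innermost choice it is also a face of $W$, contradicting non-ellipticity of $W$.

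Next, I would extend the counting of Proposition \ref{fact:GB} to the possibly-disconnected setting via a direct Euler-characteristic computation on the CW complex $W \cup \partial \poly_0 \subset \poly_0$, yielding
\[
\sum_D \alpha(D) \;=\; 2\pi + \tfrac{\pi}{3}\bigl(v_1 - f^{\mathrm{ext}}\bigr),
\]
where $\alpha$ is the internal/external ``angle'' of Proposition \ref{fact:GB}, $v_1 = |\partial W|$, and $f^{\mathrm{ext}}$ is the number of external faces; note $v_1 - f^{\mathrm{ext}} \geq 0$ since every external face contains at least one boundary arc.  Repeating the sign analysis from the proof of Lemma \ref{fact:boundary-non-elliptic-web-regularity} under the no-cap-faces hypothesis, only fork-faces (contributing $2\pi/3$) and external $4$-faces (contributing $\pi/3$) give positive summands; writing $f_3$, $h$, $h'$ for the numbers of fork-faces, H-faces, and external $4$-faces with at least two boundary sides respectively, this gives $2 f_3 + h + h' \geq 6 + (v_1 - f^{\mathrm{ext}})$.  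The final observation is the elementary bound $h' \leq v_1 - f^{\mathrm{ext}}$, because each external face $D$ with $\mathrm{bs}(D) \geq 2$ contributes at least one to $v_1 - f^{\mathrm{ext}}$; subtracting then yields $2 f_3 + h \geq 6$, whence $f_3 + h \geq 3$.

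The main anticipated obstacle is the extension of the Gauss--Bonnet-style identity of Proposition \ref{fact:GB} to disconnected $W$, since that proof relied on a dual-graph tiling of $\poly_0$ that does not obviously generalize.  The cleanest workaround is to abandon the geometric tiling in favor of a purely combinatorial Euler computation, the new feature being the correction term $(\pi/3)(v_1 - f^{\mathrm{ext}})$; once this is in hand, the remaining counting and the key bound $h' \leq v_1 - f^{\mathrm{ext}}$ are elementary.
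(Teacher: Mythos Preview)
Your argument is correct.  For the connected statement you do exactly what the paper indicates: Lemma~\ref{fact:no-closed-non-elliptic webs} forces $\partial W\neq\emptyset$, and then Lemma~\ref{fact:boundary-non-elliptic-web-regularity} applies verbatim.  The paper says nothing beyond this.

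For the second statement the paper gives no proof at all --- the sentence before the proposition reads ``Lemma~\ref{fact:no-closed-non-elliptic webs} plus a small argument allows us to relax the hypotheses\ldots'' and the proof box is empty --- so there is nothing concrete to compare your route against.  Your approach is nonetheless worth a remark: rather than trying to reduce to the connected case by peeling off components (which runs into the difficulty that a fork/H-face of one component need not be a face of the full web), you extend Proposition~\ref{fact:GB} combinatorially to disconnected $W$, obtaining the corrected identity
\[
\sum_D \alpha(D)=2\pi+\tfrac{\pi}{3}\bigl(v_1-f^{\mathrm{ext}}\bigr),
\qquad v_1-f^{\mathrm{ext}}=\sum_{D\text{ ext}}\bigl(\mathrm{bs}(D)-1\bigr)\geq 0,
\]
and then absorb the correction via the elementary bound $h'\leq v_1-f^{\mathrm{ext}}$.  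I checked the identity directly from $V-E+F=\chi(\poly_0)=1$ together with $2e_W=3v_3+v_1$, and it is right; your ruling-out of closed components (so that $W\cup\partial\poly_0$ is connected and its complementary regions are genuine $2$-cells) is also fine.  One small point of phrasing: when you say ``only fork-faces and external $4$-faces give positive summands'', note that by the paper's Definition~\ref{def:external-web-face-closed-surface} a \emph{fork-face} is any external $3$-face, with no restriction on the number of boundary sides, so external $3$-faces with $\mathrm{bs}\geq 2$ (which can occur in the disconnected setting) are already counted in your $f_3$.  With that reading the inequality $2f_3+h+h'\geq 6+(v_1-f^{\mathrm{ext}})$ and the subtraction step go through exactly as you wrote.

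Whether the authors' unspoken ``small argument'' was this Euler computation or some innermost-component induction is impossible to tell from the text; your version has the virtue of being self-contained and of making transparent exactly where the no-cap-face hypothesis is used.
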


\begin{figure}[htb]
     \centering
     \begin{subfigure}{0.25\textwidth}
         \centering
         \includegraphics[scale=.25]{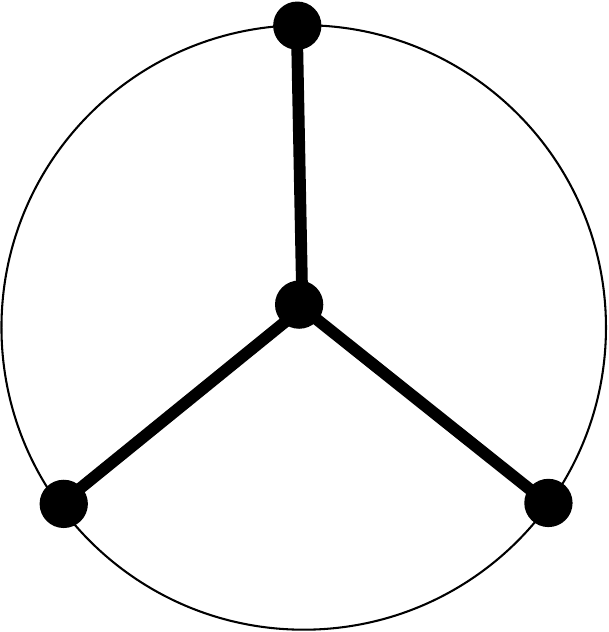}
         \caption{Three fork-faces}
         \label{subfig:web-with-three-forks}
     \end{subfigure}     
\hfill
     \begin{subfigure}{0.4\textwidth}
         \centering
         \includegraphics[scale=.25]{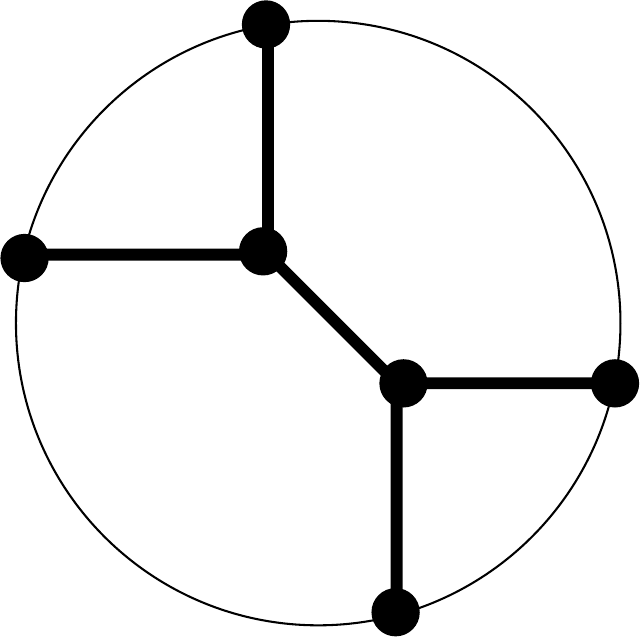}
         \caption{two fork- and two H-faces}
         \label{subfig:web-with-two-forks}
     \end{subfigure}    
    \hfill
     \begin{subfigure}{0.25\textwidth}
         \centering
         \includegraphics[scale=.25]{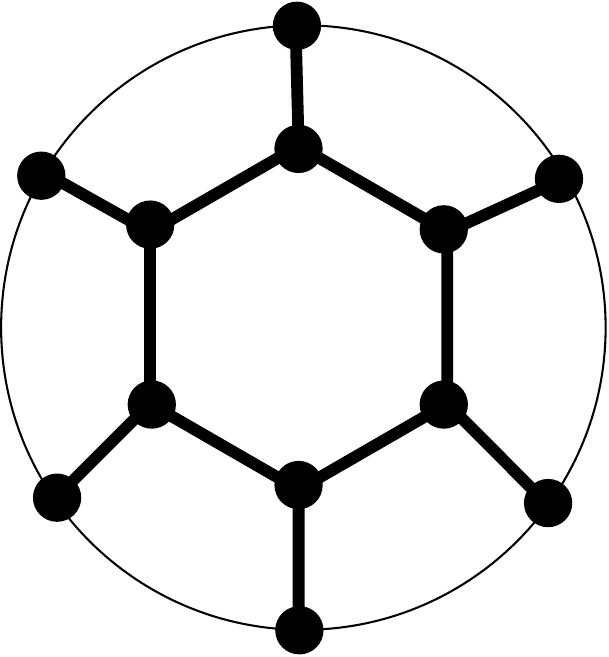}
         \caption{Six H-faces}
         \label{subfig:web-with-six-Hs}
     \end{subfigure}
         \caption{Non-elliptic local webs in the closed disk}
        \label{fig:small-webs-with-boundary-in-the-disk}
\end{figure}

		\subsection{Essential and rung-less local webs}
		\label{ssec:essential-rung-less-local-webs}

\begin{definition}
\label{def:essential-web-with-boundary}
	A local web $W$ in an ideal polygon $\poly_k$ ($k \geq 0$) is \textit{essential} if:
\begin{enumerate}
	\item  the local web $W$ is non-elliptic;
	\item  the web $W$ is \textit{taut}: for any compact arc $\alpha$ embedded in $\poly_k$ whose boundary $\partial \alpha$ lies in a component $E$ of the boundary $\partial \poly_k$ (and is disjoint from $W$), the number of intersection points $\iota(W, \overline{E})$ of $W$ with the segment $\overline{E} \subset E$ delimited by $\partial \alpha$ does not exceed the number of intersection points $\iota(W, \alpha)$ of $W$ with $\alpha$, that is $\iota(W, \overline{E}) \leq \iota(W, \alpha)$; see Figures \ref{fig:definition-of-essential} and \ref{fig:examples-and-non-exmaples-of-essential-webs}.  
\end{enumerate}
\end{definition}

\begin{figure}[t]
	\centering
	\includegraphics[scale=.32]{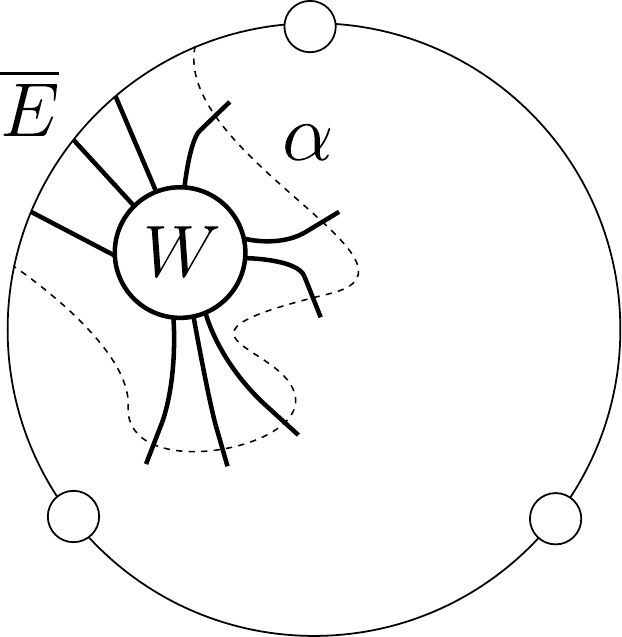}
	\caption{Tautness condition for an essential local web}
	\label{fig:definition-of-essential}
\end{figure}

\begin{figure}[t]
     \centering
     \begin{subfigure}{0.4\textwidth}
         \centering
         \includegraphics[scale=.18]{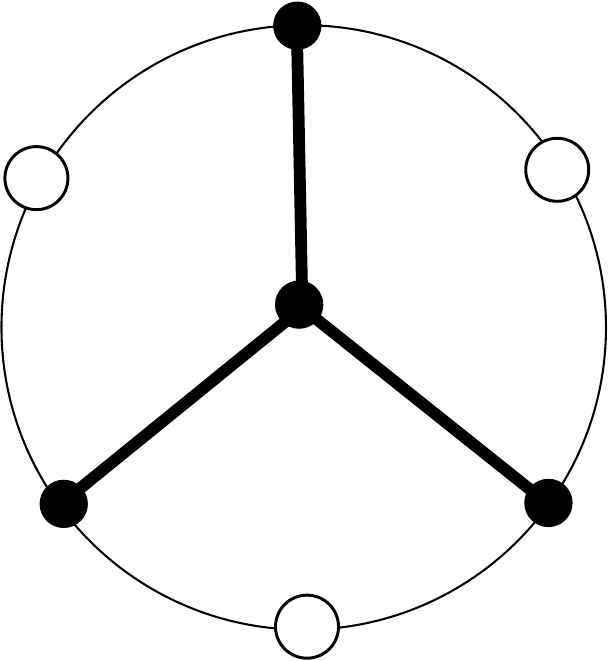}
         \caption{Essential}
         \label{subfig:essential-web-example}
     \end{subfigure}     
\hfill
     \begin{subfigure}{0.4\textwidth}
         \centering
         \includegraphics[scale=.18]{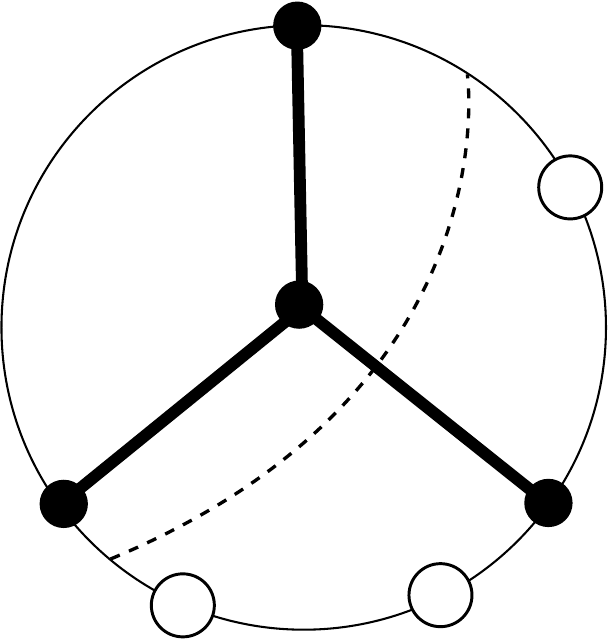}
         \caption{Non-elliptic, but not essential}
         \label{subfig:non-elliptic-not-essential-example}
     \end{subfigure}
     	\caption{More non-elliptic webs}
        \label{fig:examples-and-non-exmaples-of-essential-webs}
\end{figure}

\begin{figure}[t]
	\centering
	\includegraphics[scale=.26]{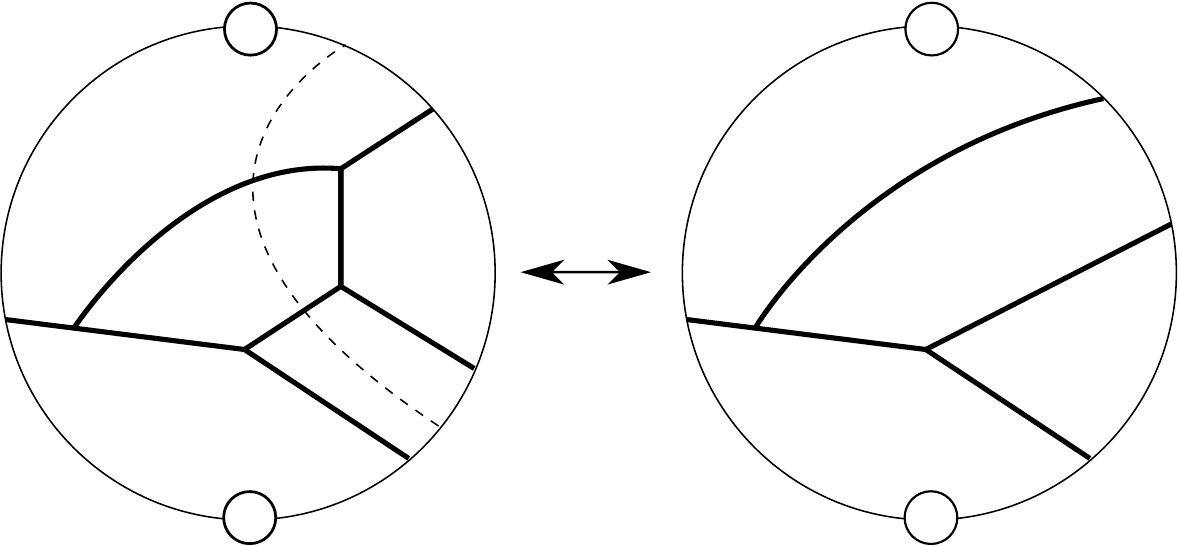}
	\caption{Adding or removing an H-face}
	\label{fig:add-remove-H}
\end{figure}

Note that essential local webs cannot have any cap- or fork-faces, but can have H-faces.  Later, we will need the operation of adding or removing an H-face, depicted in Figure \ref{fig:add-remove-H}.

\begin{definition}
\label{def:rung-less-webs}
	A local web $W$ in an ideal polygon $\poly_k$ ($k \geq 0$) is \textit{rung-less} if it does not have any H-faces; see Figure \ref{fig:Examples and non-examples of rung-less webs}.
\end{definition}

\begin{remark}   $ $
\label{rem:no-essential-webs-in-small-polygons}
\begin{enumerate}
	\item  A consequence of Proposition \ref{fact:strengthened-boundary-non-elliptic-web-regularity}, which we will not use, is that (non-empty) essential local webs in the closed disk $\poly_0$ or ideal monoangle $\poly_1$ do not exist.
	\item\label{subrem:kuperberg1}  Kuperberg \cite[\S 4, 6.1]{KuperbergCommMathPhys96} says ``(core of a) non-convex non-elliptic web in the $k$-clasped web space'' for our ``(rung-less) essential local web in the ideal $k$-polygon''.
	\end{enumerate}
\end{remark}

\begin{figure}[htb]
     \centering
     \begin{subfigure}{0.33\textwidth}
         \centering
         \includegraphics[scale=.26]{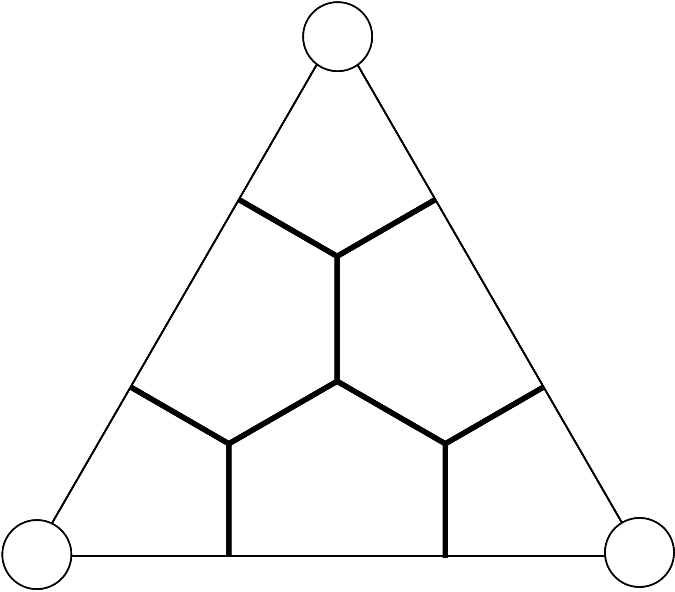}
         \caption{Essential and rung-less}
         \label{subfig:rung-less-essential-web}
     \end{subfigure}     
\hfill
     \begin{subfigure}{0.4\textwidth}
         \centering
         \includegraphics[scale=.26]{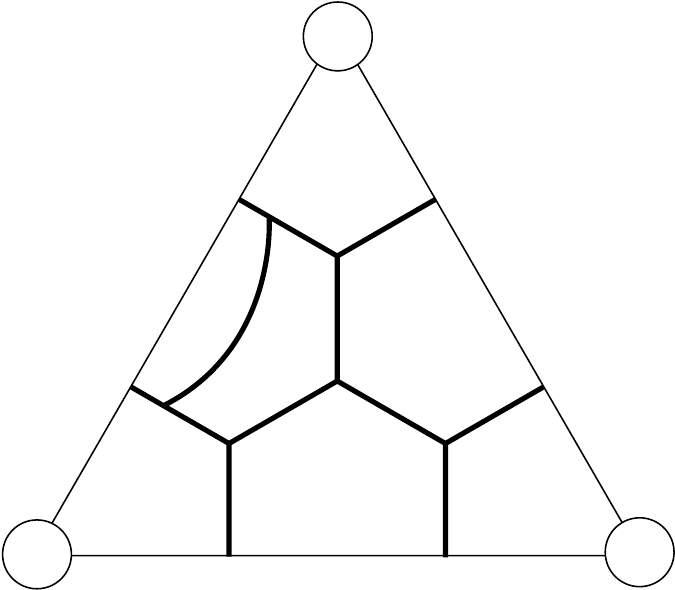}
         \caption{Essential, but not rung-less}
         \label{subfig:essential-web-not-rung-less}
     \end{subfigure}
     \caption{More essential webs}
        \label{fig:Examples and non-examples of rung-less webs}
\end{figure}

		\subsection{Ladder-webs in ideal biangles}
		\label{ssec:ladder-webs-in-ideal-biangles}

Another name for an ideal 2-polygon $\poly_2$ is an \textit{ideal biangle}, or just \textit{biangle}, denoted by $\biang$.  The boundary $\partial \biang$ consists of two ideal arcs $E^\prime$ and $E^{\prime\prime}$, called the \textit{boundary edges} of the biangle.  We want to characterize essential local webs $W$ in the biangle $\biang$; compare (1) in Remark \ref{rem:no-essential-webs-in-small-polygons}.  

\begin{definition}
\label{def:multi-curve}
	For any surface $\surfbord$, possibly with boundary, an \textit{immersed multi-curve}, or just \textit{multi-curve}, $\Gamma = \{ \gamma_i \}$ on $\surfbord$ is a finite collection of connected oriented curves (Definition \ref{def:curve}) $\gamma_i$ immersed in $\surfbord$, such that $\partial \gamma_i = \gamma_i \cap \partial \surfbord$.  Note that $\gamma_i$ and $\gamma_j$ might intersect in $\surfbord$ for any $i$ and $j$.  Note also that a component $\gamma_i$ may be either a loop or an arc.  
\end{definition} 

In the current section, components $\gamma_i$ of a multi-curve $\Gamma$ will always be embedded, but different components might intersect.  This will not be the case later on, in \S \ref{sec:proof-of-main-lemma}.  

A pair of arcs $\gamma_1$ and $\gamma_2$ each intersecting both boundary edges in $\biang$ are \textit{oppositely-oriented}  if $\gamma_1$ and $\gamma_2$ go into (resp. out of) and out of (resp. into) $E^\prime$, respectively.  Similarly, the arcs $\gamma_1$ and $\gamma_2$ are \textit{same-oriented} if $\gamma_1$ and $\gamma_2$ both go into (resp. out of) $E^\prime$, respectively.  

\begin{definition}
\label{def:ladder-web}
A \textit{symmetric strand-set pair} $S = (S^\prime, S^{\prime\prime})$ for the biangle $\biang$ is a pair of finite collections $S = (S^\prime, S^{\prime \prime}) =  ( \{s^\prime\}, \{s^{\prime\prime}\})$ of disjoint oriented \textit{strands} located on the boundary $\partial \biang = E^\prime \cup E^{\prime\prime}$, such that the strands $s^\prime$ (resp. $s^{\prime\prime}$) lie on the boundary edge $E^\prime$ (resp. $E^{\prime\prime}$), and such that the number of \textit{in-strands} (resp. \textit{out-strands}) on $E^\prime$ is equal to the number of out-strands (resp. in-strands) on $E^{\prime\prime}$; see the left-most picture in Figure \ref{fig:ladder-webs}.    
\end{definition}

Given a symmetric strand-set pair $S = (S^\prime, S^{\prime\prime})$, in the following definition we associate to $S$ a multi-curve in the biangle $\biang$, denoted $\left< W(S) \right>$.  

\begin{definition}
\label{def:local-picture-extra-def}
The \textit{local picture $\left< W(S) \right>$ associated to a symmetric strand-set pair $S= (S^\prime, S^{\prime\prime})$} is the multi-curve  in the biangle $\biang$ obtained by connecting the strands on $E^\prime$ to the strands on $E^{\prime\prime}$ with arcs, in an order preserving and  minimally intersecting way, loosely speaking, as illustrated in the middle picture in Figure \ref{fig:ladder-webs}.  Here, order preserving means in a way such that no same-oriented arcs intersect.  
\end{definition}

Observe, in the local picture $\left< W(S) \right>$, that $\gamma_1$ and $\gamma_2$ intersect if and only if (1) they are oppositely-oriented, and (2) they intersect exactly once.  We denote by $\mathscr{P}(S) \subset \biang$ the set of intersection points $p$ of pairs of oppositely-oriented arcs in the local picture  $\left< W(S) \right>$.  

Finally, we say how to associate a local web $W(S)$ in $\biang$ to a symmetric pair $S=(S^\prime, S^{\prime\prime})$.  

\begin{definition}
\label{def:actual-def-of-ladder-web}
The \textit{ladder-web} $W(S)$ in the biangle $\biang$ obtained from a symmetric strand-set pair $S= (S^\prime, S^{\prime\prime})$ is the unique (up to ambient isotopy of $\biang$) local web obtained by resolving each intersection point $p \in \mathscr{P}(S)$ into two vertices connected by a horizontal edge, relative to the biangle, called a \textit{rung}; see Figures  \ref{fig:ladder-webs} and \ref{fig:birds-eye-view}.  
\end{definition}

The following statement is implicit in \cite[Lemma 6.7]{KuperbergCommMathPhys96} and also appears in \cite[\S 8]{MR4359515frohmansikora}.  

\begin{figure}[htb]
	\centering
	\includegraphics[scale=1.11]{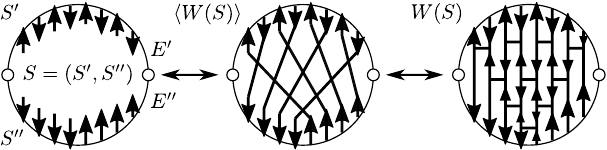}
	\caption{Construction of a ladder-web}
	\label{fig:ladder-webs}
\end{figure}

\begin{figure}[htb]
	\centering
	\includegraphics[scale=1.09]{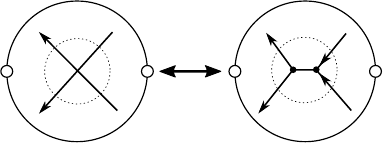}
	\caption{Replacing a local crossing with an H (also called a rung)}
	\label{fig:birds-eye-view}
\end{figure}

\begin{proposition}
\label{prop:ladder-webs}
	The ladder-web $W(S)$ is essential.  Conversely, given an essential local web $W$ in the biangle $\biang$, there exists a unique symmetric strand-set pair $S= (S^\prime, S^{\prime\prime})$ such that $W = W(S)$.  Thus, $W$ is a ladder-web.  
\end{proposition}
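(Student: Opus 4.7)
\emph{Forward direction ($W(S)$ is essential).} Non-ellipticity follows from the construction: since $\langle W(S)\rangle$ consists of arcs from $E'$ to $E''$ and the H-resolution preserves the ``$E'$-to-$E''$'' property at the level of connected components, there are no closed components and hence no disk faces; because each pair of oppositely-oriented arcs in $\langle W(S)\rangle$ crosses at most once (by minimal intersection), no two rungs of $W(S)$ share both trivalent vertices, which rules out bigons and, by a similar local analysis of stacked-rung configurations, squares. Tautness follows since each boundary edge $E$ realizes the minimum intersection number with $W(S)$ in its homotopy class, a consequence of the minimal-intersection property built into $\langle W(S)\rangle$.

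\emph{Converse direction (existence and uniqueness of $S$).} Given essential $W$ in $\biang$, I would first argue that every component of $W$ has non-empty boundary (by Lemma \ref{fact:no-closed-non-elliptic webs}, together with the fact that any loop in the contractible space $\biang$ bounds a puncture-free disk) and that no arc component has both endpoints on the same edge (since such an arc, whether or not it encloses a puncture, would produce a sub-region whose ``short'' boundary arc $\alpha$ violates the tautness inequality $\iota(W,E') \leq \iota(W,\alpha)$). The candidate strand-set pair $S = (S',S'')$ is then read off directly from $\partial W$, with the symmetric condition following by a parity count using the source/sink structure at trivalent vertices.

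\emph{Proof that $W = W(S)$.} I would induct on the number of H-faces of $W$. The key claim in the base case (no H-faces) is that $W$ has no trivalent vertices; it then follows that $W$ is a multi-curve of arcs from $E'$ to $E''$ in minimal position, so $W = \langle W(S)\rangle = W(S)$. The no-trivalent-vertex claim is proved by adapting Proposition \ref{fact:strengthened-boundary-non-elliptic-web-regularity} to the biangle via a Gauss-Bonnet calculation that accounts for the two boundary punctures: since essentiality excludes cap- and fork-faces, any trivalent vertex would force the existence of at least one H-face, contradicting the base hypothesis. For the inductive step, removing an H-face (Figure \ref{fig:add-remove-H}) yields a smaller essential web $W'$ with the same boundary data $S$, to which the inductive hypothesis applies, and re-inserting the H recovers $W = W(S)$. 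The main technical obstacle is the biangle adaptation of Proposition \ref{fact:strengthened-boundary-non-elliptic-web-regularity}, as well as verifying that the essentiality of $W$ is preserved after removing an H-face; the uniqueness of $S$ is immediate from its definition via $\partial W$.
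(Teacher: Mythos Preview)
Your overall architecture mirrors the paper's --- reduce to the case of no trivalent vertices by successively removing H-faces, using the Gauss--Bonnet argument to guarantee an H-face exists whenever a trivalent vertex does --- but the inductive step as you have written it does not go through, and the gap is exactly where the real content of the proof lies.

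The problem is the sentence ``removing an H-face yields a smaller essential web $W'$ with the same boundary data $S$, \dots\ and re-inserting the H recovers $W = W(S)$.'' Take this at face value: if $W'$ is essential with $\partial W' = S$, then by the inductive hypothesis $W' = W(S)$. But $W$ is obtained from $W'$ by \emph{adding} an H-face, hence has strictly more trivalent vertices than $W' = W(S)$; so $W \neq W(S)$, contradicting what you are trying to prove. Something has to give. In fact the operation of Figure~\ref{fig:add-remove-H} does \emph{not} preserve $S$: pushing an H-face across a boundary edge swaps the in/out labels of the two adjacent strands, so $\partial W' = S' \neq S$. With the correct $S'$, the inductive hypothesis gives $W' = W(S')$, and you now need to argue that re-inserting the H into $W(S')$ yields precisely $W(S)$ rather than some other web. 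This is equivalent to showing that the two swapped strands genuinely cross in $\langle W(S)\rangle$ --- which is \emph{not} automatic, and fails exactly when re-inserting the H would create an internal square in $W$.

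This is precisely the step the paper isolates. Rather than inducting, the paper strips all H-faces to reach $W_n$, then builds the immersed multi-curve $\Gamma$ by putting a local crossing at each removed H, and proves $\Gamma = \langle W(S)\rangle$ by checking that $\Gamma$ is minimally intersecting: a bigon in $\Gamma$ would correspond to a square-face in the original $W$, contradicting non-ellipticity. That bigon/square correspondence is the substantive use of non-ellipticity in the converse direction, and it is what your ``re-inserting the H'' sentence needs but does not supply.
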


\begin{figure}[b]
	\centering
	\includegraphics[scale=.57]{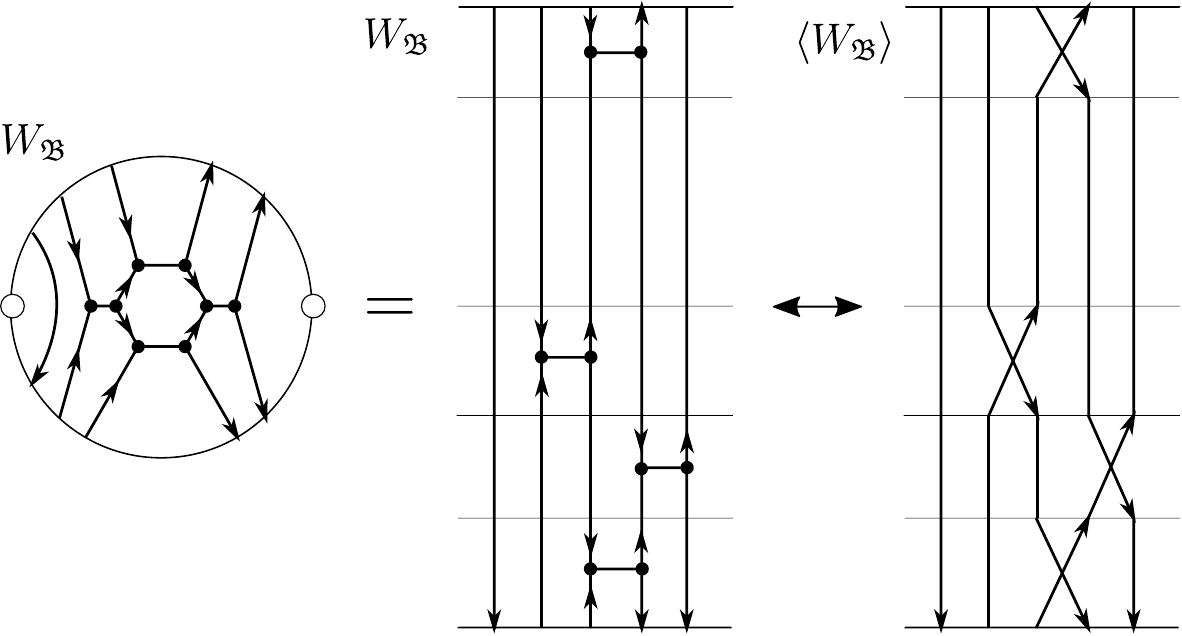}
	\caption{Essential local web $W_\biang$ in the biangle, and its corresponding local picture $\left< W_\biang \right>$}
	\label{fig:decomposing-essential-web-bigon}
\end{figure}

\begin{proof}	
	For the first statement, the non-ellipticity of $W(S)$ follows because two oppositely-oriented curves in the local picture $\left< W(S) \right>$ do not cross more than once (if there were a square-face, a pair of curves would cross twice), and the tautness of $W(S)$ is immediate.  
	
Conversely, let $W$ be an essential local web in $\biang$.  The collection of ends of $W$ located on the boundary edges $E^\prime \cup E^{\prime\prime}$ determines a strand-set pair $S= (S^\prime, S^{\prime\prime})$.  We show that $S$ is symmetric and $W = W(S)$.  In particular, $S$ is uniquely determined.  
	
	If $W$ has a tri-valent vertex, let $\overline{W}$ denote the induced local web in the closed disk $\poly_0$ underlying $\biang$, obtained by filling in the two punctures of $\biang$.  Applying Proposition \ref{fact:strengthened-boundary-non-elliptic-web-regularity} to $\overline{W}$ guarantees that $\overline{W}$ (possibly minus some arc components) has at least three fork- and/or H-faces.  At most two of these faces can straddle the two punctures of $\biang$, so we gather $W$ has one fork- or H-face $D^\mathrm{ext}$ lying on $E^\prime$ or $E^{\prime\prime}$.  Since $W$ is taut, $D^\mathrm{ext}$ is an H-face.  
	
	We can then remove this H-face from $\biang$ (recall Figure \ref{fig:add-remove-H}), obtaining a local web $W_1$ that is essential and has strictly fewer tri-valent vertices than $W$.  Repeating this process, we obtain a sequence of essential local webs $W=W_0, W_1, \dots, W_n$ such that $W_n$ has no tri-valent vertices and is obtained from $W$ by removing finitely many H-faces.  By non-ellipticity, $W_n$ consists of a collection of arcs $\gamma_i^{(n)}$ (as opposed to loops), and since $W_n$ is taut, each arc $\gamma_i^{(n)}$ connects to both boundary edges $E^\prime$ and $E^{\prime\prime}$ of the biangle~$\biang$.  

	Replacing the removed H-faces with local crossings (Figure \ref{fig:birds-eye-view}), we obtain a multi-curve $\Gamma$ in $\biang$ consisting of arcs $\gamma_i^{(0)}$, each intersecting both edges $E^\prime$ and $E^{\prime\prime}$, such that only oppositely-oriented arcs $\gamma_i^{(0)}$ intersect; see Figure \ref{fig:decomposing-essential-web-bigon}.  In particular, the pair $(S^\prime, S^{\prime\prime})$ is symmetric.  
	
	We claim $\Gamma$ is the local picture $\left< W(S) \right>$.  Since only oppositely-oriented arcs intersect, $\Gamma$ is order preserving (Definition \ref{def:local-picture-extra-def}).  It remains to show $\Gamma$ is minimally intersecting, namely that no arcs intersect more than once.  Suppose they did.  Then, because only oppositely-oriented arcs intersect, there would be an embedded bigon $B$ in the complement $\Gamma^c \subset \biang$; see the right side of Figure \ref{fig:interpreting-non-ellipticity}. Such an embedded bigon $B$ corresponds in the local web $W$ to a square-face, violating the non-ellipticity of $W$.  We gather $\Gamma = \left< W(S) \right>$, as claimed.  
	
	By definition of the multi-curve $\Gamma$ and the local web $W(S)$, it follows  that $W = W(S)$.  
\end{proof}

\begin{figure}[t]
     \centering
         \includegraphics[scale=1.17]{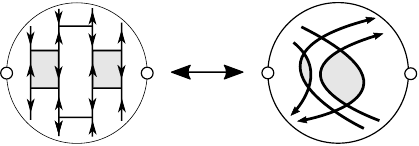}
         \caption{Prohibited ladder-webs and local pictures }
        \label{fig:interpreting-non-ellipticity}
         \label{subfig:internal-squares}
\end{figure}

For technical reasons, in \S \ref{sec:proof-of-main-lemma} we will need the following concept.  

\begin{definition}
\label{def:biangle-local-picture}
	The \textit{local picture} $\left< W_\biang \right>$ associated to an essential local web $W_\biang$ in the biangle $\biang$ is the local picture $\left< W(S) \right>$ (Definition \ref{def:local-picture-extra-def}) corresponding to the unique symmetric strand-set pair $S= (S^\prime, S^{\prime\prime})$ such that $W_\biang = W(S)$; see Figure \ref{fig:decomposing-essential-web-bigon}.  
\end{definition}

		\subsection{Honeycomb-webs in ideal triangles}
		\label{ssec:honeycomb-webs-in-ideal-triangles}

Another name for an ideal 3-polygon $\poly_3$ is an \textit{ideal triangle} $\triang$.  We want to characterize rung-less essential local webs $W$ in triangles  $\triang$.  

\begin{definition}
\label{def:honeycomb-web}
	For a positive integer $n > 0$, the \textit{$n$-out-honeycomb-web} $H^\mathrm{out}_n$ (resp. \textit{$n$-in-honeycomb-web} $H_n^\mathrm{in}$)  in the triangle $\triang$ is the local web $H_n$ dual to the \textit{$n$-triangulation} of $\triang$, where the orientation of $H_n$ is such that all the arrows go out of (resp. into) the triangle $\triang$.  
\end{definition}

For example, in Figure \ref{fig:honeycomb-web} we show the 5-out-honeycomb-web $H_5^\mathrm{out}$.  

\begin{figure}[b]
	\centering
	\includegraphics[scale=.87]{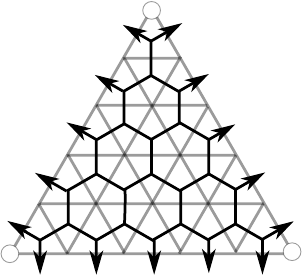}
	\caption{Honeycomb-web}
	\label{fig:honeycomb-web}
\end{figure}

The following statement is implicit in \cite[Lemma 6.8]{KuperbergCommMathPhys96} and also appears in \cite[\S 9]{MR4359515frohmansikora}.  

\begin{proposition}
\label{prop:honeycomb-webs}
	A honeycomb-web $H_n$ in the triangle $\triang$ is rung-less and essential.  Conversely, given a connected rung-less essential local web $W$ in $\triang$ having at least one tri-valent vertex, there exists a unique honeycomb-web $H_n = H_n^\mathrm{out}$ or $=H_n^\mathrm{in}$ such that $W = H_n$.  Consequently, a (possibly disconnected) rung-less essential local web $W$ in $\triang$ consists of a unique (possibly empty) honeycomb $H_n$ together with a collection of disjoint oriented arcs located on the corners of $\triang$; see the left hand side of Figure {\upshape\ref{fig:disconnected-rung-less-essential-web}}.  
\end{proposition}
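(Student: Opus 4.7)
My plan is to prove the proposition in three steps. First, for the forward direction, I would verify directly that the honeycomb $H_n$ has only hexagonal internal faces, half-hexagonal external non-corner faces (one between each consecutive pair of strands on an edge of $\triang$), and three corner-regions each bounded by two web edges meeting at a single tri-valent vertex. This immediately yields non-ellipticity and the absence of cap-, fork-, and H-faces, so $H_n$ is rung-less, while tautness follows from the parallel-strand geometry, completing essentiality.

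For the connected converse, I would start with the hypothesized $W$, use Lemma \ref{fact:no-closed-non-elliptic webs} to ensure $\partial W \neq \emptyset$, and fill in the three punctures of $\triang$ to obtain a connected local web $\overline{W}$ in $\poly_0$ with non-empty boundary. Applying the Gauss-Bonnet formula of Proposition \ref{fact:GB} to $\overline{W}$, I observe that each internal face contributes $\leq 0$ (with zero iff it is a hexagon); that each external face not coming from a filled-in puncture is an external face of $W$ which, by essentiality and rung-lessness, is neither cap, fork, nor H, so has $n \geq 5$ and contributes $\leq 0$ (zero iff a half-hexagon); and that only the at most three \emph{corner-faces} created by filling in the punctures can contribute positively, so their contributions must sum to at least $2\pi$. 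Using connectedness together with the tri-valent-vertex hypothesis, I would rule out (i) any corner-face being a cap and (ii) two or three corners sharing a single face: in each such scenario the offending corner-face would be bounded by so few web edges in $W$ that those edges constitute a lone simple-arc component of $W$, forcing $W$ to coincide with that arc and contradicting the presence of a tri-valent vertex. Hence $\overline{W}$ has exactly three distinct corner-faces of sizes $n_i \geq 3$, and the Gauss-Bonnet inequality forces $n_1 + n_2 + n_3 \leq 9$, so $n_1 = n_2 = n_3 = 3$: all three corner-faces are forks, and equality then forces every internal face to be a hexagon and every non-corner external face to be a half-hexagon.

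The rigid face structure should then pin $W$ down as a honeycomb: the cell decomposition of $\poly_0$ dual to $W$ is forced to be the $n$-triangulation of $\triang$ for some common strand count $n$, and the orientation of the boundary strands picks out $H_n^{\mathrm{out}}$ versus $H_n^{\mathrm{in}}$. The disconnected case then follows by decomposing $W$ into components: each component containing a tri-valent vertex is a honeycomb by the connected case, at most one such component can occur (since a honeycomb meets every boundary edge of $\triang$), and the remaining arc components must sit at the corners of $\triang$ by non-ellipticity and disjointness. I expect the main obstacle to be this final rigidity step, where one must verify that the constrained face structure really does force $W$ to equal $H_n$: this requires carefully checking that the strand counts on the three edges of $\triang$ are forced to coincide and that the combinatorial layout matches the $n$-triangulation, likely via an inductive peeling argument moving inward from the corners of $\triang$.
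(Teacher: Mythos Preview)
Your proposal is correct and follows essentially the same route as the paper: a Gauss--Bonnet count on the filled-in web $\overline{W}$ forces every internal face to be a hexagon, every non-corner external face to be a half-hexagon, and each of the three corner-faces to be a fork; then an inductive layer-by-layer argument pins down $W$ as $H_n$. The paper's Step~2 carries out precisely the inductive tiling you anticipate, laying down half-hexagons and hexagons face-by-face starting from one of the corner forks.

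One small correction: your argument for ruling out case (ii) (two punctures sharing a corner-face) via the ``lone simple-arc component'' reasoning does not work as stated---if the shared corner-face were, say, a fork in $\overline{W}$, its two bounding web edges would meet at a tri-valent vertex rather than constitute an arc component. However, the desired conclusion follows immediately from the Gauss--Bonnet count once caps are excluded: with at most two puncture-regions each of size $\geq 3$, the total positive external contribution is at most $2 \cdot \tfrac{2\pi}{3} < 2\pi$, a contradiction. So three distinct corner-faces are forced, and your inequality $n_1+n_2+n_3 \leq 9$ then applies.
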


\begin{figure}[htb]
	\centering
	\includegraphics[width=.60\textwidth]{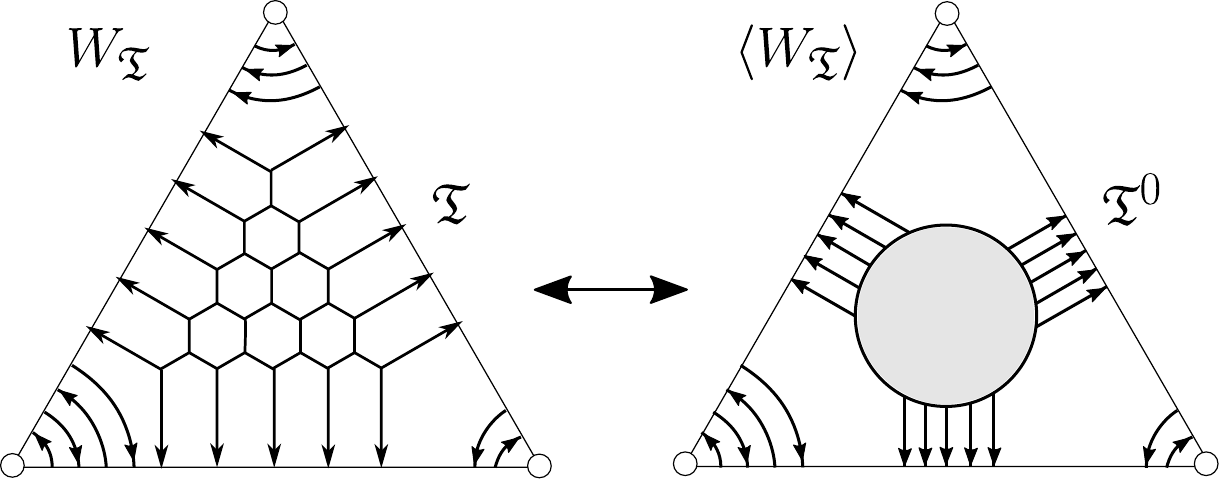}
	\caption{Rung-less essential local web $W_\triang$ in the triangle, and its corresponding local picture $\left< W_\triang \right>$ in the holed triangle}
	\label{fig:disconnected-rung-less-essential-web}
\end{figure}

\begin{proof}
	The first statement is immediate.  
	
	\textit{Step 1.}  Let $W$ be as in the second statement. Just like the proof of Proposition \ref{prop:ladder-webs}, applying Proposition \ref{fact:strengthened-boundary-non-elliptic-web-regularity} to the induced web $\overline{W}$ in the underlying closed disk $\poly_0$ guarantees that $\overline{W}$ has at least three fork- and/or H-faces, at most three of which can straddle the three punctures of $\triang$.  Since $W$ is taut and rung-less, $W$ has no fork- or H-faces.  Thus,  $\overline{W}$ has exactly three fork- and/or H-faces, each of which straddles a puncture.  Since these three faces are the only ones with a positive contribution in the formula of Proposition \ref{fact:GB}, they must be fork-faces.  Moreover, since the total contribution of these three fork-faces is $2 \pi$, every other face has exactly zero contribution.  We gather that each interior face of $W$ is a hexagon-face and each external face of $W$ is a half-hexagon-face.  

\textit{Step 2.}  To prove that $W$ is a honeycomb-web $H_n$, we argue by induction on $n$, showing that the triangle $\triang$ can be tiled by $W$ face-by-face, starting from a corner of $\triang$.  

\textit{(2.a)}  Assume inductively that some number of half-hexagon-faces have been laid down as part of the bottom layer of faces sitting on the bottom edge $E$, illustrated in Figure \ref{fig:honeycomb-proof-2}.  

\begin{figure}[b]
	\centering
	\includegraphics[scale=.65]{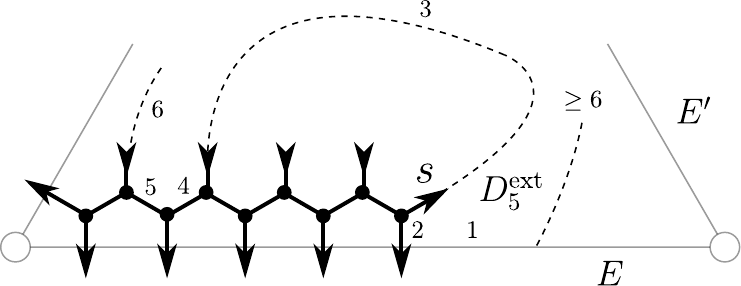}
	\caption{Laying down a honeycomb: 1 of 2}
	\label{fig:honeycomb-proof-2}
\end{figure}

The strand labeled $s$ either:  (1)  ends on the right edge $E^\prime$ of the triangle $\triang$, thereby creating a fork straddling the right-most puncture and completing the bottom layer of faces;  (2)  ends at a vertex disjoint from the vertices previously laid, hence the strand $s$ is part of the boundary of the next half-hexagon-face;  (3)  ends at one of the vertices previously laid.  

If (1), we continue to the next step of the induction, which deals with laying down the middle layers.  If (2), we repeat the current step.  Lastly, we argue (3) cannot occur.  Indeed, suppose it did.  The strand $s$ is part of the boundary of the next half-hexagon-face $D_5^\mathrm{ext}$.  But, as can be seen from the figure, the external face $D_5^\mathrm{ext}$ has $\geq 6$ sides, which is a contradiction.  

\textit{(2.b)}  Assume inductively that the bottom layer and some number of middle layers have been laid down, and moreover that some number of faces have been laid down as part of the current layer, illustrated in Figure \ref{fig:honeycomb-proof-3}.  Consider the next face $D$ shown in the figure.  

\begin{figure}[htb]
	\centering
	\includegraphics[scale=.74]{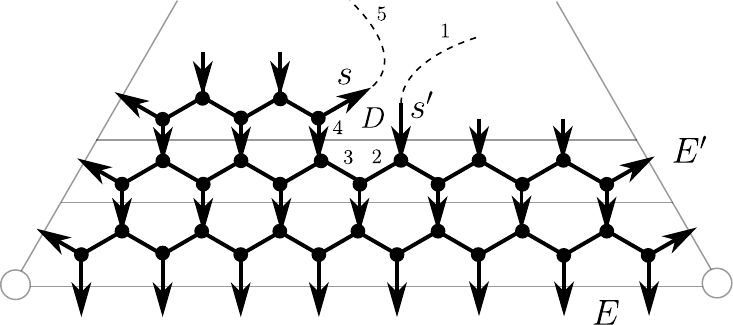}
	\caption{Laying down a honeycomb: 2 of 2}
	\label{fig:honeycomb-proof-3}
\end{figure}

The face $D$ is either external or internal.  If it is internal, then $D$ is a hexagon-face.  In this case, the strands $s$ and $s^\prime$ end at the fifth and sixth vertices of the hexagon-face, and we repeat the current step.  Otherwise, $D$ is external, so it is a half-hexagon-face, $D = D_5^\mathrm{ext}$.  However, we see from the figure that in this case $D_5^\mathrm{ext}$ has $\geq 6$ sides, which is a contradiction.  

To finish the induction, we repeat this step until the strand $s^\prime$ does not exist, in which case the strand $s$ is part of a non-external side of a half-hexagon-face lying on the boundary edge $E^\prime$.  

\textit{Step 3.}  The last statement of the proposition follows since each honeycomb-web $H_n$ attaches to all three boundary edges of the triangle $\triang$.  
\end{proof}

Later, in order to assign coordinates to webs, we will need to consider rung-less essential local webs $W_\triang$ in a triangle $\triang$ up to a certain equivalence relation.  Say that a \textit{local parallel-move} applied to $W_\triang$ is a move swapping two arcs on the same corner of $\triang$; see Figure \ref{fig:local-parallel-move-no-coordinates}.  

\begin{definition}
\label{def:corner-ambiguity}
	Let $\webbasis{\triang}$ denote the collection of rung-less essential local webs in the triangle $\triang$.  We say that two local webs $W_\triang $ and $W^\prime_\triang $ in $\webbasis{\triang}$ are \textit{equivalent up to corner-ambiguity} if they are related by local parallel-moves.  The corner-ambiguity equivalence class of a local web $W_\triang \in \webbasis{\triang}$ is denoted by $[ W_\triang ]$, and the set of corner-ambiguity classes is denoted $[ \webbasis{\triang} ]$.  
\end{definition}

For technical reasons, in \S \ref{sec:proof-of-main-lemma} we will need the following concept.  

\begin{definition}
\label{def:triangle-local-picture}
	Given a triangle $\triang$, a \textit{holed triangle} $\triang^0$ is the triangle minus an open disk $\triang^0 = \triang - \mathrm{Int}(\poly_0)$; see the right hand side of Figure \ref{fig:disconnected-rung-less-essential-web} above.    Let $W_\triang$ be a rung-less essential local web in  $\triang$, which by Proposition \ref{prop:honeycomb-webs} consists of a honeycomb-web $H_n$ together with a collection of disjoint oriented corner arcs $\{ \gamma_i \}$.  The \textit{local picture} $\left< W_\triang \right>$ associated to  $W_\triang$ is the multi-curve (Definition \ref{def:multi-curve}) in the holed triangle $\triang^0$ consisting of the corner arcs $\gamma_i$ together with $3n$ oriented arcs $\{ \gamma^\prime_j \}$ disjoint from each other and from the $\gamma_i$, and going either all out of or all into the boundary $\partial \poly_0$ of the removed disk, such that for each boundary edge $E$ of the triangle $\triang$ there are $n$ arcs $\gamma^\prime_j$ ending on $E$; see again Figure \ref{fig:disconnected-rung-less-essential-web}.  
\end{definition}

\begin{figure}[htb]
	\centering
	\includegraphics[scale=.58]{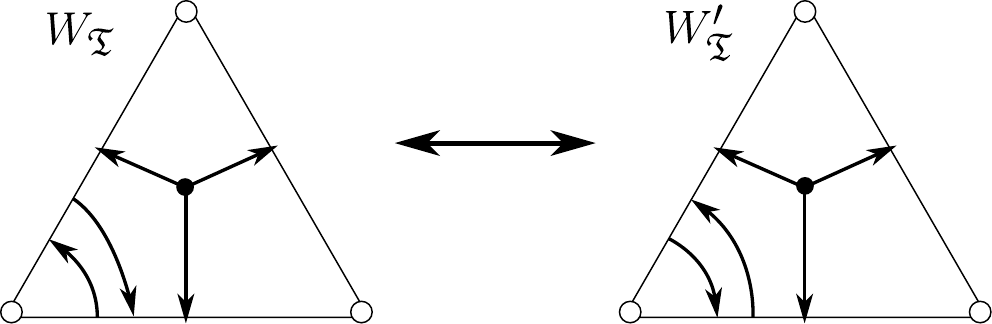}
	\caption{Local parallel-move}
	\label{fig:local-parallel-move-no-coordinates}
\end{figure}

		\section{Good position of a global web}
		\label{sec:good-position-of-a-global-web}

Using the technical results about local webs from \S \ref{sec:local-webs}, we continue studying global webs $W$ on the surface $\surf$.  We assume $\surf$ is equipped with an ideal triangulation $\idealtriang$; see \S \ref{ssec:topological-setting}.

		\subsection{Generic isotopies}
		\label{ssec:ideal-triangulations}

\begin{definition}
\label{def:generic}
	A web $W$ on $\surf$ is \textit{generic} with respect to the ideal triangulation $\idealtriang$ if none of its vertices intersect the edges $E$ of $\idealtriang$, and if in addition $W$ intersects $\idealtriang$ transversally. 
	
Two generic webs $W$ and $W^\prime$ are \textit{generically isotopic} if they are isotopic through generic webs; see Definition \ref{def:parallel-equivalent-webs}.  
\end{definition}

	Whenever there is an ideal triangulation $\idealtriang$ present, we always assume that ``web'' means ``generic web''.  However, we distinguish between isotopies and generic isotopies.

		\subsection{Minimal position}
		\label{ssec:minimal-position}

Recall the notion of two parallel-equivalent webs; see Definition \ref{def:parallel-equivalent-webs}.  

\begin{definition}
\label{def:minimal position}
	Given a web $W$ on the surface $\surf$ and given an edge $E$ of the ideal triangulation $\idealtriang$, the \textit{local geometric intersection number of the web $W$ with the edge $E$} is
\begin{equation*}
	I(W, E) = \min_{W^\prime}(\iota(W^\prime, E))
	\quad
	\in  \Zpos
	\quad\quad
	\left(
		W^\prime \text{ is parallel-equivalent to } W
	\right),
\end{equation*}
where $\iota(W^\prime, E)$ is the number of intersection points of $W^\prime$ with $E$.  
	
The web $W$ is in \textit{minimal position with respect to the ideal triangulation $\idealtriang$} if
\begin{equation*}
	\iota(W, E) = I(W, E)
	\quad  \in  \Zpos
	\quad\quad
	\left(
	\text{for all edges } E \text{ of } \idealtriang
	\right).
\end{equation*}
(If this is the case, $W$ minimizes the intersection number $\iota(W, \idealtriang)$ with the ideal triangulation~$\idealtriang$.)
\end{definition}

Let $W^\prime$ be a web, let $\triang$ be a triangle in the ideal triangulation $\idealtriang$, and let $W^\prime_\triang = W^\prime \cap \triang$ be the \textit{restriction} of $W^\prime$ to $\triang$.  Suppose that the local web $W^\prime_\triang$ is not taut; see Definition \ref{def:essential-web-with-boundary}.  Then there is an edge $E$ of $\idealtriang$ and a compact arc $\alpha$ ending on $E$ such that $\iota(W^\prime, E) > \iota(W^\prime, \alpha)$; see Figure \ref{fig:0-vertex-move-and-1-vertex-move}.  We can then isotope the part of $W^\prime$ that is inside the \textit{bigon} $B$, which is bounded by $\alpha$ and the segment $\overline{E}$ of $E$ delimited by $\partial \alpha$, into the adjacent triangle, resulting in a new web $W$.  This is called a \textit{tightening-move}.  Similarly, if the restriction $W^\prime_\triang$ has an H-face, then we may apply an \textit{H-move} to push the H into the adjacent triangle; see again Figure \ref{fig:0-vertex-move-and-1-vertex-move}.  

Note that tightening- and H-moves can be achieved with an isotopy of the web, but not a generic isotopy.  Also, by definition, in order to apply an H-move, we assume that the shaded region shown at the bottom of Figure \ref{fig:0-vertex-move-and-1-vertex-move} is \textit{empty}, namely it does not intersect the web.  

\begin{figure}[b]
 	\centering
	\includegraphics[width=.65\textwidth]{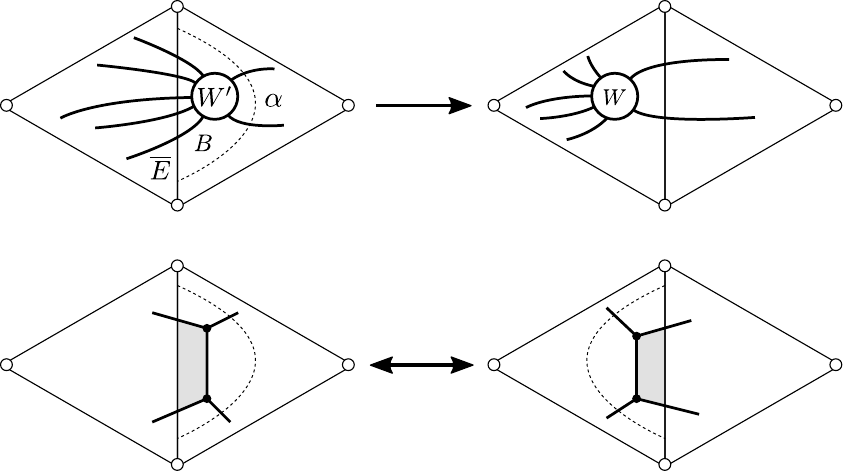}
	\caption{Tightening- and H-moves}
	\label{fig:0-vertex-move-and-1-vertex-move}
 \end{figure}
 
 We borrow the following result from \cite[\S 6]{MR4359515frohmansikora} and give essentially the same proof.

\begin{proposition}
\label{prop:minimal-position}
	If $W^\prime$ is a non-elliptic web on the surface $\surf$, then (by applying tightening-moves) there exists a non-elliptic web $W$ that is isotopic (in particular, parallel-equivalent) to $W^\prime$ and that is in minimal position with respect to the ideal triangulation $\idealtriang$; see Definition~{\upshape\ref{def:essential-global-webs}}.  

	Moreover, given any two parallel-equivalent non-elliptic webs $W$ and $W^\prime$ in minimal position, then $W$ can be taken to $W^\prime$ by a sequence of H-moves, global parallel-moves, and generic isotopies; see Definition {\upshape\ref{def:parallel-equivalent-webs}}.  
\end{proposition}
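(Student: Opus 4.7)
The plan is to handle the two parts separately, following the Frohman--Sikora strategy cited in the text.

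For existence, define the complexity $c(W) = \sum_E \iota(W, E)$ summed over the edges $E$ of $\idealtriang$. If $W^\prime$ is not in minimal position, then $\iota(W^\prime, E) > I(W^\prime, E)$ for some edge $E$, and a standard innermost-bigon argument --- comparing $W^\prime$ to any witness $V$ parallel-equivalent to $W^\prime$ with $\iota(V, E) < \iota(W^\prime, E)$ --- locates a triangle $\triang$ adjacent to $E$ for which the restriction $W^\prime_\triang$ fails the tautness condition of Definition \ref{def:essential-web-with-boundary}. A tightening-move as in Figure \ref{fig:0-vertex-move-and-1-vertex-move} then produces a web $W^{\prime\prime}$ with $c(W^{\prime\prime}) < c(W^\prime)$. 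Since the tightening-move is realized by an isotopy of the web (albeit not a generic one), it preserves both the parallel-equivalence class and non-ellipticity. Iteration terminates because $c$ takes values in $\Zpos$, producing a non-elliptic $W$ in minimal position parallel-equivalent to $W^\prime$.

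For the second statement, suppose $W$ and $W^\prime$ are parallel-equivalent non-elliptic webs in minimal position. Then $\iota(W, E) = I(W, E) = \iota(W^\prime, E)$ for every edge $E$, so the boundary data of $W_\triang$ and $W^\prime_\triang$ on each triangle $\triang$ coincide strand-by-strand with matching orientations, and both restrictions are essential. The plan is to decompose each restriction into a rung-less essential core (the honeycomb-web $H_n$ plus corner arcs furnished by Proposition \ref{prop:honeycomb-webs}) together with a collection of H-faces or rungs, and then to use H-moves to slide rungs across edges of $\idealtriang$ until the rung-positions of $W$ match those of $W^\prime$. The feasibility of this matching is guaranteed by Proposition \ref{prop:ladder-webs} applied in a biangle neighborhood of each edge $E$: the symmetric strand-set pair there is forced by the common boundary data, so the corresponding ladder-webs (hence the rung-positions) agree. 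Once the rungs are matched, the remaining cores of $W_\triang$ and $W^\prime_\triang$ agree up to corner-ambiguity; since two parallel corner arcs at a shared puncture of adjacent triangles together bound an embedded annulus in $\surf$, each local parallel-move is realized globally by a global parallel-move, and composing with generic isotopies takes $W$ to $W^\prime$.

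The main obstacle is making the rung-matching procedure rigorous. A given H-face can be pushed across its adjacent edge only when the shaded region of Figure \ref{fig:0-vertex-move-and-1-vertex-move} is empty, which depends on the surrounding web-structure; to guarantee an unobstructed sequence of H-moves, one must choose a careful order --- for instance, processing rungs closest to each edge first, or reading off rung-positions directly from the ladder-web classification near $E$ --- so that successive moves do not collide. A secondary subtlety is global coherence: the corner-ambiguity swaps needed to match $W$ and $W^\prime$ at each puncture must collectively assemble into well-defined global parallel-moves, which follows from the matching boundary data forcing matching cyclic orders of corner-arc endpoints at each puncture.
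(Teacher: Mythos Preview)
Your existence argument follows the paper's strategy closely: iterate tightening-moves until none apply, using the complexity $\sum_E \iota(W',E)$ to guarantee termination. One comment: you assert that ``not minimal $\Rightarrow$ some $W'_\triang$ is not taut'' via a ``standard innermost-bigon argument,'' but this contrapositive is precisely what requires work. The paper does not take it for granted; it proves that the taut-everywhere web is minimal by comparing it to a witness $W'$ realizing $I(W,E)$, invoking Epstein's bigon theorem to find an embedded bigon between $E$ and $\varphi_1^{-1}(E)$, observing that $W$ restricted to this bigon is a ladder-web (Proposition~\ref{prop:ladder-webs}), and then removing the bigon by H-moves plus generic isotopy. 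Your sketch hides exactly this step.

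For the second statement there is a genuine gap, and your approach diverges substantially from the paper's. You claim that because $W$ and $W'$ are both minimal, ``the boundary data of $W_\triang$ and $W'_\triang$ on each triangle $\triang$ coincide strand-by-strand with matching orientations.'' Minimality only gives equality of the \emph{total} intersection numbers $\iota(W,E)=\iota(W',E)$; it does not give matching in/out counts, let alone matching ordered strand-sequences along each edge. The latter is essentially the content of Lemma~\ref{lem:same-edge-sequences}, proved much later with considerable effort. Without it, your plan to match cores and rungs triangle-by-triangle cannot get started. A second problem: you assert that a local parallel-move (swapping two corner arcs in a triangle) is realized by a global parallel-move because ``two parallel corner arcs at a shared puncture of adjacent triangles together bound an embedded annulus in $\surf$.'' Corner arcs are arcs, not closed curves, and do not bound annuli; the relationship between local corner-swaps and global parallel-moves is exactly what Main~Lemma~\ref{lem:main-lemma} establishes, and it cannot be assumed here.

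The paper's route avoids all of this. It never compares triangle restrictions directly. Instead, after arranging $W$ and $W'$ to be ambient-isotopic via $\varphi_1$, it processes the edges $E_k$ of $\idealtriang$ one at a time: Epstein's theorem produces an embedded bigon between $E_k$ and $\varphi_1^{-1}(E_k)$; minimality of both webs forces $W$ restricted to this bigon to be essential, hence a ladder-web; H-moves strip off the rungs so the bigon contains only parallel arcs; a generic isotopy then collapses the bigon. Iterating removes all bigons, so $\varphi_1$ may be taken to fix $E_k$ pointwise, and an inductive observation (later bigons avoid earlier edges) lets one fix all of $\idealtriang$. A final generic isotopy inside each triangle finishes. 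The key idea you are missing is to work with bigons between $E$ and its isotoped image---where Proposition~\ref{prop:ladder-webs} applies directly---rather than with the triangles of $\idealtriang$ themselves.
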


\begin{proof}
	We give an algorithm putting the web $W^\prime$ into minimal position $W$.  If a tightening-move can be applied, do so.  Else, stop.  Since tightening-moves strictly decrease the quantity
\begin{equation*}
	\sum_{E \text{ edge of } \idealtriang} \iota(W^\prime, E)
	\quad  \in  \Zpos,
\end{equation*} 
the algorithm stops.  We claim that the resulting non-elliptic web $W$ is in minimal position.  

Let $W$ be this resulting web.  Let $E$ be an edge of $\idealtriang$.  By definition of the local geometric intersection number $I(W, E)$ there exists a non-elliptic web, which by abuse of notation we also call $W^\prime$, parallel-equivalent to $W$ such that $\iota(W^\prime, E) = I(W, E)$.  

By applying global parallel-moves to $W^\prime$, we may assume that $W$ and $W^\prime$ are isotopic, by an ambient isotopy $\varphi_t$ of the surface $\surf$ such that $\varphi_0$ is the identity and $\varphi_1(W) = W^\prime$.  We may also assume that the isotopy is fixed near the punctures and satisfies the property that $E$ and $\varphi_1^{-1}(E)$ intersect finitely many times.  A classical theorem in topology (see, for instance, \cite{EpsteinActa66}) guarantees the existence of an embedded bigon $B$ bounded by a segment $\overline{E}$ of $E$ and a segment $\alpha$ of $\varphi_1^{-1}(E)$; see Figure \ref{fig:epstein-bigon-theorem}.  Note that $B \cap W$ may be non-empty.  

\begin{figure}[t]
	\centering
	\includegraphics[width=.81\textwidth]{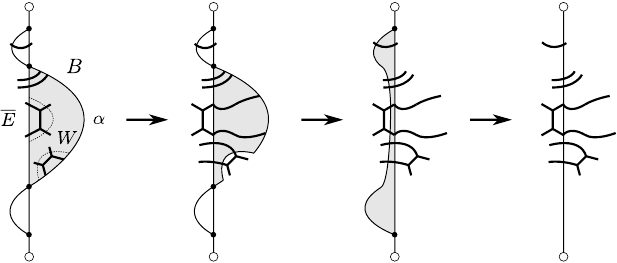}
	\caption{Relating $W$ and $W^\prime$ by H-moves and generic isotopies}
	\label{fig:epstein-bigon-theorem}
\end{figure}

By removing the two intersection points $\overline{E} \cap \alpha$ from the bigon $B$, we obtain a biangle $\biang$.  The minimality properties of $W$ and $W^\prime$ imply that the local web restriction $W_\biang$ is taut (this requires a small argument if the bigon $B$ cuts through many triangles $\triang$ of $\idealtriang$).  It is also non-elliptic since, more or less by hypothesis, $W$ is non-elliptic.  So $W_\biang$ is essential; Definition \ref{def:essential-web-with-boundary}.   By Proposition \ref{prop:ladder-webs}, $W_\biang$ is a ladder-web.  

Thus, by performing a finite number of H-moves to (retroactively) adjust $W$, $W^\prime$, and $\varphi_t$ we may assume that $W_\biang$ consists of a finite number of arcs stretching from $\overline{E}$ to $\alpha$; see Figure \ref{fig:epstein-bigon-theorem}.  With further adjustments by generic isotopies, the bigon $B$ can be removed completely.  Note that the number of intersection points of $W$ and $W^\prime$, respectively, with the ideal triangulation $\idealtriang$ is preserved throughout this adjustment process.  

By repeating the above step finitely many times in order to remove all of the bigons $B$, we may arrange that the symmetry $\varphi_1$ taking $W$ to $W^\prime$ restricts to the identity mapping on the edge $E$ (in fact, on a neighborhood of $E$).  Hence,  $\iota(W, E) = \iota(W^\prime, E) = I(W, E)$.  Since the edge $E$ was arbitrary, we are done.  

The second statement of the proposition is achieved by applying the above argument to each edge $E_i$ of $\idealtriang$, one at a time.  The key point is that if the symmetry $\varphi_1$ fixes pointwise the edges $E_1, E_2, \dots, E_{k-1}$, then a bigon $B$ formed between $E_k$ and $\varphi^{-1}_1(E_k)$ does not intersect $E_1 \cup E_2 \cup \cdots \cup E_{k-1}$.  We gather that we may assume the symmetry $\varphi_1$ sending $W$ to $W^\prime$ restricts to the identity mapping on a neighborhood of $\idealtriang$, and thus also maps each triangle $\triang$ of $\idealtriang$ to itself.  To finish, $W$ can be brought to $W^\prime$ through a generic isotopy fixing pointwise the ideal triangulation $\idealtriang$ (by Smale's theorem, for instance).  

As a remark, note that throughout this proof we never had to consider the arbitrary behavior of the ambient isotopy $\varphi_t$ between $0 < t < 1$.  
\end{proof}

		\subsection{Split ideal triangulations}
		\label{ssec:split-ideal-triangulations}

A \textit{split ideal triangulation} $\splitidealtriang$ with respect to the ideal triangulation $\idealtriang$ is a collection of bi-infinite arcs obtained by doubling every edge $E$ of $\idealtriang$.  In other words, we fatten each edge $E$ into a biangle $\biang$; see Figure \ref{fig:split-ideal-triangulation}.

\begin{figure}[t]
	\centering
	\includegraphics[scale=.55]{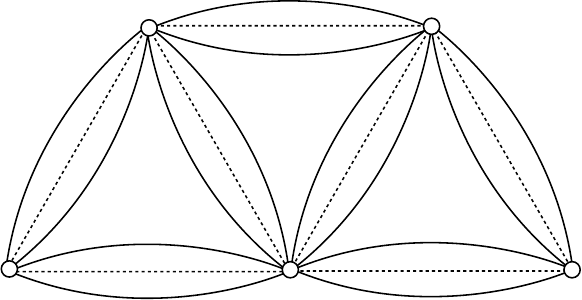}
	\caption{Split ideal triangulation}
	\label{fig:split-ideal-triangulation}
\end{figure}

The notions of generic web and generic isotopy for webs with respect to the split ideal triangulation $\splitidealtriang$ are the same as those for webs with respect to the ideal triangulation $\idealtriang$.  We always assume that webs are generic with respect to $\splitidealtriang$.  

To avoid cumbersome notation, we identify the triangles $\triang$ of the ideal triangulation $\idealtriang$ to the triangles $\triang$ of the split ideal triangulation $\splitidealtriang$.  

\begin{remark}\label{rem:bonahonwong}
	For a related usage of split ideal triangulations, in the $\mathrm{SL}_2$-case, see \cite{BonahonGT11}.
\end{remark}

		\subsection{Good position}
		\label{ssec:good-position}

\begin{definition}
\label{def:good-position}
	For a fixed split ideal triangulation $\splitidealtriang$, a web $W$ on $\surf$ is in \textit{good position} with respect to $\splitidealtriang$ if the restriction $W_\biang = W \cap \biang$ (resp. $W_\triang = W \cap \triang$) of $W$ to each biangle $\biang$ (resp. triangle $\triang$) of $\splitidealtriang$ is an essential (resp. rung-less essential) local web; see Figure \ref{fig:good-position-example-zero}.  
\end{definition}

\begin{figure}[b]
	\centering
	\includegraphics[scale=.71]{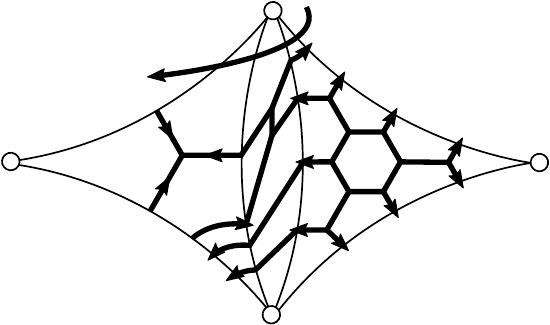}
	\caption{(Part of) a web in good position}
	\label{fig:good-position-example-zero}
\end{figure}

Note that for a web $W$ in good position, each restriction $W_\biang$ to a biangle $\biang$ of $\splitidealtriang$ is a ladder-web; see Definition \ref{def:actual-def-of-ladder-web}, Proposition \ref{prop:ladder-webs}, and Figures \ref{fig:ladder-webs} and \ref{fig:decomposing-essential-web-bigon}.  Also, each restriction $W_\triang$ to a triangle $\triang$ of $\splitidealtriang$ is a (possibly empty) honeycomb-web $H_n$ together with a collection of disjoint oriented corner arcs; see Definition \ref{def:honeycomb-web}, Proposition \ref{prop:honeycomb-webs}, and  Figures \ref{fig:honeycomb-web} and \ref{fig:disconnected-rung-less-essential-web}.  

If $W$ is a web in good position, then a \textit{modified H-move} carries an H-face in a biangle $\biang$ to an H-face in an adjacent biangle $\biang^\prime$, thereby replacing $W$ with a new web $W^\prime$; see Figure \ref{fig:modified-H-move-no-coordinates}.  If, in addition, $W$ is non-elliptic, then $W^\prime$ is also in good position.  The non-elliptic condition for $W$ is required to ensure that the new local web restriction $W^\prime_{\biang^\prime}$ is non-elliptic.  

\begin{figure}[t]
	\centering
	\includegraphics[scale=.59]{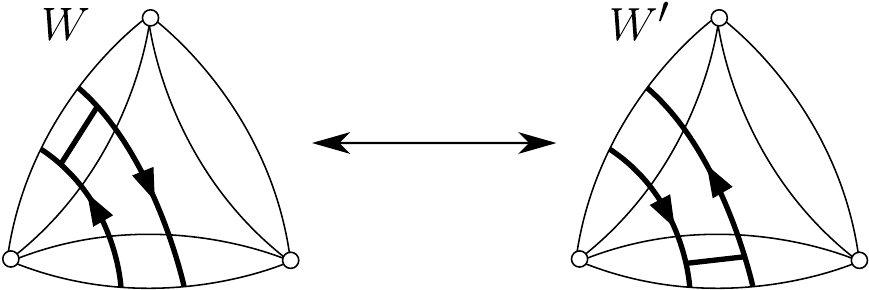}
	\caption{Modified H-move}
	\label{fig:modified-H-move-no-coordinates}
\end{figure}

\begin{remark}
	Of importance will be that the effect in the intermediate triangle $\triang$ of a modified H-move is to swap two parallel oppositely-oriented corner arcs; see again Figure \ref{fig:modified-H-move-no-coordinates}.  
\end{remark}

Once more, the following result is implicit in \cite[Lemma 6.5 and the proof of Theorem 6.2, pp. 139-140]{KuperbergCommMathPhys96} (in the setting of an ideal $k$-polygon $\poly_k$) and also appears in \cite[\S 10]{MR4359515frohmansikora}.  

\begin{proposition}
\label{prop:good-position}
	If $W^\prime$ is a non-elliptic web on the surface $\surf$, then there exists a non-elliptic web $W$ that is isotopic (in particular, parallel-equivalent) to $W^\prime$ and that is in good position with respect to the split ideal triangulation $\splitidealtriang$.  
	
	Moreover, given any two parallel-equivalent non-elliptic webs $W$ and $W^\prime$ in good position, then $W$ can be taken to $W^\prime$ by a sequence of modified H-moves, global parallel-moves, and generic isotopies. 
\end{proposition}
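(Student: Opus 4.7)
For the existence statement, I would first apply Proposition \ref{prop:minimal-position} to obtain a non-elliptic web $W_0$ parallel-equivalent to $W'$ and in minimal position with respect to $\idealtriang$. Minimality together with non-ellipticity forces each $\idealtriang$-triangle restriction $W_{0,\triang}$ to be an essential local web, since any failure of tautness would permit a tightening-move reducing $\iota(W_0,E)$ for some edge $E$, contradicting minimality. After a small generic isotopy we may pass to $\splitidealtriang$ so that each biangle restriction consists of parallel arcs (a trivial ladder-web), while each $\splitidealtriang$-triangle restriction inherits the essential local-web structure of $W_{0,\triang}$ and may still carry some H-faces. I would then iteratively apply an H-move to each such triangle-H-face, carrying it across the relevant boundary edge of its $\splitidealtriang$-triangle into the adjacent biangle. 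The total number of tri-valent vertices of $W$ lying in the union of the $\splitidealtriang$-triangles is a non-negative integer monovariant that strictly decreases by $2$ at each such move, so the process terminates; once no $\splitidealtriang$-triangle carries an H-face, Proposition \ref{prop:honeycomb-webs} identifies each such restriction as a honeycomb-web together with corner arcs, i.e.\ rung-less essential, and the resulting web $W$ is in good position.

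For the uniqueness statement, let $W$ and $W'$ be parallel-equivalent non-elliptic webs in good position. Essential biangle restrictions together with rung-less essential triangle restrictions yield tautness across every edge of $\idealtriang$, so both $W$ and $W'$ are automatically in minimal position. Invoking the second half of Proposition \ref{prop:minimal-position} produces a sequence of ordinary H-moves, global parallel-moves, and generic isotopies connecting $W$ to $W'$. I would then argue that any ordinary H-move across an edge $E$ of $\idealtriang$, performed on a good-position intermediate web, can be realized as a composition of modified H-moves and generic isotopies: since good position forbids H-faces inside $\splitidealtriang$-triangles, the H-face being transported must begin in some biangle adjacent to $E$, and its passage across $E$ decomposes naturally into the transports of the form shown in Figure \ref{fig:modified-H-move-no-coordinates}, each swapping two parallel oppositely-oriented corner arcs in the intermediate $\splitidealtriang$-triangle.

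The main obstacle I expect is twofold. First, in the push-out algorithm for existence, one must verify that every intermediate web remains non-elliptic and that each biangle restriction remains essential; the latter amounts to checking that attaching an H-face at the boundary of an existing ladder-web yields another ladder-web (which follows from Proposition \ref{prop:ladder-webs}), but must be carried out carefully to rule out any newly created bigon-, square-, or disk-face globally. Second, in the uniqueness argument, one must guarantee that every intermediate web in the connecting sequence from Proposition \ref{prop:minimal-position} can be arranged to remain in good position, so that the conversion of each ordinary H-move into modified H-moves is well-defined throughout; this may require inserting additional push-out steps (as in the existence argument) between the moves supplied by Proposition \ref{prop:minimal-position}, and verifying that these insertions do not disturb the other H-moves, parallel-moves, and generic isotopies in the sequence.
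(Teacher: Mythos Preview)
Your proposal is correct and follows essentially the same strategy as the paper's proof: reduce to minimal position via Proposition~\ref{prop:minimal-position}, observe that minimality plus non-ellipticity makes each $\idealtriang$-triangle restriction essential, then eliminate triangle H-faces to achieve rung-less essential restrictions; for uniqueness, observe that good position implies minimal position and invoke the second half of Proposition~\ref{prop:minimal-position}.

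The one cosmetic difference is in how the H-faces are removed from the $\splitidealtriang$-triangles. You keep $\splitidealtriang$ fixed and push each H-face into an adjacent biangle via an H-move applied to the web, using a vertex-count monovariant for termination. The paper instead keeps the web fixed and isotopes the relevant biangle edge outward so that the biangle swallows the H-face (Figure~\ref{fig:billowing-biangle}); since the web never changes, its non-ellipticity is trivially preserved throughout, which sidesteps your first stated obstacle entirely. Both are perfectly valid. For uniqueness, you are in fact more scrupulous than the paper: where the paper simply writes ``the result follows from the definition of good position and modified H-moves,'' you correctly flag that one must keep intermediate webs in good position so that the translation from ordinary H-moves to modified H-moves is well-defined, and your proposed remedy (intersperse push-out steps to restore good position after each ordinary H-move) is exactly what is needed to make this rigorous.
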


\begin{proof}

We will keep track of isotopies by moving the split triangulation $\splitidealtriang$ instead of webs.  

By Proposition \ref{prop:minimal-position}, we can replace $W^\prime$ with a non-elliptic web $W$ that is isotopic to $W^\prime$ and that is in minimal position with respect to the ideal triangulation $\idealtriang$.  We proceed to construct the split ideal triangulation $\splitidealtriang$.

Let us begin by splitting each edge $E$ of $\idealtriang$ into two edges $E^\prime$ and $E^{\prime\prime}$ that are very close to $E$.  These split edges form a preliminary split ideal triangulation $\splitidealtriang$, whose triangles (resp. biangles) are denoted by $\widehat{\triang}$ (resp. $\biang_E$); see the left hand side of Figure \ref{fig:billowing-biangle}.  

\begin{figure}[b]
	\centering
	\includegraphics[width=.84\textwidth]{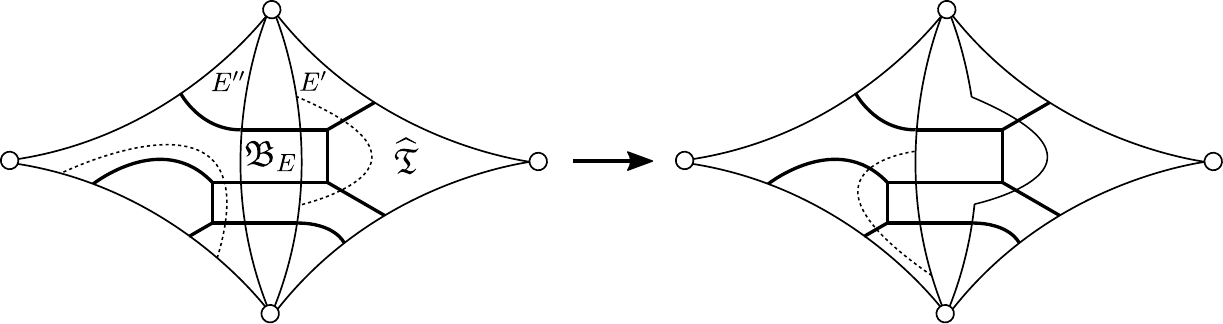}
	\caption{Enlarging a biangle}
	\label{fig:billowing-biangle}
\end{figure}

By definition of minimal position, the restriction $W_\triang$ of $W$ to a triangle $\triang$ of the ideal triangulation $\idealtriang$ is taut.  Since, in addition, $W$ is non-elliptic, we have that $W_\triang$ is essential.  If the preliminary split ideal triangulation $\splitidealtriang$ is sufficiently close to $\idealtriang$, then the restriction $W_{\widehat{\triang}}$ of $W$ to the triangle $\widehat{\triang} \subset \triang$ associated to $\triang$ is also an essential local web.  If all of the local webs $W_{\widehat{\triang}}$ are rung-less, then $W$ is in good position with respect to $\splitidealtriang$.  

Otherwise, assume $W_{\widehat{\triang}}$ has an H-face on an edge of $\splitidealtriang$, say the edge $E^\prime$.  Then by isotopy we can enlarge the biangle $\biang_E$ until it just envelops this H-face.  In other words, we can isotope the edge $E^\prime$ so that it cuts out this H-face from the triangle $\widehat{\triang}$; see Figure \ref{fig:billowing-biangle}.  The result of this step is a new split ideal triangulation $\splitidealtriang$, retaining the property that the local web restrictions $W_{\widehat{\triang}}$ are essential.  Repeating this process until all of the local webs $W_{\widehat{\triang}}$ are rung-less, we obtain the desired split ideal triangulation $\splitidealtriang$.  Notice it might be the case that there is more than one biangle into which an H-face can be moved; see again Figure \ref{fig:billowing-biangle}.  

For the second statement of the proposition, note that if a non-elliptic web $W$ is in good position with respect to $\splitidealtriang$, then $W$ is minimal with respect to the ideal triangulation $\idealtriang$ (which, for the sake of argument, we can take to be contained in $\splitidealtriang$, that is $\idealtriang \subset \splitidealtriang$).  (Indeed, this follows by the proof of the first part of Proposition \ref{prop:minimal-position},  and uses the fact that adding a ladder web $W_\biang$ to a rung-less essential web $W_\triang$ preserves the tautness property.)  Similarly, $W^\prime$ is in minimal position.  Thus, applying the second part of Proposition \ref{prop:minimal-position}, we gather that $W$ can be taken to $W^\prime$ by a finite sequence of H-moves, global parallel-moves, and generic isotopies.  The result follows by the definition of good position and modified H-moves.  
\end{proof}

		\section{Global coordinates for non-elliptic webs}
		\label{sec:global-coordinates-for-non-elliptic-webs}

Recall that $[\webbasis{\surf}]$ denotes the collection of parallel-equivalence classes of non-elliptic webs on the surface $\surf$; see just below Definition \ref{def:essential-global-webs}.  Our goal in this section is to define a function $\themap{\idealtriang}^\mathrm{FG}: [\webbasis{\surf}] \to \Zpos^N$ depending on the ideal triangulation $\idealtriang$, where $N = -8\chi(\surf) > 0$ is a positive integer depending only on the topology of $\surf$.  In \S \ref{sec:knutson-tao-positive-integer-cone}-\ref{sec:proof-of-main-lemma}, we characterize the image of $\themap{\idealtriang}^\mathrm{FG}$ and prove that it is injective.  We think of $\themap{\idealtriang}^\mathrm{FG}$ as putting global coordinates on $[\webbasis{\surf}]$.

		\subsection{Dotted ideal triangulations}
		\label{ssec:dotted-ideal-triangulations}

Consider a surface $\surfbord = \surf$ or $= \triang$ equipped with an ideal triangulation $\idealtriang$, where, in this sub-section, $\idealtriang = \triang$ when $\surfbord = \triang$.  The associated \textit{dotted ideal triangulation} is the pair consisting of $\idealtriang$ together with $N^\prime=N$ or $=7$ distinct \textit{dots} attached to the 1- and 2-cells of $\idealtriang$, where there are two \textit{edge-dots} attached to each 1-cell and there is one \textit{triangle-dot} attached to each 2-cell; see Figure \ref{fig:example-ideal-triangulations-dotted}.  Given a triangle $\triang$ of $\idealtriang$ and an edge $E$ of $\triang$, it makes sense to talk about the \textit{left-edge-dot} and \textit{right-edge-dot} as viewed from $\triang$; see Figure \ref{fig:dotted-triangle}.  Choosing an ordering for the $N^\prime$ dots lying on the dotted ideal triangulation $\idealtriang$ defines a one-to-one correspondence between functions $\{ \text{dots} \} \to \Z$ and elements of $\Z^{N^\prime}$.  We always assume that such an ordering has been chosen.

%\marginpar{\textcolor{red}{Modified Figure 27(b).}}
\begin{figure}[htb]
     \centering
     \begin{subfigure}{0.35\textwidth}
         \centering
         \includegraphics[width=\textwidth]{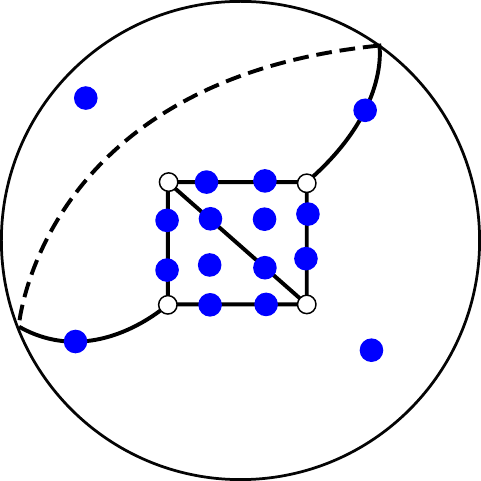}
         \caption{Four times punctured sphere}
         \label{fig:four-times-punctured-sphere-triang-dotted}
     \end{subfigure}     
\hfill
     \begin{subfigure}{0.2\textwidth}
         \centering
         \includegraphics[width=\textwidth]{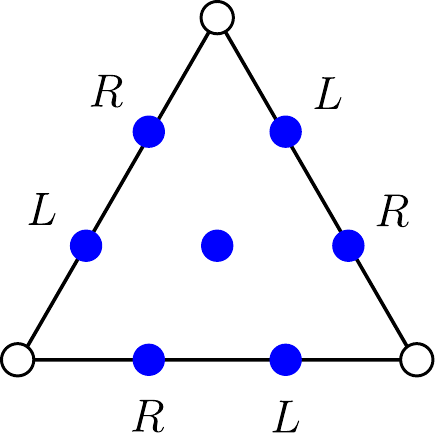}
         \caption{Ideal triangle}
         \label{fig:dotted-triangle}
     \end{subfigure}
        \caption{Dotted ideal triangulations}
        \label{fig:example-ideal-triangulations-dotted}
\end{figure}

		\subsection{Local coordinate functions }
		\label{ssec:local-web-adapted-functions}

Consider a dotted ideal triangle $\triang$; see Figure \ref{fig:dotted-triangle}.  Recall (Definition \ref{def:corner-ambiguity}) that $\webbasis{\triang}$ denotes the collection of rung-less essential local webs $W_\triang$ in $\triang$, and that $[\webbasis{\triang}]$ denotes the set of corner-ambiguity classes $[ W_\triang ]$ of local webs $W_\triang$ in $\webbasis{\triang}$.  

\begin{definition}
\label{def:local-coordinate-function}
	An \textit{integer local coordinate function}, or just \textit{local coordinate function},
\begin{equation*}
	\themap{\triang} :  \webbasis{\triang}  \longrightarrow \Z^7
\end{equation*}
is a function assigning to each local web $W_\triang$ in $\webbasis{\triang}$ one  integer coordinate per dot lying on the dotted triangle $\triang$, satisfying the following properties:
\begin{enumerate}
	\item  if a local web $W_\triang$ in $\webbasis{\triang}$ can be written $W_\triang = W^\prime_\triang \sqcup W^{\prime\prime}_\triang$ as the disjoint union of two local webs, each  in $\webbasis{\triang}$, then
	\begin{equation*}
		\themap{\triang}(W_\triang) 
		= \themap{\triang}(W^\prime_\triang) + \themap{\triang}(W^{\prime\prime}_\triang)
		\quad  \in \Z^7; 
	\end{equation*}
	\item  for an edge $E$ of $\triang$, the ordered pair of coordinates $(a^L_E, a^R_E)$ of the function $\themap{\triang}$ assigned to the left- and right-edge-dots lying on $E$, respectively, depends only on the pair $(n^\text{in}_E, n^\text{out}_E)$ of numbers of in- and out-strands of the local web $ W_\triang$ on the edge $E$; moreover, different pairs $(n_E^\mathrm{in}, n_E^\mathrm{out})$ yield different pairs of coordinates $(a^L_E, a^R_E)$;
	\item  there are two symmetries; the first is that $\themap{\triang}$ respects the rotational symmetry of the triangle (see Remark \ref{rem:frompicturestocoordinates} below for a more precise statement), and the second is that if the numbers $n^\text{in}_E$ and $n^\text{out}_E$ of in- and out-strands on an edge $E$ are exchanged, then the coordinates $a^L_E$ and $a^R_E$ are exchanged as well;
	\item  observe, by property (1), the function $\themap{\triang}(W_\triang) = \themap{\triang}(W^\prime_\triang)$ agrees on local webs $W_\triang$ and $W^\prime_\triang$ in $\webbasis{\triang}$ representing the same corner-ambiguity class $[W_\triang] = [W^\prime_\triang]$ in $[ \webbasis{\triang} ]$ (because $W_\triang$ and $W^\prime_\triang$ differ only by permutations of oriented corner arcs), thus inducing
	\begin{equation*}
		\themap{\triang} :  [\webbasis{\triang}]  \longrightarrow \Z^7,
	\end{equation*}	
also called $\themap{\triang}$; we require that this induced function $\themap{\triang}$ is an injection.
\end{enumerate}
The coordinates assigned by $\themap{\triang}$ to edge-dots (resp. triangle-dots) are called \textit{edge-coordinates} (resp. \textit{triangle-coordinates}).  
\end{definition}

We illustrate properties (1), (2), (3) in Figures \ref{fig:additive-property-of-coordinates} and \ref{fig:dotted-triangle-symmetry}.

\begin{figure}[b]
	\centering
	\includegraphics[scale=.52]{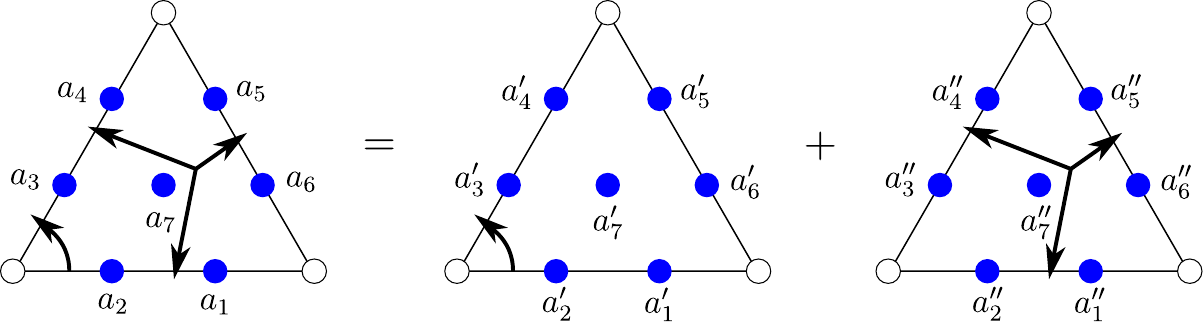}
	\caption{Property (1): $a_i = a_i^\prime + a_i^{\prime\prime}$}
	\label{fig:additive-property-of-coordinates}
\end{figure}

%\marginpar{\textcolor{red}{Added the picture on the right in Figure 29.}}
\begin{figure}[htb]
	\centering
	\includegraphics[width=\textwidth]{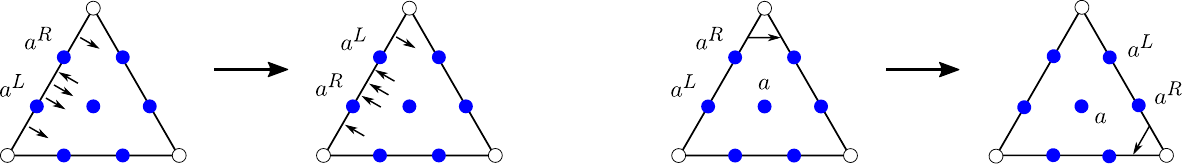}
	\caption{Properties (2) and (3)}
	\label{fig:dotted-triangle-symmetry}
\end{figure}

\begin{remark}[from pictures to coordinates]\label{rem:frompicturestocoordinates}
Let us be more precise about what we mean by the first symmetry of property (3), which will also allow us the opportunity to give a clearer explanation of the meaning of pictures such as those shown in the figures below.  We will use the picture displayed on the left hand side of Figure \ref{fig:additive-property-of-coordinates} as a reference.  When we draw such a picture, we have implicitly selected a preferred vertex of the triangle $\triang$, say the vertex appearing at the top of the picture; write $\triang_0$ to indicate this extra data.  A tuple $(a_1, a_2, \dots, a_7)\in\Z^7$  defines a function $\{\text{dots of }\triang_0\}\to\Z$ by sending the $i$-th dot to $a_i$, as indicated in the picture.  If this tuple is associated to a local web $W_{\triang_0}$, then we say this tuple is the value $\themap{\triang_0}(W_{\triang_0})$ of the local coordinate function evaluated on the web $W_{\triang_0}$.  
The rotational symmetry of property (3) says that if $W^\prime_{\triang_0}$ is the different local web obtained by rotating $W_{\triang_0}$ by $2\pi/3$ radians clockwise, with coordinates $\themap{\triang_0}(W^\prime_{\triang_0})=(a^\prime_1, a^\prime_2, \dots, a^\prime_7)$, then $a^\prime_1=a_5$, $a^\prime_2=a_6$, $a^\prime_3=a_1$, $a^\prime_4=a_2$, $a^\prime_5=a_3$, $a^\prime_6=a_4$, and $a^\prime_7=a_7$.  Lastly, we define $\themap{\triang}(W_\triang)=\themap{\triang_0}(W_{\triang_0})$, and the rotational symmetry implies that this is independent of the choice of preferred vertex.  
\end{remark}

%{\color{red}
%\begin{remark}[from pictures to coordinates]\label{rem:frompicturestocoordinates}
%Let us attempt to be more clear about the meaning of pictures such as those shown in Figure \ref{fig:additive-property-of-coordinates}, and Figure \ref{fig:triangle-hilbert-basis} below.  The \textit{$3$-discrete ideal triangle} $\Delta_3$ is the subset of $\{0,1,2,3\}^3-\{(3,0,0),(0,3,0),(0,0,3)\}$ consisting of tuples $(a,b,c)$ solving $a+b+c=3$.  Choosing a preferred vertex of the dotted ideal triangle $\triang$ determines a bijection $f:\{\text{dots of }\triang\}\longrightarrow\Delta_3$ as follows, using the left hand side of Figure \ref{fig:additive-property-of-coordinates} as a reference.  Let the dots of $\triang$ be denoted $d_1, d_2, \dots, d_7$ as indicated in the figure.  Then $f(d_1)=(1,0,2)$, $f(d_2)=(2,0,1)$, $f(d_3)=(2,1,0)$, $f(d_4)=(1,2,0)$, $f(d_5)=(0,2,1)$, $f(d_6)=(0,1,2)$, $f(d_7)=(1,1,1)$.  The top, bottom left, and bottom right vertices of $\triang$ correspond to $(0,3,0)$, $(3,0,0)$, $(0,0,3)$, respectively.  Giving a local web $W_\triang$ and a tuple $(a_1, a_2, \dots, a_7)$ of integers determines a function $\Phi_{\Delta_3}(W_\triang):\Delta_3\longrightarrow\Z$ by $(1,0,2)\mapsto a_1$, $(2,0,1)\mapsto a_2$, $(2,1,0)\mapsto a_3$, $(1,2,0)\mapsto a_4$, $(0,2,1)\mapsto a_5$, $(0,1,2)\mapsto a_6$, $(1,1,1)\mapsto a_7$.  The desired map $\Phi_\triang(W_\triang):\{\text{dots of }\triang\}\longrightarrow\Z$ is then defined to be $\Phi_{\Delta_3}(W_\triang)\circ f$.  
%\end{remark}
%}

		\subsection{Local coordinates from Fock-Goncharov theory}
		\label{ssec:local-coordinates-from-Fock-Goncharov-theory}

We define an explicit \textit{Fock-Goncharov local coordinate function} $\themap{\triang}^\mathrm{FG}: \webbasis{\triang} \to \Zpos^7$ valued in non-negative integers.  

By property (1) in Definition \ref{def:local-coordinate-function}, it suffices to define $\themap{\triang}^\mathrm{FG}$ on connected local webs in $\webbasis{\triang}$.  By Proposition \ref{prop:honeycomb-webs}, these come in one of exactly eight types $H_n^\text{out}$, $H_n^\text{in}$, $R_1$, $L_1$, $R_2$, $L_2$, $R_3$, $L_3$ illustrated in Figure \ref{fig:triangle-hilbert-basis}.  In the figure, note that in the two top left triangles we have, for visibility, drawn the local pictures $\left< H_n^\text{out} \right>$ and $\left< H_n^\text{in} \right>$ as a short-hand for the actual $n$-out-honeycomb-web $H_n^\text{out}$ and $n$-in-honeycomb-web $H_n^\text{in}$, respectively; see Definition \ref{def:triangle-local-picture}.  It is immediate that  $\themap{\triang}^\mathrm{FG}$ satisfies property (3) and the first part of (2).  The second part of (2) follows by the invertibility of the matrix $\left( \begin{smallmatrix} 2 & 1 \\ 1 & 2 \end{smallmatrix} \right)$.  We will check property (4) in \S \ref{sec:knutson-tao-positive-integer-cone}.  

\begin{remark}\label{rem:xiemotivation}		$ $
\begin{enumerate}
	\item\label{subrem:xie1}  Xie \cite{XieArxiv13} writes down the same local coordinates (up to a multiplicative factor of 3) for $R_1$, $L_1$, $R_2$, $L_2$, $R_3$, $L_3$ as well as the 1-honeycomb-webs $H_1^\text{out}$ and $H_1^\text{in}$.  
	\item\label{subrem:experiment}  The definition of these local coordinates can be checked experimentally by studying the highest terms of the Fock-Goncharov $\mathrm{SL}_3$-trace polynomials; see the introduction as well as \cite[Proposition 5.80]{KimArxiv20} (and \cite[Proposition 3.15]{kim2021mutation}).  Moreover, it appears that these coordinates fit into a broader geometric context \hbox{\cite[Theorem 8.22(2)]{SunGeomFunctAnal20}}.  
	\item  The coordinates in the $\mathrm{SL}_2$-setting are geometric intersection numbers; see the introduction.  In contrast, the $\mathrm{SL}_3$-coordinates depend on the choice of orientation of~$\surf$.  
\end{enumerate} 
\end{remark}

\begin{figure}[htb]
	\centering
	\includegraphics[width=.86\textwidth]{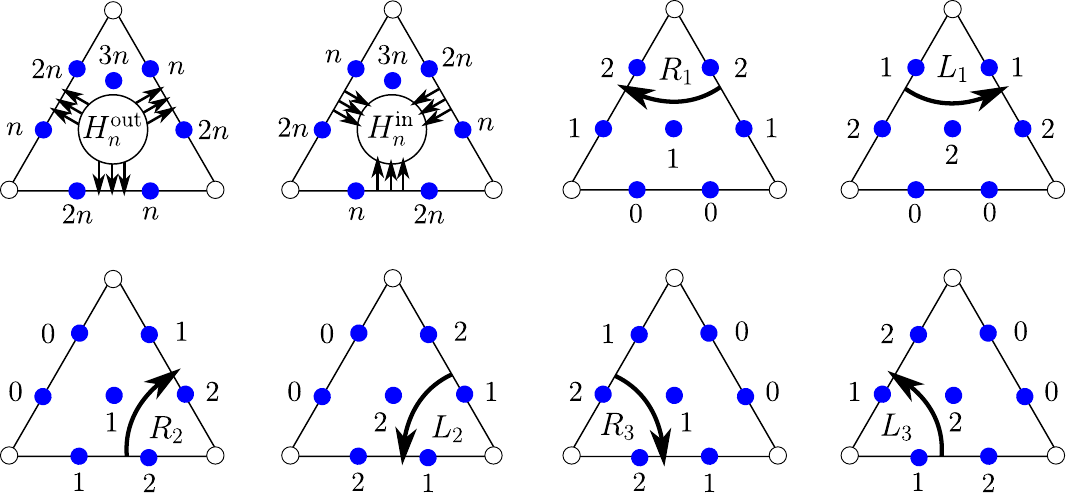}
	\caption{Fock-Goncharov local coordinate function $\themap{\triang}^\mathrm{FG}$}
	\label{fig:triangle-hilbert-basis}
\end{figure}

		\subsection{Global coordinates from local coordinate functions}
		\label{ssec:global-coordinates-from-local-coordinate-functions}

Assume that, for an abstract dotted triangle $\triang$, we have chosen an arbitrary local coordinate function $\themap{\triang}: \webbasis{\triang} \to \Z^7$.  We show that this induces a \textit{global coordinate function} $\themap{\idealtriang} : [\webbasis{\surf}] \to \Z^N$ that is well-adapted to the choice of $\themap{\triang}$.  The argument uses only properties (1), the first part of (2), and (3) of~$\themap{\triang}$.  

As a guiding example of the construction to come, reference Figure \ref{fig:coordinates-example}, which uses the Fock-Goncharov local coordinate function $\themap{\triang}^\mathrm{FG}$.  This is an example on the once punctured torus $\surf$.  Note that the web $W$ in the example has one hexagon-face.  All of the other components of $W^c$ are not contractible.  So $W$ is  non-elliptic.  

\textit{Step 1.}  Consider the split ideal triangulation $\splitidealtriang$ (\S \ref{ssec:split-ideal-triangulations}).  We put dots on each triangle $\triang$ of $\splitidealtriang$.  The chosen local coordinate function $\themap{\triang}$ can be associated to each of these dotted triangles $\triang$; see the left hand side of Figure \ref{fig:split-ideal-triangulation2}.  

\begin{figure}[t]
	\centering
	\includegraphics[width=.7\textwidth]{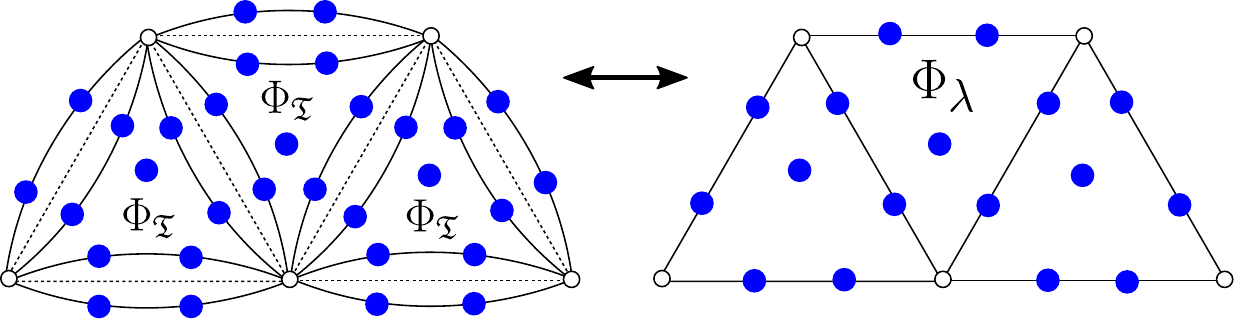}
	\caption{Local coordinates $\themap{\triang}$ attached to the triangles $\triang$ of $\splitidealtriang$ (left), and the corresponding global coordinates $\themap{\idealtriang}$ attached to $\idealtriang$ (right)}
	\label{fig:split-ideal-triangulation2}
\end{figure}

\textit{Step 2.}  Fix a non-elliptic web $W$ on $\surf$ that is in good position (Definition \ref{def:good-position}) with respect to the split ideal triangulation $\splitidealtriang$.  We assign to $W$ one  integer coordinate per dot lying on the dotted ideal triangulation $\idealtriang$, namely an element $\themap{\idealtriang}(W)$ in $\Z^N$.  

By good position, the local web restriction $W_\triang = W \cap \triang$ is in $\webbasis{\triang}$ for each triangle $\triang$ of $\splitidealtriang$.  So, we may evaluate the local coordinate function $\Phi_\triang$ on $W_\triang$, obtaining coordinates for each of the seven dots lying on the dotted triangle $\triang$ of $\splitidealtriang$.  For instance, in this way we assign coordinates to all of the dots shown on the left hand side of Figure \ref{fig:split-ideal-triangulation2} above.  We claim that these coordinates glue together along each biangle $\biang$ of $\splitidealtriang$ in such a way that we obtain one coordinate per dot lying on the dotted ideal triangulation $\idealtriang$; see Figure \ref{fig:split-ideal-triangulation2}.  

Indeed, suppose $\biang$ is a biangle between two triangles $\triang^\prime$ and $\triang^{\prime\prime}$ of $\splitidealtriang$.  Let $E^\prime$ and $E^{\prime\prime}$ be the corresponding boundary edges of $\biang$, and let $a^{ L}_{E^\prime}$ and $a^{ R}_{E^\prime}$ (resp. $a^{ L}_{E^{\prime\prime}}$ and $a^{ R}_{E^{\prime\prime}}$) be the coordinates assigned by $\Phi_{\triang^\prime}$ (resp. $\Phi_{\triang^{\prime\prime}}$) to the left- and right-edge-dots, respectively, lying on $E^\prime$ (resp. $E^{\prime\prime}$) as viewed from $\triang^\prime$ (resp. $\triang^{\prime\prime}$).  Also, denote by $n^\text{in}_{E^\prime}$ and $n^\text{out}_{E^\prime}$ (resp. $n^\text{in}_{E^{\prime\prime}}$ and $n^\text{out}_{E^{\prime\prime}}$) the numbers of in- and out-strands of the  local web restriction $W_{\triang^\prime}$ (resp. $W_{\triang^{\prime\prime}}$) lying on the edge $E^\prime$ (resp. $E^{\prime\prime}$); see Figure \ref{fig:gluing-together-coordinates}.  

\begin{figure}[htb]
	\centering
	\includegraphics[scale=.75]{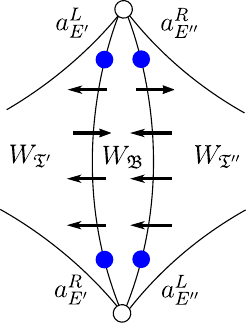}
	\caption{Local coordinates attached to a biangle: 	
	$a^L_{E^\prime} = a^R_{E^{\prime\prime}}$ and
	$a^R_{E^\prime} = a^L_{E^{\prime\prime}}$
	}
	\label{fig:gluing-together-coordinates}
\end{figure}

Since, by good position, the restriction $W_\biang = W \cap \biang$ is a ladder-web, we have $n^\text{out}_{E^\prime} = n^\text{in}_{E^{\prime\prime}}$ and $ n^\text{in}_{E^{\prime}} = n^\text{out}_{E^{\prime\prime}}$.  It follows immediately from properties (3) and the first part of (2) that the coordinates across from each other agree $a^L_{E^\prime} = a^R_{E^{\prime\prime}}$ and $a^R_{E^\prime} = a^L_{E^{\prime\prime}}$.  So, we may glue together the two pairs of coordinates into two coordinates lying on the edge $E$ of $\idealtriang$, as desired.  

\textit{Step 3.}  For a general non-elliptic web $W^\prime$ on $\surf$, by the first part of Proposition \ref{prop:good-position} there exists a non-elliptic web $W$ that is parallel-equivalent to $W^\prime$ and that is in good position with respect to the split ideal triangulation $\splitidealtriang$.  Define $\themap{\idealtriang}(W^\prime) = \themap{\idealtriang}(W)$ in $\Z^N$.  

To show $\themap{\idealtriang}(W^\prime)$ is well-defined, suppose $W_2$ is another web as $W$.  By the second part of Proposition \ref{prop:good-position}, the non-elliptic webs $W$ and $W_2$ are related by a sequence of modified H-moves and global parallel-moves.  The effect of either of these moves on a web in good position is to swap, possibly many, parallel oppositely-oriented corner arcs in the triangles $\triang$ of $\splitidealtriang$; recall Figures \ref{fig:modified-H-move-no-coordinates} and \ref{fig:parallel-move} above, respectively.  By property (1) of $\themap{\triang}$, we have $\themap{\idealtriang}(W) = \themap{\idealtriang}(W_2)$.  
	
From this point on, our approach diverges from that in \cite{MR4359515frohmansikora}.  In particular, our coordinates are different from theirs.

\begin{definition}
\label{def:fock-goncharov-global-coordinate-function}
	The \textit{Fock-Goncharov global coordinate function}
\begin{equation*}
	\themap{\idealtriang}^\FG : [\webbasis{\surf}] \longrightarrow \Zpos^N
\end{equation*}
	is the well-defined global coordinate function on $[\webbasis{\surf}]$, valued in non-negative integers, induced by the Fock-Goncharov local coordinate function $\themap{\triang}^\FG$.  In  \S \ref{sec:main-theorem}-\ref{sec:proof-of-main-lemma} we prove:
\end{definition}

\begin{proposition}
\label{prop:injectivity}
		The Fock-Goncharov global coordinate function $\themap{\idealtriang}^\FG$ is an injection of sets.  
\end{proposition}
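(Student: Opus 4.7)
The plan is to reduce global injectivity to the local injectivity built into property (4) of $\themap{\triang}^{\FG}$, combined with Proposition \ref{prop:good-position} (good-position representatives) and Proposition \ref{prop:ladder-webs} (biangle rigidity of ladder-webs). Suppose $W$ and $W'$ are parallel-equivalence classes of non-elliptic webs on $\surf$ with $\themap{\idealtriang}^{\FG}(W) = \themap{\idealtriang}^{\FG}(W')$. First, using the first part of Proposition \ref{prop:good-position}, I would replace $W$ and $W'$ by parallel-equivalent representatives (still denoted $W$ and $W'$) that are in good position with respect to the split ideal triangulation $\splitidealtriang$, and then compare these two representatives piece by piece against $\splitidealtriang$.

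Next I would analyze each biangle and each triangle of $\splitidealtriang$ separately. On a biangle $\biang$, the pair of edge-coordinates sitting on each boundary edge $E$ of $\biang$ is the same for $W$ and $W'$; the converse half of property (2) of $\themap{\triang}^{\FG}$ then forces the strand numbers $(n^{\mathrm{in}}_E, n^{\mathrm{out}}_E)$ to agree, so the two symmetric strand-set pairs coincide. By Proposition \ref{prop:ladder-webs}, the restrictions $W_\biang$ and $W'_\biang$ are therefore the same ladder-web up to ambient isotopy of $\biang$. On a triangle $\triang$, equality of all seven dot-coordinates combined with property (4) of $\themap{\triang}^{\FG}$ (to be verified in \S\ref{sec:knutson-tao-positive-integer-cone}) yields $[W_\triang] = [W'_\triang]$ in $[\webbasis{\triang}]$, so $W_\triang$ and $W'_\triang$ differ by a finite sequence of local parallel-moves inside $\triang$.

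Finally, I would assemble this local data into a global parallel-equivalence. An ambient isotopy of $\surf$ supported in a neighborhood of the union of the biangles brings $W_\biang$ onto $W'_\biang$ for every biangle $\biang$ simultaneously. In each triangle $\triang$, the local webs $W_\triang$ and $W'_\triang$ then agree on $\partial\triang$ and are related by local parallel-moves swapping pairs of parallel, oppositely-oriented corner arcs. I would then follow such a swapped pair of corner arcs out of $\triang$ through the now-fixed biangles and into the adjacent triangles: non-ellipticity together with good position ensures that the two arcs trace out parallel paths that eventually close up either as two disjoint simple loops bounding an embedded annulus in $\surf$ (so the swap is realized by a global parallel-move) or as two pieces of a single component (so the swap is realized by an ambient isotopy). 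Hence $W$ and $W'$ are parallel-equivalent.

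The main obstacle, I expect, is precisely this last assembly step: the local parallel-moves in different triangles are not independent, since swapping two corner arcs in one triangle shuffles how these arcs connect through the biangles to corner arcs in neighboring triangles, and one must check that, globally, a coherent configuration of such swaps assembles into honest global parallel-moves rather than into something more tangled. The good-position hypothesis is essential here, because the biangles act as a rigid scaffold constraining how corner arcs can re-enter a triangle; controlling this combinatorial bookkeeping, rather than any further topological input, is where the real work lies.
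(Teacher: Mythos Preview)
Your overall reduction is the same as the paper's: put both webs in good position, use property (4) to conclude that the triangle restrictions agree up to corner-ambiguity, and then argue that this forces global parallel-equivalence. The paper packages this last implication as its Main Lemma \ref{lem:main-lemma}, and indeed you correctly flag the assembly step as the heart of the matter. However, two points in your sketch are genuine gaps.

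First, your biangle step is wrong as stated. From equality of the edge-coordinates you correctly deduce that the \emph{numbers} $(n^{\mathrm{in}}_E, n^{\mathrm{out}}_E)$ agree on each boundary edge $E$ of $\biang$, but a symmetric strand-set pair $S=(S',S'')$ records the \emph{ordered sequence} of in- and out-strands along each edge, not merely their count. That ordering is inherited from the corner-arc ordering in the adjacent triangles, which is exactly what is allowed to differ between $W_\triang$ and $W'_\triang$. So $W_\biang$ and $W'_\biang$ need not be the same ladder-web, and you cannot fix the biangles by an ambient isotopy before touching the triangles; the biangle and triangle data are coupled. This undermines the subsequent paragraph, where you assume the biangles are already matched and then trace corner arcs through a ``now-fixed'' biangle scaffold.

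Second, even granting the correct setup (same corner-ambiguity classes, possibly different biangle ladders), your proposed dichotomy for how a swapped pair of corner arcs propagates---either closing up as two loops bounding an annulus or as pieces of a single component---is not justified and is, in general, too optimistic. The paper's proof of the Main Lemma replaces this hand-wave with substantial machinery: it passes to \emph{global pictures} (multicurves on the holed surface obtained by replacing honeycombs by strand-bundles and ladder H's by crossings), proves a \emph{Fellow-Traveler Lemma} giving a natural bijection between the components of $\langle W\rangle$ and $\langle W'\rangle$ with identical edge-sequences, and then runs a careful algorithm of modified H-moves to bring corresponding crossing points into the same shared-route-biangle, after which the full oriented strand-sequences on every edge are shown to coincide. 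Your ``follow the arcs'' intuition is in the spirit of the Fellow-Traveler Lemma, but the actual bookkeeping---controlling how crossings in different biangles interfere and ruling out immersed bigons via non-ellipticity---is where the content lies, and none of it is visible in your sketch.
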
 

\begin{remark}
	Proposition \ref{prop:injectivity} is  valid for any global coordinate function $\themap{\idealtriang} : [\webbasis{\surf}] \to \Z^N$ induced by a local coordinate function $\themap{\triang} : [\webbasis{\triang}] \to \Z^7$.  The proof is the same as the one we will give for $\themap{\idealtriang}^\FG$, and uses properties (4) and the second part of (2) in Definition \ref{def:local-coordinate-function}.  
\end{remark}

\begin{figure}[b]
	\centering
	\includegraphics[width=.93\textwidth]{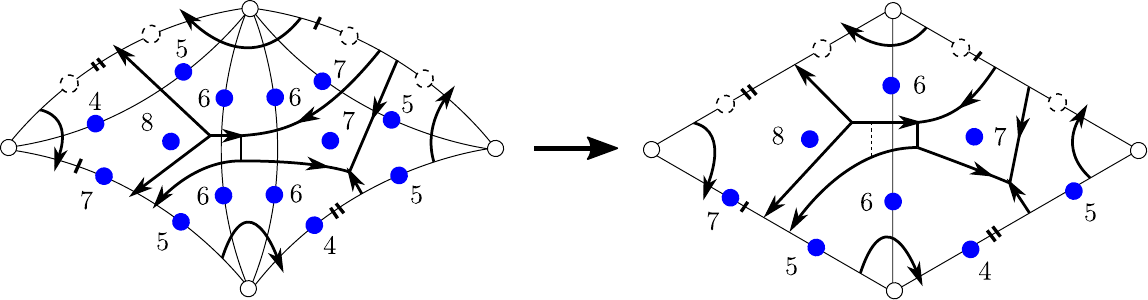}
	\caption{Tropical Fock-Goncharov $\mathcal{A}$-coordinates for a non-elliptic web}
	\label{fig:coordinates-example}
\end{figure}

\begin{remark}[relation to Fock-Goncharov theory; see \cite{DouglasArxiv20b} for a more detailed discussion]
\label{rem:relation-to-fock-goncharov theory}
To a surface-with-boundary $\surfbord$ (see \S \ref{sec:webs-on-surfaces-with-boundary} below), Fock-Goncharov/Goncharov-Shen \cite{FockIHES06, GoncharovInvent15} associated two dual moduli spaces $\mathcal{A}_{\mathrm{PGL}_3,\surfbord}$ and $\mathscr{R}_{\mathrm{SL}_n,\surfbord}$, both of which are certain generalizations of the character variety.  They are dual in the sense of Fock-Goncharov-Shen duality, which in particular says that the positive tropical integer points $\mathcal{A}_{\mathrm{PGL}_3,\surfbord}^+(\Z^t)$ of the $\mathcal{A}$-moduli space index a natural linear basis for the ring $\mathscr{O}(\mathscr{R}_{\mathrm{SL}_n,\surfbord})$ of regular functions on the generalized character variety.  Here, the positivity is taken with respect to the tropicalized Goncharov-Shen potential. 

An ideal triangulation $\idealtriang$ determines a coordinate chart $\mathcal{A}_{\mathrm{PGL}_3,\surfbord}^+(\Z^t)_\idealtriang$ of $\mathcal{A}_{\mathrm{PGL}_3,\surfbord}^+(\Z^t)$.  More concretely, in coordinates the positivity condition with respect to the tropicalized Goncharov-Shen potential translates to the Knutson-Tao rhombus inequalities (see \S \ref{sec:knutson-tao-positive-integer-cone} below), and in this way the coordinate chart $\mathcal{A}_{\mathrm{PGL}_3,\surfbord}^+(\Z^t)_\idealtriang\cong\KTcone{\idealtriang}$ becomes identified with the Knutson-Tao cone $\KTcone{\idealtriang}$.  If $\idealtriang^\prime$ is another ideal triangulation, the coordinate change map $\mathcal{A}_{\mathrm{PGL}_3,\surfbord}^+(\Z^t)_\idealtriang \to \mathcal{A}_{\mathrm{PGL}_3,\surfbord}^+(\Z^t)_{\idealtriang^\prime}$ takes the form of a tropicalized \hbox{$\mathcal{A}$-coordinate} cluster transformation.  For these reasons, Theorems \ref{thm:first-theorem-intro} and \ref{thm:second-theorem-intro} can be interpreted as saying that the web coordinates constructed above provide a natural identification between the set $[\webbasis{\surfbord}]$ of parallel-equivalence classes of rung-less essential webs (see \S \ref{sec:webs-on-surfaces-with-boundary} below) and the positive tropical integer points $\mathcal{A}_{\mathrm{PGL}_3,\surfbord}^+(\Z^t)$.  

As another concrete manifestation of Fock-Goncharov duality, when the trace function $\mathrm{Tr}_W$ on the $\mathrm{SL}_3$-character variety associated to a basis web $W$ is written as a polynomial in the Fock-Goncharov $\mathcal{X}$-coordinates, then this polynomial has a unique highest term, whose exponents are precisely the tropical $\mathcal{A}$-coordinates assigned to the web $W$; see \S \ref{ssec:application-geometry-and-topology-of-sl3c-character-varieties} below for more precise statements.  

For a discussion of previous works motivating our construction, see the introduction as well as Remarks \ref{rem:no-essential-webs-in-small-polygons}(\ref{subrem:kuperberg1}), \ref{rem:bonahonwong}, \ref{rem:xiemotivation}(\ref{subrem:xie1}, \ref{subrem:experiment}), \ref{rem:modulo-3-congruence-conditions}(\ref{subrem:GS1}), \ref{rem:kim}, \ref{rem:lastrem}(\ref{subrem:kup2}).  
\end{remark}

		\section{Knutson-Tao cone}
		\label{sec:knutson-tao-positive-integer-cone}

For $N = - 8 \chi(\surf) > 0$, we construct a subset $\KTcone{\idealtriang} \subset \Zpos^N$ that we will show, in \S \ref{sec:main-theorem}-\ref{sec:proof-of-main-lemma}, is the image $\KTcone{\idealtriang}=\themap{\idealtriang}^\mathrm{FG}([\webbasis{\surf}])$ of the mapping $\themap{\idealtriang}^\mathrm{FG} : [\webbasis{\surf}] \to \Zpos^N$ constructed in \S \ref{sec:global-coordinates-for-non-elliptic-webs}.  The subset $\KTcone{\idealtriang}$ is called the Knutson-Tao cone associated to the ideal triangulation $\idealtriang$, and is defined by finitely many Knutson-Tao rhombus inequalities and modulo 3 congruence conditions.

		\subsection{Integer cones}
		\label{ssec:integer-cones}

\begin{definition}
\label{def:cone-definitions}
	An \textit{integer cone}, or just \textit{cone}, $\Cone$ is a sub-monoid of $\Z^n$ for some positive integer $n$.  In other words, $\Cone \subset \Z^n$ is a subset that contains 0 and is closed under addition. 
	
A \textit{partition} of $\Cone$ is a decomposition $\Cone = \Cone_1 \sqcup \Cone_2 \sqcup \dots \sqcup \Cone_k$ as a disjoint union of subsets.  
	
A \textit{positive integer cone}, or just \textit{positive cone}, $\Cone^+$ is a cone that is contained in $\Zpos^n$.   
\end{definition}

We define notions of independence for cones.  
	
\begin{definition}
\label{def:cone-definitions-independence}
	Let $\Cone \subset \Z^n \subset \Q^n$ be a cone, and let $\Omega \subset \Q$ be a subset such that $0 \in \Omega$.  Let $c_1, c_2, \dots, c_k$ be a collection of cone points in $\Cone$.  We say that the cone points $\{ c_i \}$
\begin{enumerate}
	\item   \emph{span} the cone $\Cone$ if every cone point $c \in \Cone$ can be written as a $\Zpos$-linear combination of the cone points $\{ c_i \}$;
	\item  are \emph{weakly independent} over $\Omega$ if 
\begin{equation*}
	\omega_1 c_1 + \cdots + \omega_k c_k = 0 \in \Q^n
	\quad \Longrightarrow \quad
	\omega_1 = \cdots = \omega_k = 0
	\quad\quad
	\left( \omega_1, \dots, \omega_k \in \Omega \right);
\end{equation*} 
	\item  form a \emph{weak basis} of  $\Cone$ if they span $\Cone$ and are weakly independent over $\Omega=\Zpos \subset \Q$;
	\item  are \emph{strongly independent} over $\Omega$ if 
\begin{equation*}
	\omega_1 c_1 + \cdots + \omega_k c_k 
	= \omega^\prime_1 c_1 + \cdots + \omega^\prime_k c_k
	\in \Q^n
	\quad \Longrightarrow \quad
	\omega_1 = \omega^\prime_1, \dots, \omega_k = \omega^\prime_k
	\quad\quad
	\left( \omega_i, \omega^\prime_j \in \Omega \right). 
\end{equation*} 
\end{enumerate}
\end{definition}

Note:
\begin{itemize}
	\item  strongly independent over $\Omega$ $\Longrightarrow$ weakly independent over $\Omega$;
	\item  strongly independent over $\Zpos$
	$\Longleftrightarrow$ weakly independent over $\Z$
	$\Longleftrightarrow$ linearly independent over $\Q$ (the usual definition from Linear Algebra).
\end{itemize}

The following technical fact is immediate from the definitions.  

\begin{lemma}
\label{lem:KTcone-technical-lemma}
	Let $\Cone, \Cone^\prime \subset \Z^n$ be two cones.  Consider a $\Zpos$-linear bijection $\psi \colon \Cone^\prime \to \Cone$ that extends to a $\Q$-linear isomorphism $\widetilde{\psi} : \Q^n \to \Q^n$.  Let $\{ c_i \}$ be cone points of $\Cone$ and let $\{ c^\prime_i\}$ be cone points of $\Cone^\prime$, such that $\psi(c_i^\prime) = c_i$.  Then,
\begin{enumerate}
	\item  if the cone points $\{ c^\prime_i \}$ span $\Cone^\prime$, then the cone points $\{ c_i \}$ span $\Cone$;
	\item  if the $\{ c^\prime_i \}$ are weakly independent over $\Zpos$, then so are the $\{ c_i \}$;
	\item  therefore, if the $\{ c^\prime_i \}$ form a weak basis of $\Cone^\prime$, then the $\{ c_i \}$ form a weak basis of $\Cone$;
	\item  if the $\{ c^\prime_i \}$ are strongly independent over $\Zpos$, then so are the $\{ c_i \}$;
		\item  the function $\psi$ sends partitions of $\Cone^\prime$ to partitions of $\Cone$.  \qed
\end{enumerate}
\end{lemma}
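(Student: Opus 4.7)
The plan is to dispatch the five claims mechanically; each reduces to transporting a relation across the bijection $\psi$ or its $\Q$-linear extension $\widetilde{\psi}$. The one point that must be kept in mind throughout is that the defining relations for (weak or strong) independence live in $\Q^n$, where intermediate $\Zpos$-linear combinations of cone points may drift outside of $\Cone$ or $\Cone^\prime$; so those manipulations must be carried out with $\widetilde{\psi}$, while the spanning statement lives entirely inside the cones and is handled with $\psi$ itself. Note that since $\widetilde{\psi}$ extends $\psi$, we have $\widetilde{\psi}(c_i^\prime) = c_i$ for every $i$, and $\widetilde{\psi}^{-1}$ restricts to the inverse bijection $\psi^{-1} \colon \Cone \to \Cone^\prime$.

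For part (1), I would pick an arbitrary $c \in \Cone$, set $c^\prime = \psi^{-1}(c) \in \Cone^\prime$, apply the spanning hypothesis to write $c^\prime = \sum_i a_i c_i^\prime$ with $a_i \in \Zpos$, and then push this relation forward by the $\Zpos$-linear map $\psi$ to obtain $c = \sum_i a_i c_i$. For part (2), suppose $\sum_i \omega_i c_i = 0$ in $\Q^n$ with $\omega_i \in \Zpos$; apply the $\Q$-linear isomorphism $\widetilde{\psi}^{-1}$ to both sides, using $\widetilde{\psi}(c_i^\prime) = c_i$, to get $\sum_i \omega_i c_i^\prime = 0$ in $\Q^n$, and then invoke weak independence of $\{c_i^\prime\}$ over $\Zpos$ to conclude each $\omega_i = 0$. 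Part (3) is then simply the conjunction of (1) and (2).

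For part (4), the same transport argument as in (2) reduces a hypothetical equality $\sum_i \omega_i c_i = \sum_i \omega_i^\prime c_i$ (with $\omega_i, \omega_i^\prime \in \Zpos$) to the corresponding equality $\sum_i \omega_i c_i^\prime = \sum_i \omega_i^\prime c_i^\prime$ after applying $\widetilde{\psi}^{-1}$, and strong independence over $\Zpos$ of the $c_i^\prime$ yields $\omega_i = \omega_i^\prime$ for each $i$. For part (5), any bijection of sets sends a disjoint-union decomposition of its domain to a disjoint-union decomposition of its codomain, so if $\Cone^\prime = \Cone^\prime_1 \sqcup \cdots \sqcup \Cone^\prime_k$ then $\Cone = \psi(\Cone^\prime_1) \sqcup \cdots \sqcup \psi(\Cone^\prime_k)$, which is again a partition into subsets (and each $\psi(\Cone^\prime_j)$ is a subset of $\Cone$ as required).

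There is no real obstacle; the lemma is essentially a bookkeeping statement, which is presumably why the authors call it immediate. The only subtlety worth flagging in the writeup is the distinction between the $\Zpos$-linear bijection $\psi$ and its $\Q$-linear extension $\widetilde{\psi}$: because weak and strong independence are statements about vanishing or equality in the ambient $\Q^n$, the transport of these relations must go through $\widetilde{\psi}$, so one genuinely uses the hypothesis that $\psi$ admits such an extension and not merely that $\psi$ is $\Zpos$-linear on the cones.
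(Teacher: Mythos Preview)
Your proof is correct and is exactly the kind of routine verification the authors have in mind: the paper gives no argument at all beyond declaring the lemma ``immediate from the definitions'' and placing a \qed\ in the statement. Your careful distinction between using $\psi$ for the spanning claim and $\widetilde{\psi}$ for the independence claims is precisely the only point worth spelling out, and you have done so.
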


		\subsection{Local Knutson-Tao cone}
		\label{ssec:local-knutson-tao-cone}

Let $\triang$ be a dotted ideal triangle (\S \ref{ssec:dotted-ideal-triangulations}); recall Figure \ref{fig:dotted-triangle} above.  In this section, we are going to order the dots on $\triang$ so that if the dots are labeled as in the left hand side of Figure \ref{fig:triangle-7-tropical-coordinates}, then a point $c \in \Z^7$ will be written
\begin{equation*}
\label{eq:integer-point}
\tag{$\ast$}
	c = \left(a_{11}, a_{12}, a_{21}, a_{22}, a_{31}, a_{32}, a\right)
	\quad  \in \Z^7.
\end{equation*}  

\begin{figure}[t]
	\includegraphics[width=.88\textwidth]{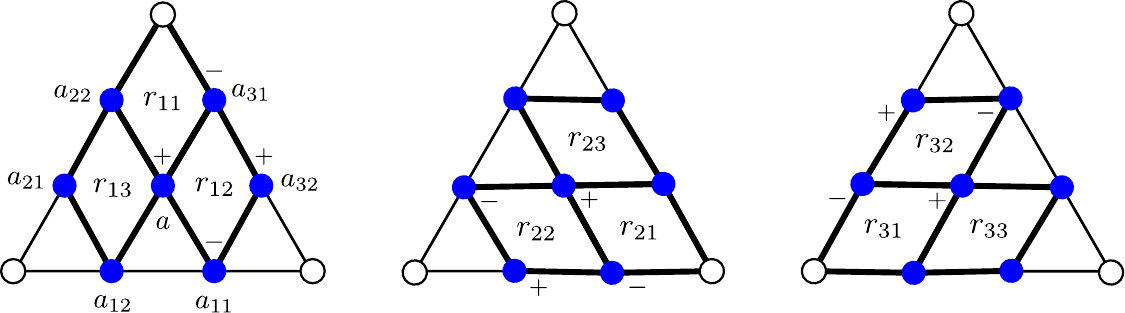}
	\centering
	\caption{Rhombus numbers}
	\label{fig:triangle-7-tropical-coordinates}
\end{figure}

Let $\Z/3 \subset \Q$ denote the set of integer thirds within the rational numbers, namely $\Z/3$ is the image of the map $\Z \to \Q$ sending $n \mapsto n/3$.  Note that $\Z \subset \Z/3$.  

To each point $c \in  \Z^7$, as in Equation \eqref{eq:integer-point}, associate a 9-tuple of \textit{rhombus numbers}
\begin{equation*}
	r(c) = \left( r_{11}, r_{12}, r_{13}, r_{21}, r_{22}, r_{23}, r_{31}, r_{32}, r_{33}\right)
	\quad	\in \left(\Z/3\right)^9
\end{equation*}
by the linear equations (see Figure \ref{fig:triangle-7-tropical-coordinates} above)
\begin{equation*}
\begin{split}
	r_{12} &= \left(a + a_{32} - a_{11} - a_{31}\right)/3,	
	\quad\quad			r_{11} = \left(a_{22} + a_{31} - a - 0\right)/3, 
	\\		r_{13} &= \left(a_{21} + a - a_{12} - a_{22}\right)/3;
	\\		r_{22} &= \left(a + a_{12} - a_{21} - a_{11}\right)/3,	
	\quad\quad	r_{21} = \left(a_{32} + a_{11} - a-0\right)/3,	
	\\		r_{23} &= \left(a_{31} + a - a_{22} -a_{32}\right)/3;	
	\\		r_{32} &= \left(a+a_{22} -a_{31} -a_{21}\right)/3,
	\quad\quad	r_{31} = \left(a_{12} + a_{21} - a -0\right)/3,		
	\\		r_{33} &= \left(a_{11}+a-a_{32} -a_{12}\right)/3.
\end{split}
\end{equation*}

\begin{definition}
\label{def:knutson-tao-cone-triangle}
	The \textit{local Knutson-Tao positive cone}, or just \textit{local Knutson-Tao  cone} or \textit{local cone}, $\KTcone{\triang}$ associated to the dotted ideal triangle $\triang$ is defined by
\begin{equation*}
	\KTcone{\triang} = 
	\left\{
		c \in \Z^7 ; \quad
	r(c) = \left(r_{11}, r_{12}, r_{13}, r_{21}, r_{22}, r_{23}, r_{31}, r_{32}, r_{33}\right) \quad\in \Zpos^9 \subset \left(\Z/3\right)^9
	\right\}.
\end{equation*}
\end{definition}

By linearity, this indeed defines a cone contained in $\Z^7$.  We will prove below in this sub-section that $\KTcone{\triang} \subset \Zpos^7$ is, in fact, a positive cone.  

\begin{remark}  $  $
\label{rem:modulo-3-congruence-conditions} 
\begin{enumerate}
	\item\label{subrem:GS1}  The inequalities $3 r_{ij} \geq 0$ are known as the Knutson-Tao rhombus inequalities; see \cite[Appendix 2]{KnutsonJAmerMathsoc99} and \cite[\S 3.1]{GoncharovInvent15}.  Note that $3 r_{ij}$ is always in $\Z$ by definition.  We impose the additional modulo 3 congruence condition that the $r_{ij}$ are integers.  This is analogous to the parity condition imposed in \cite[\S 3.1]{FockArxiv97} in the case of $\mathrm{SL}_2$.  
	\item  By the proof of Proposition \ref{LEM:POSITIVITY-OF-CONE} below, we could just as well have taken rational coefficients $c \in \mathbb{Q}^7$ in Definition \ref{def:knutson-tao-cone-triangle} without changing the resulting cone $\KTcone{\triang} \subset \Zpos^7 \subset \mathbb{Q}^7$.  That is, any rational solution to $r(c) \in \Zpos^9$ is, in fact, a non-negative integer solution.  
\end{enumerate}
\end{remark}

To see that $\KTcone{\triang}$ is non-trivial, one checks that the image $\themap{\triang}^\mathrm{FG}(\webbasis{\triang})$ of the Fock-Goncharov local coordinate function $\themap{\triang}^\mathrm{FG} : \webbasis{\triang} \to \Zpos^7$ (\S \ref{ssec:local-coordinates-from-Fock-Goncharov-theory}) lies in the local cone $\themap{\triang}^\mathrm{FG}(\webbasis{\triang}) \subset \KTcone{\triang}$.  By property (1) in Definition \ref{def:local-coordinate-function}, it suffices to check this on the connected local webs in $\webbasis{\triang}$; recall Figure \ref{fig:triangle-hilbert-basis} above. Specifically, using the convention in Equation \eqref{eq:integer-point}, we have
\begin{equation*}
\label{eq:eight-KT-points}	
\begin{split}
	c(R_1) &= \themap{\triang}^\mathrm{FG}(R_1) = \left(0, 0, 1, 2, 2, 1, 1\right), 	
	\quad\quad\quad\quad\quad	c(L_1) = \themap{\triang}^\mathrm{FG}(L_1) = \left(0, 0, 2, 1, 1, 2, 2\right), 	
	\\	c(R_2) &= \themap{\triang}^\mathrm{FG}(R_2) = \left(2, 1, 0, 0, 1, 2, 1\right), 	
	\quad\quad\quad\quad\quad	c(L_2) =  \themap{\triang}^\mathrm{FG}(L_2) =\left(1, 2, 0, 0, 2, 1, 2\right),
	\\	c(R_3) &= \themap{\triang}^\mathrm{FG}(R_3) =\left(1, 2, 2, 1, 0, 0, 1\right), 
	\quad\quad\quad\quad\quad	c(L_3) = \themap{\triang}^\mathrm{FG}(L_3) =\left(2, 1, 1, 2, 0, 0, 2\right),	
	\\	c(H_n^{\mathrm{in}}) &= \themap{\triang}^\mathrm{FG}(H_n^{\mathrm{in}}) =\left(2n, n, 2n, n, 2n, n, 3n\right),
	\\	c(H_n^{\mathrm{out}}) &= \themap{\triang}^\mathrm{FG}(H_n^{\mathrm{out}}) =\left(n, 2n, n, 2n, n, 2n, 3n\right).
\end{split}
\end{equation*}
The associated 9-tuples of rhombus numbers are
\begin{equation*}
\label{eq:nine-diamond-numbers}
\begin{split}
	r(c(R_1)) &= \left(1, 0, 0, 0, 0, 0, 0, 0, 0\right), 	
	\quad\quad	r(c(L_1)) = \left(0, 1, 1, 0, 0, 0, 0, 0, 0\right), 	
	\\	r(c(R_2)) &= \left(0, 0, 0, 1, 0, 0, 0, 0, 0\right), 	
	\quad\quad	r(c(L_2)) = \left(0, 0, 0, 0, 1, 1, 0, 0, 0\right),	
	\\	r(c(R_3)) &= \left(0, 0, 0, 0, 0, 0, 1, 0, 0\right), 	
	\quad\quad	r(c(L_3)) = \left(0, 0, 0, 0, 0, 0, 0, 1, 1\right),	
	\\	r(c(H_n^{\mathrm{in}})) &= \left(0, 0, n, 0, 0, n, 0, 0, n\right),	
	\\  r(c(H_n^{\mathrm{out}})) &= \left(0, n, 0, 0, n, 0, 0, n, 0\right).
\end{split}
\end{equation*}	

By rank considerations, the eight cone points $c(R_1)$, $c(L_1)$, $c(R_2)$, $c(L_2)$, $c(R_3)$, $c(L_3)$, $c(H_n^{\mathrm{in}})$, $c(H_n^{\mathrm{out}})$  have a linear dependence relation over $\Z$.  For instance (see Figure \ref{fig:coordinate-dependence-relation}),
\begin{equation*}
	 c(H_n^\mathrm{out}) +  c(H_n^\mathrm{in})   = n \left( c(L_1) +  c(L_2) +  c(L_3) \right)
	 \quad \in \KTcone{\triang}.
\end{equation*}  

\begin{figure}[t]
	\centering
	\includegraphics[scale=.78]{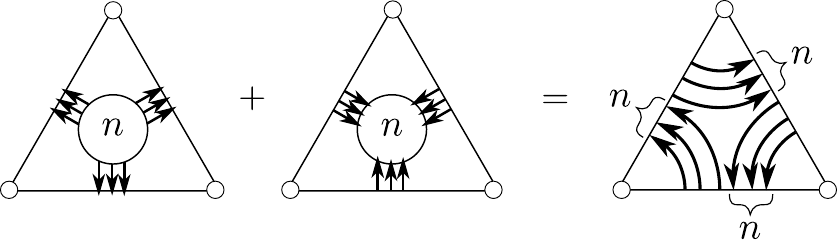}
	\caption{Linear dependence relation over $\Z$}
	\label{fig:coordinate-dependence-relation}
\end{figure}

Nevertheless, we can say the following:

\begin{proposition}
\label{LEM:POSITIVITY-OF-CONE}
	The collection of eight cone points 
\begin{equation*}
	c(R_1), c(L_1), c(R_2), c(L_2), c(R_3), c(L_3), c(H_1^{\mathrm{in}}), c(H_1^{\mathrm{out}}) \quad \in \themap{\triang}^\mathrm{FG}(\webbasis{\triang}) \subset \KTcone{\triang}
\end{equation*} 
forms a weak basis of the Knutson-Tao local cone $\KTcone{\triang}$.  
	
	Among these eight cone points, the seven points 
\begin{equation*}
	c(R_1), c(L_1), c(R_2), c(L_2), c(R_3), c(L_3), c(H_1^\mathrm{in}) 
\end{equation*}
are strongly independent over $\Zpos$, and the seven  points 
\begin{equation*}
	c(R_1), c(L_1), c(R_2), c(L_2), c(R_3), c(L_3), c(H_1^\mathrm{out})
\end{equation*} 
are strongly independent over $\Zpos$.  Moreover, each cone point $c$ in $\KTcone{\triang}$ can be uniquely expressed in exactly one of the following three forms:
\begin{align*}	
	c &= n_1 c(R_1) + n_2 c(L_1) + \cdots + n_6 c(L_3),		\\	
	c &= n_1 c(R_1) + n_2 c(L_1) + \cdots + n_6 c(L_3) + n c(H_1^\mathrm{in}),			\\	
	c &= n_1 c(R_1) + n_2 c(L_1) + \cdots + n_6 c(L_3) + n c(H_1^\mathrm{out}),
\quad\quad\left(n_i \in \Zpos, \quad n \in \Z_{>0}\right).	
\end{align*}
\end{proposition}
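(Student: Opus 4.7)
The plan is to use the rhombus-number map as the main bookkeeping tool, to decompose every cone point explicitly via its rhombus numbers, and to extract both spanning and independence from this decomposition. The first step is to verify that the linear map $r \colon \Z^7 \to (\Z/3)^9$, $c \mapsto r(c)$, has trivial kernel. A direct chase through the defining equations for the $r_{ij}$ (starting from $r_{11}=r_{21}=r_{31}=0$) forces $a = a_{11} = a_{22}$, $a_{31}=a_{32}=0$, and $a_{12}=a_{21}=a/2$; the remaining equation $r_{32}=0$ then gives $a/2 = 0$, so everything vanishes. Since the domain is $7$-dimensional, the image is a $7$-dimensional sublattice of $(\Z/3)^9$ cut out by exactly two linear relations, which an explicit null-space computation shows to be the symmetric pair
\[
r_{12} - r_{13} \;=\; r_{22} - r_{23} \;=\; r_{32} - r_{33}.
\]
Let $D = D(c) \in \Z$ denote this common value (integrality holds because each $r_{ij}$ is an integer for $c \in \KTcone{\triang}$).

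The second step is a constructive decomposition. The rhombus vectors of the eight generators show that $c(R_i)$ contributes a single unit at position $(i,1)$, that $c(L_i)$ contributes one unit at each of $(i,2)$ and $(i,3)$, while $c(H_1^{\mathrm{in}})$ and $c(H_1^{\mathrm{out}})$ uniformly add one to the third and second columns, respectively. Given $c \in \KTcone{\triang}$ with rhombus numbers $(r_{ij}) \in \Zpos^9$, set $n_i := r_{i1}$ and split on the sign of $D$: if $D=0$ take $m_i := r_{i2} = r_{i3}$ with no honeycomb; if $D > 0$ take $m_i := r_{i3}$ together with $\nu := D$ copies of $c(H_1^{\mathrm{out}})$; if $D < 0$ take $m_i := r_{i2}$ together with $\nu := -D$ copies of $c(H_1^{\mathrm{in}})$. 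All coefficients are non-negative integers because every $r_{ij} \geq 0$. The resulting combination has the same rhombus image as $c$ by construction, so injectivity of $r$ forces it to equal $c$; this yields existence in all three of the stated forms, with $\nu > 0$ in the two honeycomb cases, and the three forms are mutually exclusive since they are distinguished by the sign of $D$.

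For the independence statements, consider the seven cone points $c(R_1), c(L_1), \dots, c(L_3), c(H_1^{\mathrm{in}})$. Restrict their rhombus images to a $7$-element subset of the $9$ coordinates on which the two relations above can be solved, for example by dropping $r_{12}$ and $r_{32}$. A short determinant calculation shows the resulting $7 \times 7$ matrix is invertible, so the seven rhombus vectors are $\Q$-linearly independent; injectivity of $r$ promotes this to $\Q$-linear independence of the seven cone points themselves, which is equivalent to strong independence over $\Zpos$. An identical argument handles the variant with $c(H_1^{\mathrm{out}})$, giving uniqueness within each of the three expression forms. Weak independence over $\Zpos$ of the full collection of eight points is immediate, since every generator lies in $\Zpos^7$ with a strictly positive entry, so together with the spanning of the second step this establishes the weak-basis claim. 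The main obstacle is the initial injectivity-and-relations analysis of $r$, which is the only step requiring genuine computation; everything else is routine bookkeeping once these rhombus-number relations are in hand.
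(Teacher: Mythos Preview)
Your argument is correct and rests on the same core observation as the paper's proof: the quantity $D = r_{12}-r_{13} = r_{22}-r_{23} = r_{32}-r_{33}$ (which the paper denotes $-x$) is a well-defined integer invariant of any cone point, and the trichotomy on its sign both partitions $\KTcone{\triang}$ and dictates the unique decomposition.  The difference is purely organizational.  The paper packages the argument through a transport-of-structure lemma: it introduces an auxiliary cone $\Cone' = \{(r_{11},r_{12},r_{21},r_{22},r_{31},r_{32},x) \in \Zpos^6 \times \Z : -x \leq \min(r_{12},r_{22},r_{32})\}$, verifies the weak-basis and independence claims for the very simple generators $c'_i$ of $\Cone'$, and then checks that the map $c \mapsto (r_{11},r_{12},r_{21},r_{22},r_{31},r_{32},x)$ is a $\Zpos$-linear bijection $\KTcone{\triang} \to \Cone'$ extending to a $\Q$-linear isomorphism of $\Q^7$.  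You instead work directly with the full nine-tuple of rhombus numbers, check injectivity of $r$ by hand, and read off the decomposition from the rhombus coordinates without ever naming the auxiliary cone.  Your route is shorter and more concrete; the paper's route isolates the linear-algebra step cleanly (one invertible $7\times 7$ matrix) and makes the partition $\KTcone{\triang} = \overline{(\KTcone{\triang})^{\mathrm{in}}} \sqcup (\KTcone{\triang})^{\mathrm{out}}$ more visibly structural.  Either way the content is the same.
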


Because the spanning set $c(R_1), c(L_1), c(R_2), c(L_2), c(R_3), c(L_3), c(H_1^\mathrm{in}), c(H_1^\mathrm{out})$ consists of positive points, we immediately obtain:

\begin{corollary}
\label{cor:first-corollary}
	The local Knutson-Tao cone satisfies the property that $\KTcone{\triang} = \themap{\triang}^\mathrm{FG}(\webbasis{\triang}) \subset \Zpos^7$.  In particular, $\KTcone{\triang}$ is a positive cone.  \qed
\end{corollary}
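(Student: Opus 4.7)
Since the corollary follows almost immediately from Proposition~\ref{LEM:POSITIVITY-OF-CONE} together with the additivity property~(1) of local coordinate functions, my plan is to first outline a proof of the Proposition, and then deduce the Corollary. The strategy for the Proposition is to translate everything into rhombus-number coordinates. Working with the $\Q$-linear rhombus-number map $r \colon \Z^7 \to (\Z/3)^9$, one checks (by inspecting the seven explicit $r$-images of the generators and extracting coefficients position-by-position) that $r$ is injective on $\Q^7$, so it suffices to solve the coefficient problem at the level of $r$-images. Given $c \in \KTcone{\triang}$ with $r(c) = (r_{ij}) \in \Zpos^9$, I look for $\Zpos$-coefficients realizing
\begin{equation*}
	c = \alpha_1 c(R_1) + \beta_1 c(L_1) + \alpha_2 c(R_2) + \beta_2 c(L_2) + \alpha_3 c(R_3) + \beta_3 c(L_3) + m\, c(H_1^{\mathrm{out}}) + n\, c(H_1^{\mathrm{in}}).
\end{equation*}
Applying $r$ and using the explicit values computed in the excerpt, this reduces to the system $\alpha_i = r_{i1}$, $\beta_i + m = r_{i2}$, $\beta_i + n = r_{i3}$ for $i = 1, 2, 3$.

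The key input is a direct calculation that $r_{12} - r_{13} = r_{22} - r_{23} = r_{32} - r_{33}$ (this common value equals $(a_{12}+a_{22}+a_{32}-a_{11}-a_{21}-a_{31})/3$, which is an integer thanks to the mod-$3$ congruence built into $\KTcone{\triang}$). Denoting this common value by $d$, one $\Zpos$-solution is $m = \max(d,0)$, $n = \max(-d,0)$, $\beta_i = \min(r_{i2}, r_{i3})$; any other $\Zpos$-solution arises by shifting along the unique weak dependence $c(H_1^{\mathrm{out}}) + c(H_1^{\mathrm{in}}) = c(L_1) + c(L_2) + c(L_3)$. Taking the canonical choice with $\min(m, n) = 0$ gives exactly the three mutually exclusive forms listed in the Proposition, and simultaneously establishes that the eight cone points span $\KTcone{\triang}$, form a weak basis, and that each of the two specified seven-point sub-collections (obtained by forcing $m = 0$ or $n = 0$ from the outset) is strongly independent over $\Zpos$, since in that case $\alpha_i, \beta_i$, and the remaining honeycomb coefficient are all recovered as explicit $\Zpos$-linear combinations of the rhombus numbers.

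To deduce the Corollary, the map $\themap{\triang}^{\mathrm{FG}}$ is by construction $\Zpos^7$-valued, so $\themap{\triang}^{\mathrm{FG}}(\webbasis{\triang}) \subseteq \Zpos^7$, and the inclusion $\themap{\triang}^{\mathrm{FG}}(\webbasis{\triang}) \subseteq \KTcone{\triang}$ was verified in the excerpt on the eight connected generators. For the reverse inclusion, given any $c \in \KTcone{\triang}$, the Proposition writes $c$ as a $\Zpos$-combination of images under $\themap{\triang}^{\mathrm{FG}}$ of connected webs of types $R_i, L_i, H_n^{\mathrm{in}}, H_n^{\mathrm{out}}$. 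By property~(1) of local coordinate functions, $c$ is then realized as the $\themap{\triang}^{\mathrm{FG}}$-image of the disjoint union of these connected webs, laid out inside $\triang$ by nesting the corner arcs at their respective corners and placing at most one central honeycomb; this disjoint union is itself a rung-less essential local web by the classification in Proposition~\ref{prop:honeycomb-webs}. The main obstacle in the whole program is computational: first verifying the identity that $r_{i2} - r_{i3}$ is $i$-independent, and then patiently extracting from the rhombus-number bookkeeping the canonical three-case decomposition, after which the rest is linear algebra plus a transparent geometric assembly step.
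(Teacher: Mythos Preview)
Your proof is correct, and the underlying computation is the same as the paper's, though the packaging differs. The paper introduces an auxiliary cone $\Cone' = \{(r_{11},r_{12},r_{21},r_{22},r_{31},r_{32},x) \in \Zpos^6 \times \Z : -x \leq \min(r_{12},r_{22},r_{32})\}$, constructs an explicit $\Zpos$-linear bijection $\KTcone{\triang} \to \Cone'$ extending to a $\Q$-linear isomorphism of $\Q^7$, and then transfers the weak-basis and strong-independence statements from $\Cone'$ (where the generators become near-standard basis vectors) via the formal Lemma~\ref{lem:KTcone-technical-lemma}. You instead work directly inside $(\Z/3)^9$ via the rhombus map $r$ and solve the resulting linear system by hand. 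The key identity is identical in both approaches: your common difference $d = r_{i2} - r_{i3}$ is precisely the paper's coordinate $-x$, and your canonical solution $m = \max(d,0)$, $n = \max(-d,0)$, $\beta_i = \min(r_{i2},r_{i3})$ is exactly what the paper's partition $\Cone' = \overline{(\Cone')^{>0}} \sqcup (\Cone')^{<0}$ encodes. Your route is more elementary and self-contained; the paper's route isolates the linear-algebra content into a reusable transfer lemma. The deduction of the corollary itself (realizing the $\Zpos$-combination as a disjoint union of corner arcs and a single $H_n$) is the same in both.
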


\begin{corollary}
\label{cor:second-corollary}
	The Fock-Goncharov local coordinate function $\themap{\triang}^\mathrm{FG}: \webbasis{\triang} \to \KTcone{\triang}$ satisfies property (4) in Definition {\upshape\ref{def:local-coordinate-function}}, namely, the induced function $\themap{\triang}^\mathrm{FG}: [\webbasis{\triang}] \hookrightarrow \KTcone{\triang}$, defined on the collection of corner-ambiguity classes $[W_\triang]$ of local webs $W_\triang$ in $\webbasis{\triang}$, is an injection.  
\end{corollary}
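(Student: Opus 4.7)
The plan is to combine the structural classification of Proposition \ref{prop:honeycomb-webs} with the uniqueness clause of Proposition \ref{LEM:POSITIVITY-OF-CONE}. By Proposition \ref{prop:honeycomb-webs}, any $W_\triang \in \webbasis{\triang}$ splits as a disjoint union of a (possibly empty) honeycomb-web $H_n^{\mathrm{in}}$ or $H_n^{\mathrm{out}}$ together with a collection of oriented corner arcs, each of one of the six types $R_1, L_1, R_2, L_2, R_3, L_3$. The additivity property~(1) in Definition \ref{def:local-coordinate-function}, combined with the identities $c(H_n^{\mathrm{in/out}}) = n \cdot c(H_1^{\mathrm{in/out}})$ that are immediate from the explicit formulas in \S \ref{ssec:local-knutson-tao-cone}, then forces $\themap{\triang}^\mathrm{FG}(W_\triang)$ to be a $\Zpos$-linear combination of $c(R_1),\dots,c(L_3)$ together with at most one of $c(H_1^\mathrm{in}), c(H_1^\mathrm{out})$, the honeycomb term appearing precisely when $W_\triang$ contains a nonempty honeycomb. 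In particular, $\themap{\triang}^\mathrm{FG}(W_\triang)$ fits into exactly one of the three normal forms listed in Proposition \ref{LEM:POSITIVITY-OF-CONE}.

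Now suppose $W_\triang, W'_\triang \in \webbasis{\triang}$ satisfy $\themap{\triang}^\mathrm{FG}(W_\triang) = \themap{\triang}^\mathrm{FG}(W'_\triang)$. By the uniqueness clause of Proposition \ref{LEM:POSITIVITY-OF-CONE}, the common cone point admits a single normal form, so the two webs must fall into the same one of the three cases (no honeycomb, in-honeycomb, out-honeycomb), share the same honeycomb size $n$, and have the same counts $n_{R_i}, n_{L_i}$ of arcs of each of the six corner types. Thus $W_\triang$ and $W'_\triang$ have the same honeycomb-web decomposition and the same multi-set of corner arcs at each of the three corners of $\triang$.

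It remains to argue that any two local webs in $\webbasis{\triang}$ that agree on this combinatorial data must represent the same corner-ambiguity class. The honeycomb-web $H_n^{\mathrm{in/out}}$ is unique up to ambient isotopy of $\triang$ by Proposition \ref{prop:honeycomb-webs}, so after isotopy we may assume the honeycomb-webs in $W_\triang$ and $W'_\triang$ coincide pointwise. Given the honeycomb and the fixed multi-set of corner arc types, the only remaining geometric freedom is the nesting order of the arcs at each corner; any two such orderings are related by finitely many adjacent transpositions, each of which is by definition a local parallel-move in the sense of Definition \ref{def:corner-ambiguity} and Figure \ref{fig:local-parallel-move-no-coordinates}. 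The main obstacle in the whole argument is really just this last bookkeeping step, namely verifying that the combinatorial invariants recovered from Proposition \ref{LEM:POSITIVITY-OF-CONE} are precisely those captured by the corner-ambiguity relation; the first two steps are direct applications of previously established results.
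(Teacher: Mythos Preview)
Your proof is correct and follows essentially the same route as the paper's: both arguments combine the structural decomposition of Proposition \ref{prop:honeycomb-webs}, the additivity property~(1), and the uniqueness clause of Proposition \ref{LEM:POSITIVITY-OF-CONE} to recover the honeycomb type and corner-arc multiplicities, then observe that the remaining freedom is exactly the corner-ambiguity relation. Your write-up is slightly more explicit about the final bookkeeping step than the paper's, but the logic is identical.
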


\begin{proof}
	Assume $\themap{\triang}^\mathrm{FG}(W_\triang) = \themap{\triang}^\mathrm{FG}(W^\prime_\triang) \in \KTcone{\triang}$.  This cone point falls into one of the three families in Proposition \ref{LEM:POSITIVITY-OF-CONE}.  For the sake of argument, suppose 
\begin{equation*}
	\themap{\triang}^\mathrm{FG}(W_\triang) = \themap{\triang}^\mathrm{FG}(W^\prime_\triang) = n_1 c(R_1) + n_2 c(L_1) + \cdots + n_6 c(L_3) + n c(H_1^\mathrm{in})
	\quad\left(n_i \in \Zpos, \quad n \in \Z_{>0}\right).  
\end{equation*}
Note that $n c(H_1^\mathrm{in}) = c(H_n^\mathrm{in})$ in $\KTcone{\triang}$; see  Figure \ref{fig:triangle-hilbert-basis}.  By the uniqueness property in Proposition \ref{LEM:POSITIVITY-OF-CONE} together with property (1) in Definition \ref{def:local-coordinate-function}, we gather that $W_\triang$ and $W^\prime_\triang$ have $1 + \sum_{i=1}^6 n_i$ connected components, one of which is a $n$-in-honeycomb $H_n^\mathrm{in}$, and $n_1$ (resp. $n_2$, $n_3$, $n_4$, $n_5$, $n_6$) of which are corner arcs $R_1$ (resp. $L_1$, $R_2$, $L_2$, $R_3$, $L_3$).  The only ambiguity is how these corner arcs are permuted on their respective corners, that is  $[W_\triang] = [W^\prime_\triang]$ in $[\webbasis{\triang}]$.   
\end{proof}

\begin{proof}[Proof of Proposition \ref{LEM:POSITIVITY-OF-CONE}]
	Define two subsets $\overline{(\KTcone{\triang})^{\mathrm{in}}}$ and $(\KTcone{\triang})^{\mathrm{out}}$ of $\KTcone{\triang}$ by
\begin{align*}
\tag{$\#$}
\label{eq:def-of-in-out-partition}
	\overline{(\KTcone{\triang})^{\mathrm{in}}} &= \mathrm{Span}_{\Zpos}\left(c(R_1), c(L_1), c(R_2), c(L_2), c(R_3), c(L_3)\right) + \Zpos \cdot c(H_1^{\mathrm{in}}), \\  \tag{$\#\#$}  \label{eq:def-of-in-out-partition2}
	(\KTcone{\triang})^{\mathrm{out}} &= \mathrm{Span}_{\Zpos}\left(c(R_1), c(L_1), c(R_2), c(L_2), c(R_3), c(L_3)\right) + \Z_{> 0} \cdot c(H_1^{\mathrm{out}}).  
\end{align*}
(Here, $A+B = \{ a+b; \quad a \in A \text{ and } b \in B \}$.)  Put
\begin{align*}
	c_1 &= c(R_1), &
	 c_2 &= c(L_1), &
	 c_3 &= c(R_2), &
	 c_4 &= c(L_2), \\	
	 c_5 &= c(R_3), &
	 c_6 &= c(L_3), &
	 c_7 &= c(H_1^\mathrm{in}), &
	 c_8 &= c(H_1^\mathrm{out}).  
\end{align*}
By Lemma \ref{lem:KTcone-technical-lemma}, with $\Cone = \KTcone{\triang}$, in order to prove Proposition \ref{LEM:POSITIVITY-OF-CONE} it suffices to establish:  

\begin{claim}
	There exists
	\begin{enumerate}
		\item    a cone $\Cone^\prime \subset \Z^7$;
		\item  a collection of cone points $c'_1, \dots, c'_8$ in $\Cone^\prime$;
		\item   a partition $\Cone^\prime = \overline{(\Cone^\prime)^{>0}} \sqcup (\Cone^\prime)^{< 0}$;
		\item  a $\Zpos$-linear bijection $\psi \colon \Cone^\prime \to \KTcone{\triang}$; 
		\item  an extension $\widetilde{\psi}$ of $\psi$ to a $\Q$-linear isomorphism $\widetilde{\psi} \colon \Q^7 \to \Q^7$;
	\end{enumerate}
	such that
	\begin{enumerate}
		\item  we have $\psi(c'_i) = c_i$;
		\item  we have $\psi(\overline{(\Cone^\prime)^{>0}}) = \overline{(\KTcone{\triang})^\mathrm{in}}$ and $\psi((\Cone^\prime)^{<0}) = (\KTcone{\triang})^\mathrm{out}$;
		\item  the eight cone points $c'_1, \dots, c'_6, c'_7, c'_8$ form a weak basis of the cone $\Cone^\prime$;
		\item  the seven cone points $c'_1, \dots, c'_6, c'_7$ are strongly independent over $\Zpos$;
		\item  the seven cone points $c'_1, \dots, c'_6, c'_8$ are strongly independent over $\Zpos$.  
	\end{enumerate}
\end{claim}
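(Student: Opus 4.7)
The plan is to construct $\Cone^\prime$ by hand, exploiting the single non-trivial dependence relation $c(H_n^\mathrm{out}) + c(H_n^\mathrm{in}) = n(c(L_1) + c(L_2) + c(L_3))$ among the eight listed cone points $c_1, \ldots, c_8$. Specifically, I will take
\begin{equation*}
	\Cone^\prime = \{(n_1, n_2, n_3, n_4, n_5, n_6, m) \in \Z^7 : n_1, \ldots, n_6 \geq 0 \text{ and } n_2 + m, n_4 + m, n_6 + m \geq 0\},
\end{equation*}
set $c'_i = e_i$ for $i = 1, \ldots, 6$, $c'_7 = e_7$, and $c'_8 = e_2 + e_4 + e_6 - e_7$, and partition by the sign of the last coordinate, $\overline{(\Cone^\prime)^{>0}} = \Cone^\prime \cap \{m \geq 0\}$ and $(\Cone^\prime)^{<0} = \Cone^\prime \cap \{m < 0\}$. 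Define $\widetilde{\psi} \colon \Q^7 \to \Q^7$ to be the $\Q$-linear extension of $e_i \mapsto c_i$ for $i = 1, \ldots, 6$ and $e_7 \mapsto c_7$.

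The key linear-algebra observation needed throughout is the identity
\begin{equation*}
	r_{13} - r_{12} = r_{23} - r_{22} = r_{33} - r_{32} = \frac{1}{3}\bigl(a_{11} - a_{12} + a_{21} - a_{22} + a_{31} - a_{32}\bigr),
\end{equation*}
valid identically in the coordinates of $c \in \Q^7$ and obtained by a direct substitution into the defining formulas of the rhombus numbers. Combining this identity with the rhombus-number data $r(c_i)$ tabulated in the excerpt, the map $c \mapsto (r_{11}(c), r_{12}(c), r_{21}(c), r_{22}(c), r_{31}(c), r_{32}(c), r_{13}(c) - r_{12}(c))$ is verified to be a two-sided inverse of $\widetilde{\psi}$, so in particular $\widetilde{\psi}$ is a $\Q$-linear isomorphism.

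Next, set $\psi := \widetilde{\psi}|_{\Cone^\prime}$ and check it is a $\Zpos$-linear bijection onto $\KTcone{\triang}$ with the required partition behaviour. The identity $\psi(c'_8) = c_2 + c_4 + c_6 - c_7 = c_8$ follows from the displayed dependence relation with $n = 1$. For $v = (n_1, \ldots, n_6, m) \in \Cone^\prime$ with $m \geq 0$, the image $\psi(v) = \sum_{i=1}^6 n_i c_i + m c_7$ lies in $\overline{(\KTcone{\triang})^\mathrm{in}}$ by definition; for $m < 0$, substituting $c_7 = c_2 + c_4 + c_6 - c_8$ rewrites $\psi(v)$ as $n_1 c_1 + (n_2 + m) c_2 + n_3 c_3 + (n_4 + m) c_4 + n_5 c_5 + (n_6 + m) c_6 + (-m) c_8$, which lies in $(\KTcone{\triang})^\mathrm{out}$ precisely because of the extra defining inequalities of $\Cone^\prime$. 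Surjectivity onto $\KTcone{\triang}$ is then immediate from the inverse formula: for $c \in \KTcone{\triang}$, the Knutson-Tao inequalities $r_{ij}(c) \geq 0$ together with the coincidence identity place the candidate preimage inside $\Cone^\prime$, and the modulo $3$ congruence conditions ensure integrality.

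The remaining verifications are entirely formal in the transparent cone $\Cone^\prime$. Strong $\Zpos$-independence of $\{c'_1, \ldots, c'_7\} = \{e_1, \ldots, e_7\}$ is immediate; for $\{c'_1, \ldots, c'_6, c'_8\}$, equality of two $\Zpos$-combinations forces the $c'_8$-coefficients to match by inspecting the seventh coordinate, after which the other six coordinates agree term by term. Spanning of $\Cone^\prime$ by $\{c'_1, \ldots, c'_8\}$ is exactly the two-case rewriting above, and weak $\Zpos$-independence is a one-line coordinate comparison. The only real conceptual obstacle is the choice of $\Cone^\prime$ together with the three-way rhombus-difference identity; once those are in place, the verification is mechanical and Lemma \ref{lem:KTcone-technical-lemma} transfers all conclusions back to $\KTcone{\triang}$.
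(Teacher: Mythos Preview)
Your proof is correct and follows essentially the same approach as the paper: the cone $\Cone'$, the eight cone points $c'_i$, the sign-of-last-coordinate partition, and the map (defined in the paper as $\varphi = \widetilde{\psi}^{-1}$, sending $c$ to $(r_{11}, r_{12}, r_{21}, r_{22}, r_{31}, r_{32}, r_{13}-r_{12})$) are identical to yours. The only cosmetic difference is that the paper defines the inverse $\varphi$ first and verifies invertibility by exhibiting the $7\times 7$ matrix, whereas you define $\widetilde{\psi}$ first and verify invertibility by checking $\varphi(c_i) = e_i$ from the tabulated rhombus data; the key three-way identity $r_{13}-r_{12} = r_{23}-r_{22} = r_{33}-r_{32}$ plays the same role in both arguments.
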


We prove the claim.  Define $\Cone^\prime \subset \Zpos^6 \times \Z \subset \Z^7$ by
\begin{equation*}
\tag{$\ast\ast$}
\label{eq:second-cone-def}
	\Cone^\prime = \left\{	\left(r_{11}, r_{12}, r_{21}, r_{22}, r_{31}, r_{32}, x\right) \in \Zpos^6 \times \Z;
	\quad	-x \leq \mathrm{min}\left(r_{12}, r_{22}, r_{32}\right)	\right\}.
\end{equation*}
It follows from the definition that $\Cone^\prime$ is a cone.  Put
\begin{align*}	
	c'_1 &= \left(1, 0, 0, 0, 0, 0, 0\right), 	&
	c'_2 &= \left(0, 1, 0, 0, 0, 0, 0\right), 	\\	
	c'_3 &= \left(0, 0, 1, 0, 0, 0, 0\right), 	&
	c'_4 &= \left(0, 0, 0, 1, 0, 0, 0\right),	\\
	c'_5 &= \left(0, 0, 0, 0, 1, 0, 0\right), 	&
	c'_6 &= \left(0, 0, 0, 0, 0, 1, 0\right),	\\	
	\label{eq:transformed-cone-points}
	c'_7 &= \left(0, 0, 0, 0, 0, 0, 1\right),	&
	c'_8 &= \left(0,1,0,1,0,1,-1\right).
\end{align*}
One checks that $c'_1, \dots, c'_8$ are in $\Cone^\prime$.  Define
\begin{equation*}
	\overline{(\Cone^\prime)^{>0}} = \Cone^\prime \cap \left( \Zpos^6 \times \Zpos\right),
	\quad\quad	(\Cone^\prime)^{<0} = \Cone^\prime \cap \left( \Zpos^6 \times \Z_{<0}\right).
\end{equation*}
Then $\Cone^\prime = \overline{(\Cone^\prime)^{>0}} \sqcup (\Cone^\prime)^{<0}$ is a partition.  

First, we show $c'_1, \dots, c'_6, c'_7, c'_8$ spans $\Cone^\prime$.  We see that
\begin{equation*}	\label{spanning-equation-1}
\tag{$\dagger$}
	\overline{(\Cone^\prime)^{>0}} = \mathrm{Span}_{\Zpos}\left(c'_1, \dots, c'_6\right) + \Zpos \cdot c'_7 \quad\left(= \Zpos^6 \times \Zpos \right).
\end{equation*}	
If $c' \in (\Cone^\prime)^{<0}$, then its last coordinate is $x \leq -1$.  Since $-x > 0$ and $-x \leq \mathrm{min}(r_{12}, r_{22}, r_{32})$, 
\begin{equation*}
	c' = \left(r_{11}, -x+r'_{12}, r_{21}, -x+r'_{22}, r_{31}, -x+r'_{32}, -x \cdot -1\right)
\end{equation*}
for some $r_{11}, r'_{12}, r_{21}, r'_{22}, r_{31}, r'_{32} \in \Zpos$ and $-x \in \Z_{>0}$.  That is, 
\begin{equation*}
	c' = r_{11} c'_1 + r'_{12} c'_2 + r_{21} c'_3 + r'_{22} c'_4
	+ r_{31} c'_5 + r'_{32} c'_6 + \left(-x\right) c'_8
	\quad
	\in \mathrm{Span}_{\Zpos}\left(c'_1, \dots, c'_6\right) + \Z_{>0} \cdot c'_8.
\end{equation*}
  Thus, 
\begin{equation*}	\label{spanning-equation-2}
\tag{$\dagger\dagger$}
	(\Cone^\prime)^{<0} = \mathrm{Span}_{\Zpos}\left(c'_1, \dots, c'_6\right) + \Z_{>0} \cdot c'_8,
\end{equation*}
where the $\supseteq$ containment follows since $\mathrm{Span}_{\Zpos}\left(c'_1, \dots, c'_6\right) + \Z_{>0} \cdot c'_8 \subset \Zpos^6 \times \Z_{<0}$.  

Next, we show $c'_1, \dots, c'_6, c'_7, c'_8$ are weakly independent over $\Zpos$.  Indeed, if $n_1 c'_1 + \cdots + n_7 c'_7 + n_8 c'_8 = 0$, then $n_1 = n_3 = n_5 = 0$ and $n_2 + n_8, n_4 + n_8, n_6 + n_8, n_7 - n_8 = 0$.  Since all $n_i \in \Zpos$, it follows that $n_2 = n_4 = n_6 = n_8 = 0$, and so $n_7 = n_8 = 0$, as desired.  

We gather that $c'_1, \dots, c'_6, c'_7, c'_8$ form a weak basis of $\Cone^\prime$.  

Next, we show $c'_1, \dots, c'_6, c'_7$ are strongly independent over $\Zpos$.  This is equivalent to being linearly independent over $\Q$, which follows from the definitions.  Similarly, it follows from the definitions that $c'_1, \dots, c'_6, c'_8$ are strongly independent over $\Zpos$.  

We now define a $\Zpos$-linear bijection $\varphi \colon \KTcone{\triang} \to \Cone^\prime$. Its inverse will be the desired $\Zpos$-linear bijection $\psi = \varphi^{-1} \colon \Cone^\prime \to \KTcone{\triang}$.  Let $c$ be a cone point in $\KTcone{\triang}$, written as in Equation \eqref{eq:integer-point}.  Put
\begin{equation*}
\begin{split}
	x &=  \left(a_{11} - a_{12} + a_{21} - a_{22} + a_{31} - a_{32}\right)/3
	\\	&=  r_{13} - r_{12} = r_{23} - r_{22} = r_{33} - r_{32}
	\\	
	&\geq -r_{12} \text{ and} -r_{22} \text{ and} -r_{32}
\end{split}
\end{equation*}
where the rhombus numbers $r_{ij}$ are in $\Zpos$ since $c \in \KTcone{\triang}$; see Figure \ref{fig:dotted-triangle-def-of-x} (we think of $x$ as the tropical Fock-Goncharov $\mathcal{X}$-coordinate for the triangle).  Thus,
\begin{equation*}
	x \geq \mathrm{max}(-r_{12}, -r_{22}, -r_{32}) = - \mathrm{min}(r_{12}, r_{22}, r_{32}).
\end{equation*}  
Therefore, recalling $\Cone^\prime \subset \Zpos^6 \times \Z$ (Equation \eqref{eq:second-cone-def}), we may define the function $\varphi \colon \KTcone{\triang} \to \Cone^\prime$~by 
\begin{equation*}
	\varphi(c) = (r_{11}, r_{12}, r_{21}, r_{22}, r_{31}, r_{32}, x).  
\end{equation*}

It follows from the  definition that $\varphi \colon \KTcone{\triang} \to \Cone^\prime$ is $\Zpos$-linear.  One checks that $\varphi(c_i) = c'_i$.  Since the $c'_i$ span $\Cone^\prime$, we have $\varphi$ is surjective.  In particular, by Equations \eqref{eq:def-of-in-out-partition},\eqref{eq:def-of-in-out-partition2},\eqref{spanning-equation-1},\eqref{spanning-equation-2},
\begin{equation*}
	\varphi\left(\overline{(\KTcone{\triang})^\mathrm{in}}\right) = \overline{(\Cone^\prime)^{>0}}
	\quad  \text{and}  \quad
	\varphi\left((\KTcone{\triang})^\mathrm{out}\right) = (\Cone^\prime)^{<0}.
\end{equation*}

The formula for $\varphi$ extends to define a $\Q$-linear isomorphism $\widetilde{\varphi} \colon \Q^7 \to \Q^7$, and its inverse is the desired $\Q$-linear isomorphism $\widetilde{\psi} = (\widetilde{\varphi})^{-1} \colon \Q^7 \to \Q^7$.  Indeed, the bijectivity of $\widetilde{\varphi}$ follows by computing the values on the standard column basis of $\Q^7$, giving the invertible matrix
\begin{equation*}
	\widetilde{\varphi}(\vec{e}_1, \vec{e}_2, \vec{e}_3, \vec{e}_4, \vec{e}_5, \vec{e}_6, \vec{e}_7) = \frac{1}{3}
	\left( \begin{smallmatrix}
		0&0&0&1&1&0&-1	\\
		-1&0&0&0&-1&1&1	\\
		1&0&0&0&0&1&-1	\\
		-1&1&-1&0&0&0&1	\\
		0&1&1&0&0&0&-1	\\
		0&0&-1&1&-1&0&1	\\
		1&-1&1&-1&1&-1&0	\\
	\end{smallmatrix} \right).  
\end{equation*}

So $\widetilde{\psi} = (\widetilde{\varphi})^{-1}$ is defined.  Since $\widetilde{\varphi}$ is an injection, so is its restriction $\varphi \colon \KTcone{\triang} \to \Cone^\prime$.  Also, since, as we argued above, $\varphi$ is a surjection, we gather $\varphi$ is a bijection.  Thus, $\psi = \varphi^{-1} : \Cone^\prime \to \KTcone{\triang}$ is defined.  This completes the proof of the claim, thereby establishing the proposition.
\end{proof}

\begin{figure}[htb]
	\centering
	\includegraphics[width=\textwidth]{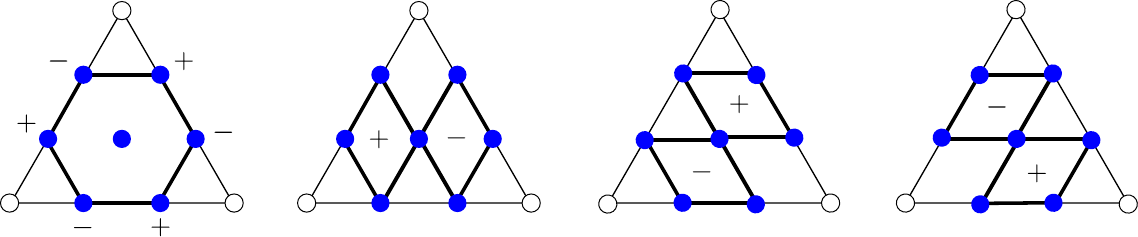}
	\caption{Four ways to view the tropical Fock-Goncharov $\mathcal{X}$-coordinate}
	\label{fig:dotted-triangle-def-of-x}
\end{figure}

		\subsection{Global Knutson-Tao cone}
		\label{ssec:global-knutson-tao-cone}

Given the dotted ideal triangulation $\idealtriang$ on the surface $\surf$, an element $c$ of $\Z^N$ corresponds to a function $\{\text{dots on } \idealtriang\} \to \Z$; see \S \ref{ssec:dotted-ideal-triangulations}.  If $\triang$ is a dotted triangle of $\idealtriang$, then an element $c$ of $\Z^N$ induces a function $\{ \text{dots on } \triang \} \to \Z$, which likewise corresponds to an element $c_\triang$ of $\Z^7$. 

\begin{definition}
\label{def:global-cone-point}
	The \textit{global Knutson-Tao positive cone}, or just \textit{Knutson-Tao cone} or \textit{global cone}, $\KTcone{\idealtriang} \subset \Zpos^N$ is defined by
\begin{equation*}
	\KTcone{\idealtriang} =
	\left\{
		c \in \Z^N;
		\quad  c_\triang \text{ is in } \KTcone{\triang} 
		 \text{ for all triangles } \triang \text{ of } \idealtriang
	\right\}.  
\end{equation*}
\end{definition} 

It follows from Corollary \ref{cor:first-corollary} that $\KTcone{\idealtriang} \subset \Zpos^N$ is indeed a positive cone.  

In \S \ref{sec:global-coordinates-for-non-elliptic-webs}, we defined the global coordinate function $\themap{\idealtriang}^\FG : [\webbasis{\surf}] \to \Zpos^N$; see Definition \ref{def:fock-goncharov-global-coordinate-function}.  Since the image $\themap{\triang}^\FG([\webbasis{\triang}]) \subset \KTcone{\triang}$ (which is, in fact, an equality by Corollary \ref{cor:first-corollary}), it follows by the construction of $\themap{\idealtriang}^\FG$ that the image $\themap{\idealtriang}^\FG([\webbasis{\surf}]) \subset \KTcone{\idealtriang}$; recall, for instance, Figure \ref{fig:coordinates-example}.  

\begin{proposition}
\label{prop:surjectivity}
	Moreover, we have (see  {\upshape\S \ref{sec:main-theorem}-\ref{sec:proof-of-main-lemma}} for a proof)
\begin{equation*}
	\themap{\idealtriang}^\FG([\webbasis{\surf}]) = \KTcone{\idealtriang}.  
\end{equation*}
\end{proposition}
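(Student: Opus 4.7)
The plan is to produce, for each $c \in \KTcone{\idealtriang}$, a non-elliptic web $W$ on $\surf$ with $\themap{\idealtriang}^{\FG}(W) = c$, by constructing $W$ piece by piece on the split ideal triangulation $\splitidealtriang$. First, for each triangle $\triang$ of $\idealtriang$, the restriction $c_\triang$ lies in $\KTcone{\triang}$, so by Corollary \ref{cor:first-corollary} together with the explicit decomposition in Proposition \ref{LEM:POSITIVITY-OF-CONE}, there is a (readily-written-down) rung-less essential local web $W_\triang \in \webbasis{\triang}$ with $\themap{\triang}^{\FG}(W_\triang) = c_\triang$, namely the disjoint union of (at most) one honeycomb-web $H_n^{\mathrm{in}}$ or $H_n^{\mathrm{out}}$ together with some number of corner arcs $R_i, L_i$ dictated by the coefficients in the decomposition.

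Next I would glue these local webs across the biangles of $\splitidealtriang$. Consider a biangle $\biang$ between triangles $\triang'$ and $\triang''$, whose boundary edges $E', E''$ correspond to the same edge $E$ of $\idealtriang$. The two edge-dots on $E$ carry the coordinates $(a^L_{E'}, a^R_{E'})$ as viewed from $\triang'$, and $(a^L_{E''}, a^R_{E''}) = (a^R_{E'}, a^L_{E'})$ as viewed from $\triang''$. Property (2) of Definition \ref{def:local-coordinate-function} says that each such pair bijectively determines the pair $(n^{\mathrm{in}}, n^{\mathrm{out}})$ of in- and out-strand counts for the local web restriction on the edge; property (3) then forces the two counts to be related by the interchange $n^{\mathrm{out}}_{E'} = n^{\mathrm{in}}_{E''}$, $n^{\mathrm{in}}_{E'} = n^{\mathrm{out}}_{E''}$. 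This is exactly the symmetric strand-set condition of Definition \ref{def:ladder-web}, so Proposition \ref{prop:ladder-webs} provides a unique essential ladder-web $W_\biang = W(S)$ with the required boundary behavior.

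Setting $W := \bigcup_\triang W_\triang \cup \bigcup_\biang W_\biang$ gives a well-defined global web on $\surf$ that is in good position with respect to $\splitidealtriang$ by construction. Running the procedure of Section \ref{ssec:global-coordinates-from-local-coordinate-functions} in reverse, the assignment of coordinates to the dots of $\idealtriang$ given by $W$ is precisely $c$, that is $\themap{\idealtriang}^{\FG}(W) = c$, provided only that $W$ is non-elliptic so that $W$ belongs to $\webbasis{\surf}$ on which $\themap{\idealtriang}^{\FG}$ is defined.

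The main obstacle, therefore, is to verify that the web $W$ produced above has no disk-, bigon-, or square-faces. The plan is to argue this by face analysis: a hypothetical face $F$ of $W$ with at most four sides, when cut by the edges of $\splitidealtriang$ that pass through its interior, decomposes into pieces, each of which is a face of a local restriction $W_\triang$ or $W_\biang$. These local faces are sharply constrained: by Proposition \ref{prop:honeycomb-webs} every internal face of $W_\triang$ is a hexagon and every external face is a half-hexagon or a corner-region face with many sides, and by Proposition \ref{prop:ladder-webs} (in conjunction with the strand-matching enforced across each biangle edge by construction) the essential ladder-webs $W_\biang$ contribute no bigon, square, or disk. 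A careful bookkeeping of the web-edge sides of $F$ contributed by these local pieces—in spirit parallel to the Gauss-Bonnet identity of Proposition \ref{fact:GB}—will force $F$ to have at least six sides, yielding the desired contradiction. Once non-ellipticity is confirmed, $W \in \webbasis{\surf}$ realizes $c$ and the inclusion $\KTcone{\idealtriang} \subset \themap{\idealtriang}^{\FG}(\webbasis{\surf})$ follows.
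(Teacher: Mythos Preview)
Your construction of the local webs $W_\triang$ and the ladder gluing across biangles is exactly the paper's approach, and your compatibility argument using properties (2) and (3) is correct. The genuine gap is in the final step: the glued web $W$ is \emph{not} in general non-elliptic, so the face-counting argument you sketch cannot succeed.

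Concretely, the external faces of $W_\triang$ are not only half-hexagons. Between two adjacent oppositely-oriented corner arcs on the same corner of $\triang$ there sits an external $4$-face with two boundary sides and only two web-sides. When such a region is capped on each boundary side by an H-face of a ladder-web in an adjacent biangle (each H-face contributing three web-sides but losing its boundary side in the gluing), the resulting global face has exactly four sides: a square-face. More generally, two H-faces in distant biangles joined by a chain of such parallel corner-arc channels produce a square-face spanning several triangles; this is precisely the situation depicted in Figure~\ref{fig:resolving-an-elliptic-web} and in the right column of Figure~\ref{fig:coordinates-example2}. Your Gauss--Bonnet bookkeeping breaks down because these thin corner channels carry zero curvature contribution and can be arbitrarily long.

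The fix is the paper's Lemma~\ref{lem:fixing-an-elliptic-web}: rather than proving the glued web is non-elliptic, one observes that every square-face arising this way can be \emph{resolved} by swapping the two oppositely-oriented corner arcs bounding one of the intermediate channels (a local parallel-move). Iterating strictly decreases the number of vertices in biangles, so the process terminates in a non-elliptic web $W$. Since local parallel-moves do not change the corner-ambiguity class $[W_\triang]$, property~(1) of Definition~\ref{def:local-coordinate-function} guarantees $\themap{\triang}^{\FG}(W_\triang)$ is unchanged, hence $\themap{\idealtriang}^{\FG}(W)=c$ as desired. For surjectivity alone this is enough; the Main Lemma is only needed to show the resulting parallel-equivalence class is independent of choices, which is the content of injectivity.
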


		\section{Main result: global coordinates}
		\label{sec:main-theorem}

We summarize what we have done so far.  Consider a punctured surface $\surf$ with empty boundary; see \S \ref{sec:global-webs}.  Let $[\webbasis{\surf}]$ denote the collection of parallel-equivalence classes of global non-elliptic webs on $\surf$.  Assume that $\surf$ is equipped with an ideal triangulation $\idealtriang$.  For $N = - 8 \chi(\surf)$, in \S \ref{sec:global-coordinates-for-non-elliptic-webs} we defined the Fock-Goncharov global coordinate function $\themap{\idealtriang}^\FG : [\webbasis{\surf}] \to \Zpos^N$, depending on the choice of the ideal triangulation $\idealtriang$.  Proposition \ref{prop:injectivity}, which still needs to be proved, says that the mapping $\themap{\idealtriang}^\FG$ is injective.  In \S \ref{sec:knutson-tao-positive-integer-cone}, we defined the global Knutson-Tao positive cone $\KTcone{\idealtriang} \subset \Zpos^N$, which also depends on the ideal triangulation $\idealtriang$.  By construction, the image $\themap{\idealtriang}^\FG([\webbasis{\surf}]) \subset \KTcone{\idealtriang}$.  According to Proposition \ref{prop:surjectivity}, which also still needs to be proved, $\themap{\idealtriang}^\FG$ maps $[\webbasis{\surf}]$ onto $\KTcone{\idealtriang}$.  Therefore, assuming Propositions \ref{prop:injectivity} and \ref{prop:surjectivity}, we have proved:
	
\begin{theorem}
\label{thm:main-theorem-1}
	The Fock-Goncharov global coordinate function
\begin{equation*}
	\themap{\idealtriang}^\FG : [\webbasis{\surf}]
	\overset{\sim}{\longrightarrow} \KTcone{\idealtriang} \subset \mathbb{Z}_{\geq 0}^N
\end{equation*}
is a bijection of sets.  \qed
\end{theorem}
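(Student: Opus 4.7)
As the final theorem is explicitly the combination of Propositions \ref{prop:injectivity} and \ref{prop:surjectivity}, both of which are still to be proved, my proposal for Theorem \ref{thm:main-theorem-1} is really a proposal for these two propositions; the bijection then follows immediately, since the image of $\themap{\idealtriang}^\FG$ already lies in $\KTcone{\idealtriang}$ by construction of the Fock-Goncharov global coordinate function. Throughout, the common setup is to replace any given non-elliptic web by a parallel-equivalent one in good position with respect to $\splitidealtriang$, using Proposition \ref{prop:good-position}, and then to work triangle-by-triangle and biangle-by-biangle.

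For injectivity, suppose $\themap{\idealtriang}^\FG(W)=\themap{\idealtriang}^\FG(W')$ for two non-elliptic webs $W,W'$, which I may assume are in good position. Restricting to any triangle $\triang$ of $\splitidealtriang$, the equality of global coordinates forces the equality of local Fock-Goncharov coordinates $\themap{\triang}^\FG(W_\triang)=\themap{\triang}^\FG(W'_\triang)$. By Corollary \ref{cor:second-corollary}, the induced map on corner-ambiguity classes is injective, hence $[W_\triang]=[W'_\triang]$; in particular $W_\triang$ and $W'_\triang$ have the same boundary strand pattern on every edge of $\splitidealtriang$. Consequently, on any biangle $\biang$, the restrictions $W_\biang$ and $W'_\biang$ are ladder-webs with the same symmetric strand-set pair, so by Proposition \ref{prop:ladder-webs} they are isotopic. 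I then plan to transport $W$ to $W'$ by a finite sequence of modified H-moves and global parallel-moves that realize the required corner swaps inside each triangle, using the observation (see Figure \ref{fig:modified-H-move-no-coordinates}) that a modified H-move enacts precisely the swap of two parallel oppositely-oriented corner arcs in the intermediate triangle. By Proposition \ref{prop:good-position}, this exhibits $W$ and $W'$ as parallel-equivalent.

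For surjectivity, given $c\in\KTcone{\idealtriang}$, the splitting of $\idealtriang$ into $\splitidealtriang$ canonically distributes the edge-coordinates of $c$ to pairs of edge-dots on both triangles of $\splitidealtriang$ meeting at each biangle, preserving the triangle-coordinates. This yields local cone points $c_\triang\in\KTcone{\triang}$ for every triangle $\triang$ of $\splitidealtriang$. The crucial compatibility across each biangle, namely $n^{\mathrm{in}}_E=n^{\mathrm{out}}_{E'}$ and $n^{\mathrm{out}}_E=n^{\mathrm{in}}_{E'}$ for paired boundary edges, is automatic from properties (2) and (3) of the local coordinate function in Definition \ref{def:local-coordinate-function}, since swapping $(a^L,a^R)$ swaps $(n^{\mathrm{in}},n^{\mathrm{out}})$. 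By Corollary \ref{cor:first-corollary} and the explicit weak-basis expansion in Proposition \ref{LEM:POSITIVITY-OF-CONE}, each $c_\triang$ is realized by an explicit rung-less essential local web $W_\triang$ assembled from corner arcs $R_i,L_i$ and a honeycomb $H_n^{\mathrm{in}}$ or $H_n^{\mathrm{out}}$. Proposition \ref{prop:ladder-webs} then provides, in each biangle, a unique ladder-web with matching boundary, producing a global web $W$ with $\themap{\idealtriang}^\FG(W)=c$.

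The hard part — and what I expect to consume the bulk of \S \ref{sec:main-theorem}--\ref{sec:proof-of-main-lemma} — is that the global web $W$ constructed above must be verified to be \emph{non-elliptic}. Local essentiality within each triangle and biangle of $\splitidealtriang$ rules out small faces of the local restrictions, but a disk-, bigon-, or square-face of $W$ can in principle arise from concatenating an external face of a triangle-restriction with a square-face of a biangle ladder-web and another external face of a neighboring triangle. Controlling such ``straddling'' faces requires exploiting the freedom in the corner-ambiguity class: one must choose the corner-arc arrangement in each $W_\triang$ so that no forbidden configuration crosses the edges of $\idealtriang$. This is exactly the combinatorial content I expect to be codified by the Main Lemma of \S \ref{sec:proof-of-main-lemma}, and the same analysis should also feed back into the injectivity argument by showing that the corner permutations implementing $[W_\triang]=[W'_\triang]$ can be realized globally via modified H-moves without leaving good position.
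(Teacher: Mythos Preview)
Your overall architecture matches the paper's: both injectivity and surjectivity hinge on comparing non-elliptic webs whose triangle restrictions agree up to corner-ambiguity. However, there are two points where your diagnosis diverges from what actually happens.

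First, for surjectivity you have the emphasis backwards. Obtaining a non-elliptic global web from the ladder-gluing construction is \emph{not} the hard part. The paper dispatches it in a short Lemma (the ``square removing algorithm''): any disk- or bigon-face is ruled out immediately by local essentiality and tautness, and any square-face must straddle biangles in a specific shape, so it can be resolved into a corner-swap, strictly decreasing complexity. Once some non-elliptic $W$ with $\themap{\triang}^{\FG}(W_\triang)=c_\triang$ is produced, $\themap{\idealtriang}^{\FG}(W)=c$ is automatic, and surjectivity is done. No ``Main Lemma'' is needed for surjectivity alone.

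Second, your injectivity sketch has a genuine gap. You write that a modified H-move ``enacts precisely the swap of two parallel oppositely-oriented corner arcs,'' and propose to string such swaps together to carry $W$ to $W'$. The forward observation is correct, but the converse fails: a modified H-move requires an existing H-face in an adjacent biangle to push, it alters the ladder-webs in \emph{two} biangles simultaneously, and the intermediate web must remain non-elliptic or the next move is not even defined. There is no local reason why the specific permutation of corner arcs distinguishing $W_\triang$ from $W'_\triang$ is achievable by such moves without creating squares elsewhere. This is exactly the content of the paper's Main Lemma, and its proof in \S\ref{sec:proof-of-main-lemma} is substantial: it passes to ``global pictures'' on the holed surface, establishes a Fellow-Traveler correspondence between curves of $\langle W\rangle$ and $\langle W'\rangle$, and then runs a careful algorithm (via nested pyramids and saturated movable sets) that pushes crossing points into matching shared-route-biangles while never decreasing the count of already-matched crossings. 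Only after this alignment can one conclude that the oriented strand-sequences on every edge agree, hence $W=W'$.

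Organizationally, the paper does not separate injectivity and surjectivity as you do; it builds one inverse map $\invmap{\idealtriang}^{\FG}$ via ladder-gluing plus square-removal, and the Main Lemma is the statement that this inverse is well-defined (independent of the corner-arc choices). Your injectivity statement is then literally a special case of the Main Lemma, and surjectivity falls out of the construction for free.
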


\begin{remark}
In \S \ref{sec:webs-on-surfaces-with-boundary}, we generalize Theorem \ref{thm:main-theorem-1} to the setting of surfaces-with-boundary $\surfbord$.  
\end{remark}

		\subsection{Inverse mapping}
		\label{ssec:inverse-mapping}

Our strategy for proving Propositions \ref{prop:injectivity} and \ref{prop:surjectivity} (equivalently, Theorem \ref{thm:main-theorem-1}) is to construct an explicit inverse mapping
\begin{equation*}
	\invmap{\idealtriang}^\FG:
	\KTcone{\idealtriang}
	\longrightarrow
	[\webbasis{\surf}]
\end{equation*}
namely a function that is both a left and a right inverse for the function $\themap{\idealtriang}^\FG$.  The definition of the mapping $\invmap{\idealtriang}^\FG$ is relatively straightforward, and it will be automatic that it is an inverse for $\themap{\idealtriang}^\FG$.  The more challenging part will be to show that $\invmap{\idealtriang}^\FG$ is well-defined.

		\subsection{Inverse mapping: ladder gluing construction}
		\label{ssec:ladder-gluing-construction}

Recall that for a triangle $\triang$ we denote by $\webbasis{\triang}$ the collection of rung-less essential local webs $W_\triang$ in $\triang$; see Definition \ref{def:corner-ambiguity}.  We will once again make use of the split ideal triangulation $\splitidealtriang$; see \S \ref{ssec:split-ideal-triangulations}.

\begin{definition}
\label{def:compatible}
	A collection $\{ W_\triang \}_{\triang \in \splitidealtriang }$ of local webs $W_\triang \in \webbasis{\triang}$, varying over the triangles $\triang$ of $\splitidealtriang$, is \textit{compatible} if for each biangle $\biang$, with boundary edges $E^\prime$ and $E^{\prime\prime}$, sitting between two triangles $\triang^\prime$ and $\triang^{\prime\prime}$, respectively, the number of out-strands (resp. in-strands) of $W_{\triang^\prime}$ on $E^\prime$ is equal to the number of in-strands (resp. out-strands) of $W_{\triang^{\prime\prime}}$ on $E^{\prime\prime}$.  
\end{definition}

For example, see the third row of Figure \ref{fig:coordinates-example2}, an example on a once-punctured torus.  

To a compatible collection $\{ W_\triang \}_{\triang \in \splitidealtriang }$ of local webs, we will associate a global web $W$ on $\surf$ that \underline{need not be non-elliptic} and that is in good position with respect to $\splitidealtriang$; recall Definition \ref{def:good-position}.  The global web $W$ is well-defined up to ambient isotopy of $\surf$ respecting $\splitidealtriang$.  

\textit{Construction of $W$.}  Consider a biangle $\biang$ sitting between two triangles $\triang^\prime$ and $\triang^{\prime\prime}$.  The local webs $W_{\triang^\prime}$ and $W_{\triang^{\prime\prime}}$ determine strand sets $S^\prime$ and $S^{\prime\prime}$ on the boundary edges $E^\prime$ and $E^{\prime\prime}$, respectively.  By the compatibility property, the strand-set pair $S = (S^\prime, S^{\prime\prime})$ is symmetric; see Definition \ref{def:ladder-web}.  Let $W_\biang = W_\biang(S)$ be the induced ladder-web in $\biang$; see Definition \ref{def:actual-def-of-ladder-web}.  

Define $W$ to be the global web obtained by gluing together the local webs $\{ W_\triang \}_{\triang \in \splitidealtriang}$ and $\{ W_\biang \}_{\biang \in \splitidealtriang}$ in the obvious way; see the fourth row and the left side of the fifth row of Figure~\ref{fig:coordinates-example2}.  

\begin{definition}
We say that the global web $W$ has been obtained from the compatible collection $\{ W_\triang \}_{\triang \in \splitidealtriang }$ of local webs by applying the \textit{ladder gluing construction}.  
\end{definition}

The following statement is immediate.

\begin{lemma}
\label{lem:basics-of-ladder-construction}
	A global web $W$ obtained via the ladder gluing construction is in good position with respect to $\splitidealtriang$.  Conversely, if $W$ is a global web in good position, then $W$ can be recovered as the result of applying the ladder gluing construction to $\{ W_\triang = W \cap \triang \}_{\triang \in \splitidealtriang}$.  \qed
\end{lemma}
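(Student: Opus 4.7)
The statement is essentially a bookkeeping lemma that follows by unpacking definitions and combining two earlier results, namely Proposition \ref{prop:ladder-webs} characterizing essential local webs in biangles as ladder-webs, and the definition of good position. My plan is to treat the two directions separately, and in each direction to verify the required conditions triangle-by-triangle and biangle-by-biangle.

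For the forward direction, I would start with a compatible collection $\{W_\triang\}_{\triang \in \splitidealtriang}$ and the associated global web $W$ produced by the ladder gluing construction. The condition for good position (Definition \ref{def:good-position}) has two parts: the restriction $W \cap \triang$ to each triangle $\triang$ of $\splitidealtriang$ is rung-less essential, and the restriction $W \cap \biang$ to each biangle $\biang$ is essential. The first holds by construction, since $W \cap \triang = W_\triang \in \webbasis{\triang}$ by definition of the ladder gluing construction. For the second, by construction $W \cap \biang = W_\biang(S)$ is the ladder-web built from the symmetric strand-set pair $S = (S', S'')$ induced by the neighboring triangles (symmetric because the collection is compatible, which is exactly the defining property of a symmetric strand-set pair). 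By the first assertion of Proposition \ref{prop:ladder-webs}, every ladder-web is essential, so $W \cap \biang$ is essential as required.

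For the converse, I would begin with a global web $W$ in good position and set $W_\triang = W \cap \triang$ for each triangle $\triang$ of $\splitidealtriang$; by definition of good position these lie in $\webbasis{\triang}$. To show that the collection $\{W_\triang\}$ is compatible, fix a biangle $\biang$ between triangles $\triang'$ and $\triang''$ with boundary edges $E'$ and $E''$. By good position, $W \cap \biang$ is essential, so by the second half of Proposition \ref{prop:ladder-webs} it equals the ladder-web $W_\biang(S)$ for a unique symmetric strand-set pair $S = (S', S'')$. The strands of $S'$ (respectively $S''$) coincide with the ends of $W_{\triang'}$ (respectively $W_{\triang''}$) on $E'$ (respectively $E''$), so the symmetry of $S$ is exactly the compatibility condition of Definition \ref{def:compatible}. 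Finally, applying the ladder gluing construction to $\{W_\triang\}$ produces in the biangle $\biang$ the same ladder-web $W_\biang(S) = W \cap \biang$ (by uniqueness in Proposition \ref{prop:ladder-webs}), and in each triangle reproduces $W_\triang = W \cap \triang$; thus the output of the construction agrees with $W$ on each triangle and biangle of $\splitidealtriang$, hence equals $W$.

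There is no real obstacle here; the only point that requires any thought is invoking the uniqueness part of Proposition \ref{prop:ladder-webs} to identify the biangle piece of $W$ with the ladder-web built from the boundary strand data, which is what makes the construction genuinely inverse to restriction. Everything else is a direct translation between Definitions \ref{def:good-position}, \ref{def:compatible}, and the ladder gluing construction.
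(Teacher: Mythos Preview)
Your proposal is correct and is exactly the unpacking the paper has in mind; the paper simply declares the lemma ``immediate'' and marks it with a \qed, so your treatment is just a fully spelled-out version of the same argument, relying on Definition~\ref{def:good-position}, Definition~\ref{def:compatible}, and both halves of Proposition~\ref{prop:ladder-webs}.
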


If the global web $W$ is obtained via the ladder gluing construction, then $W$ could be (1) non-elliptic, for example see the left side of the fifth row of Figure \ref{fig:coordinates-example2}, or (2) elliptic, for example see the fourth row of Figure \ref{fig:coordinates-example2}.  

\begin{figure}[htb]
	\centering
	\includegraphics[scale=.54]{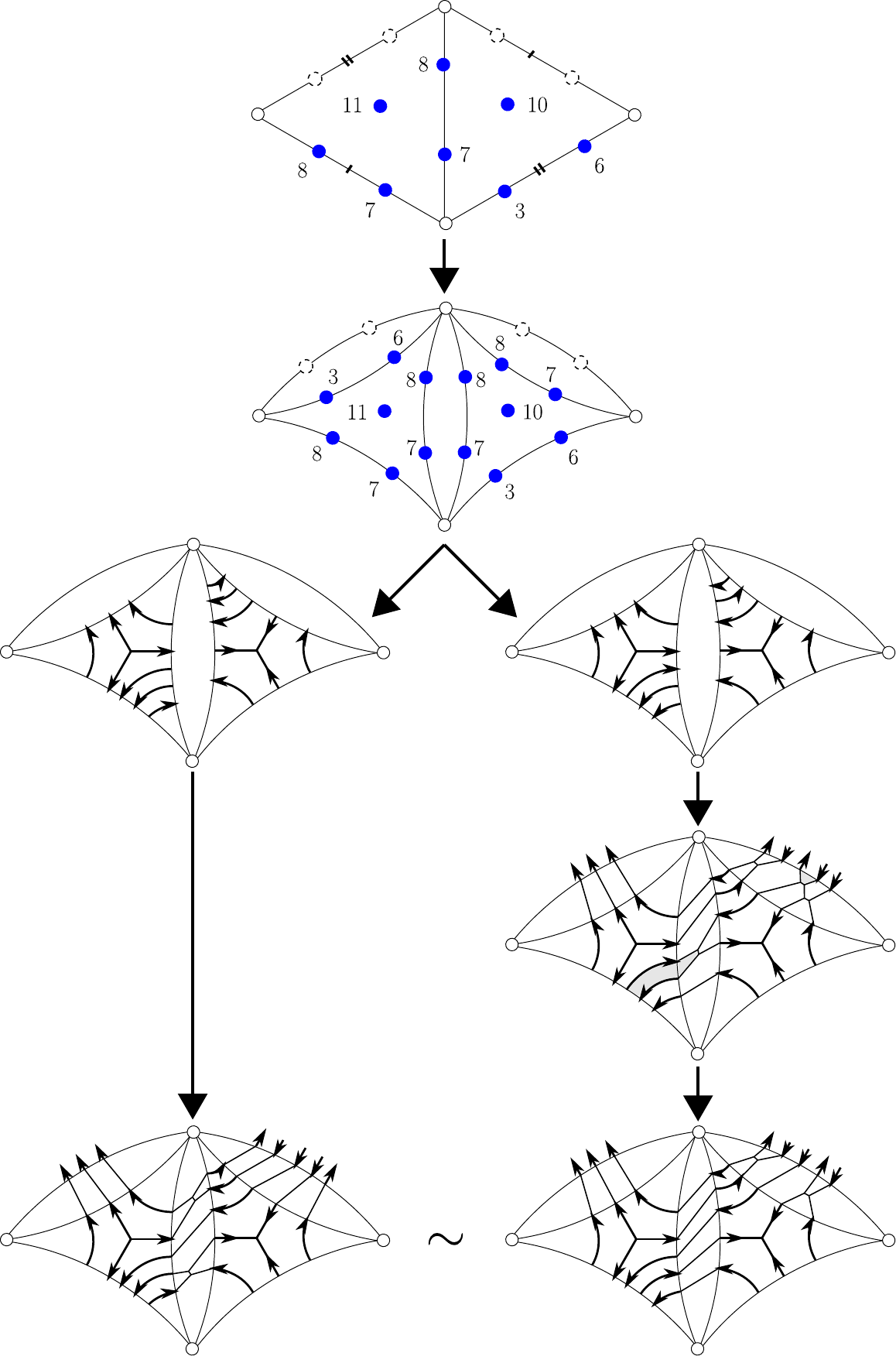}
	\caption{Ladder gluing construction (on the once-punctured torus).  Shown are two different ways of assigning the local webs, differing by permutations of corner arcs.  On the left, the result of the gluing is a non-elliptic web.  On the right, the result is an elliptic web, which has to be resolved by removing a square before becoming a non-elliptic web.  The two non-elliptic webs obtained in this way are equivalent.}
	\label{fig:coordinates-example2}
\end{figure}

		\subsection{Inverse mapping: resolving an elliptic web}
		\label{ssec:inverse-mapping-correcting-an-elliptic-web}

Recall the notion of a local parallel-move; see Figure \ref{fig:local-parallel-move-no-coordinates}.  Note that if $\{ W_\triang^\prime \}_{\triang \in \splitidealtriang}$ is a compatible collection of local webs, and if $W_\triang$ is related to $W^\prime_\triang$ by a sequence of local parallel-moves, then $\{ W_\triang\}_{\triang \in \splitidealtriang}$ is also compatible.  

\begin{lemma}
\label{lem:fixing-an-elliptic-web}
	Given a compatible collection $\{ W^\prime_\triang \}_{\triang \in \splitidealtriang}$ of local webs, there exist local webs $\{ W_\triang \}_{\triang \in \splitidealtriang}$ such that $W_\triang$ is related to $W^\prime_\triang$ by a sequence of local parallel-moves, and the global web $W$ obtained by applying the ladder gluing construction to $\{ W_\triang \}_{\triang \in \splitidealtriang}$ is non-elliptic. 
\end{lemma}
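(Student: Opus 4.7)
The plan is to prove existence of $\{W_\triang\}$ by a choice-and-reduce argument. First apply the ladder gluing construction to the given compatible collection $\{W^\prime_\triang\}$ to obtain a global web $W^\prime$, which by Lemma \ref{lem:basics-of-ladder-construction} is in good position with respect to $\splitidealtriang$. If $W^\prime$ is already non-elliptic, there is nothing to prove, so assume $W^\prime$ possesses at least one bad face, namely a disk-, bigon-, or square-face~$F$.

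The key observation is that, by Propositions \ref{prop:ladder-webs} and \ref{prop:honeycomb-webs}, every local restriction $W^\prime \cap \triang$ (resp.\ $W^\prime \cap \biang$) is by construction non-elliptic. Hence such a face $F$ cannot be contained in any single triangle or biangle of $\splitidealtriang$: its boundary must cross at least one edge of $\splitidealtriang$, and in fact thread through a non-trivial cyclic sequence of triangles and biangles. My strategy is to trace the boundary of $F$ through $\splitidealtriang$ and to locate a corner $v$ of some triangle $\triang$ at which two parallel corner arcs of $W^\prime_\triang$ lie on the boundary of $F$. A local parallel-move at $v$ swaps these two arcs, thereby re-routing the corresponding pair of ladder strands through the adjacent biangles and ``breaking'' the closed boundary cycle that formed $F$.

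Concretely, I would phrase this as an extremal argument. Consider the finite set of compatible collections $\{W_\triang\}$ obtained from $\{W^\prime_\triang\}$ by sequences of local parallel-moves (these all lie in the same corner-ambiguity classes $[W^\prime_\triang]$), and among them choose one minimising a suitable monovariant on the resulting global web $W$. A natural candidate is the lexicographic tuple counting (disk-faces, bigon-faces, square-faces), possibly refined by their perimeters. I claim such a minimiser must be non-elliptic; otherwise, tracing the boundary of a bad face $F$ of $W$ as above would produce a corner of some triangle where a local parallel-move strictly decreases the monovariant, contradicting minimality.

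The hard part will be the technical step that guarantees the existence of such a corner-arc swap on the boundary of $F$. This requires a case analysis based on the rigid local descriptions: inside each triangle $W_\triang$ is a honeycomb together with corner arcs, inside each biangle $W_\biang$ is a ladder, and the boundary of $F$ decomposes accordingly into alternating segments of half-hexagon edges and ladder strands. A pigeon-hole argument, exploiting the closed cyclic nature of $\partial F$ and the limited number of boundary sides of $F$ (at most $4$ in the bigon and square cases, and $0$ with a bounding loop in the disk case), should force the presence of two adjacent arcs of $\partial F$ issuing from the same corner of some $\triang$ with a compatible orientation pattern; swapping these via a local parallel-move removes $F$ from the face list and suffices to complete the induction, after verifying that no new bad face of higher priority is created in the process.
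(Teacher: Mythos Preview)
Your overall strategy---apply the ladder gluing, then reduce by local parallel-moves with a complexity argument---is the same as the paper's, but you are missing the structural insights that make the argument go through, and what you sketch in their place is not correct as stated.

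First, the paper does not treat disk-, bigon-, and square-faces on an equal footing. Its Step~1 shows that disk- and bigon-faces \emph{never occur} in $W'$ at all: if such a face existed, its boundary would create a cap- or fork-face in some restriction $W'_\triang$ or $W'_\biang$, contradicting that these local webs are \emph{taut} (not merely non-elliptic, as you write). You never isolate this, and instead carry disk- and bigon-faces through your entire argument, which needlessly complicates your monovariant and your case analysis.

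Second, once only square-faces remain, the paper pins down their exact shape (its Step~2): each square-face is bounded by two H's sitting in biangles, joined by a pair of parallel oppositely-oriented strands threading through the intermediate triangles and biangles. This rigid description is what makes the resolution possible. Your proposal to ``locate a corner $v$ of some $\triang$ at which two parallel corner arcs of $W'_\triang$ lie on the boundary of $F$'' and then perform a \emph{single} swap there is not what happens: resolving a square-face requires performing local parallel-moves simultaneously in \emph{every} intermediate triangle along the strip (see the paper's Figure depicting the resolution), not just one. A single swap at one corner would typically destroy good position or create a new square elsewhere, so your claim that ``no new bad face of higher priority is created'' is exactly the point that fails without the global picture of the square.

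Finally, the paper's monovariant is simply the total number of web vertices lying in the biangles; each square resolution drops this by~4. Your lexicographic face-count, by contrast, would require you to check that resolving one square does not create another---which is true, but only once you have the structural description you are missing.
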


\begin{proof}
	Suppose that the global web $W^\prime$ obtained by applying the ladder gluing construction to the local webs $\{ W^\prime_\triang \}_{\triang \in \splitidealtriang}$ is elliptic.  
	
\textit{Step 1.}  We show that the elliptic global web $W^\prime$ has no disk- or bigon-faces.  If there were a disk- or bigon-face, then it could not lie completely in a triangle $\triang$ or biangle $\biang$ of $\splitidealtriang$, for this would violate that the  local web restriction $W^\prime_\triang$ or $W^\prime_\biang$ is essential (in particular, non-elliptic) by Lemma \ref{lem:basics-of-ladder-construction}.  Consequently, there is a cap- or fork-face lying in some $\triang$ or $\biang$, contradicting that the  local web restriction $W^\prime_\triang$ or $W^\prime_\biang$ is essential (in particular, taut).
	
\textit{Step 2.}  We consider the possible positions of square-faces relative to the split ideal triangulation $\splitidealtriang$.  We claim that a square-face can only appear as demonstrated at the top of Figure \ref{fig:resolving-an-elliptic-web}, namely having two H-faces in two (possibly identical) biangles $\biang$ and, in between, having opposite sides traveling parallel through the intermediate triangles $\triang$ and biangles $\biang$.  Indeed, otherwise there would be a square-, cap-, or fork-face, similar to Step 1.  
	
\textit{Step 3.}  We remove a square-face. Since the square-faces are positioned in this way, given a fixed square-face there is a well-defined \textit{state} into which the square-face can be resolved, illustrated in Figure \ref{fig:resolving-an-elliptic-web}.  The resulting global web $W_1$ is in good position with respect to $\splitidealtriang$.  Also, $W_1$ is less complex than  $W^\prime$, where the \textit{complexity} of a global web in good position is measured by the total number of vertices lying in the union $\cup_\biang \biang$ of all of the biangles $\biang$.  Note that resolving a square-face decreases the complexity by 4.  

The effect of resolving a square-face is to perform, in each triangle $\triang$, some number (possibly zero) of local parallel-moves, replacing the original local webs $\{ W^\prime_\triang \}_{\triang \in \splitidealtriang}$ with new local webs $\{ (W_1)_\triang \}_{\triang \in \splitidealtriang}$ such that $(W_1)_\triang$ is equivalent to $W^\prime_\triang$ up to corner-ambiguity; see Figure~\ref{fig:resolving-an-elliptic-web}.

\textit{Step 4.}  By a complexity argument, we can repeat the previous step until we obtain a sequence $W^\prime = W_0, W_1, W_2, \dots, W_n = W$ of global webs in good position such that $\{ (W_{i+1})_\triang \}_{\triang \in \splitidealtriang}$ is related to $\{ (W_{i})_\triang \}_{\triang \in \splitidealtriang}$ by  a sequence of local parallel-moves, and such that $W$ has no square-faces.  By Lemma \ref{lem:basics-of-ladder-construction}, $W$ is recovered by applying the ladder gluing construction to $\{ W_\triang \}_{\triang \in \splitidealtriang}$. By Step 1, $W$ has no disk- or bigon-faces.  Thus, $W$ is non-elliptic.
\end{proof}  

We refer to the algorithm used in the proof of Lemma \ref{lem:fixing-an-elliptic-web} as the \textit{square removing algorithm}.  For example, see the fourth row and the right side of the fifth row of Figure \ref{fig:coordinates-example2}.  

Note that the algorithm removes the square-faces at random, thus the local webs $\{ W_\triang \}_{\triang \in \splitidealtriang}$ satisfying the conclusion of Lemma \ref{lem:fixing-an-elliptic-web} are not necessarily unique.  For example, see Figure~\ref{fig:elliptic-web-non-example}.

\begin{figure}[htb]
	\centering
	\includegraphics[width=.38\textwidth]{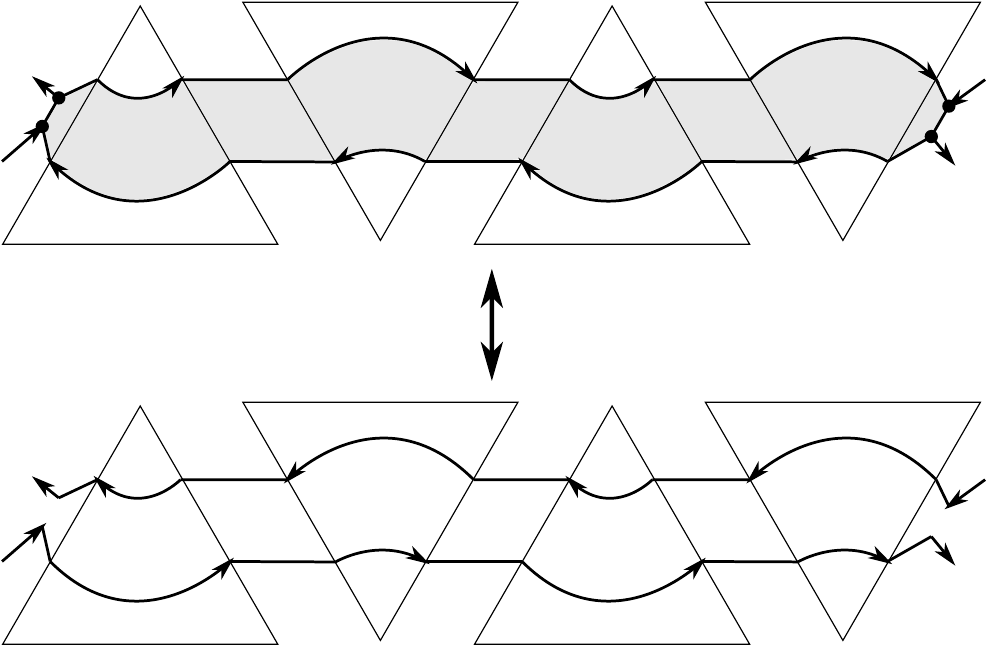}
	\caption{Resolving a square-face}
	\label{fig:resolving-an-elliptic-web}
\end{figure}

\begin{figure}[htb]
	\centering
	\includegraphics[width=.48\textwidth]{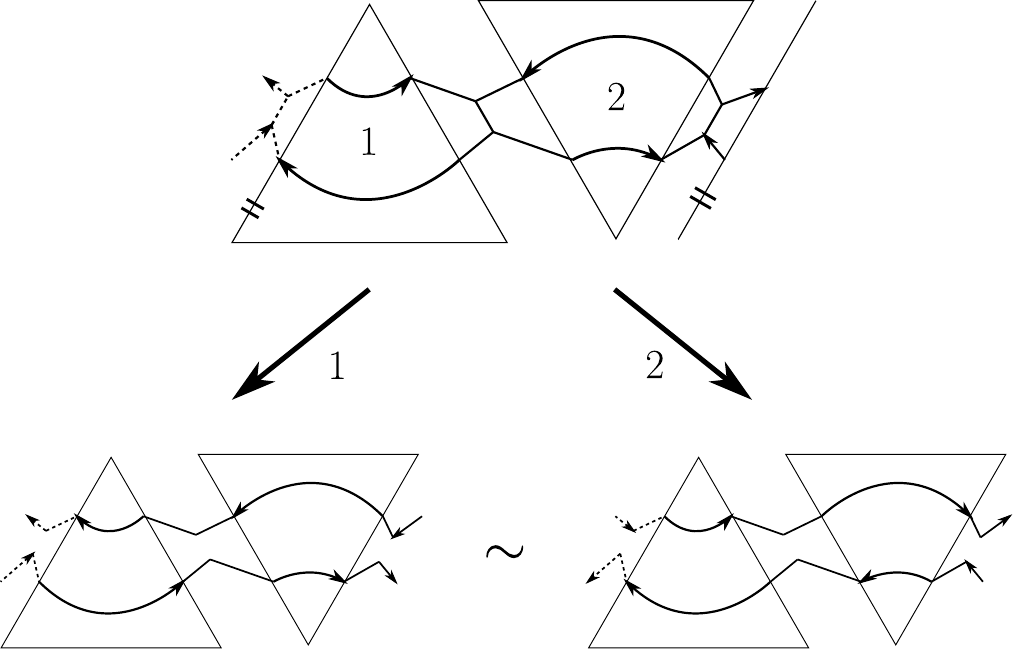}
	\caption{Elliptic web resulting from the ladder gluing construction (top), and
	two different applications of the square removing algorithm, yielding different, but parallel-equivalent, non-elliptic webs (bottom)}
	\label{fig:elliptic-web-non-example}
\end{figure}

		\subsection{Inverse mapping: definition}
		\label{ssec:inverse-mapping-def}

Let $c$ be a point in the global cone $\KTcone{\idealtriang}$; see Definition \ref{def:global-cone-point}.  Our goal is to associate to $c$ a parallel-equivalence class $\invmap{\idealtriang}^\FG(c) \in [\webbasis{\surf}]$ of global non-elliptic webs on $\surf$.  Equivalently, we want to associate to $c$ a non-elliptic web $\invmaptilde{\idealtriang}^\FG(c)$ on $\surf$ well-defined up to parallel-equivalence; see Definition \ref{def:parallel-equivalent-webs}.  Recall that we identify the triangles $\triang$ of the ideal triangulation $\idealtriang$ with the triangles $\triang$ of the split ideal triangulation $\splitidealtriang$.  

\textit{Construction of $\invmaptilde{\idealtriang}^\FG(c)$.}  The global cone point $c$ determines a local cone point $c_\triang$ in the local cone $\KTcone{\triang}$ for each triangle $\triang$ of $\idealtriang$; see just before Definition \ref{def:global-cone-point}.  By the triangle identifications between $\idealtriang$ and $\splitidealtriang$, the local cone point $c_\triang \in \KTcone{\triang}$ is assigned to each triangle $\triang$ of $\splitidealtriang$; see the first and second rows of Figure \ref{fig:coordinates-example2}.  
	
Note, by construction, corresponding edge-coordinates located across a biangle $\biang$ take the same value.  More precisely, if $\biang$ sits between two triangles $\triang^\prime$ and $\triang^{\prime\prime}$ and if the boundary edges of $\biang$ are $E^\prime$ and $E^{\prime\prime}$, respectively, then the coordinate $a^L_{E^\prime}$ (resp. $a^R_{E^\prime}$) lying on the left-edge-dot (resp. right-edge-dot) as viewed from $\triang^\prime$ agrees with the coordinate $a^R_{E^{\prime\prime}}$ (resp. $a^L_{E^{\prime\prime}}$) lying on the right-edge-dot (resp. left-edge-dot) as viewed from $\triang^{\prime\prime}$; see Figure \ref{fig:coordinates-example2}.

By Corollaries \ref{cor:first-corollary} and \ref{cor:second-corollary}, for each local cone point $c_\triang \in \KTcone{\triang}$ assigned to a triangle $\triang$ of $\splitidealtriang$, there exists a unique corner-ambiguity class $[W_\triang]$ of local webs $W_\triang$ in $\webbasis{\triang}$ such that $\themap{\triang}^\FG(W_\triang) = c_\triang$ for any representative $W_\triang$ of $[W_\triang]$.  

We now make a \underline{choice} of such a representative $W_\triang$ for each $\triang$.  Two different choices $W_\triang$ and $W^\prime_\triang$ of local webs representing $[W_\triang] = [W^\prime_\triang]$ are, by definition, related by  local parallel-moves; see the third row of Figure \ref{fig:coordinates-example2}.  

Since corresponding edge-coordinates across biangles agree, the collection $\{ W_\triang \}_{\triang \in \splitidealtriang}$ of local webs is compatible (Definition \ref{def:compatible}).  This follows by Figure \ref{fig:triangle-hilbert-basis}.  (There is also a general argument, by properties (2) and (3) in Definition \ref{def:local-coordinate-function}, which uses the fact that if $W_\triang \in \webbasis{\triang}$, then the opposite web $W^\mathrm{op}_\triang$ obtained by reversing all of the orientations of $W_\triang$ is also in $\webbasis{\triang}$).  

By Lemma \ref{lem:fixing-an-elliptic-web}, this choice of a compatible collection $\{ W_\triang \}_{\triang \in \splitidealtriang}$ of local webs can be made (in a non-unique way) such that the global web $W$ on $\surf$ obtained by applying the ladder gluing construction to $\{ W_\triang \}_{\triang \in \splitidealtriang}$ is non-elliptic.  Finally, we define $\invmaptilde{\idealtriang}^\FG(c) = W$.  In order for the global web $\invmaptilde{\idealtriang}^\FG(c)$ to be well-defined up to parallel-equivalence, we require:  

\begin{mainlemma}
\label{lem:main-lemma}
	Assume that each of $\{W_\triang \}_{\triang \in \splitidealtriang}$ and $\{W^\prime_\triang\}_{\triang \in \splitidealtriang}$ is a compatible collection of rung-less essential webs in the $\webbasis{\triang}$, satisfying
	\begin{enumerate}
		\item  for each triangle $\triang$, the local webs $W_\triang$ and $W^\prime_\triang$ are equivalent up to corner-ambiguity;
		\item  both global webs $W$ and $W^\prime$, obtained from the compatible collections $\{ W_\triang \}_{\triang \in \splitidealtriang}$ and $\{ W^\prime_\triang \}_{\triang \in \splitidealtriang}$, respectively, by applying the ladder gluing construction, are non-elliptic.
	\end{enumerate}
	Then, the non-elliptic webs $W$ and $W^\prime$ represent the same parallel-equivalence class in $[\webbasis{\surf}]$.  
\end{mainlemma} 

\begin{definition}
	The \textit{inverse mapping}
\begin{equation*}
	\invmap{\idealtriang}^\FG : \KTcone{\idealtriang} \longrightarrow [\webbasis{\surf}]
\end{equation*}
	is defined by sending a cone point $c$ in the global Knutson-Tao cone $\KTcone{\idealtriang}$ to the parallel-equivalence class in $[\webbasis{\surf}]$ of the global non-elliptic web $\invmaptilde{\idealtriang}^\FG(c)$ on $\surf$.  
\end{definition}

\begin{proof}[Proof of Propositions \ref{prop:injectivity} and \ref{prop:surjectivity}]
	Assuming Main Lemma \ref{lem:main-lemma} to be true, it follows immediately from the constructions that the well-defined mapping $\invmap{\idealtriang}^\FG : \KTcone{\idealtriang} \to [\webbasis{\surf}]$  is the set-functional inverse of the Fock-Goncharov global coordinate function $\themap{\idealtriang}^\FG : [\webbasis{\surf}] \to \KTcone{\idealtriang}$.  
\end{proof}

In summary, we have reduced the proof of Theorem \ref{thm:main-theorem-1} to proving the main lemma.

		\section{Proof of the main lemma}
		\label{sec:proof-of-main-lemma}

In this section, we prove Main Lemma \ref{lem:main-lemma}.  In particular, we provide an explicit algorithm taking one web to the other by a sequence of modified H-moves and global parallel-moves.  

The strategy of the proof is simple, whereas its implementation is more complicated due to the combinatorics.  The key idea is to think of a web $W$ not as a graph, but as a multi-curve $\left< W \right>$, which we call a \textit{web picture}; see Figure \ref{fig:BEV-multicurve-example12}.  We have already previewed web pictures at the local level, in Definitions \ref{def:biangle-local-picture} and \ref{def:triangle-local-picture}  (see also the second paragraph of \S \ref{ssec:local-coordinates-from-Fock-Goncharov-theory}).  

If $W$ and $W^\prime$ are two non-elliptic webs as in Main Lemma \ref{lem:main-lemma}, we show that their associated multi-curves $\left< W \right>$ and $\left< W^\prime \right>$ satisfy a fellow-travel property; see Lemma \ref{lem:life-neighbors-lemma}.  As a consequence of this Fellow Traveler Lemma, the intersection points $\mathscr{P} \subset \left< W \right>$ are in natural bijection with those $\mathscr{P}^\prime \subset \left< W^\prime \right>$; here, the non-elliptic hypothesis is necessary.  To finish, we can use modified H-moves (Figures \ref{fig:modified-H-move-no-coordinates} and \ref{fig:odified-H-move-BEV-curve11}) to push around these intersection points in both webs until they are in the same configuration, establishing that $W$ and $W^\prime$ are equivalent.

		\subsection{Preparation:  web pictures on the surface}
		\label{ssec:non-elliptic-webs-and-global-pictures-in-the-surface}

 For a web $W$ on $\surf$ in good position with respect to the split ideal triangulation $\splitidealtriang$, the restrictions $W_\biang = W \cap \biang$ and $W_\triang = W \cap \triang$ in the biangles $\biang$ and triangles $\triang$ of $\splitidealtriang$ are essential and rung-less essential local webs, respectively.  By Definitions \ref{def:biangle-local-picture} and \ref{def:triangle-local-picture}, we may consider the corresponding local pictures $\left< W_\biang \right>$ and $\left< W_\triang \right>$, which are in particular immersed multi-curves in the biangle $\biang$ and the holed triangle $\triang^0$, respectively; see Definition \ref{def:multi-curve} and Figures \ref{fig:decomposing-essential-web-bigon} and \ref{fig:disconnected-rung-less-essential-web}.  

\begin{definition}
\label{def:global-picture}
	The \textit{holed surface} $\surf^0$ is the surface $\surf$ minus one open disk per triangle $\triang$ of $\splitidealtriang$.  The \textit{global picture} $\left< W \right>$ corresponding to a web $W$ in good position with respect to $\splitidealtriang$ is the multi-curve on the holed surface $\surf^0$ obtained by gluing together in the obvious way the collection of local pictures $\{ \left< W_\biang \right> \}_{\biang \in \splitidealtriang}$ and $\{ \left< W_\triang \right> \}_{\triang \in \splitidealtriang}$ associated to the biangles $\biang$ and triangles $\triang$ of $\splitidealtriang$, well-defined up to ambient isotopy of $\surf^0$ respecting $\splitidealtriang$.  See Figure \ref{fig:BEV-multicurve-example12}.
\end{definition}

Figure \ref{fig:odified-H-move-BEV-curve11} depicts how a modified H-move between webs $W$ and $W^\prime$ in good position looks when viewed from the perspective of the global pictures $\left< W \right>$ and $\left< W^\prime \right>$; see Figure \ref{fig:modified-H-move-no-coordinates}.  

Note the global picture $\left< W \right>$ has no U-turns on any edge of $\splitidealtriang$, meaning there are no bigons formed between a component $\gamma$ of $\left< W \right>$ and $\splitidealtriang$.  We call this the \textit{no-switchbacks property}.  

\begin{definition}
\label{def:based-global-picture}
A \textit{based multi-curve} $(\Gamma, \{ x_0^j \})$ on the holed surface $\surf^0$ is a multi-curve $\Gamma = \{ \gamma_i \}$ equipped with a base point $x_0^j \in \gamma_j$ for each loop component $\gamma_j$ of $\Gamma$, such that the base points $x_0^j$ do not lie on any edges of the split ideal triangulation $\splitidealtriang$; see Definition \ref{def:multi-curve}.  
\end{definition}

\begin{figure}[htb]
	\centering
	\includegraphics[width=.72\textwidth]{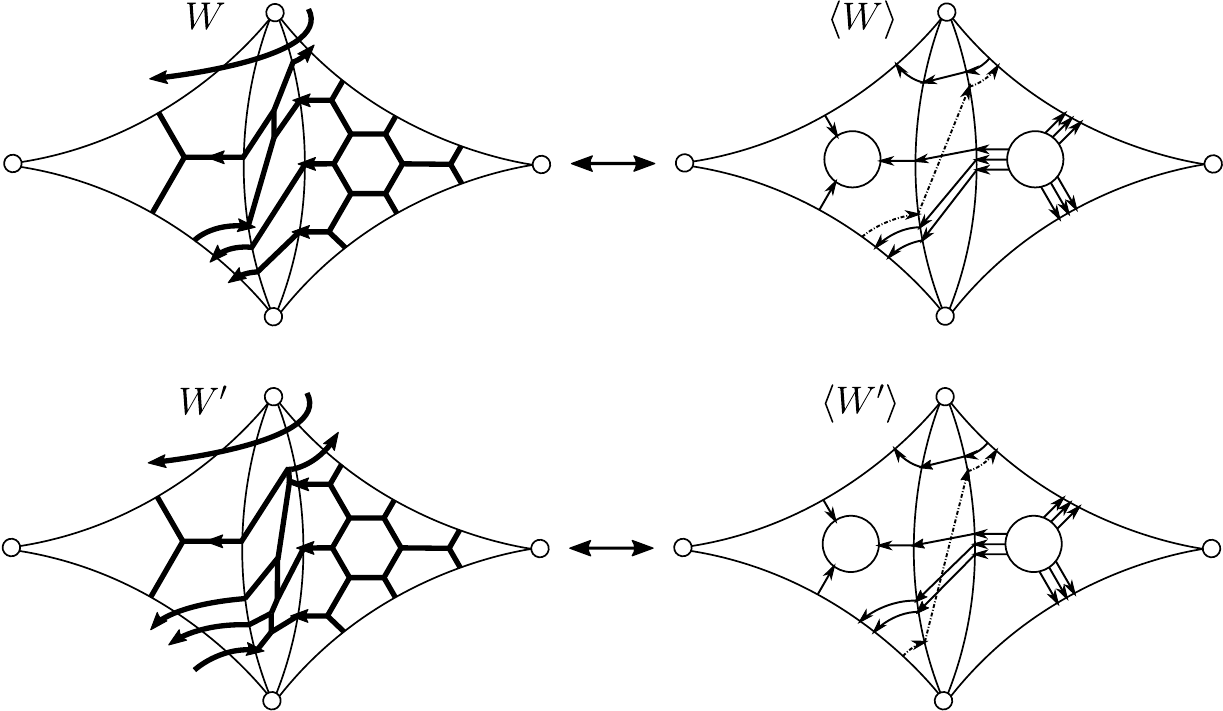}
	\caption{(Parts of) two webs $W$ and $W^\prime$ in good position on the surface, and their corresponding global pictures $\left< W \right>$ and $\left< W^\prime \right>$ on the holed surface.  Note that, over triangles, $W$ and  $W^\prime$ differ by a permutation of corner arcs.}
	\label{fig:BEV-multicurve-example12}
\end{figure}

\begin{figure}[htb]
	\centering
	\includegraphics[scale=.52]{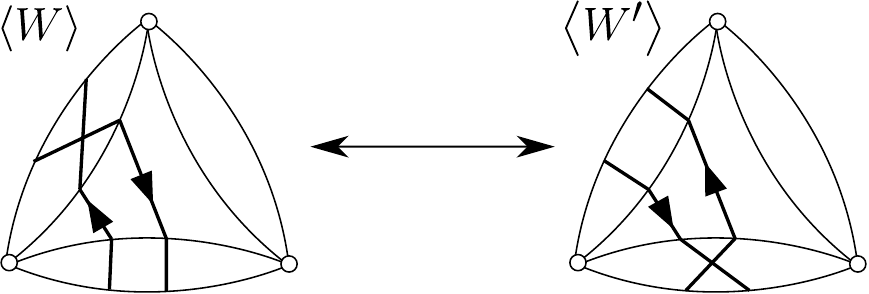}
	\caption{Modified H-move from the perspective of web pictures}
	\label{fig:odified-H-move-BEV-curve11}
\end{figure}

		\subsection{Preparation: sequences}
		\label{ssec:section20.5}

A \textit{convex subset} $I \subset \Z$ of the integers is a subset such that if $n, m \in I$ are integers, then all the integers between $n$ and $m$ are in $I$.
	
	A \textit{sequence} $(a_i)_{i \in I}$ valued in a set $\mathscr{A}$ is a function $I \to \mathscr{A}$, $i \mapsto a_i$, where $I \subset \Z$ is a convex subset of the integers.  
	
Given a sequence $(a_i)_{i \in I}$, a \textit{subsequence} $(a_{i_k})_{k \in K}$ is the sequence $K \to \mathscr{A}$ determined by a convex subset $K \subset \Z$  together with an order-preserving injective function $K \to I$, $k \mapsto i_k$.  
	
Given a sequence $(a_i)_{i \in I}$, a \textit{convex subsequence} $(a_{i_k})_{k \in K}$ is a subsequence such that the image $I^\prime$ of $K$ in $I$ under the function $K \to I$ is a convex subset of $\Z$.  
	
Given two sequences $(a_i)_{i \in I}$ and $(b_j)_{j \in J}$ taking values in the same set, a \textit{common subsequence} $\{ (a_{i_k})_{k \in K}, (b_{j_k})_{k \in K} \}$ is a pair of subsequences having the same indexing set $K$, such that $a_{i_k} = b_{j_k}$ for all $k \in K$. 
	
A \textit{convex common subsequence} $\{ (a_{i_k})_{k \in K}, (b_{j_k})_{k \in K} \}$ is a common subsequence such that both subsequences $(a_{i_k})_{k \in K}$ and $(b_{j_k})_{k \in K}$ are convex.  
	
A \textit{maximal convex common subsequence} $\{ (a_{i_k})_{k \in K}, (b_{j_k})_{k \in K} \}$ is a convex common subsequence, such that there does not exist:  $K \subsetneq K^\prime$ and a convex common subsequence $\{ (a_{i^\prime_k})_{k \in K^\prime}, (b_{j^\prime_k})_{k \in K^\prime} \}$, satisfying  $i^\prime_k = i_k$ and $j^\prime_k = j_k$ for all $k \in K$.

		\subsection{Preparation: edge-sequences and the Fellow-Traveler Lemma }
		\label{ssec:section22.5}

Let $W$ be a web on $\surf$ in good position with respect to $\splitidealtriang$ such that its global picture $(\left< W \right>, \{x_0^j\})$ is based.  
	
	Let $\gamma$ be a loop or arc in $\left< W \right>$.  Associated to the component $\gamma$ is an \textit{edge-sequence} $(E_i)_{i \in I}$ where $E_i$ is an edge of the split ideal triangulation $\splitidealtriang$.  More precisely, the sequence $(E_i)_{i \in I}$ describes the $i$-th edge crossed by $\gamma$ listed in order according to $\gamma$'s orientation.  In the case where $\gamma$ is an arc, we put $I=\{0,1,\dots,n\} \subset \Z$, and the edge-sequence is well-defined.   In the case where $\gamma$ is a loop with base point $x_0$, we put $I = \Z$, and the edge-sequence is well-defined by sending $0$ to the first edge $E_0$ encountered by $\gamma$ after passing $x_0$.  

We also associate an \textit{inverse edge-sequence} $(E^{-1}_i)_{i \in I^{-1}}$ to the inverse curve $\gamma^{-1}$, defined as follows.  In the case of an arc put $I^{-1} = \{-n, \dots, 1, 0\}$, and in the case of a loop put $I^{-1} = \Z$.  Then the inverse edge-sequence is defined by $E^{-1}_{i} = E_{-i}$ for all $i \in I^{-1}$.  

Another name for a loop or arc $\gamma$ in the global picture $\left< W \right>$ is a \textit{traveler}.  Another name for an inverse curve $\gamma^{-1}$ is a \textit{past-traveler}.  The edge-sequence $( E_i)_{i \in I}$ associated to a traveler $\gamma$ is called its \textit{route}, and the edge-sequence $( E^{-1}_i)_{i \in I^{-1}}$ associated to a past-traveler $\gamma^{-1}$ is called its \textit{past-route}; see Figure \ref{fig:foute-and-past-route}.  Two travelers $\gamma$ in $\left< W \right>$ and $\gamma^\prime$ in $\left< W^\prime \right>$ are called \textit{fellow-travelers} if they have the same routes $(E_i)_{i \in I} = (E^\prime_i)_{i \in I^\prime}$, $I = I^\prime$.  In particular, if $\gamma$ is a loop (resp. arc), then $\gamma^\prime$ is also a loop (resp. arc of the same length).  

The following statement is the key to proving the main lemma.  

\begin{figure}[b]
	\centering
	\includegraphics[scale=.49]{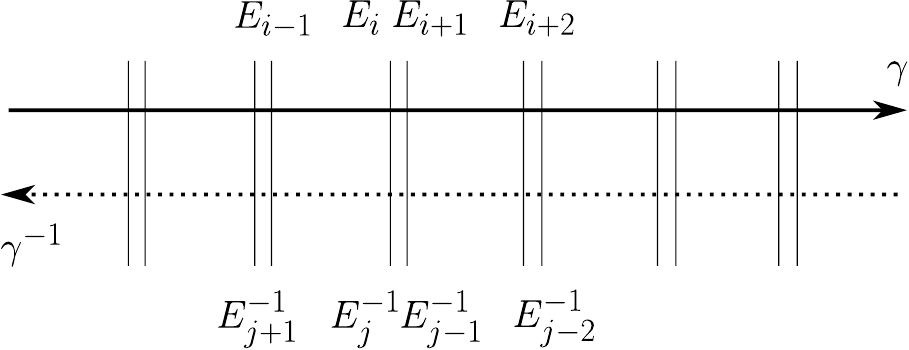}
	\caption{Route and past-route}
	\label{fig:foute-and-past-route}
\end{figure}

\begin{lemma}[Fellow-Traveler Lemma]
\label{lem:life-neighbors-lemma}
	Fix compatible local webs $\{W_\triang \}_{\triang \in \splitidealtriang}$ and $\{W^\prime_\triang\}_{\triang \in \splitidealtriang}$ in the $\webbasis{\triang}$ satisfying hypothesis (1) of Main Lemma {\upshape\ref{lem:main-lemma}}, and let $W$ and $W^\prime$ be the induced global webs obtained by the ladder gluing construction.  Then, there exists a natural one-to-one correspondence
\begin{equation*}
	\gamma \longleftrightarrow \gamma^\prime = \varphi(\gamma)
\end{equation*}
between the collection of travelers $\gamma$ in the global picture $\left< W \right>$ and the collection of travelers $\gamma^\prime = \varphi(\gamma)$ in $\left< W^\prime \right>$, and there exists a choice of base points $ x_0$ and $x^{\prime}_0$ for the loops $\gamma$ and $\gamma^\prime$ in  $\left< W \right>$ and $\left< W^\prime \right>$, respectively, such that $\gamma$ and $\gamma^\prime=\varphi(\gamma)$ are fellow-travelers for all travelers~$\gamma$.  
\end{lemma}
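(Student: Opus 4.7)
The plan is to set up the bijection $\varphi$ between travelers of $\langle W \rangle$ and $\langle W^\prime \rangle$ by reducing to a single local move, analyzing that move explicitly, and then composing. First I would reduce to the case where $W$ and $W^\prime$ differ by a single modified H-move. Since both $\{W_\triang\}_{\triang \in \splitidealtriang}$ and $\{W^\prime_\triang\}_{\triang \in \splitidealtriang}$ are compatible with respect to $\splitidealtriang$, the local parallel-moves of Definition \ref{def:corner-ambiguity} relating them can be organized, triangle-by-triangle together with the two adjacent biangles, into a sequence of intermediate compatible collections, each step of which realizes a single modified H-move on the global web in the sense of Proposition \ref{prop:good-position} and Figure \ref{fig:modified-H-move-no-coordinates}. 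By induction on the number of H-moves in this reduction, it suffices to prove the lemma when $W$ and $W^\prime$ differ by a single one.

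Next I would analyze a single modified H-move. Outside a small neighborhood $U$ of the transferred H-face (contained in the union of one triangle with its two adjacent biangles), the global pictures $\langle W \rangle$ and $\langle W^\prime \rangle$ agree as multi-curves on $\surf^0$, so every traveler disjoint from $U$ has an obvious partner in the other picture with identical route and for these travelers $\varphi$ is the identity. Inside $U$, the two pictures differ only by the re-pairing of the four strand-ends of the H, exactly as depicted in Figure \ref{fig:odified-H-move-BEV-curve11}. This re-pairing affects at most two travelers, and acts on them in one of three elementary ways: no change, splitting one loop into two travelers, or merging two travelers into one. In each case $\varphi$ can be defined explicitly on the affected travelers, and inspection of Figure \ref{fig:odified-H-move-BEV-curve11} shows that the sequences of edges of $\splitidealtriang$ visited inside $U$ also match up under $\varphi$, so route-preservation extends through the H-region.

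For the base-point choice on loops, I would pick for each loop $\gamma$ a distinguished edge-crossing $p \in \gamma \setminus U$ on some edge of $\splitidealtriang$, and place the base points $x_0 \in \gamma$ and $x^\prime_0 \in \varphi(\gamma)$ immediately after this common crossing $p$. Such a $p$ exists whenever $\gamma$ visits any edge outside $U$, and short loops contained entirely within $U$ have to be handled as a separate finite case directly using the local analysis above. With this choice, the coincidence of the two bi-infinite edge-sequences follows at once: outside $U$ the pictures are equal, and inside $U$ the local routes match by the elementary analysis just described.

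The main obstacle I anticipate is the combinatorial book-keeping across the inductive composition of modified H-moves. At each step some travelers may split or merge, so one must propagate both the bijection $\varphi$ and the distinguished-crossing data so that the final $\varphi$ is well-defined independently of the reduction, and the base-point choice for a loop at the end of the induction agrees with what the route-preservation demands. I would handle this by carrying as auxiliary data, through the induction, a preferred edge-crossing on each loop, and verifying that a split move distributes this crossing onto exactly one of the two output loops (fixing a canonical choice on the other) while a merge move inherits the crossing from one of the input loops. Non-ellipticity of $W$ and $W^\prime$ should ensure that no pathological configurations arise in which these choices become inconsistent.
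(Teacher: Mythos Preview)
Your reduction to a sequence of modified H-moves is circular. You invoke Proposition \ref{prop:good-position} to organize the local parallel-moves into modified H-moves, but that proposition applies to two webs already known to be parallel-equivalent, which is precisely what the Main Lemma (and hence this Fellow-Traveler Lemma) is being used to establish. More concretely, a single local parallel-move in a triangle $\triang$ swaps two adjacent oppositely-oriented corner arcs, which changes the strand-sets on \emph{two} edges and hence alters the ladder-webs in \emph{two} adjacent biangles simultaneously. Depending on the configuration on the far edges of those biangles, this can create an H in both biangles rather than transfer one H between them; the intermediate global web is then elliptic (it acquires a square-face, exactly the situation handled by the square removing algorithm in Lemma \ref{lem:fixing-an-elliptic-web}), and no modified H-move is available. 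So the inductive chain you want does not exist in general.

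A secondary point: under a genuine modified H-move, travelers never split or merge. Figure \ref{fig:odified-H-move-BEV-curve11} shows that the crossing simply migrates to the adjacent biangle while each of the two travelers keeps its route; your case analysis of ``split/merge'' is not needed, and the book-keeping you worry about does not arise.

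The paper's proof avoids all of this by defining $\varphi$ directly, with no induction. On each edge $E$ of $\splitidealtriang$ it matches the $k$-th out-strand of $\langle W\rangle$ with the $k$-th out-strand of $\langle W'\rangle$ (same ordering from one side), and then proves a local claim: because $W_\triang$ and $W'_\triang$ have the same numbers of corner arcs of each orientation on every corner, and because only oppositely-oriented arcs cross in a biangle, two matched strands either both turn left, both turn right, or both terminate in a honeycomb in the next triangle, and in the turning cases again occupy the same position on the next edge. Iterating this gives well-definedness of $\varphi$ and the fellow-traveler property simultaneously. This is the missing idea you need.
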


For an example of the Fellow-Traveler Lemma on the once punctured torus, see Figure \ref{fig:example-of-fellow-traveler-lemma}.  

\begin{figure}[t]
	\centering
	\includegraphics[width=\textwidth]{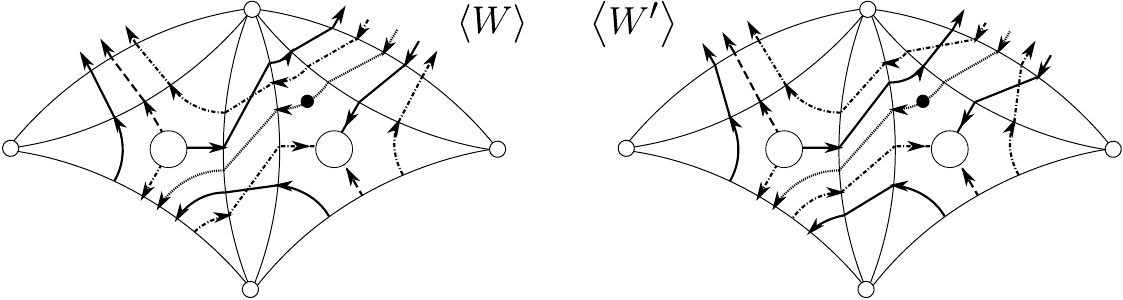}
	\caption{Fellow-Traveler Lemma}
	\label{fig:example-of-fellow-traveler-lemma}
\end{figure}

\begin{proof}[Proof of Lemma \ref{lem:life-neighbors-lemma}]
	Let $E$ be an edge of $\splitidealtriang$.  This is associated to a unique triangle $\triang$ of $\splitidealtriang$ containing $E$ in its boundary.  Let $S^{(E)\mathrm{out}}=(s_i^{(E)\mathrm{out}})_{i=1,\dots,n_E^\mathrm{out}}$ (resp. $S^{\prime(E)\mathrm{out}}=(s_i^{\prime(E)\mathrm{out}})_{i=1,\dots,n_E^{\prime\mathrm{out}}}$) denote the sequence of out-strands of the global picture $\left< W \right>$ (resp. $\left< W^\prime \right>$) lying on the edge $E$, ordered, say, from left to right as viewed from $\triang$.   By hypothesis (1) of the main lemma,  $n_E^\mathrm{out}=n_E^{\prime\mathrm{out}}$.  Let $\gamma_i^{(E)}$ denote the unique traveler in $\left< W \right>$ containing the strand $s_i^{(E)\mathrm{out}}$.  Similarly, define travelers $\gamma_i^{\prime(E)}$ with respect to $\left< W^\prime \right>$.  The mapping $\varphi$ is defined by
\begin{equation*}
	\varphi \left( \gamma_i^{(E)} \right) = \gamma_i^{\prime(E)}
	\quad\quad
	\left( i=1,2,\dots,n_E^\mathrm{out}=n_E^{\prime\mathrm{out}} \right).
\end{equation*}
Note every traveler $\gamma$ in $\left< W \right>$ (resp. $\gamma^\prime$ in $\left< W^\prime \right>$) is of the form $\gamma_i^{(E)}$ (resp. $\gamma_i^{\prime(E)}$) for some $E$.  

To establish that $\varphi$ is well-defined, we show that $\gamma_{i_1}^{(E_1)} = \gamma_{i_2}^{(E_2)}$ implies $\gamma_{i_1}^{\prime(E_1)} = \gamma_{i_2}^{\prime(E_2)}$.  This property follows immediately from:

\begin{claim}
\label{claim:fellow-traveler}
	For some $k \in \{ 1, 2, \dots, n_E^\mathrm{out}=n_E^{\prime\mathrm{out}} \}$, let $s_k^{(E)\mathrm{out}} \in S^{(E)\mathrm{out}}$ and $s_k^{\prime(E)\mathrm{out}} \in S^{\prime(E)\mathrm{out}}$ be out-strands of $\left< W \right>$ and $\left< W^\prime \right>$, respectively, lying on an edge $E$ of a triangle $\triang$ of $\splitidealtriang$.  Note that each of these strands, according to its orientation, enters via the edge $E$ into a biangle $\biang$, exits via an edge $E_2$ into a triangle $\triang_2$, and then either
\begin{enumerate}
	\item  turns left in $\triang_2$, ending as a strand $s$ or $s^\prime$, respectively, lying on an edge $E_3$;
	\item turns right in $\triang_2$, ending as a strand $s$ or $s^\prime$, respectively, lying on an edge $E_3$;
	\item  terminates in a honeycomb $H_n$.
\end{enumerate}
The claim is that if the forward motion of the strand $s_k^{(E)\mathrm{out}}$ is described by item $(i)$ above for $i \in \{ 1, 2, 3 \}$, then the forward motion of the strand $s_k^{\prime(E)\mathrm{out}}$ is also described by item $(i)$.  Consequently, in cases (1) or (2), there exists some $k_3 \in \{ 1, 2, \dots, n_{E_3}^\mathrm{out}=n_{E_3}^{\prime\mathrm{out}} \}$ such that 
\begin{equation*}
	s = s_{k_3}^{(E_3)\mathrm{out}} \in S^{(E_3)\mathrm{out}}
	\quad\quad  \text{and}  \quad\quad
	s^\prime = s_{k_3}^{\prime(E_3)\mathrm{out}} \in S^{\prime(E_3)\mathrm{out}}.
\end{equation*}
\end{claim}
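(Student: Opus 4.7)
The strategy is to decompose the forward motion of the two strands into the biangle crossing followed by the triangle crossing, and to verify that $s_k^{(E)\mathrm{out}}$ and $s_k^{\prime(E)\mathrm{out}}$ behave identically at each stage.

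Stage 1 (biangle). The first step is to observe that the local pictures $\left<W_\biang\right>$ and $\left<W^\prime_\biang\right>$ agree up to isotopy in the biangle $\biang$ following $E$. By Proposition \ref{prop:ladder-webs}, an essential local web in a biangle is uniquely determined by its symmetric strand-set pair, and this pair is read off from the in/out-strand counts of the adjacent rung-less essential local webs $W_\triang, W_{\triang_2}$ on the boundary edges of $\biang$. Since corner-ambiguity is generated by local parallel-moves, which permute parallel corner arcs without changing in/out-strand counts on any edge, the strand-set data coincides between $W$ and $W^\prime$ from either side of $\biang$; by the compatibility hypothesis this forces $W_\biang$ and $W^\prime_\biang$ to have identical local pictures. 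Hence both strands exit $\biang$ on a common edge $E_2$ at the same position, entering $\triang_2$ as the $k$-th in-strand on $E_2$ (up to the natural reindexing coming from the ladder).

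Stage 2 (triangle trichotomy). By Proposition \ref{prop:honeycomb-webs}, each of $W_{\triang_2}$ and $W^\prime_{\triang_2}$ decomposes as a (possibly empty) honeycomb $H_n$ together with oriented corner arcs at the three corners. Because the local coordinates on $\triang_2$ agree, the honeycomb size $n$ and the number of corner arcs at each corner are the same for both webs. At the level of the local pictures $\left<W_{\triang_2}\right>$ and $\left<W^\prime_{\triang_2}\right>$ on the holed triangle $\triang_2^0$, the in-strands on $E_2$ split left-to-right into three consecutive blocks: left-corner arcs, honeycomb arcs, right-corner arcs. The sizes of these blocks match between $W$ and $W^\prime$, so the position $k$ alone determines the block membership of the strand. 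This yields the trichotomy (1), (2), (3) of the claim with both strands in the same case.

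Stage 3 (position agreement). In cases (1) and (2) the two strands are corner arcs at the same corner of $\triang_2$, and it remains to check they end at the same position $k_3$ on the corresponding neighboring edge $E_3$. The corner arcs at a single corner form a nested family of arcs wrapping around that corner, and the $j$-th arc counted inward pairs the $j$-th position on $E_2$ (counted from the corner) with the $j$-th position on $E_3$ (counted from the corner). Crucially, the local parallel-moves generating corner-ambiguity permute arcs at a corner without disturbing this position-to-position pairing: the pairing is a topological consequence of nestedness and is invariant under swapping parallel arcs. Therefore $s$ and $s^\prime$ arrive at the same position $k_3$ on $E_3$, completing the claim.

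The main obstacle I anticipate is careful bookkeeping of the left/right orientation conventions across edges of the biangle (left-to-right viewed from one triangle becomes right-to-left from the other) and across corners of the triangle (the inward nesting pairing reverses the orientation from $E_2$ to $E_3$). The substantive content, however, is the pair of elementary observations that (a) biangle local pictures are fully determined by the strand-set pair, and (b) corner-ambiguity preserves both the block partition of edge positions into left-corner/honeycomb/right-corner groups and the nested pairing of positions across corners.
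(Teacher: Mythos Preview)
Your overall decomposition into biangle passage, triangle trichotomy, and position matching is sound and is exactly what the paper's one-line argument unpacks to. However, Stage~1 contains a genuine error: the assertion that the biangle local pictures $\langle W_\biang\rangle$ and $\langle W'_\biang\rangle$ agree is false. A symmetric strand-set pair (Definition~\ref{def:ladder-web}) is a pair of \emph{ordered} sequences of oriented strands, not merely the in/out counts; the ladder-web in $\biang$ depends on the interleaving pattern of in- and out-strands along each boundary edge. A local parallel-move in an adjacent triangle swaps two oppositely-oriented corner arcs, which changes this interleaving and hence can change the ladder-web in $\biang$. So $W_\biang$ and $W'_\biang$ need not coincide.

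The conclusion you need from Stage~1 is nonetheless true, but for the reason the paper actually gives: in any biangle local picture, only oppositely-oriented arcs cross. Hence the arcs oriented from $E$ to $E_2$ preserve their relative order, so the $k$-th out-strand on $E$ is the $k$-th in-strand on $E_2$, for each of $W$ and $W'$ separately, regardless of whether the two biangle pictures agree. Once Stage~1 is repaired this way, your Stages~2 and~3 go through as written and match the paper's ``same number of clockwise/counterclockwise corner arcs on each corner'' argument.
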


The claim is true since, by hypothesis, on each corner of each triangle, $\left< W \right>$ and $\left< W^\prime \right>$ have the same number of clockwise-oriented (resp. counterclockwise-oriented) corner arcs, together with the fact that only oppositely-oriented arcs cross in the biangles; see Figure \ref{fig:fellow-traveler-lemma-proof}.  

Having established that $\varphi$ is well-defined, it follows by the definition that $\varphi$ is a bijection.  Another consequence of Claim \ref{claim:fellow-traveler} is that if $\gamma$ is an arc, then $\gamma^\prime = \varphi(\gamma)$ is an arc such that $\gamma$ and $\gamma^\prime$ are fellow-travelers.  Also, if $\gamma=\gamma_i^{(E)}$ is a loop, then $\gamma^\prime = \varphi(\gamma)=\gamma_i^{\prime(E)}$ is a loop.  Choosing base points $x_0$ and $x_0^\prime$ on the out-strands $s_i^{(E)\mathrm{out}}$ and $s_i^{\prime (E)\mathrm{out}}$, respectively, just before, say, the strands cross the edge $E$ makes the loops $\gamma$ and $\gamma^\prime$ into fellow-travelers.  
\end{proof}

\begin{figure}[htb]
	\centering
	\includegraphics[width=.49\textwidth]{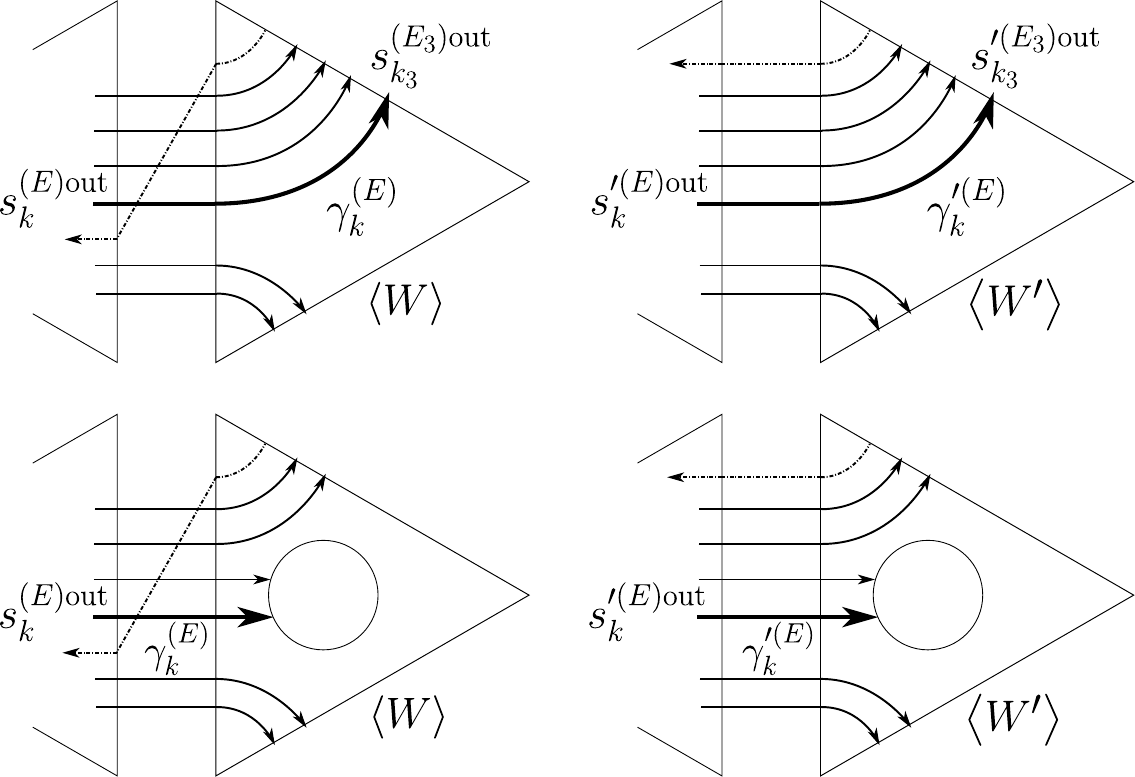}
	\caption{Cases (1) (top) and (3) (bottom) in Claim \ref{claim:fellow-traveler}}
	\label{fig:fellow-traveler-lemma-proof}
\end{figure}

		\subsection{Preparation: shared-routes}
		\label{ssec:section25}

As in the previous sub-section, let $W$ be a web on $\surf$ in good position with respect to $\splitidealtriang$ such that its global picture $(\left< W \right>, \{x_0^j\})$ is based.  
		
Let $\gamma$ be a traveler in $\left< W \right>$ having route $(E_i)_{i \in I}$.  For some $i \in I$ indexing an edge $E_i$, by definition of the route there is a corresponding point $y_i$ of $\gamma$ lying on $E_i$.  Consider the associated \textit{segment} $\overline{\gamma}_{i}$ of $\gamma$ lying between the points $y_i$ and $y_{i+1}$.  Similarly, define segments $\overline{(\gamma^{-1})}_i$ associated to the past-traveler $\gamma^{-1}$ with respect to its past-route $(E^{-1}_i)_{i \in I^{-1}}$.  

\begin{definition}
	Let $\gamma_1$, $\gamma_2$ be travelers in $\left< W \right>$ and $\gamma_1^{-1}$, $\gamma_2^{-1}$ the corresponding past-travelers, with routes $(E^1_i)_{i \in I}$ and $(E^2_j)_{j \in J}$ and past-routes $((E^1)^{-1}_i)_{ \in I^{-1}}$ and $((E^2)^{-1}_j)_{ \in J^{-1}}$.  

An \textit{oppositely-oriented shared-route}, or just \textit{shared-route}, $SR$ for the ordered pair $(\gamma_1, \gamma_2)$ of travelers is a maximal convex common subsequence (\S \ref{ssec:section20.5}) $SR= \{  (E^1_{i_k})_{k \in K}, ((E^2)^{-1}_{j_k})_{k \in K} \}$ for the route $(E^1_i)_{i \in I}$ of $\gamma_1$ and the past-route $((E^2)^{-1}_j)_{ \in J^{-1}}$ of $\gamma_2^{-1}$.
	
A shared-route is \textit{open} (resp. \textit{closed}) if its domain $K$ is not equal to (resp. equal to) $\Z$.  
	
A shared-route is \textit{crossing} if there exists an index $k \in K$ such that the associated segments $\overline{(\gamma_1)}_{i_k}$ and $\overline{(\gamma_2)^{-1}_{j_k}}$ intersect, say at a point $p_k$.  We call $p_k$ an \textit{intersection point} of the crossing shared-route.  Note that an intersection point must lie inside a biangle $\biang$ of $\splitidealtriang$.  A shared-route is \textit{non-crossing} if it has no intersection points.  
\end{definition}

\begin{figure}[htb]
	\centering
	\includegraphics[scale=.46]{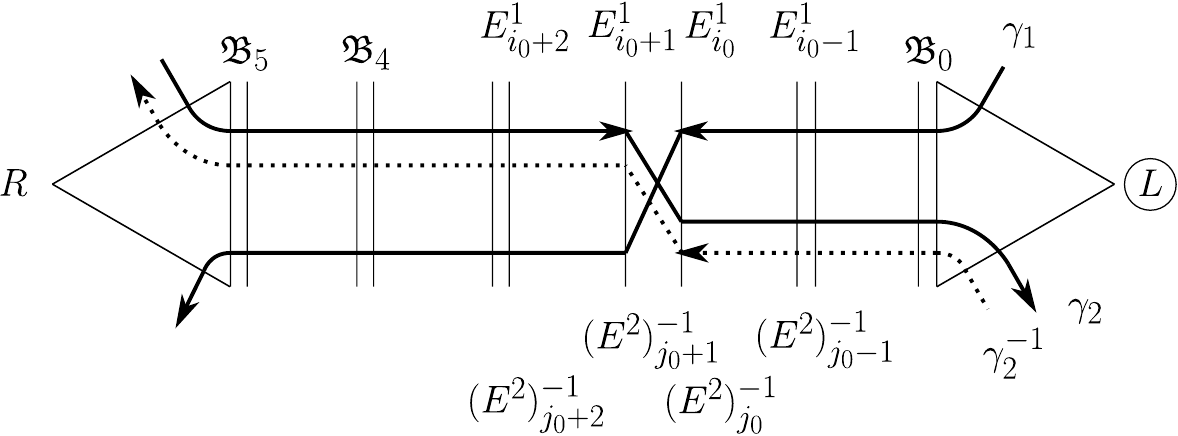}
	\caption{Crossing shared-route}
	\label{fig:intersecting-crossing-journey}
\end{figure}

\begin{figure}[htb]
     \centering
     \begin{subfigure}{0.49\textwidth}
         \centering
         \includegraphics[scale=.4]{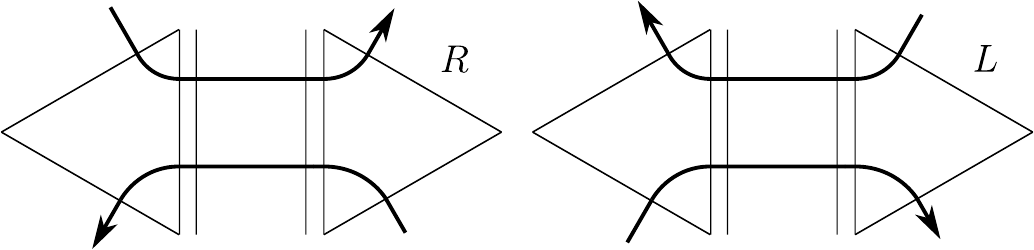}
         \caption{Open}
         \label{subfig:open-non-crossing-shared-route}
     \end{subfigure}     
\hfill
     \begin{subfigure}{0.49\textwidth}
         \centering
         \includegraphics[scale=.45]{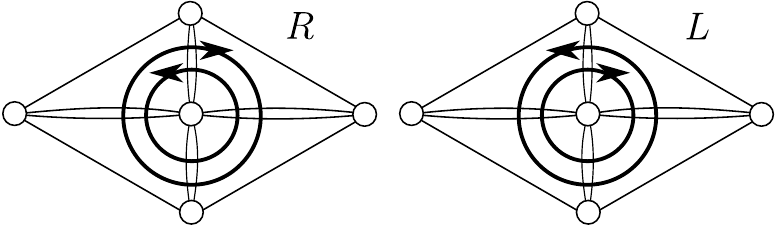}
         \caption{Closed}
         \label{subfig:closed-non-crossing-shared-route}
     \end{subfigure}
        \caption{Non-crossing shared-routes}
        \label{fig:non-crossing-shared-routes}
\end{figure}

For some examples, see Figures \ref{fig:intersecting-crossing-journey} and \ref{fig:non-crossing-shared-routes}.  Our pictures for shared-routes are only schematics, since the actual shared-routes on $\surf^0$ might cross the same edge multiple times.  That is, there might exist $k \neq k^\prime$ such that $E^1_{i_k} = (E^2)^{-1}_{j_k} = E^1_{i_{k^\prime}} = (E^2)^{-1}_{j_{k^\prime}}$.  Alternatively, one could think of these pictures at the level of the universal cover $\widetilde{\surf^0}$.  Note that  travelers in open shared-routes may end in honeycombs (Figure \ref{fig:fellow-traveler-lemma-proof}), but this will not affect our arguments.  

\begin{lemma}
\label{lem:unique-intersection-point}
	Assume in addition that $W$ is non-elliptic.   Then any shared-route $SR$ has at most one intersection point $p$.  In particular, a crossing shared-route is necessarily open.  
\end{lemma}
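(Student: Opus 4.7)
The plan is to suppose for contradiction that a shared-route $SR$ for $(\gamma_1,\gamma_2)$ has two intersection points $p_k$ and $p_{k'}$ with $k<k'$, chosen to be consecutive among the intersections of $SR$ (so for $k<\ell<k'$, the segments $\overline{(\gamma_1)}_{i_\ell}$ and $\overline{(\gamma_2)^{-1}_{j_\ell}}$ are disjoint), and from this to extract a face of $W$ with at most four sides, contradicting non-ellipticity. Form the closed loop $\ell$ in the holed surface $\surf^0$ by concatenating the segment of $\gamma_1$ from $p_k$ to $p_{k'}$ with the reverse segment of $\gamma_2$ from $p_{k'}$ to $p_k$. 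The crucial geometric point will be that $\ell$ bounds an embedded topological disk (``bigon'') $B\subset\surf^0$. Indeed, between $p_k$ and $p_{k'}$ both travelers traverse each intermediate triangle $\triang$ of $\splitidealtriang$ from one boundary edge to another; by Definition \ref{def:triangle-local-picture}, the only components of the local picture $\left<W_\triang\right>$ joining two distinct boundary edges of $\triang$ are corner arcs, since every radial arc has an endpoint on the interior hole boundary $\partial\poly_0$. Hence $\gamma_1$ and $\gamma_2$ traverse each intermediate triangle as corner arcs on the same corner, never approaching the hole; in each intermediate biangle they are parallel by consecutivity of $p_k,p_{k'}$. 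Together these observations confine $\ell$ to a thin strip and force it to bound an embedded disk $B\subset\surf^0$ disjoint from all holes of $\surf^0$.

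Next I would apply an innermost-bigon argument in the spirit of the proof of Proposition \ref{prop:ladder-webs}. Any loop component of the picture lying entirely inside $B$ would immediately yield a disk-face of $W$, so we may assume none exists and that every picture component meeting the interior of $B$ is an arc entering and exiting through $\partial B$. If some pair of such arcs crosses twice inside $B$, iterate by passing to the smaller bigon they bound; since the picture restricted to $B$ has only finitely many crossings, this process terminates at an \emph{innermost} embedded bigon $B_0\subseteq B$ whose interior is free of crossings. Translating $B_0$ back to the web $W$ by resolving its two boundary crossings into H-rungs, the interior of $B_0$ becomes a single face of $W$ bounded by exactly four edges---two rungs and two parallel arcs---that is, a square-face. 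This contradicts the non-ellipticity of $W$ and establishes the first statement.

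For the ``in particular'' assertion, suppose that a closed shared-route ($K=\Z$) is crossing. Then $\gamma_1$ and $\gamma_2$ must be loops in the global picture, and their edge-sequences on $\Z$ are periodic with some common period $T>0$. An intersection at a single index $k_0$ then duplicates, by this periodicity, to intersections at every $k_0+nT$ for $n\in\Z$, contradicting the just-established ``at most one intersection'' conclusion; therefore any crossing shared-route is open. The main technical obstacle I anticipate is verifying rigorously that $\ell$ bounds an embedded disk in $\surf^0$, especially when the shared-route revisits the same biangle or triangle: the corner-arc observation above is precisely what keeps $\ell$ away from the holes, and together with the no-switchbacks property of $\left<W\right>$ it should also suffice to rule out self-overlaps forcing $\ell$ to fail to be an embedded disk.
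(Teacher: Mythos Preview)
Your overall strategy---two consecutive intersection points bound a bigon, an innermost-bigon argument produces a square-face, contradiction---is exactly the paper's. The genuine gap is in the step where you assert that the loop $\ell$ bounds an \emph{embedded} disk $B\subset\surf^0$. You correctly identify this as the main obstacle, but the justification you offer (corner-arc observation plus no-switchbacks) does not close it.

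Concretely: suppose the shared-route passes through the same biangle $\biang$ at two distinct positions $m<m'$ between $k$ and $k'$, entering $\biang$ in the \emph{same} direction both times. Then the segment of $\gamma_1$ at position $m$ and the segment of $\gamma_2$ at position $m'$ are oppositely-oriented arcs in $\biang$ and may well cross at some point $q$. This crossing is \emph{not} an intersection point of $SR$ in the sense of the definition (the indices differ), so your consecutivity hypothesis does not exclude it; yet it makes $\ell$ non-embedded. The no-switchbacks property forbids U-turns but does not prevent a curve from re-entering a biangle via genuine surface topology, so on its own it cannot rule this out. The paper resolves this precisely by passing to the universal cover $\widetilde{\surf^0}$: there the dual graph of the lifted triangulation is a tree, so no-switchbacks \emph{does} imply that lifted curves never recross a lifted edge, and the immersed bigon lifts to a genuinely embedded one. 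The innermost-bigon argument then runs upstairs, and the resulting square-face projects down.

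A smaller point: your periodicity argument for the ``in particular'' clause shows that a crossing at index $k_0$ recurs at every $k_0+nT$, but these may all be the \emph{same physical point} $p$, so this does not immediately contradict ``at most one intersection point.'' The paper instead observes that a closed crossing shared-route with a single intersection would force an embedded M\"obius band in the oriented surface $\surf^0$. (Alternatively, once you have the universal-cover argument in hand, the two indices $k_0$ and $k_0+T$ lift to distinct points and the bigon argument applies directly.)
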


\begin{proof}
	The second statement follows from the first since otherwise the oriented holed surface $\surf^0$ would contain a  M\"{o}bius strip.  

	Suppose, for an ordered pair $(\gamma_1, \gamma_2)$ of travelers, there were a crossing shared-route $ \{  (E^1_{i_k})_{k \in K}, ((E^2)^{-1}_{j_k})_{k \in K} \}$ that has more than one intersection point.  There are only finitely-many intersection points, denoted $p_{k_1}, p_{k_2}, \dots, p_{k_m}$ with $k_i < k_{i+1}$.  The intersection points $p_{k_1}$ and $p_{k_{2}}$ form the tips of an \textit{immersed bigon} $B$, which we formalize as the convex common subsequence $ B = \{   (E^1_{i_k})_{k_1 \leq k \leq k_2+1}, ((E^2)^{-1}_{j_k})_{k_1 \leq k \leq k_2+1} \}$; see the bottom of Figure \ref{fig:no-immersed-bigons}.  Alternatively, we think of $B$ as bounded by the segments of $\gamma_1$ and $\gamma_2$ between $p_{k_1}$ and $p_{k_2}$.  
	
	\begin{figure}[b]
	\centering
	\includegraphics[scale=.7]{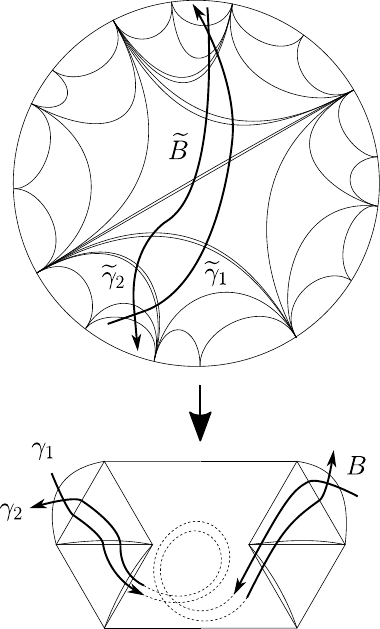}
	\caption{Immersed bigons do not exist: 1 of 2}
	\label{fig:no-immersed-bigons}	
\end{figure}

Let $\pi$ be the projection map from the universal cover $\widetilde{\surf^0}$ to the holed surface $\surf^0$.  Equip $\widetilde{\surf^0}$ with the lifted split ideal triangulation $\widetilde{\splitidealtriang} = \pi^{-1}(\splitidealtriang)$.  For a traveler $\gamma$, consider one of its lifts $\widetilde{\gamma}$ in  $\widetilde{\surf^0}$.  By the no-switchbacks property (\S \ref{ssec:non-elliptic-webs-and-global-pictures-in-the-surface}), and the fact that the dual graph of $\widetilde{\splitidealtriang}$ in $\widetilde{\surf^0}$ is a tree, the lifted curve $\widetilde{\gamma}$ does not cross the same edge $\widetilde{E}$ in the universal cover $\widetilde{\surf^0}$ more than once.  Therefore, the immersed bigon $B$ lifts to an  embedded topological bigon $\widetilde{B}$ in $\widetilde{\surf^0}$, bounded by segments of lifts $\widetilde{\gamma}_1$ and $\widetilde{\gamma_2}$ of the curves $\gamma_1$ and $\gamma_2$; see Figure \ref{fig:no-immersed-bigons}.  
	
The preimage $\widetilde{W} = \pi^{-1}(W)$ of the web $W$ is an (infinite) web in $\widetilde{\surf^0}$.  Moreover, $\widetilde{W}$ is in good position with respect to $\widetilde{\splitidealtriang}$.  Since $W$ is non-elliptic, so is $\widetilde{W}$ (compare the proof of Lemma \ref{lem:fixing-an-elliptic-web}).    Let $\left< \widetilde{W} \right>$ be the global picture associated to $\widetilde{W}$.  Note that the lifted curves $\widetilde{\gamma}_1$ and $\widetilde{\gamma}_2$ are in $\left< \widetilde{W} \right>$.  Observe that it is possible for $\mathrm{int}(\widetilde{B}) \cap \left< \widetilde{W} \right> \neq \emptyset$ to be non-empty; see Figure \ref{fig:no-immersed-bigons-2}.  However, by the no-switchbacks property, there are no closed curves of $\left< \widetilde{W} \right>$ in this interior.  

	The rest of the proof is similar to the proof of Proposition \ref{prop:ladder-webs}; see Figure \ref{subfig:internal-squares}.  Here, the web orientation is important.  Specifically, since only (locally) oppositely-oriented (with respect to biangles) curves in the global picture $\left< \widetilde{W} \right>$ can intersect, it follows by the no-switchbacks property that if a curve $\widetilde{\gamma}$ enters the embedded bigon $\widetilde{B}$ via a boundary edge $\widetilde{E}$, then $\widetilde{\gamma}$ must leave through $\widetilde{E}$ as well.  Consequently, there exists an inner-most embedded bigon $\widetilde{B}^\prime \subset \widetilde{B}$ whose interior does not intersect $\left< \widetilde{W} \right>$; see Figure \ref{fig:no-immersed-bigons-2}.  But then $\widetilde{B}^\prime$ corresponds to a square-face $\widetilde{D}$ in the lifted non-elliptic web $\widetilde{W}$, which is a contradiction.  
\end{proof}

\begin{figure}[t]
	\centering
	\includegraphics[scale=.9]{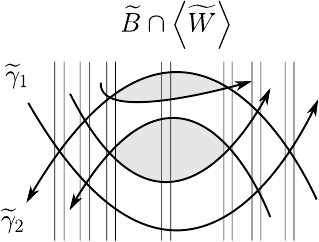}
	\caption{Immersed bigons do not exist: 2 of 2}
	\label{fig:no-immersed-bigons-2}	
\end{figure}

\begin{lemma}
\label{lem:closed-non-crossing-shared-routes-are-embedded}
	If the web $W$ is non-elliptic, then there are no intersection points of $\left< W \right>$ along any closed non-crossing shared-route (as opposed to open non-crossing shared-routes).  In particular, each closed non-crossing shared-route $SR$ is embedded, namely its  travelers $\gamma_1$ and $\gamma_2$ bound an embedded annulus $A \subset \surf$; see Figure {\upshape\ref{subfig:closed-non-crossing-shared-route}}.  
\end{lemma}

\begin{proof}
	If there were an intersection point of $\left< W \right>$ along either traveler, then an argument similar to that depicted in Figure \ref{fig:no-immersed-bigons-2} implies there would exist an immersed bigon in $\left< W \right>$.
\end{proof}

\begin{definition}
\label{def:source-end}
Consider an open shared-route $SR$ for an ordered pair $(\gamma_1, \gamma_2)$ of travelers in $\left< W \right>$.
We say that the \textit{source-end} $\E$ of the open shared-route $SR$ is the unique end $\E$ of $SR$ such that the traveler $\gamma_1$ enters  the shared-route $SR$ through the end  $\E$.

Assuming $W$ is non-elliptic, we say that the unique intersection point $p$ in a crossing shared-route $SR$, which is necessarily open by Lemma \ref{lem:unique-intersection-point}, \textit{lies in the $i$-th shared-route-biangle $\biang_i$}, denoted $p \in_{SR} \biang_i$, $i \geq 0$, if $\gamma_1$ crosses $\gamma_2^{-1}$ (at the point $p$) inside the $i$-th biangle through which $\gamma_1$ travels after entering $SR$ through the source-end $\E$.
\end{definition}

For example, in Figure \ref{fig:intersecting-crossing-journey}, the source-end $\E$ of $SR$ is the end labeled $L$.  

Also, in Figure \ref{fig:intersecting-crossing-journey}, $p$ is in the shared-route-biangle $p \in_{SR} \biang_2$.  Note that there is a unique index $i$ such that $p \in_{SR} \biang_i$.  This definition is specially designed to circumvent the situation where $\biang_i$ and $\biang_j$ represent the same biangle $\biang$ on the surface for different indices $i \neq j$.  For example, in Figure \ref{fig:intersecting-crossing-journey}, even if, say, $\biang_5$ represented the same biangle $\biang$ as $\biang_2$, we would say $p \in_{SR} \biang_2$ and $p \notin_{SR} \biang_5$.  Alternatively, one could think of this distinction at the level of the universal cover.

		\subsection{Preparation:  oriented shared-routes}
		\label{ssec:oriented-highways}

As previously, let $W$ be a web on $\surf$ in good position with respect to $\splitidealtriang$ such that its global picture $(\left< W \right>, \{x_0^j\})$ is based.

\begin{definition}
\label{def:orientationsofnoncrossingsharedroutes}
	We say that a non-crossing shared-route $SR$ for an ordered pair $(\gamma_1, \gamma_2)$ of travelers in $\left< W \right>$ is \textit{left-oriented} (resp. \textit{right-oriented}) if for either of the travelers $\gamma_1$ or $\gamma_2$, call it $\gamma$, the other traveler appears on the left (resp. right) of $\gamma$ with respect to $\gamma$'s orientation; see Figure \ref{fig:non-crossing-shared-routes}.  
	
	The web $W$ is \textit{closed-left-oriented} (resp. \textit{closed-right-oriented}) if all of $\left< W \right>$'s closed non-crossing shared-routes are left-oriented (resp. right-oriented); see Figure \ref{subfig:closed-non-crossing-shared-route}.
\end{definition}

	Note, by Lemma \ref{lem:closed-non-crossing-shared-routes-are-embedded}, a non-elliptic web $W$ can always be replaced with a closed-left-oriented or closed-right-oriented non-elliptic web by performing global parallel-moves (Definition \ref{def:parallel-equivalent-webs}); see Figure~\ref{fig:parallel-move}.  

	We also want to define a notion of orientation for crossing shared-routes.  Unlike for non-crossing shared-routes, this will depend  on the ordering of the pair $(\gamma_1, \gamma_2)$.  Since we will be dealing with non-elliptic webs, by Lemma \ref{lem:unique-intersection-point} it suffices to think about open shared-routes.
	
\begin{definition}
\label{def:orientationofcrossingsharedroutefornonelliptic}

We say that an end $\E$ of an open shared-route $SR$ is \textit{left-oriented} or \textit{right-oriented} in the same way as in Definition \ref{def:orientationsofnoncrossingsharedroutes} for non-crossing shared routes.  
	
Assuming $W$ is non-elliptic, a crossing shared-route $SR$ for an ordered pair $(\gamma_1, \gamma_2)$ of travelers in $\left< W \right>$, which is necessarily open by Lemma \ref{lem:unique-intersection-point}, 
 is \textit{left-oriented} (resp. \textit{right-oriented}) if its source-end $\E$ (Definition \ref{def:source-end}) is left-oriented (resp. right-oriented).  
	
\end{definition}
	
	For example, the crossing shared-route shown in Figure \ref{fig:intersecting-crossing-journey} is left-oriented.

		\subsection{Proof of the main lemma:  intersection points}
		\label{ssec:intersection-points}

We now begin the formal proof of Main Lemma \ref{lem:main-lemma}.  	Fix  local webs $\{W_\triang \}_{\triang \in \splitidealtriang}$ and $\{W^\prime_\triang\}_{\triang \in \splitidealtriang}$ in the $\webbasis{\triang}$ satisfying the hypotheses of the main lemma, and let $W$ and $W^\prime$ be the induced non-elliptic global webs obtained by the ladder gluing construction.  By applying global parallel-moves, we may assume that both $W$ and $W^\prime$ are closed-left-oriented, say (Definition \ref{def:orientationsofnoncrossingsharedroutes}).  Assume that the global pictures $(\left<W\right>, \{x_0^j\})$ and $(\left<W^\prime\right>, \{ x_0^{\prime j} \})$ are based, and that the base points $x_0^j$ and $x_0^{\prime j}$ satisfy the conclusion of the Fellow-Traveler Lemma \ref{lem:life-neighbors-lemma}.  Throughout, for each traveler $\gamma$ in $\left< W \right>$ we denote by $\gamma^\prime$ the corresponding traveler in $\left< W^\prime \right>$ as provided by the Fellow-Traveler Lemma.  

Let $\mathscr{P}$ (resp. $\mathscr{P}^\prime$) denote the set of intersection points $p$ of all travelers in $\left< W \right>$ (resp. $\left< W^\prime \right>$).

\begin{corollary}
\label{cor:same-number-of-intersection-points}
	There is a natural bijection $\varphi : \mathscr{P} \overset{\sim}{\to} \mathscr{P}^\prime$.  We write $p^\prime = \varphi(p)$.  
\end{corollary}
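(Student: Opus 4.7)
The plan is to construct the bijection $\varphi$ by following the left-oriented crossing shared-route generated by each intersection point, as set up in Definition \ref{def:generated-crossing-shared-route}. Given $p \in \mathscr{P}$, the shared-route $SR(p)$ is the unique left-oriented crossing shared-route in $\left< W \right>$ containing $p$, associated to some ordered pair $(\gamma_1, \gamma_2)$ of travelers. By the Fellow-Traveler Lemma \ref{lem:life-neighbors-lemma}, there are corresponding travelers $\gamma_1^\prime, \gamma_2^\prime$ in $\left< W^\prime \right>$ with the same routes as $\gamma_1, \gamma_2$, respectively. Since a shared-route is defined purely combinatorially as a maximal connected common subsequence of the route of one traveler and the past-route of the other, the same edge-data produces a well-defined shared-route $SR^\prime$ for the fellow pair $(\gamma_1^\prime, \gamma_2^\prime)$ in $\left< W^\prime \right>$.

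The crux is to verify that $SR^\prime$ is still a left-oriented crossing shared-route. Granted this, Lemma \ref{lem:unique-intersection-point} supplies a unique intersection point $p^\prime \in \mathscr{P}^\prime$ lying on $SR^\prime$, and we set $\varphi(p) = p^\prime$. Running the same construction with the roles of $W$ and $W^\prime$ swapped yields a reverse assignment $\mathscr{P}^\prime \to \mathscr{P}$, and because the generation mechanism $p \mapsto SR(p)$ of Definition \ref{def:generated-crossing-shared-route} is tautologically inverted by the operation ``take the unique intersection point'', the two assignments are mutually inverse, establishing the claimed natural bijection.

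The main obstacle is the crux step: showing that both the left-orientation at the source-end and the crossing property (i.e.\ the existence of an intersection point) are preserved under the Fellow-Traveler correspondence. The delicate point is that $\{W_\triang\}$ and $\{W^\prime_\triang\}$ differ by corner-ambiguity inside the triangles of $\splitidealtriang$, which can change the interleaving of in- and out-strands on the boundary edges of biangles and thereby alter the fine ladder-web structure through which the shared-routes pass. The natural strategy for overcoming this is to argue that both the source-end orientation and the crossing status of an open shared-route are determined by local combinatorial data at its two endpoints---namely the pair of edges from which $\gamma_1$ and $\gamma_2$ diverge just past each end of the shared-route---and that this endpoint data is preserved by the strand-index correspondence underlying the Fellow-Traveler Lemma. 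The fine combinatorics of ladder-webs in the intermediate biangles can be ignored, because by Lemma \ref{lem:unique-intersection-point} a crossing shared-route contributes at most one intersection point, whose presence or absence is therefore dictated by how the two travelers enter and exit the shared-route rather than by what happens in between.
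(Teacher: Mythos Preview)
Your proposal is correct and follows essentially the same approach as the paper: transfer the left-oriented crossing shared-route $SR(p)$ to $\langle W'\rangle$ via the Fellow-Traveler Lemma, observe that the transferred shared-route $SR'$ is open with end orientations matching those of $SR(p)$, and conclude that $SR'$ is crossing, so its unique intersection point (Lemma~\ref{lem:unique-intersection-point}) furnishes $p'=\varphi(p)$. Your articulation of the crux---that the crossing status of an open shared-route is determined by the orientations at its two ends, which in turn depend only on route data preserved by the Fellow-Traveler correspondence---is in fact more explicit than the paper's terse ``it follows that $SR'$ is crossing'', and your parity observation (mismatched end orientations force an odd, hence by Lemma~\ref{lem:unique-intersection-point} exactly one, crossing) is the right justification.
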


For the proof, we will need the following notion.

\begin{definition}
\label{def:generated-crossing-shared-route}
	Let $p \in \mathscr{P}$.  We define \textit{the left-oriented crossing shared-route generated by $p$}, denoted $SR(p)$, to be the unique left-oriented crossing shared-route (Definition \ref{def:orientationofcrossingsharedroutefornonelliptic}) in $\left< W \right>$ whose intersection point is $p$.  Note, in particular, that the left-orientation condition determines the order $(\gamma_1, \gamma_2)$ of the involved travelers.  (Technically speaking, we choose $K$ starting at $0$, and then
the shared-route $SR(p) = \{  (E^1_{i_k})_{k \in K}, ((E^2)^{-1}_{j_k})_{k \in K} \}$ is only uniquely determined after choosing the two indices $i_0$ and $j_0$ assigned by $0 \in K$; this ambiguity only occurs when the shared-route has--part of--a loop traveler.)
\end{definition}	

\begin{proof}[Proof of Corollary \ref{cor:same-number-of-intersection-points}]
	Consider the left-oriented crossing shared-route $SR(p)$ in $\left< W \right>$ with travelers $(\gamma_1, \gamma_2)$ generated by the intersection point $p$.  By the Fellow-Traveler Lemma, there is a corresponding shared-route $SR^\prime$ in $\left< W^\prime \right>$ with the travelers $(\gamma^\prime_1, \gamma^\prime_2)$, which must also be open; see Figures \ref{fig:intersecting-crossing-journey} and \ref{subfig:open-non-crossing-shared-route}.  Moreover, the ends $\E^\prime$ of $SR^\prime$ have orientations (Definition \ref{def:orientationofcrossingsharedroutefornonelliptic}) matching those of the ends $\E$ of $SR(p)$.  It follows that $SR^\prime$ is crossing.  Its unique intersection point $p^\prime$ is the desired image of $p$; see Figure \ref{fig:corresponding-intersection-points}.  (Note for later that since $SR(p)$ is left-oriented, so is $SR^\prime$, thus $SR^\prime = SR^\prime(p^\prime)$.)  
\end{proof}

Recall that a crossing shared-route $SR$ for the ordered pair $(\gamma_1, \gamma_2)$ comes with an ordering of the shared-route-biangles $\biang_i$ appearing along $\gamma_1$'s route, starting from the source-end $\E$; see Definition \ref{def:source-end}.  If $p$ and $p^\prime$ are intersection points as in Corollary \ref{cor:same-number-of-intersection-points} and its proof, then the left-oriented crossing shared-routes $SR(p)$ and $SR^\prime(p^\prime)$ have the same associated sequence of shared-route-biangles $\biang_i$.  However, if $p \in_{SR(p)} \biang_i$ and $p^\prime \in_{SR^\prime(p^\prime)} \biang_j$ (see again Definition \ref{def:source-end}), it need not be true that $i = j$; see Figure \ref{fig:corresponding-intersection-points}. 

\begin{definition}
\label{def:lying-in-the-same-biangle}
	We say that two corresponding intersection points $p$ and $p^\prime$, as in Corollary \ref{cor:same-number-of-intersection-points}, \textit{lie in the same shared-route-biangle} if there is an index $i$ such that $p \in_{SR(p)} \biang_i \ni_{SR^\prime(p^\prime)} p^\prime$, where the sequence of shared-route-biangles $\{ \biang_i \}$ is defined with respect to the left-oriented crossing shared-routes $SR(p)$ and $SR^\prime(p^\prime)$ generated by $p$ and $p^\prime$, respectively.  
\end{definition}

For example, in Figure \ref{fig:corresponding-intersection-points}, even if it were true that $\biang_0$ and $\biang_2$ represented the same biangle $\biang$ on the surface, we would not say that $p$ and $p^\prime$ lie in the same shared-route-biangle.  

\begin{lemma}
\label{lem:moving-into-same-bigon}
	There is a sequence of modified H-moves (Figure {\upshape\ref{fig:odified-H-move-BEV-curve11}}) applicable to the web $W$ and  a sequence of modified H-moves applicable to $W^\prime$, after which the bijection $\mathscr{P} \leftrightarrow \mathscr{P}^\prime$ from Corollary {\upshape\ref{cor:same-number-of-intersection-points}} satisfies the property that each intersection point $p$ in the global picture $\left< W \right>$ and its corresponding intersection point $p^\prime$ in $\left< W^\prime \right>$ lie in the same shared-route-biangle $\biang_i$.
\end{lemma}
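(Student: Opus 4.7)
The plan is to introduce a non-negative integer monovariant measuring the total misalignment of corresponding intersection points, and to show that each carefully chosen modified H-move strictly reduces this monovariant while leaving all other pairs untouched. Iterating until the monovariant vanishes gives the conclusion.

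Concretely, for each pair $(p, p^\prime)$ produced by Corollary \ref{cor:same-number-of-intersection-points}, record the biangle indices $i(p)$ and $j(p^\prime)$ such that $p \in_{SR(p)} \biang_{i(p)}$ and $p^\prime \in_{SR^\prime(p^\prime)} \biang_{j(p^\prime)}$, and set $d(p) = |i(p) - j(p^\prime)|$ and $\Phi(W, W^\prime) = \sum_{p \in \mathscr{P}} d(p)$, which is a non-negative integer because $\mathscr{P}$ is finite and each open shared-route contains only finitely many biangles. The lemma is precisely the assertion that $\Phi$ can be driven to zero by modified H-moves on $W$ and $W^\prime$.

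The key observation is the strictly local effect of a single modified H-move. Given $p \in \mathscr{P}$ with $d(p) > 0$, the crossing $p$ in the global picture $\langle W \rangle$ is the local-picture realization of a unique H-face sitting in the biangle $\biang_{i(p)}$. Pushing this H-face through one of the two triangles bounding $\biang_{i(p)}$, as depicted in Figure \ref{fig:modified-H-move-no-coordinates}, has three simultaneous consequences: the crossing $p$ migrates to either $\biang_{i(p) + 1}$ or $\biang_{i(p) - 1}$ along $SR(p)$ (by definition of the indexing, these are precisely the two biangles reached across the two triangles bounding $\biang_{i(p)}$); every traveler's route is unchanged, since the route records only which edges are crossed in order, not which biangles host self-crossings; and no other H-face is disturbed, so $i(q)$ is unchanged for every $q \neq p$. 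Because routes, hence shared-route biangle sequences, are preserved, both the Fellow-Traveler correspondence of Lemma \ref{lem:life-neighbors-lemma} and the derived bijection $\varphi$ of Corollary \ref{cor:same-number-of-intersection-points} persist through the move in the obvious way. Choosing the direction of the push appropriately therefore reduces $d(p)$ by one and leaves every $d(q)$, $q \neq p$, fixed, so $\Phi$ drops by exactly one. An analogous statement holds for modified H-moves on $W^\prime$ that adjust $j(p^\prime)$ instead.

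The principal obstacle will be verifying the applicability of each chosen modified H-move: the intermediate triangle must actually host a pair of oppositely-oriented parallel corner arcs so that the swap depicted in Figure \ref{fig:modified-H-move-no-coordinates} can be performed, and the resulting web must remain non-elliptic and in good position with respect to $\splitidealtriang$. Applicability should follow from the hypothesis that $W$ is already in good position, together with the fact that $p$ is the crossing of two oppositely-oriented strands of $\gamma_1$ and $\gamma_2^{-1}$ in the biangle $\biang_{i(p)}$, so the two corner arcs of the intermediate triangle immediately adjacent to $p$ automatically carry opposite orientations and are mutually parallel in the sense required by the move. Preservation of non-ellipticity is established because, as an immersed multi-curve on $\surf^0$, the global picture $\langle W \rangle$ changes only by sliding one crossing into a neighboring biangle along its shared-route; no new disk-, bigon-, or square-face can appear, by essentially the argument used in the proof of Proposition \ref{prop:good-position}. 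Once these verifications are in hand, the monovariant argument gives the algorithmic alignment of all corresponding intersection points required by the statement.
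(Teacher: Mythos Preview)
Your monovariant approach has a genuine gap at the step where you claim a single modified H-move moves exactly the one crossing $p$ and leaves every other $i(q)$ unchanged. An intersection point $p$ in the local picture $\langle W_\biang\rangle$ corresponds to a rung of the ladder-web $W_\biang$, but that rung is an H-\emph{face} on the edge $E$ (hence eligible for a modified H-move across $E$) only when no other crossings lie in the pyramid $\Delta(p,E)$; see Fact \ref{fact:movable-points} and the definition of ``immediately movable''. In general, to move $p$ across $E$ you must first clear out all the crossings $q$ sitting in $\Delta(p,E)$, and each such $q$ then lands in an adjacent biangle, so $i(q)$ \emph{does} change. If some of those $q$ happened to be already aligned with their partners $q'$, your $\Phi$ can increase. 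Your last paragraph hopes that the two corner arcs immediately adjacent to $p$ are automatically parallel in the required sense, but ``parallel'' in Figure \ref{fig:modified-H-move-no-coordinates} means \emph{adjacent with nothing between them}, and this is exactly what fails when $\Delta(p,E)$ contains other crossings.

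The paper's proof confronts precisely this obstruction. It replaces your $\Phi$ by the count $N(W,W')$ of aligned pairs, and in Claim \ref{claim:minefield-main-step} constructs, via a ping-pong between $\mathscr P_\biang$ and $\mathscr P'_\biang$, a pair of \emph{saturated} movable subsets $P(p_0)\subset\mathscr P_\biang$ and $P'(p_0)\subset\mathscr P'_\biang$ with the symmetric property \eqref{eq:biangles-symmetric-condition}. Saturation guarantees (via Claim \ref{claim:minefield-sub-step} and the nested-pyramid Fact \ref{fact:movable-points}) that the whole set can actually be pushed out by a sequence of modified H-moves; the symmetric property guarantees that every aligned $p$ forced to move has its partner $p'$ moving with it into the same adjacent biangle, so $N(W,W')$ does not decrease. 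Only then does the argument choose a misaligned pair and push the farther point toward the source-end, using the no-switchbacks property to rule out the partner overtaking it. Your proposal is missing both the saturation machinery and the simultaneous control of the $W'$-side, and without them the monovariant need not be monotone.
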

	
Before giving a proof (\S \ref{ssec:section26}), we reduce the proof of the main lemma to that of Lemma \ref{lem:moving-into-same-bigon}.

\begin{figure}[htb]
	\centering
	\includegraphics[scale=.41]{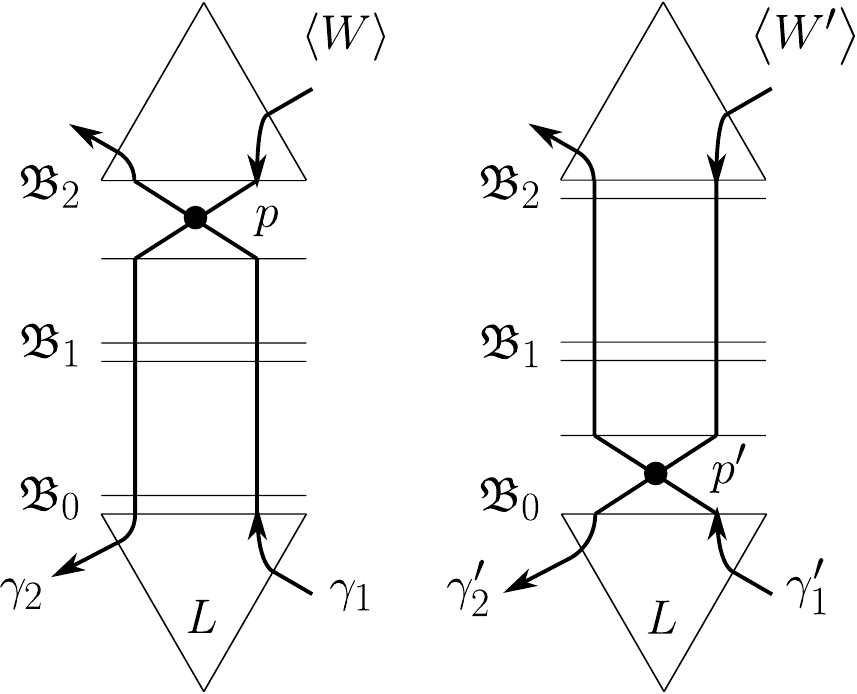}
	\caption{Natural one-to-one correspondence between intersection points}
	\label{fig:corresponding-intersection-points}
\end{figure}

		\subsection{Proof of the main lemma:  finishing the argument}
		\label{ssec:finishing-the-argument}

Assuming corresponding intersection points lie in the same shared-route-biangle, we claim that we are done, $W = W^\prime$.  
	
By the proof of the Fellow-Traveler Lemma, not only is there a natural bijection of travelers $\gamma \leftrightarrow \gamma^\prime$, moreover for each edge $E$ of $\splitidealtriang$ there is a natural bijection of oriented strands $s \leftrightarrow s^\prime$ of $\left< W \right>$ and $\left< W^\prime \right>$, respectively, on $E$. Namely, the $k$-th out-strand (resp. in-strand) $s$, measured from left to right, say, with respect to $\triang$, is matched with the $k$-th out-strand (resp. in-strand) $s^\prime$. 
This satisfies that $s$ lies in $\gamma$ if and only if $s^\prime$ lies in $\gamma^\prime$.
	
Fix an edge $E$ adjacent to a triangle $\triang$.  Let $S = (s_i)$ (resp. $S^\prime=(s^*_i)$) be the full sequence of oriented strands for $\left< W \right>$ (resp. $\left< W^\prime \right>$) on the edge $E$, measured from left to right.  In particular, both in- and out-strands occur in $S$ (resp. $S^\prime$). 

\begin{lemma}
\label{lem:same-edge-sequences}
	Assuming corresponding intersection points lie in the same shared-route-biangle, we have that  $S = S^\prime$, for every edge $E$ of $\splitidealtriang$; see Definition {\upshape\ref{def:lying-in-the-same-biangle}}.  (That is, $s^*_i=s^\prime_i$ for all $i$.)
\end{lemma}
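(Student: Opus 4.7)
The strategy is to show, for each edge $E$ of $\splitidealtriang$, that the sequences $S$ and $S'$ of oriented strands on $E$ are equal. Fix such an edge $E$, adjacent to a triangle $\triang$ and a biangle $\biang$, with the other boundary edge of $\biang$ being $E''$ (adjacent to triangle $\triang''$). By the construction of the bijection in the proof of the Fellow-Traveler Lemma~\ref{lem:life-neighbors-lemma}, the $i$-th out-strand on $E$ (from $\triang$) in $W$ corresponds to the $i$-th out-strand in $W'$, giving a position-preserving correspondence on out-strands on $E$; analogously, the $j$-th out-strand on $E''$ (from $\triang''$) in $W$ corresponds to the $j$-th such strand in $W'$. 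Thus, the remaining task is to show that the interleaving of in-strands and out-strands on $E$ is the same in $W$ and $W'$.

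The interleaving is determined by the ladder-web structure inside $\biang$: an in-strand on $E$ from $\triang$ corresponds via $W_\biang$ (resp.\ $W'_\biang$) to an out-strand on $E''$ from $\triang''$, and the precise matching is encoded by the rungs (H-faces) of the ladder web. Since these rungs are exactly the intersection points of the global picture $\left< W \right>$ (resp.\ $\left< W' \right>$) lying inside $\biang$, the hypothesis that corresponding intersection points lie in the same shared-route-biangle gives a natural bijection between rungs inside $\biang$ in $W$ and in $W'$. Combining the position-preservation on out-strands of both $E$ and $E''$ with this rung bijection, we deduce that the symmetric strand-set pairs of $W_\biang$ and $W'_\biang$ on $(E, E'')$ are identical; by the uniqueness part of Proposition~\ref{prop:ladder-webs}, this forces $W_\biang = W'_\biang$ and hence $S = S'$ on $E$.

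The main obstacle is to promote the ``same shared-route-biangle'' hypothesis to the positional statement that rungs inside a given biangle $\biang$ are matched in a manner compatible with the fixed out-strand orderings on $E$ and $E''$. This is where the shared-route analysis of Sections~\ref{ssec:section25}--\ref{ssec:oriented-highways} enters crucially: each rung in $\biang$ generates a unique left-oriented crossing shared-route via Definition~\ref{def:generated-crossing-shared-route}, and the shared-route data tracks both the source-end (Definition~\ref{def:source-end}) and the index of the biangle along which the crossing occurs. The non-ellipticity of $W$ and $W'$, via Lemma~\ref{lem:unique-intersection-point}, ensures that each shared-route has a unique intersection point, preventing any ``braiding'' ambiguity and allowing the positional correspondence between rungs to be read off directly from the shared-route indexing. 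This is exactly the step where the hypothesis of Lemma~\ref{lem:moving-into-same-bigon} is consumed, and without it one could not rule out configurations where the rung bijection in $\biang$ permutes rungs along the boundary of $\biang$ and thereby alters the interleaving on $E$.
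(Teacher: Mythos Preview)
Your plan has a genuine gap at the step you yourself flag as ``the main obstacle.'' Knowing (i) the bijection on out-strands on $E$ (from $\triang$), (ii) the bijection on out-strands on $E''$ (from $\triang''$), and (iii) a bijection between rungs in $\biang$ is \emph{not} enough to reconstruct the symmetric strand-set pair, so the appeal to Proposition~\ref{prop:ladder-webs} is circular. Concretely, take a single type-$A$ arc and a single type-$B$ arc in $\biang$ that cross once; both interleavings $(A,B)$ and $(B,A)$ on $E$ are compatible with one rung and with (i)--(ii), since those only fix positions \emph{within} each type. What actually distinguishes the two is orientation data of the shared-route (left- vs.\ right-oriented at the source-end), together with the closed-left-oriented normalization fixed at the start of \S\ref{ssec:intersection-points}. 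Your third paragraph gestures at the source-end and Lemma~\ref{lem:unique-intersection-point} but never converts this into an argument; and the closed-left-oriented hypothesis is never invoked at all. Without it, the non-crossing case (where there is no rung in $\biang$, or indeed anywhere) cannot be handled by your rung bijection, yet such pairs of strands still need their relative order pinned down.

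The paper takes a different and more direct route: rather than trying to rebuild $W_\biang$ from rung data, it reduces the statement $S=S'$ to a claim about a single pair of strands on $E$, namely that an out-strand $s^{\mathrm{out}}$ lies to the left of an in-strand $s^{\mathrm{in}}$ if and only if the same holds for $s'^{\mathrm{out}}$ and $s'^{\mathrm{in}}$. This is proved by contradiction: take a shared-route $SR$ through $s^{\mathrm{out}}, s^{\mathrm{in}}$ and the corresponding $SR'$. If $SR$ is crossing, pass to the left-oriented crossing shared-routes generated by the unique intersection points $p$ and $p'$; the hypothesis $p\in_{SR(p)}\biang_i\ni_{SR'(p')}p'$ then forces the travelers in $\left<W'\right>$ to carry two opposite orientations simultaneously. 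If $SR$ is non-crossing, the swap forces one of $SR,SR'$ to be left-oriented and the other right-oriented, contradicting either the matching of end-orientations (open case, via the Fellow-Traveler Lemma) or the closed-left-oriented normalization (closed case). To repair your plan you would essentially need to run this orientation/contradiction argument inside your biangle framework, at which point the detour through rungs and Proposition~\ref{prop:ladder-webs} is no longer doing work.
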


\begin{proof}
	It suffices to prove the following statement.
\begin{claim}
\label{claim:last-argument-claim}
	If $s^\mathrm{out}$ is an out-strand of $S$, and if $s^\mathrm{in}$ is an in-strand of $S$, then
\begin{equation*}
	s^\mathrm{out} \text{ lies to the left of } s^\mathrm{in}
	\quad  \Longleftrightarrow  \quad
	s^{\prime \mathrm{out}} \text{ lies to the left of } s^{\prime \mathrm{in}}.
\end{equation*}
\end{claim}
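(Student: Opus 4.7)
The plan is to analyze the unique crossing in the biangle $\biang$ adjacent to $\triang$ across $E$ created by the travelers $\gamma^\mathrm{out}$ and $\gamma^\mathrm{in}$ through $s^\mathrm{out}$ and $s^\mathrm{in}$, and to show that the relative position of these strands on $E$ is encoded in topological data attached to this crossing that is preserved under the bijection $\varphi\colon \mathscr{P} \to \mathscr{P}'$.

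First I would reduce to the genuinely non-trivial case in which $s^\mathrm{out}$ and $s^\mathrm{in}$ lie at the same corner of $\triang$ on the edge $E$. If they lie in different regions of $E$ (distinct corners, or one at a corner and the other in the middle honeycomb block), then the relative order is determined by the region alone, and these regions are fixed by the cone point $c_\triang$, which prescribes the size and orientation of the honeycomb together with the number of each type of corner arc — independent of corner-ambiguity. In this case the orderings trivially match between $W$ and $W^\prime$, since by the Fellow-Traveler Lemma the corresponding $s^{\prime\mathrm{out}}$ and $s^{\prime\mathrm{in}}$ sit in the same regions of $E$ in $W^\prime$.

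In the remaining case, $\gamma^\mathrm{out}$ and $\gamma^\mathrm{in}$ traverse two oppositely-oriented corner arcs sharing a common corner of $\triang$. Their arcs in $\biang$ are oppositely-oriented with respect to $\biang$, so by Proposition \ref{prop:ladder-webs} they meet at a unique point $p \in \mathscr{P}$; by Corollary \ref{cor:same-number-of-intersection-points} the image $p^\prime = \varphi(p)$ is the unique crossing of the fellow-travelers $\gamma^{\prime\mathrm{out}}$ and $\gamma^{\prime\mathrm{in}}$ supplied by Lemma \ref{lem:life-neighbors-lemma}. Moreover, since the fellow-traveling pair also traverses a matching corner-arc structure inside $\triang$, the edge-sequences of the left-oriented shared-routes $SR(p)$ and $SR(p^\prime)$ agree and extend through both biangles adjacent to $\triang$ across the two edges of the common corner.

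The key observation is that non-ellipticity of $W$ prevents $\gamma^\mathrm{out}$ and $\gamma^\mathrm{in}$ from crossing inside both of these biangles simultaneously, since otherwise the two corner-arc segments in $\triang$ together with the two resulting H-faces would enclose a global square-face. Thus the relative order of $s^\mathrm{out}$ and $s^\mathrm{in}$ on $E$ controls which of the two biangles hosts $p$, and correspondingly which end of $SR(p)$ is the source-end for the left-orientation convention. The analogous analysis in $W^\prime$ shows that a reversed ordering on $E$ would move the crossing $p^\prime$ into the other biangle and flip the source-end of $SR(p^\prime)$, so that the shared-route-biangle index $i$ of $p^\prime$ would differ from that of $p$, contradicting Lemma \ref{lem:moving-into-same-bigon}. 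The principal difficulty is the careful bookkeeping of source-end and left-orientation conventions, and in particular the verification that Lemma \ref{lem:moving-into-same-bigon} provides an honest constraint here precisely because the shared-route of a same-corner pair of corner arcs passes through two biangles rather than one, so that the index distinction is meaningful.
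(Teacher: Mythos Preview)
Your proposal has a genuine gap. The central step asserts that the two travelers through $s^{\mathrm{out}}$ and $s^{\mathrm{in}}$ must cross in the biangle $\biang$ adjacent to $E$, invoking Proposition~\ref{prop:ladder-webs}. But that proposition only says that oppositely-oriented arcs in a ladder-web cross \emph{at most} once, not exactly once (see Definition~\ref{def:ladder-web}). In fact the shared-route for this pair of travelers may be non-crossing altogether, and this happens in two flavors. For an \emph{open} non-crossing shared-route the paper obtains the contradiction from the fact that the ends of $SR$ and $SR'$ have matching orientations (by the Fellow-Traveler Lemma), so one cannot be left-oriented and the other right-oriented. For a \emph{closed} non-crossing shared-route the paper uses the standing hypothesis, arranged at the start of \S\ref{ssec:intersection-points} via global parallel-moves, that both $W$ and $W'$ are closed-left-oriented. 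You never invoke this hypothesis, so the closed case is completely unaddressed; and the open non-crossing case is not covered by your crossing-based argument either.

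There is also a secondary problem in the crossing case itself. You claim that the unique intersection point $p$ lies in one of the two biangles adjacent to the common corner, so that ``which of the two biangles hosts $p$'' encodes the order on $E$. But $p$ can sit anywhere along the shared-route, possibly many biangles away from the corner; Lemma~\ref{lem:unique-intersection-point} gives uniqueness, not location. The paper's argument in the crossing case avoids this localization entirely: once $p$ and $p'$ lie in the same shared-route-biangle $\biang_i$ (the hypothesis of Lemma~\ref{lem:same-edge-sequences}), the left-orientation convention pins down the source-end of $SR(p)=SR'(p')$, and then a reversed order at $E$ forces the travelers in $\langle W'\rangle$ to carry contradictory orientations (Figure~\ref{fig:proof-by-contradiction-last-argument}). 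Your source-end idea is pointing in the right direction, but the argument needs to be decoupled from the specific adjacent biangles and, more importantly, supplemented with separate treatments of the two non-crossing cases.
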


\begin{figure}[t]
	\centering
	\includegraphics[scale=.43]{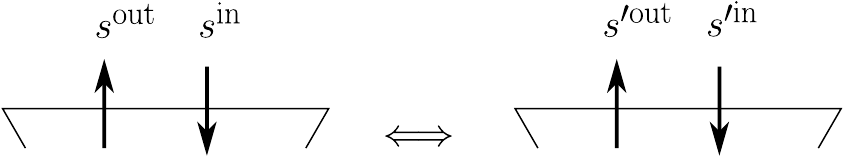}
	\caption{Identical oriented strand-sequences on each edge $E$}
	\label{fig:same-edge-sequences}
\end{figure}

See Figure \ref{fig:same-edge-sequences}.  To prove the forward direction of the claim, suppose otherwise, that is suppose $s^{\prime \mathrm{out}}$ lies to the right of $s^{\prime \mathrm{in}}$.  Let $SR$ (resp. $SR^\prime$) be a shared-route containing $s^\mathrm{out}$ and $s^\mathrm{in}$ (resp. $s^{\prime\mathrm{out}}$ and $s^{\prime\mathrm{in}}$) (there are two possibilities for each, determined by the order of the pair of involved travelers).  By the Fellow-Traveler Lemma, $SR$ is crossing (resp. open/closed non-crossing) if and only if $SR^\prime$ is crossing (resp. open/closed non-crossing).  

Suppose $SR$ and $SR^\prime$ are crossing.  Then we may assume that $SR=SR(p)$ and $SR^\prime=SR^\prime(p^\prime)$ have been chosen as the left-oriented crossing shared-routes generated by their unique intersection points $p$ and $p^\prime$, respectively; see Definition \ref{def:generated-crossing-shared-route}.  By hypothesis, $p$ and $p^\prime$ lie in the same shared-route-biangle, call it $\biang_i$, that is $p \in_{SR(p)} \biang_i \ni_{SR^\prime(p^\prime)} p^\prime$.  
%%%%%
Letting $\biang_j$ denote the shared-route-biangle containing the edge $E$ with the strands $s^\mathrm{out}$ and $s^\mathrm{in}$ (and $s^{\prime\mathrm{in}}$ and $s^{\prime\mathrm{out}}$), let us say that \textit{the strands are on the close (resp. far) side} if they are on the first (resp. last) edge of  $\biang_j$ hit while traveling from the source-end of the shared-route.  Similarly, it makes sense to say that \textit{the crossing comes before (resp. after) the strands} with respect to the source-end.  Our first observation is that since the source-end is left-oriented, and since $s^\mathrm{out}$ lies to the left of $s^\mathrm{in}$, it cannot be true that the crossing comes after the strands.  So let Case 1 (resp. Case 2) be the case that the crossing comes before the strands and that the strands are on the close (resp. far) side.  Since $s^{\prime\mathrm{in}}$ lies to the left of $s^{\prime\mathrm{out}}$ (by contradiction hypothesis), both Case 1 and Case 2 lead to a contradiction, namely that both ends of the crossing shared-route $SR^\prime$ are left-oriented (Definition \ref{def:orientationofcrossingsharedroutefornonelliptic}); see Figure \ref{fig:proof-by-contradiction-last-argument}.

\begin{figure}[htb]
	\centering
	\begin{subfigure}{.49\textwidth}
		\centering
		\includegraphics[scale=.35]{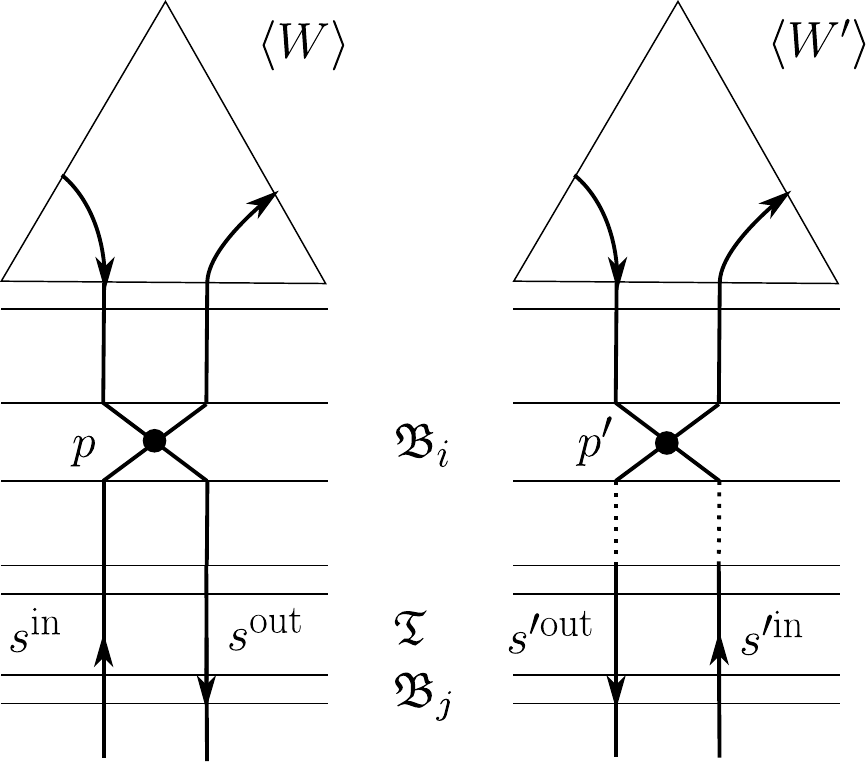}
		\caption{Case 1: crossing before the strands, and strands on the close side of the biangle $\biang_j$}
	\end{subfigure}
\hfill
	\begin{subfigure}{.49\textwidth}
		\centering
		\includegraphics[scale=.35]{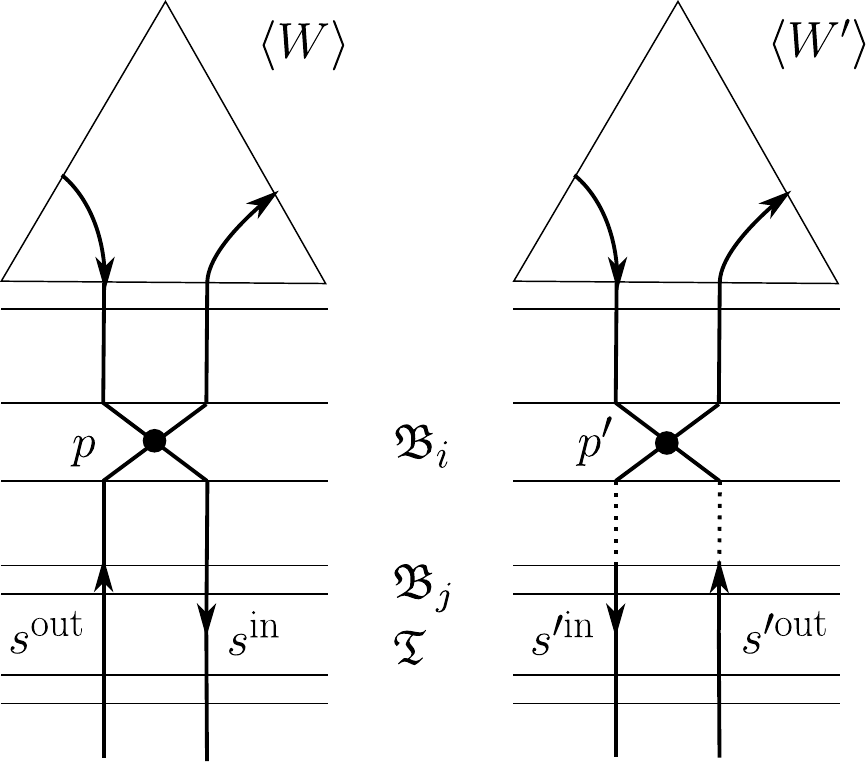}
		\caption{Case 2:  crossing before the strands, and strands on the far side of the biangle $\biang_j$}
	\end{subfigure}
	\caption{Proof of Claim \ref{claim:last-argument-claim}, by contradiction}
	\label{fig:proof-by-contradiction-last-argument}
\end{figure}

Similarly, if $SR$ and $SR^\prime$ are non-crossing, the contradiction is that one of the shared-routes is left-oriented, and the other is right-oriented; see Definition \ref{def:orientationsofnoncrossingsharedroutes}.  Indeed, in the open case (Figure \ref{subfig:open-non-crossing-shared-route}), this violates their matching end orientations (by the Fellow-Traveler Lemma), and in the closed case (Figure \ref{subfig:closed-non-crossing-shared-route}), this violates that both $W$ and $W^\prime$ are closed-left-oriented; see the beginning of \S \ref{ssec:intersection-points}.  

The backward direction of the claim is proved by symmetry.  
\end{proof}

\begin{proof}[Proof of Main Lemma \ref{lem:main-lemma}]
	By Lemma \ref{lem:moving-into-same-bigon}, we may assume that corresponding intersection points lie in the same shared-route-biangle.  By hypothesis, the webs $W$ and $W^\prime$ may differ over triangles $\triang$ by permutations of corner arcs.  However, we gather from Lemma \ref{lem:same-edge-sequences} that they in fact have the same orderings of corner arcs in each triangle $\triang$.  Also, since the ladder-webs in the biangles $\biang$ are uniquely determined by their boundary-edge sequences, it follows that $W$ and $W^\prime$ have the same ladder-web in each biangle $\biang$; see Proposition \ref{prop:ladder-webs}.  
\end{proof}

		\subsection{Proof of the main lemma: proof of Lemma \ref{lem:moving-into-same-bigon}}
		\label{ssec:section26}

We have reduced the proof of the main lemma to proving Lemma \ref{lem:moving-into-same-bigon}.  We begin by laying some groundwork.  

Let $\biang$ be a biangle, and let $\mathscr{P}_\biang = \mathscr{P} \cap \biang$ be the set of intersection points of $\left< W \right>$ in $\biang$.  Let $E$ be a boundary edge of the biangle $\biang$,  and let $\biang_1$ and $\biang_2$ be the two biangles opposite $\biang$ across the triangle $\triang$ adjacent to the edge $E$; see Figure \ref{fig:nested-pyramids}.  

\begin{definition}
Let $p \in \mathscr{P}_\biang$ be an intersection point in $\biang$ of two travelers $\gamma_1$ and $\gamma_2$ in $\left< W \right>$.  We denote by $\overline{\gamma}_1(p, E)$ the half-segment of $\gamma_1$ connecting $p$ to $E$.  Define similarly $\overline{\gamma}_2(p, E)$.  The \textit{pyramid $\Delta(p, E)$ bounded by $p$ and $E$} is the triangular subset of the biangle $\biang$ bordered by the boundary edge $E$ and the two half-segments $\overline{\gamma}_1(p, E)$ and $\overline{\gamma}_2(p, E)$; see Figure \ref{fig:nested-pyramids}.  

Let $P \subset \mathscr{P}_\biang$ be a subset of intersection points.  We call $P$ \textit{saturated with respect to $E$} if 
\begin{equation*}
	\mathscr{P}_\biang \cap \left( \bigcup_{p \in P} \Delta(p, E) \right) = P.
\end{equation*}  
In other words, there are no intersection points in the pyramids $\Delta(p, E)$, $p\in P$, that are not already in $P$.  

An intersection point $p \in \mathscr{P}_\biang$ is \textit{movable with respect to $E$} if, after crossing $E$, the half-segments $\overline{\gamma}_1(p, E)$ and $\overline{\gamma}_2(p, E)$ extend parallel to each other across the adjacent triangle $\triang$, thus landing in the same opposite biangle, either $\biang_1$ or $\biang_2$; see Figure \ref{fig:nested-pyramids}, where on the left, six points are movable, in the middle, four points are movable, and on the right, none are movable.  We say a subset $P \subset \mathscr{P}_\biang$ is \textit{movable with respect to $E$} if each $p \in P$ is movable.  
\end{definition}

\begin{figure}[b]
	\centering
	\includegraphics[width=.9\textwidth]{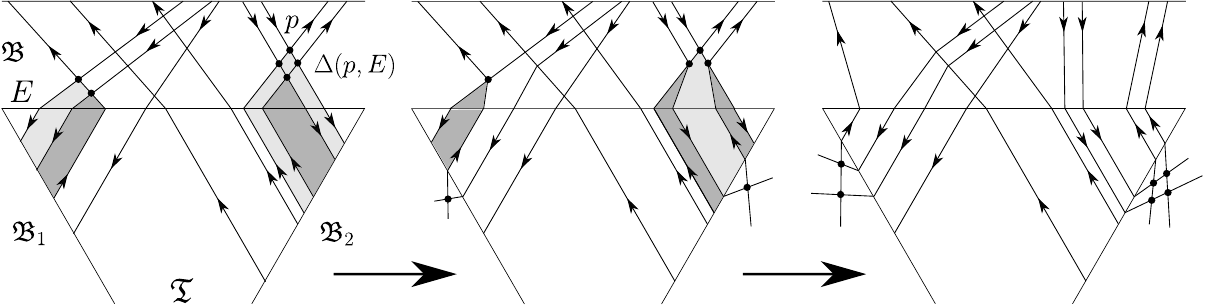}
	\caption{Pushing a saturated movable subset $P$ into adjacent biangles.  (Two rounds of pushes are required to go from the second to third picture.)}
	\label{fig:nested-pyramids}
\end{figure}

\begin{claim}
\label{claim:minefield-sub-step}
	Let $P \subset \mathscr{P}_\biang$ be a subset of intersection points that is saturated and movable with respect to $E$.  Then, there exists a sequence $W = W_0$, $W_1$, $\dots, W_n$ of webs and a sequence $P_{-1} = \emptyset \subsetneq P_0 \subsetneq P_1 \subsetneq \dots \subsetneq P_{n-1}=P \subset \mathscr{P}_\biang$ of intersection points of $\left< W \right>$ in the biangle $\biang$, such that $W_{i+1}$ is obtained from $W_i$ by a finite number of modified H-moves (Figure {\upshape\ref{fig:odified-H-move-BEV-curve11}}) in such a way that the points $P_i - P_{i-1}$ are carried into the two biangles $\biang_1 \cup \biang_2$ and no other intersection points are moved.  After this process is complete, $P$ has been moved into $\biang_1 \cup \biang_2$ and all the other intersection points $\mathscr{P} - P$ remain un-moved in their original biangles.  
\end{claim}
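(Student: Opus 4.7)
The plan is to prove the claim by induction on $|P|$, the base case $|P|=0$ being trivial. For the inductive step I will identify a single intersection point $p_0 \in P$ that can be transported across $E$ by one modified H-move, leaving the remaining set $P \setminus \{p_0\}$ still saturated and movable with respect to $E$; the inductive hypothesis will then complete the process. In particular, it suffices to take $|P_i \setminus P_{i-1}| = 1$ at each step, so $n = |P|$ and $W_{i+1}$ differs from $W_i$ by a single modified H-move.

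To choose $p_0$, pick any element of $P$ whose pyramid $\Delta(p_0, E)$ is minimal under inclusion in the finite poset $\{\Delta(p,E)\}_{p \in P}$. Since $P$ is saturated with respect to $E$, the pyramid $\Delta(p_0, E)$ contains no intersection points of $\left< W \right>$ other than $p_0$ itself. Invoking the ladder-web description from Proposition \ref{prop:ladder-webs}, intersection points of the local picture $\left< W_\biang \right>$ correspond bijectively to rungs of $W_\biang$, and the nesting of pyramids $\Delta(\cdot,E)$ matches the nesting of rungs measured from $E$. Consequently the rung realizing $p_0$ is an H-face of $W$ lying in $\biang$ and bordering $E$ directly, with no other H-face between it and $E$. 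Movability of $p_0$ then guarantees that the two strands leaving this rung cross the adjacent triangle $\triang$ as a pair of parallel corner arcs ending together in a single opposite biangle $\biang_j$ ($j \in \{1,2\}$), so that the triangle-side region between these two strands (the shaded region of Figure \ref{fig:0-vertex-move-and-1-vertex-move}) is empty. This is precisely the hypothesis required to perform a modified H-move transporting the H-face from $\biang$ through $\triang$ into $\biang_j$; let $W_1$ denote the resulting web and set $P_0 = \{p_0\}$.

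The main obstacle is verifying that the residual set $P \setminus \{p_0\}$, viewed as a subset of the intersection points of $\left< W_1 \right>$ in $\biang$, is again saturated and movable with respect to $E$ in $W_1$, so that the induction may be iterated. A modified H-move alters $\left< W \right>$ only by replacing the single local crossing at $p_0$ in $\biang$ with an uncrossed parallel pair, and by swapping two parallel corner arcs in the adjacent triangle $\triang$; the routes and mutual crossings of all other travelers in $\biang$ are left intact, and in particular no new intersection points are created in $\biang$ and no existing ones outside $\{p_0\}$ are destroyed. Consequently the pyramids $\Delta(p,E)$ for $p \in P \setminus \{p_0\}$ are unchanged outside a small neighborhood of $E$, so saturation persists, and the corner-arc arrangement on $\triang$ relevant to the movability of each remaining $p$ is undisturbed in the region through which its two emanating strands cross $\triang$, so movability persists as well. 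Iterating $|P|$ times yields the required sequences $W_0, W_1, \ldots, W_n$ and $P_{-1} \subsetneq P_0 \subsetneq \cdots \subsetneq P_{n-1} = P$ with $n = |P|$, and since each step is strictly local near one point of $P$, no intersection point outside $P$ is moved during the entire process.
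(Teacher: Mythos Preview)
Your argument is correct and follows essentially the same approach as the paper: identify immediately movable points (those whose pyramid toward $E$ contains no other intersection point), push them across via modified H-moves, observe that saturation and movability of the remainder are preserved, and induct. The only cosmetic difference is that the paper moves the entire set $Q_i$ of immediately movable points at each stage (so $P_i\setminus P_{i-1}$ may have several elements), whereas you move a single minimal-pyramid point per step; this just refines the paper's sequence. One minor slip: the ``shaded region'' you cite is from the H-move figure, while here you are invoking a modified H-move (Figure~\ref{fig:modified-H-move-no-coordinates}); the content is the same, but the reference should point there.
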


Claim \ref{claim:minefield-sub-step} will first be used in the proof of Claim \ref{claim:minefield-main-step}.  We now prepare to prove Claim \ref{claim:minefield-sub-step}.  

We say that $p \in \mathscr{P}_\biang$ is \textit{immediately movable with respect to $E$} if it is movable and there are no other intersection points in the pyramid $\Delta(p, E)$, that is $\Delta(p, E) \cap \mathscr{P}_\biang = \{ p \}$.  Equivalently, $\mathrm{Int}(\Delta(p, E)) \cap \left< W \right> = \emptyset$, hence a modified H-move can be applied to carry  $p$ across the edge $E$, across the adjacent triangle $\triang$, and into one of the opposite biangles $\biang_1$ or $\biang_2$; see Figure \ref{fig:nested-pyramids}, where on the left and in the middle, two and three points are immediately movable.

The following statement is evident from the ladder-web structure in the biangle $\biang$.  

\begin{fact}[Nested pyramids]
\label{fact:movable-points}
	If $q \in \mathscr{P}_\biang \cap \Delta(p, E)$
 is an intersection point in the pyramid $\Delta(p, E)$, then $\Delta(q, E) \subset \Delta(p, E)$.  Consequently, if $p$ is movable, then so is $q$.  Therefore, if $p$ is movable, then there exists an inner-most $q$ in $\Delta(p, E)$ that is immediately movable.  \qed
\end{fact}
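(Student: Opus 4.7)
The plan is to establish the three statements in the stated fact by unpacking the ladder-web structure of $W_\biang$ and its local picture $\langle W_\biang \rangle$ in the biangle $\biang$; see Definition \ref{def:ladder-web} and Proposition \ref{prop:ladder-webs}. I will first fix notation. Write $p = \gamma_1 \cap \gamma_2$ where $\gamma_1$ is an in-strand and $\gamma_2$ is an out-strand at $E$, and let $e_1, e_2 \in E$ be their respective feet, so that the $E$-segment of $\partial \Delta(p,E)$ is the sub-interval of $E$ between $e_1$ and $e_2$, which I will denote $[e_1, e_2]$. For $q = \gamma_3 \cap \gamma_4 \in \Delta(p, E) \cap \mathscr{P}_\biang$, I will choose labels so that $\gamma_3$ is same-oriented as $\gamma_1$ at $E$ (hence $\gamma_4$ same-oriented as $\gamma_2$).

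For the nesting $\Delta(q, E) \subset \Delta(p, E)$, the key input from the local picture is that two same-oriented arcs do not intersect while two oppositely-oriented arcs intersect exactly once. Hence $\gamma_3 \cap \overline{\gamma}_1(p, E) = \emptyset$, and $\gamma_3 \cap \overline{\gamma}_2(p, E)$ has at most one point. I will then count how many times $\gamma_3$ crosses $\partial \Delta(p, E) = [e_1, e_2] \cup \overline{\gamma}_1(p, E) \cup \overline{\gamma}_2(p, E)$. Since $\gamma_3$ meets $E$ only at its foot, the number of crossings of $[e_1, e_2]$ is $1$ or $0$ according to whether the foot lies in $[e_1, e_2]$ or not. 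If the foot were outside $[e_1, e_2]$, then $\gamma_3$ would need to enter $\Delta(p, E)$ (to reach $q$) and later exit (to reach its other endpoint, which lies on the opposite boundary edge), requiring at least two crossings of $\partial \Delta(p, E)$; but the total available is at most one (coming from $\overline{\gamma}_2(p,E)$), a contradiction. Hence the foot lies in $[e_1, e_2]$, contributing exactly one crossing, and then $\gamma_3$ exits $\Delta(p,E)$ at most once through $\overline{\gamma}_2(p,E)$; in particular, $\gamma_3$ stays inside $\Delta(p, E)$ between its foot and $q$, since otherwise it would exit and re-enter the pyramid, double-crossing $\overline{\gamma}_2(p, E)$. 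This yields $\overline{\gamma}_3(q, E) \subset \Delta(p, E)$, and the symmetric argument for $\gamma_4$ completes $\Delta(q, E) \subset \Delta(p, E)$.

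For the movability implication, I will invoke Proposition \ref{prop:honeycomb-webs}: the local web $W_\triang$ in the adjacent triangle $\triang$ consists of a possibly-empty honeycomb together with corner arcs on each of its three corners, so the strands on $E$ partition into three contiguous blocks along $E$, namely a left-corner block, a honeycomb block, and a right-corner block. By the definition of movability, $\gamma_1$ and $\gamma_2$ extend inside $\triang$ as parallel corner arcs on the same corner, so $e_1$ and $e_2$ both lie in one corner block. The feet of $\gamma_3, \gamma_4$ lie in $[e_1, e_2]$ by Part 1, hence in the same corner block by contiguity, so $\gamma_3$ and $\gamma_4$ also extend in $\triang$ as corner arcs on that same corner and run parallel into the same opposite biangle. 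Thus $q$ is movable.

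Finally, for the existence of an inner-most immediately-movable $q$, I will exploit the fact that the apex of a pyramid uniquely determines it; combined with Part 1 and the observation that $p$ lies on $\partial \Delta(p,E) \setminus \Delta(q,E)$, this gives strict inclusion $\Delta(q, E) \subsetneq \Delta(p, E)$ whenever $q \in (\mathscr{P}_\biang \cap \Delta(p, E)) \setminus \{p\}$. Iterating by replacing the current candidate with any other intersection point in its pyramid strictly decreases the finite quantity $|\mathscr{P}_\biang \cap \Delta(\cdot, E)|$, so the process terminates at some $q^*$ with $\mathscr{P}_\biang \cap \Delta(q^*, E) = \{q^*\}$; by Part 2, $q^*$ is movable, hence immediately movable. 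The main obstacle is the crossing-count in Part 1: it must carefully juggle the orientation-based non-crossing and one-crossing rules against the three distinct boundary pieces of the pyramid, and it must further localize the count to the sub-arc of $\gamma_3$ between its foot and $q$ in order to extract the nesting of pyramids rather than merely a statement about $\gamma_3$ as a whole.
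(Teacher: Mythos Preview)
Your proposal is correct and follows exactly the route the paper intends: the paper states this fact as ``immediate from the ladder-web structure in the biangle~$\biang$'' and appends a bare \qed\ without further argument. Your write-up simply makes explicit what the paper leaves to the reader, namely the crossing count in the local picture $\langle W_\biang\rangle$ (same-oriented arcs disjoint, oppositely-oriented arcs cross at most once) for the nesting, the contiguous corner/honeycomb block decomposition on $E$ from Proposition~\ref{prop:honeycomb-webs} for the movability inheritance, and a finite descent on $|\mathscr{P}_\biang \cap \Delta(\,\cdot\,,E)|$ for the inner-most point.
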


\begin{proof}[Proof of Claim \ref{claim:minefield-sub-step}]
By induction, assume $W_i$ and $P_{i-1}$ are given.  At this stage, the intersection points $P_{i-1}$ have been moved into $\biang_1 \cup \biang_2$, and the intersection points $P - P_{i-1} \neq \emptyset$ are still in  $\biang$.  Note that, since $P$ is saturated in $\left< W \right>$, $P - P_{i-1}$ is saturated in $\left< W_i \right>$, that is 
\begin{equation*}
	P - P_{i-1} = \mathscr{P}_\biang^{(i)} \cap \left( \cup_{p \in P - P_{i-1}} \Delta^{(i)}(p, E) \right)
	\quad  \subset \left< W_i \right>.
\end{equation*}
	Since by hypothesis each $p \in P - P_{i-1}$ is movable, by Fact \ref{fact:movable-points} the subset
\begin{equation*}
	Q_i = \{  q \in \mathscr{P}^{(i)}_\biang \cap ( \cup_{p \in P - P_{i-1}} \Delta^{(i)}(p, E)) ; \quad q \text{ is immediately movable} \} \neq \emptyset,
\end{equation*}
is non-empty.  In particular, $Q_i \subset P - P_{i-1}$.  We can thus apply modified H-moves to $W_i$ to move the intersection points $Q_i$ from the biangle $\biang$ into the two biangles $\biang_1 \cup \biang_2$, yielding the new web $W_{i+1}$.  Putting $P_{i} = P_{i-1} \cup Q_i$ finishes the induction step; see Figure \ref{fig:nested-pyramids}, where $n=3$.    
\end{proof}

The following statement is  immediate from  Fact \ref{fact:movable-points}.  

\begin{fact}[Saturation of a subset of intersection points]
\label{fact:suf-cond-for-saturated}
	For any subset $Q \subset \mathscr{P}_\biang$, the set
\begin{equation*}
	P = \mathscr{P}_\biang \cap \left( \bigcup_{q \in Q} \Delta(q, E) \right) \quad \subset \mathscr{P}_\biang,
\end{equation*}
is saturated with respect to $E$.  \qed
\end{fact}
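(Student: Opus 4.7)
The plan is to prove both inclusions of the defining equality
\begin{equation*}
	\mathscr{P}_\biang \cap \Bigl( \bigcup_{p \in P} \Delta(p, E) \Bigr) = P
\end{equation*}
for $P = \mathscr{P}_\biang \cap \bigl( \bigcup_{q \in Q} \Delta(q, E) \bigr)$, with the substantive direction resting entirely on the Nested Pyramids Fact \ref{fact:movable-points}.

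First I would observe the $\supset$ inclusion, which is essentially tautological: every $p \in P$ is an intersection point in $\mathscr{P}_\biang$ and is a vertex of its own pyramid $\Delta(p,E)$, so $p \in \mathscr{P}_\biang \cap \Delta(p,E)$, showing $P \subset \mathscr{P}_\biang \cap \bigl( \bigcup_{p \in P} \Delta(p,E) \bigr)$.

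For the reverse $\subset$ inclusion, let $r$ be any point of $\mathscr{P}_\biang$ that lies in $\Delta(p,E)$ for some $p \in P$. Since $p$ belongs to $P$, by definition of $P$ there exists some $q \in Q$ such that $p \in \mathscr{P}_\biang \cap \Delta(q,E)$. Applying Fact \ref{fact:movable-points} to the pair $(q,p)$ (with $p$ in the role of the ``inner'' intersection point of the pyramid $\Delta(q,E)$), we conclude that $\Delta(p,E) \subset \Delta(q,E)$. Consequently $r \in \Delta(q,E)$, and since also $r \in \mathscr{P}_\biang$, we obtain $r \in \mathscr{P}_\biang \cap \Delta(q,E) \subset P$, as needed.

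There is no serious obstacle here: the entire content of the statement is that the operation $Q \mapsto \mathscr{P}_\biang \cap \bigl( \bigcup_{q \in Q} \Delta(q,E) \bigr)$ is idempotent, and this idempotence is a direct formal consequence of the nesting property $\Delta(p,E) \subset \Delta(q,E)$ whenever $p \in \Delta(q,E)$. The only point where one must be slightly careful is to invoke Fact \ref{fact:movable-points} with the correct roles: the ``outer'' pyramid corresponds to the witness $q \in Q$, and the ``inner'' one to the arbitrary $p \in P$ sitting inside it, so that the transitivity of pyramid containment closes the argument.
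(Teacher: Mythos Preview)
Your proof is correct and follows exactly the approach the paper indicates: the paper simply declares the fact ``immediate from Fact~\ref{fact:movable-points}'' without further argument, and your write-up spells out precisely that immediacy, using the nesting $\Delta(p,E)\subset\Delta(q,E)$ from Fact~\ref{fact:movable-points} to establish the nontrivial inclusion.
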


We continue moving toward the proof of Lemma \ref{lem:moving-into-same-bigon}.  We are now dealing with two webs $W$ and $W^\prime$.  Let $E$, $\triang$, $\biang$, $\biang_1$, $\biang_2$ be as before.  We begin by setting some notation.  

Given a subset $P \subset \mathscr{P}_\biang$, put
\begin{equation*}
	P^\prime(P, E) = \mathscr{P}^\prime_\biang \cap \left( \bigcup_{ \{ p \in P; \quad p \text{ and } p^\prime \text{ lie in the same shared-route-biangle} \} } \Delta(p^\prime, E)
	\right)
	\quad  \subset \mathscr{P}^\prime_\biang.
\end{equation*}
In other words, $P^\prime(P, E)$ consists of the points in $\mathscr{P}^\prime_\biang$ lying in the pyramids $\Delta(p^\prime, E)$ generated by those intersection points $p^\prime$ in $\mathscr{P}^\prime_\biang$ whose corresponding intersection point $p$ lies in the same shared-route-biangle as $p^\prime$ and satisfies $p \in P$.  Symmetrically, given a subset $P^\prime \subset \mathscr{P}^\prime_\biang$, put
\begin{equation*}
	P(P^\prime, E) = \mathscr{P}_\biang \cap \left( \bigcup_{ \{ p^\prime \in P^\prime; \quad p^\prime \text{ and } p \text{ lie in the same shared-route-biangle} \} } \Delta(p, E)
	\right) 
	\quad  \subset  \mathscr{P}_\biang.
\end{equation*}
Note that (1) the above shared-route-biangles necessarily coincide with the biangle $\biang$, and (2) generally, either of the sets $P^\prime(P, E)$ or $P(P^\prime, E)$ may be empty.  

\begin{fact}
\label{fact:movable-then-movable}
	The union of movable sets is movable.  Let $P \subset \mathscr{P}_\biang$ (resp. $P^\prime \subset \mathscr{P}^\prime_\biang$) be movable with respect to $E$.  Then $P^\prime(P, E) \subset \mathscr{P}^\prime_\biang$ (resp. $P(P^\prime, E) \subset \mathscr{P}_\biang$) is movable with respect to $E$.  
\end{fact}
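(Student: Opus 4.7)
The first assertion is immediate from the definition, since a set is movable precisely when each of its points is movable, and hence a union of movable sets is movable pointwise.

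For the second assertion, I treat the case of $P^\prime(P,E)$; the case $P(P^\prime,E)$ is identical by symmetry. The strategy has three beats: fix $q^\prime \in P^\prime(P,E)$; identify the generating point $p^\prime \in \mathscr{P}^\prime_\biang$ with $q^\prime \in \Delta(p^\prime,E)$; use the Fellow-Traveler Lemma \ref{lem:life-neighbors-lemma} to transfer the movability of the paired point $p \in P$ to $p^\prime$; and then apply the nested pyramids (Fact \ref{fact:movable-points}) to propagate movability from $p^\prime$ out to $q^\prime$. By the definition of $P^\prime(P,E)$, the hypotheses on $p$ supply $p \in P$ movable with $p, p^\prime$ lying in the same shared-route-biangle, so by Fact \ref{fact:movable-points} it suffices to prove that $p^\prime$ itself is movable.

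To carry out the transfer step, write $p = \gamma_1 \cap \gamma_2$ and $p^\prime = \gamma_1^\prime \cap \gamma_2^\prime$, where $\gamma_i^\prime$ is the fellow-traveler of $\gamma_i$; that these fellow-travelers do meet at $p^\prime$ is built into the construction of the bijection $\varphi \colon \mathscr{P} \to \mathscr{P}^\prime$ in the proof of Corollary \ref{cor:same-number-of-intersection-points}. Movability of $p$ says that the two half-segments $\overline{\gamma}_1(p,E)$ and $\overline{\gamma}_2(p,E)$, upon crossing $E$, both traverse the adjacent triangle $\triang$ and exit through one common opposite edge $E_1$, landing in the same opposite biangle $\biang_1$ or $\biang_2$. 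Because $\gamma_i$ and $\gamma_i^\prime$ share a common edge-sequence, and because the hypothesis that $p$ and $p^\prime$ lie in the same shared-route-biangle aligns their route-indices on each side of the intersection, the fellow-half-segments $\overline{\gamma_1^\prime}(p^\prime,E)$ and $\overline{\gamma_2^\prime}(p^\prime,E)$ likewise cross $E$, traverse $\triang$, and exit through $E_1$; hence $p^\prime$ is movable.

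The delicate step I expect is pinning down precisely why the condition ``$p$ and $p^\prime$ lie in the same shared-route-biangle'' synchronizes the route-indices around $p$ and $p^\prime$, so that the next-edge-after-$E$ on each branch emanating from $p$ corresponds, under the fellow-traveler identification, to the next-edge-after-$E$ on the matching branch of $p^\prime$. This alignment is exactly what the construction of $\varphi$ via the left-oriented crossing shared-route generated by $p$ builds in, and unpacking that construction at the level of the indexed edge-sequences of $\gamma_1, \gamma_2, \gamma_1^\prime, \gamma_2^\prime$ should close the argument without further input.
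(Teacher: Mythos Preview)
Your proof is correct and follows essentially the same approach as the paper's: both argue that if $p \in P$ is movable and $p,p'$ lie in the same shared-route-biangle then the Fellow-Traveler Lemma forces $p'$ to be movable, then invoke Fact~\ref{fact:movable-points} to spread movability to every point of $\Delta(p',E)$, and conclude by the first assertion. The paper compresses the transfer step to a single clause (``by the Fellow-Traveler Lemma, $p'$ is movable''), whereas you unpack it at the level of edge-sequences; your final paragraph is a bit hedged, but the alignment of route-indices you identify is exactly what the same-shared-route-biangle hypothesis guarantees, so the argument closes as written.
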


\begin{proof}
	The first statement is obvious.  For the second, if $p \in P$ is movable and if $p^\prime$ lies in the same shared-route-biangle as $p$, then, by the Fellow-Traveler Lemma \ref{lem:life-neighbors-lemma}, $p^\prime$ is movable.  By Fact \ref{fact:movable-points}, $\mathscr{P}^\prime_\biang \cap \Delta(p^\prime, E) \subset \mathscr{P}^\prime_\biang$ is movable.  By the first statement, $P^\prime(P, E)$ is movable.  
\end{proof}

We are now prepared to prove Lemma \ref{lem:moving-into-same-bigon}, which we re-state here for convenience.  

\begin{customlem}{\ref{lem:moving-into-same-bigon}}
	There is a sequence of modified H-moves (Figure {\upshape\ref{fig:odified-H-move-BEV-curve11}}) applicable to the web $W$ and  a sequence of modified H-moves applicable to $W^\prime$, after which the bijection $\mathscr{P} \leftrightarrow \mathscr{P}^\prime$ from Corollary {\upshape\ref{cor:same-number-of-intersection-points}} satisfies the property that each intersection point $p$ in the global picture $\left< W \right>$ and its corresponding intersection point $p^\prime$ in $\left< W^\prime \right>$ lie in the same shared-route-biangle.
\end{customlem}

\begin{proof}  
	  \textit{Step 1.} Let $N$ equal the cardinality $N = |\mathscr{P}| = |\mathscr{P}^\prime|$.  Define
\begin{equation*}
	N(W, W^\prime) = 
	\left|  \{ p \in \mathscr{P} ; \quad  p \text{ and } p^\prime \text{ lie in the same shared-route-biangle} \}  \right|
	\quad  \in \Z_{\geq 0}.
\end{equation*}
If $N(W, W^\prime) = N$, then we are done.  So assume $N(W, W^\prime) < N$.  

The strategy is simple.  If two intersection points $p \in \mathscr{P}_\biang$ and $p^\prime \in \mathscr{P}_{\biang^\prime}$ do not lie in the same shared-route-biangle, then we choose sufficiently large saturated movable sets $p \in P \subset \mathscr{P}_\biang$ and $p^\prime \in P^\prime \subset \mathscr{P}_{\biang^\prime}$ such that pushing $P$ and $P^\prime$ into adjacent biangles via Claim \ref{claim:minefield-sub-step} does not decrease $N(W, W^\prime)$.  This can be done in a controlled way so that eventually $N(W, W^\prime)$ increases.  

\textit{Step 2.}  Let $E$, $\triang$, $\biang$, $\biang_1$, $\biang_2$ be as above.

\begin{claim}
\label{claim:minefield-main-step}
	Let $p_0 \in \mathscr{P}_\biang$ be movable with respect to $E$.  Then, there exist subsets $p_0 \in P(p_0) \subset \mathscr{P}_\biang$ and $P^\prime(p_0) \subset \mathscr{P}^\prime_\biang$, and webs $W_1$ and $W^\prime_1$ obtained by applying finitely many modified H-moves to $W$ and  $W^\prime$, respectively, such that: in $\left< W_1 \right>$ and $\left< W_1^\prime \right>$ the subsets $P(p_0)$ and $P^\prime(p_0)$ have been moved into $\biang_1 \cup \biang_2$;  also $\mathscr{P} - P(p_0)$ and $\mathscr{P}^\prime - P^\prime(p_0)$ are un-moved; and, 
	\begin{equation*}
\tag{$\ast$}
\label{eq:biangles-intersections-nondecreasing}
	N \geq N(W_1, W^\prime_1) \geq N(W, W^\prime)
	\quad  \in  \Z_{\geq 0}.
\end{equation*}
\end{claim}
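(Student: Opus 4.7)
The plan is to take
$P(p_0) := \mathscr{P}_\biang \cap \Delta(p_0, E)$
and
$P'(p_0) := P'(P(p_0), E)$
using the paper's notation.  By Fact~\ref{fact:suf-cond-for-saturated}, both sets are saturated with respect to $E$, and Fact~\ref{fact:movable-points} applied to the movable point $p_0$ shows every point of $P(p_0)$ is movable; Fact~\ref{fact:movable-then-movable} then transfers movability to $P'(p_0)$.  I would invoke Claim~\ref{claim:minefield-sub-step} once on $W$ with input $P(p_0)$ and once on $W'$ with input $P'(p_0)$ to produce, via finitely many modified H-moves on each, webs $W_1$ and $W'_1$ in which exactly $P(p_0)$ and $P'(p_0)$ have been pushed into $\biang_1 \cup \biang_2$ while all other intersection points are fixed.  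This establishes the existence statement and gives $N \geq N(W_1, W'_1)$ trivially, so the content of the claim reduces to the inequality $N(W_1, W'_1) \geq N(W, W')$.

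The starting observation is that modified H-moves preserve every traveler's route, so the Fellow-Traveler bijection $\varphi \colon \mathscr{P} \to \mathscr{P}'$ and the biangle sequences of the shared-routes on $\surf$ are preserved; only the index of the biangle containing $p$ or $p'$ along its shared-route can shift.  A pair $(p, \varphi(p))$ contributes to $N$ only when both points lie in a common biangle on $\surf$ at matching shared-route-biangle indices.  Pairs not meeting $\biang$ are unaffected by the moves.  For a good pair with both points in $\biang$, if $p \in P(p_0)$, then $p' = \varphi(p)$ is automatically in $P'(p_0)$, because $\Delta(p', E) \ni p'$ is one of the defining pyramids for $P'(P(p_0), E)$; so both are moved.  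By the Fellow-Traveler Lemma, the shared-route strands of $\gamma_1, \gamma_2$ and of $\gamma_1' = \varphi(\gamma_1), \gamma_2' = \varphi(\gamma_2)$ exit $\biang$ and traverse $\triang$ in parallel, landing in the same opposite biangle (which is the adjacent shared-route-biangle at index $i \pm 1$), so the pair remains good.

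The main obstacle will be the symmetric case: ruling out a good pair $(p, p')$ with $p' \in P'(p_0)$ and $p \notin P(p_0)$.  From the definition, $p' \in \Delta(p_1', E)$ for some $p_1 \in P(p_0)$ with $(p_1, p_1')$ good; to contradict $p \notin P(p_0)$ one must show $p \in \Delta(p_1, E)$, whence $p \in \Delta(p_0, E) \cap \mathscr{P}_\biang = P(p_0)$ by nested pyramids.  This amounts to a structural compatibility between the two pyramid partitions of $\biang$ coming from the possibly distinct ladder-webs $W_\biang$ and $W'_\biang$, mediated by the restriction of $\varphi$ to good pairs in $\biang$.  My plan is to read this compatibility off from the strand correspondence along the boundary edge $E$: good-pair intersection points in $\biang$ correspond to ``matched rungs'' of the two ladder-webs with aligned strand endpoints on $E$, and I would argue by induction on ladder-height away from $E$ that matched rungs sit in order-preserving bijection with respect to pyramid containment.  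Once this symmetric case is handled, no good pair is lost and the desired inequality $N(W_1, W'_1) \geq N(W, W')$ follows, with possible additional gains when previously bad pairs happen to align after the moves.
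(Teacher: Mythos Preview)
Your construction takes only the first iterate of what the paper does: you set $P(p_0)=\mathscr{P}_\biang\cap\Delta(p_0,E)$ and $P'(p_0)=P'(P(p_0),E)$, whereas the paper iterates a ``ping-pong'' $P_{i+1}=P_i\cup P(P'_i,E)$, $P'_{i+1}=P'_i\cup P'(P_{i+1},E)$ until both sequences stabilize, and only then declares $P(p_0)=P_{i_0}$, $P'(p_0)=P'_{i_0}$.  The point of the ping-pong is exactly to force the symmetric condition \eqref{eq:biangles-symmetric-condition} to hold \emph{by construction}: once the sequences stabilize one has $P(P'(p_0),E)\subset P(p_0)$ and $P'(P(p_0),E)\subset P'(p_0)$, and \eqref{eq:biangles-symmetric-condition} follows in one line in each direction.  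With your single-step sets the forward direction of \eqref{eq:biangles-symmetric-condition} is automatic, but the backward direction is precisely the ``main obstacle'' you flag, and your proposed resolution does not go through.

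Concretely, you want: if $(p_1,p_1')$ and $(q,q')$ are both good pairs and $q'\in\Delta(p_1',E)$, then $q\in\Delta(p_1,E)$.  But the pyramid relation on $E$ is governed by the \emph{absolute} positions of the two boundary strands (one in, one out) of each rung, while the Fellow-Traveler correspondence only matches out-strands to out-strands and in-strands to in-strands \emph{in order within each type}.  The interleaving of in- and out-strands along $E$ can differ between $W_\biang$ and $W'_\biang$, since it is exactly this interleaving that encodes the corner-arc permutation in the adjacent triangle.  So for instance one can have, on $E$, strand pattern $(\text{out},\text{out},\text{in},\text{in})$ in $W'$ versus $(\text{in},\text{out},\text{in},\text{out})$ in $W$; a pair of matched rungs nested in $W'$ with respect to $E$ then becomes \emph{reverse}-nested in $W$.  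Your ``induction on ladder-height'' sketch does not address this, and I do not see how to repair it without essentially re-introducing the closure under $P(\,\cdot\,,E)$ and $P'(\,\cdot\,,E)$ that the paper's ping-pong provides.  The fix is simply to enlarge your $P(p_0)$, $P'(p_0)$ by iterating the two operators until stable; saturation and movability are preserved under the iteration by Facts~\ref{fact:suf-cond-for-saturated} and~\ref{fact:movable-then-movable}, and then both directions of \eqref{eq:biangles-symmetric-condition} are free, which is all that is needed for \eqref{eq:biangles-intersections-nondecreasing}.
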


We prove the claim.  Our main task is to define two subsets $p_0 \in P(p_0) \subset \mathscr{P}_\biang$ and $P^\prime(p_0) \subset \mathscr{P}^\prime_\biang$ that are saturated and movable with respect to $E$, satisfying the property that
\begin{gather*}
	p \in P(p_0), \quad 
	p \text{ and } p^\prime \text{ lie in the same shared-route-biangle}
\\\tag{$\ast \ast$}\label{eq:biangles-symmetric-condition}	\Longleftrightarrow
\\	p^\prime \in P^\prime(p_0), \quad p^\prime \text{ and } p \text{ lie in the same shared-route-biangle}.
\end{gather*}
We do this simultaneously by a ping-pong procedure.  

Put $P_1 = \mathscr{P}_\biang \cap \Delta(p_0, E)$ and $P^\prime_1 = P^\prime(P_1, E) \subset \mathscr{P}^\prime_\biang$.  Having defined $P_i \subset \mathscr{P}_\biang$ and $P^\prime_i \subset \mathscr{P}^\prime_\biang$, put $P_{i+1} = P_i \cup P(P^\prime_i, E)$ and $P^\prime_{i+1} = P^\prime_i \cup P^\prime(P_{i+1}, E)$.  This defines two nested infinite sequences $P_1 \subset P_2 \subset \cdots \subset \mathscr{P}_\biang$ and $P^\prime_1 \subset P^\prime_2 \subset \cdots \subset \mathscr{P}^\prime_\biang$.  Since $\mathscr{P}_\biang$ and $\mathscr{P}^\prime_\biang$ are finite, these sequences stabilize: $P_i = P_{i+1}$ and $P^\prime_i = P^\prime_{i+1}$ for all $i \geq i_0$. Set $P(p_0) = P_{i_0} \ni p_0$ and $P^\prime(p_0) = P^\prime_{i_0}$.  

Note that, by construction, there exists $Q \subset \mathscr{P}_\biang$ and $Q^\prime \subset \mathscr{P}^\prime_\biang$ such that
\begin{equation*}
	P(p_0) = \mathscr{P}_\biang \cap \left( \bigcup_{q \in Q} \Delta(q, E) \right) 
	\quad  \text{and}  \quad
	P^\prime(p_0) = \mathscr{P}^\prime_\biang \cap \left( \bigcup_{q^\prime \in Q^\prime} \Delta(q^\prime, E) \right).  
\end{equation*}
By Fact \ref{fact:suf-cond-for-saturated}, $P(p_0)$ and $P^\prime(p_0)$ are saturated with respect to $E$.  

Observe also that since $p_0 \in \mathscr{P}_\biang$ is movable by hypothesis, $P_1 = \mathscr{P}_\biang \cap \Delta(p_0, E)$ is movable by Fact \ref{fact:movable-points}, hence $P(p_0) \subset \mathscr{P}_\biang$ and $P^\prime(p_0) \subset \mathscr{P}^\prime_\biang$ are movable by Fact \ref{fact:movable-then-movable}.  

To check Equation \eqref{eq:biangles-symmetric-condition}, by symmetry it suffices to check one direction.  Assume $p \in P(p_0)$ and that $p$ and $p^\prime$ lie in the same shared-route-biangle.  Let $i$ be such that $p \in P_i$.  Then
\begin{equation*}
	p^\prime \in \mathscr{P}^\prime_\biang \cap \Delta(p^\prime, E) \subset P^\prime(P_i, E) \subset P^\prime_{i} \subset P^\prime(p_0).
\end{equation*}

To prove Equation \eqref{eq:biangles-intersections-nondecreasing}, we use Claim \ref{claim:minefield-sub-step} to move the saturated and movable sets $P(p_0)$ and $P^\prime(p_0)$, and only these sets, into the opposite biangles $\biang_1 \cup \biang_2$ via finitely many modified H-moves applied to $W$ and $W^\prime$, yielding the desired webs $W_1$ and $W^\prime_1$ (note what we are here calling $W_1$ was called $W_n$ in the statement of Claim \ref{claim:minefield-sub-step}).  If $p \in P(p_0)$ moves into the biangle $\biang_1$ (resp. $\biang_2$), and if $p^\prime$ lies in the same shared-route-biangle as $p$, so that $p^\prime \in P^\prime(p_0)$ by Equation \eqref{eq:biangles-symmetric-condition}, then by the Fellow-Traveler Lemma \ref{lem:life-neighbors-lemma} $p^\prime$ also moves into the biangle $\biang_1$ (resp. $\biang_2$), and similarly if the roles of $p$ and $p^\prime$ are reversed.  

\textit{Step 3.}  To finish the proof, assume that $p$ and $p^\prime$ do not lie in the same shared-route-biangle.  Then it makes sense to talk about which of $p$ or $p^\prime$ is \textit{farther away} from the source-end $\E$ or $\E^\prime$ of the left-oriented crossing shared-route $SR(p)$ or $SR^\prime(p^\prime)$ which it generates, respectively.  More precisely, if $p \in_{SR(p)} \biang_i$ and $p^\prime \in _{SR^\prime(p^\prime)} \biang_j$, $i, j \geq 0$, then $i \neq j$ and $p$ being farther away is equivalent to $i > j$.  

Assume $p$ is farther away, so $i > j$.    By Claim \ref{claim:minefield-main-step}, we can push $p$ one step closer to the source-end $\E$, that is we can push $p$ into $\biang_{i-1}$.  For this step, $p^\prime$ either (1) stays in $\biang_j$, (2) is pushed into $\biang_{j-1}$, or (3) is pushed into $\biang_{j+1}$; see Figure \ref{fig:algorithm-moving-into-the-same-biangle}. Notice since no two adjacent edges in a shared-route can represent the same edge in the split ideal triangulation $\splitidealtriang$ (by the no-switchbacks property), case (3) can only happen if $j < i-1$.  Also, again by Claim \ref{claim:minefield-main-step}, as a result of this step the number $N(W, W^\prime)$ only increases or stays the same.  

Since the indices $i$,$j$ are bounded below, after multiple applications of this step eventually $p$ and $p^\prime$ fall into the same shared-route-biangle, at which point $N(W, W^\prime)$ strictly increases.  Repeating this procedure for each pair $p$ and $p^\prime$ completes the proof of Lemma \ref{lem:moving-into-same-bigon}.  
\end{proof}

\begin{figure}[htb]
	\centering
	\includegraphics[width=\textwidth]{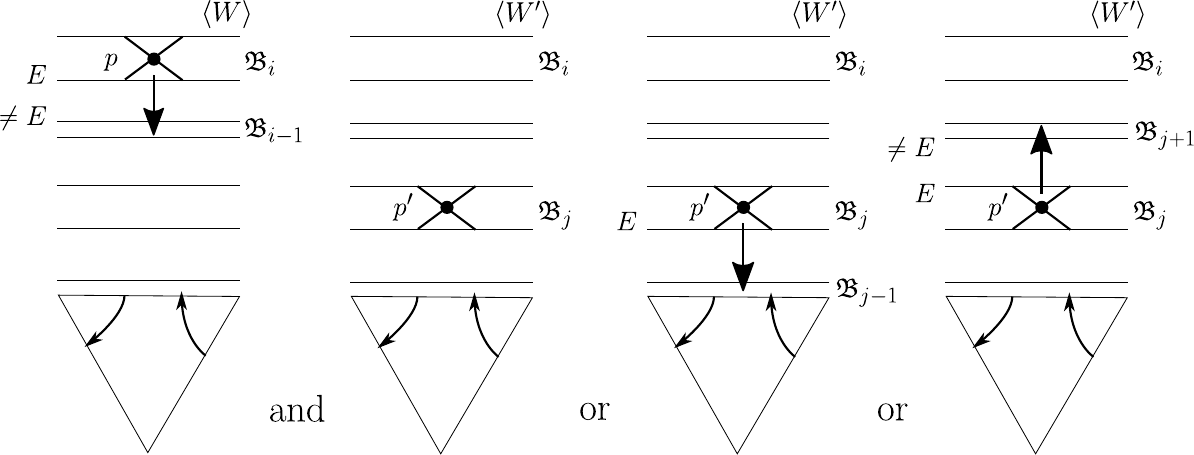}
	\caption{Moving intersection points into the same shared-route-biangle}
	\label{fig:algorithm-moving-into-the-same-biangle}
\end{figure}

		\section{Webs on surfaces-with-boundary}
		\label{sec:webs-on-surfaces-with-boundary}

We generalize Theorem \ref{thm:main-theorem-1} to the case of surfaces-with-boundary $\surfbord$.  More precisely, we give two distinct, but complementary, versions of the result.  The first version, where we think of the surface $\surfbord$ as generalizing punctured surfaces $\surf$, originates in the geometry and topology of $\mathrm{SL}_3(\mathbb{C})$-character varieties.  The second version, where we think of the surface $\surfbord$ as generalizing ideal polygons $\poly_k$, originates in the representation theory of the Lie group $\mathrm{SL}_3(\mathbb{C})$.  The proof of either statement is essentially the same as in the empty boundary case.

		\subsection{Essential webs}
		\label{ssec:topological-setting--boundarysetting}

		\subsubsection{Surfaces-with-boundary}
		\label{sssec:surfaces-with-boundary}

Our surfaces, now denoted $\surfbord=\overline{\surf}-P$, are obtained by removing a finite set $P$ of punctures from a compact oriented surface $\overline{\surf}$.  We require that there is at least one puncture, that each boundary component of $\overline{\surf}$ contains a puncture, and that the resulting punctured surface $\surfbord$ admits an ideal triangulation $\idealtriang$; this last property is equivalent to the Euler characteristic condition $\chi(\surfbord) < d/2$, where $d$ is the number of components of $\partial \surfbord$.  The boundary edges of $\surfbord$ count as edges in an ideal triangulation $\idealtriang$.  

Once again, for simplicity, we assume that $\idealtriang$ does not contain any self-folded triangles; however, our results should extend to this setting essentially without change.  

The split ideal triangulation $\splitidealtriang$ associated to an ideal triangulation $\idealtriang$ is defined as in \S \ref{ssec:split-ideal-triangulations}.  In particular, the boundary edges of $\surfbord$ are split as well.

		\subsubsection{Essential webs}
		\label{sssec:webs--boundarysetting}

A \textit{global web}, or just \textit{web}, $W$ on the surface $\surfbord$ is defined as in Definition \ref{def:local-web-on-a-surface}, except ``$\mathfrak{D}_k$'' is replaced by ``$\surfbord$'', and ``local'' is replaced by ``global''. 

The internal and external faces of a web $W$ on $\surfbord$ are defined as in Definition \ref{def:external-web-face-closed-surface}, except with the appropriate replacements as above.  As usual, a web $W$ on $\surfbord$ is non-elliptic if all of its internal faces have at least six sides; compare Definition \ref{def:non-elliptic-web-with-boundary}.  

An \textit{essential} web $W$ on $\surfbord$ is defined as in Definition \ref{def:essential-web-with-boundary}, where in addition the arc $\alpha$ needs to be isotopic (respecting boundary) in $\surfbord$ to the segment $\overline{E}$.  

The good position of a web $W$ with respect to a split ideal triangulation $\splitidealtriang$ is defined exactly as in Definition \ref{def:good-position}, without change.

		\subsection{Rung-less essential webs; first version of the boundary result}
		\label{sssec:rung-less-essential-webs-first-version-of-the-result}

		\subsubsection{Rung-less essential webs}
		\label{sssec:rung-less-essential-webs}

As usual, a \textit{rung-less} web $W$ on the surface $\surfbord$ is a web that does not have any H-faces; compare Definition \ref{def:rung-less-webs}.  

The \textit{parallel-equivalence class of a rung-less web} $W$ is defined as in Definition \ref{def:parallel-equivalent-webs}, except we have to include another global parallel-move exchanging two arcs that together with segments in $\partial \surfbord$ form the boundary of an embedded rectangle $R$ in the surface $\surfbord$; for instance, this would be the case in Figure \ref{fig:parallel-move} had we not identified the top and bottom edges of the surface.  

The property of being essential is preserved by parallel-equivalence.  The collection of parallel-equivalence classes of rung-less essential webs is denoted by $[\webbasis{\surfbord}]$.  (Note, by definition, the empty class $[W]=[\emptyset]$ is in $[\webbasis{\surfbord}]$.)

		\subsubsection{Knutson-Tao cone associated to an ideal triangulation}
		\label{sssec:knutson-tao-cone-associated-to-a-triangulation--boundarysetting}

To an ideal triangulation $\idealtriang$ of the surface $\surfbord$ we associate a dotted ideal triangulation, also denoted $\idealtriang$, as in \S \ref{ssec:dotted-ideal-triangulations}.  In particular, there are dots located on the boundary edges of $\surfbord$, as for example in Figure \ref{fig:dotted-triangle} where $\surfbord = \triang$ is an ideal triangle.  The number $N$ of dots in the dotted triangulation $\idealtriang$ can be computed as $N = 2 * \#\left\{\text{edges } E \text{ of } \idealtriang \right\} + \#\left\{\text{triangles } \triang \text{ of } \idealtriang \right\}$.  (Since each ideal triangulation $\lambda$ has $-3\chi(\surfbord)+2d$ edges and $-2\chi(\surfbord)+d$ triangles, note $N$ is independent of~$\lambda$.)

To the dotted triangulation $\idealtriang$ we associate the \textit{Knutson-Tao cone} $\KTcone{\idealtriang} \subset \Zpos^N$, as in \S \ref{ssec:global-knutson-tao-cone}.

		\subsubsection{Coordinates for rung-less essential webs}
		\label{sssec:coordinates-for-rung-less-essential-webs--boundarysetting}

The minimal position of a rung-less web $W$ with respect to an ideal triangulation $\idealtriang$ is defined as in Definition \ref{def:minimal position}.  Then, Proposition \ref{prop:minimal-position} holds word for word, except ``non-elliptic'' is replaced by ``rung-less essential''.  		

Modified H-moves take rung-less essential webs in good position to webs of the same type.  Proposition \ref{prop:good-position} holds verbatim, except ``non-elliptic'' is replaced by ``rung-less essential''.  

Given an ideal triangulation $\idealtriang$, we define the Fock-Goncharov global coordinate function
	$\themap{\idealtriang}^\mathrm{FG} : [\webbasis{\surfbord}] \to \KTcone{\idealtriang}$
 as in \S \ref{ssec:global-coordinates-from-local-coordinate-functions}; see Definition \ref{def:fock-goncharov-global-coordinate-function}.  
 
 \begin{theorem}[First boundary result]
\label{thm:main-theorem-v1}
	The Fock-Goncharov global coordinate function
\begin{equation*}
	\themap{\idealtriang}^\FG : [\webbasis{\surfbord}]
	\overset{\sim}{\longrightarrow} \KTcone{\idealtriang} \subset \mathbb{Z}_{\geq 0}^N
\end{equation*}
is a bijection of sets, identifying parallel-equivalence classes of rung-less essential webs on the surface $\surfbord$ with points of the Knutson-Tao cone associated to the ideal triangulation $\idealtriang$.
\end{theorem}

\begin{proof}
	As in the proof of Theorem \ref{thm:main-theorem-1}, the strategy is to construct an explicit inverse
\begin{equation*}
	\invmap{\idealtriang}^\mathrm{FG} : \KTcone{\idealtriang}
	\longrightarrow [\webbasis{\surfbord}].  
\end{equation*}
The mapping $\invmap{\idealtriang}^\mathrm{FG}$ is defined via the ladder gluing construction followed by removing internal elliptic faces, as explained in \S \ref{ssec:ladder-gluing-construction}-\ref{ssec:inverse-mapping-correcting-an-elliptic-web}.  Because of the rung-less condition, we also need to remove external H-faces, which can be done at the cost of  swapping two strands of the web lying on the boundary $\partial \surfbord$.  For two examples of this procedure, see Figures \ref{fig:ladder-construction-boundary-1} and \ref{fig:ladder-construction-boundary-1-twoways} (compare Figures \ref{fig:coordinates-example2} and \ref{fig:elliptic-web-non-example}).  As before, the resulting rung-less essential web is not unique in general.  

In order to deal with this ambiguity, we need the analogue of Main Lemma \ref{lem:main-lemma}, saying that two rung-less essential webs resulting from the ladder gluing construction are parallel-equivalent.  The proof of the main lemma is essentially unchanged from \S \ref{sec:proof-of-main-lemma}.  To say a word about it, the proof of Corollary \ref{cor:same-number-of-intersection-points} requires the fact that there are no crossing shared-routes terminating on the boundary $\partial \surfbord$.  This follows from the rung-less condition.  
\end{proof}

\begin{remark}\label{rem:kim}
	Theorem \ref{thm:main-theorem-v1} is closely related to \cite[Proposition 1.12]{KimArxiv20}.  
\end{remark}

\begin{figure}[htb]
	\centering
	\includegraphics[width=.6\textwidth]{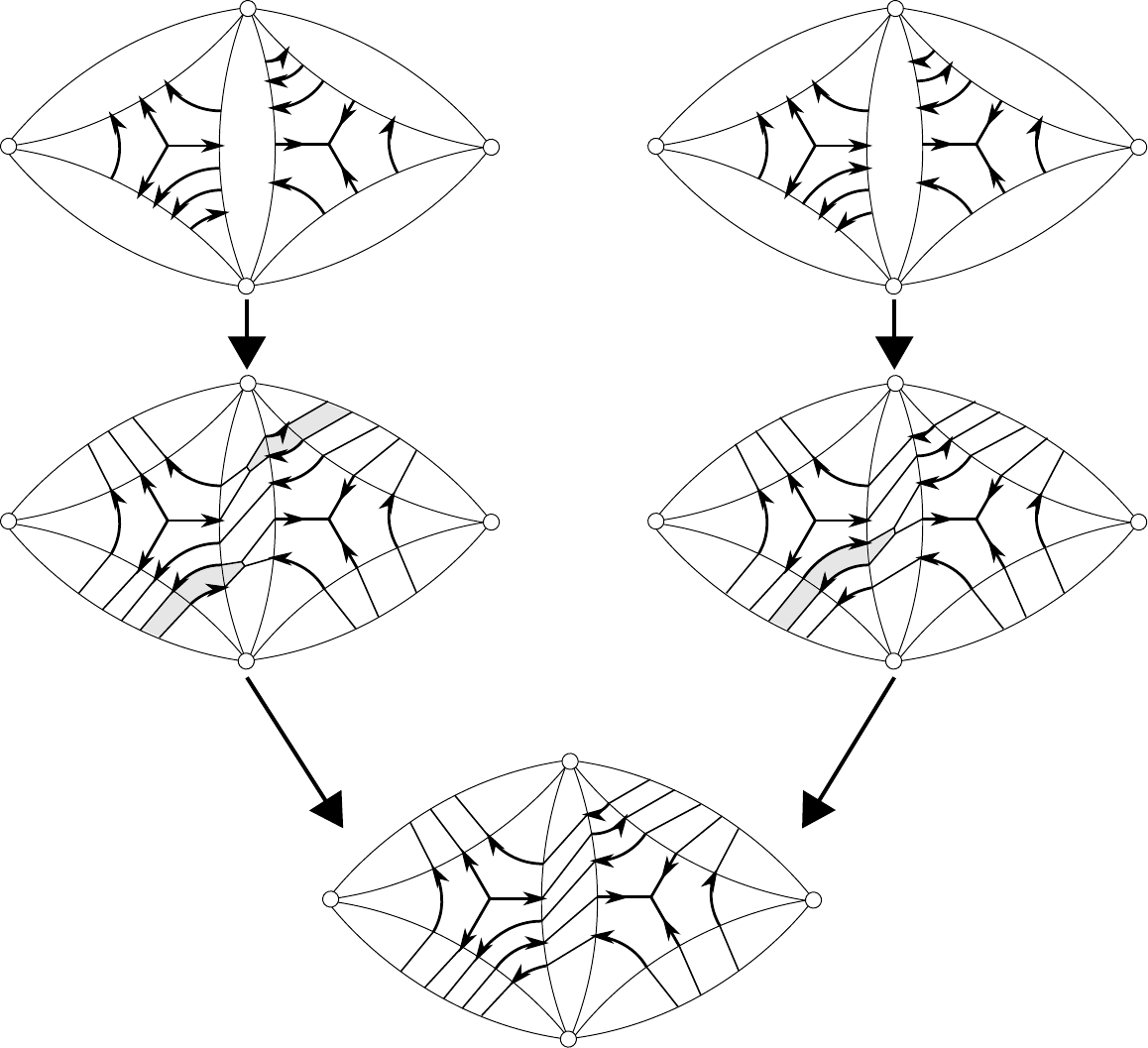}
	\caption{Ladder gluing construction for rung-less essential webs: 1 of 2 (on the ideal square)}
	\label{fig:ladder-construction-boundary-1}
\end{figure}

\begin{figure}[htb]
	\centering
	\includegraphics[width=.46\textwidth]{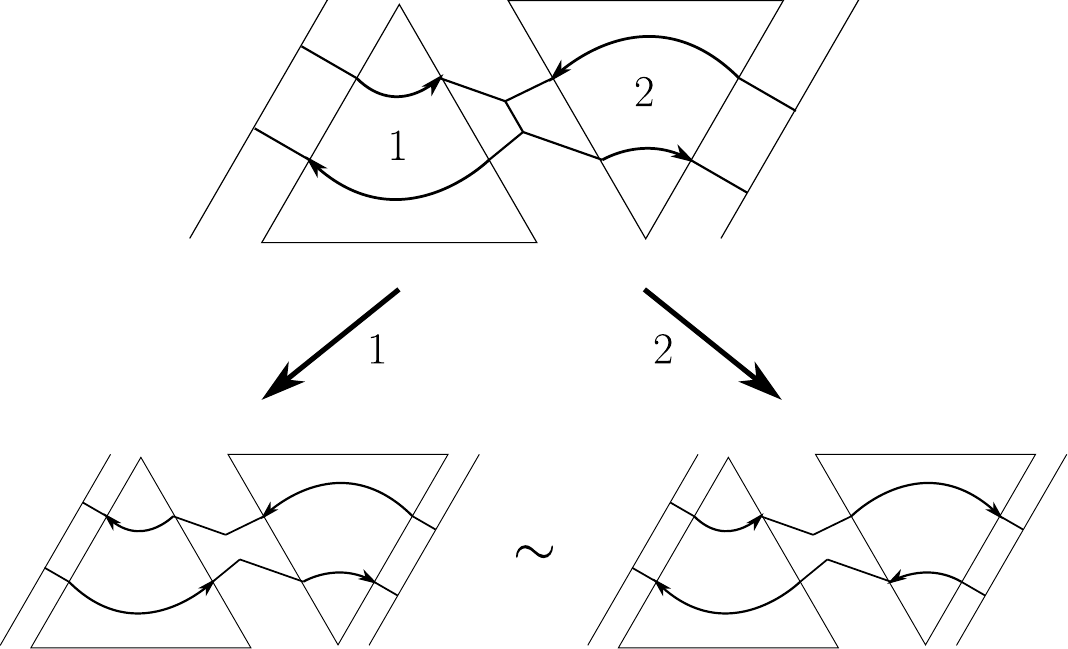}
	\caption{Ladder gluing construction for rung-less essential webs: 2 of 2}
	\label{fig:ladder-construction-boundary-1-twoways}
\end{figure}

		\subsection{Application: geometry and topology of \texorpdfstring{$\mathrm{SL}_3(\mathbb{C})$}{SL3}-character varieties}
		\label{ssec:application-geometry-and-topology-of-sl3c-character-varieties}

As a consequence of the first version of the result, Theorem \ref{thm:main-theorem-v1}, we give an alternative geometric proof of the Sikora-Westbury theorem \cite[Theorem 9.5]{SikoraAlgGeomTop07} (see also \cite[Proposition 4]{MR4359515frohmansikora}), whose original proof in \cite{SikoraAlgGeomTop07} assumes the Diamond Lemma from non-commutative algebra.

\begin{corollary}[Application of the first boundary result]
\label{cor:application-of-the-first-version}
	The collection $[\webbasis{\surfbord}]$ of parallel-equivalence classes of rung-less essential webs on the surface $\surfbord$ indexes a natural linear basis for the algebra $\mathbb{C}[\mathscr{R}_{\mathrm{SL}_3(\mathbb{C})}(\surfbord)]$ of regular functions on the $\mathrm{SL}_3(\mathbb{C})$-character variety.  
\end{corollary}

Here, the character variety $\mathscr{R}_{\mathrm{SL}_n(\mathbb{C})}(\surfbord)$,  for general $n$, was discussed in the introduction for surfaces $\surfbord = \surf$ with empty boundary.  When $\partial \surfbord \neq \emptyset$, there is not a mainstream definition for $\mathscr{R}_{\mathrm{SL}_n(\mathbb{C})}(\surfbord)$.  Possible models may be found in \cite{FominPNAS14, GoncharovInvent15, MR4493620costantinole, korinman2019arxiv, MR4359515frohmansikora, MR4609753higgins}.  In order for Corollary \ref{cor:application-of-the-first-version} to be well-posed, we make use of a purely topological model, via \textit{skein algebras}, which has the advantage of admitting a natural deformation quantization.  

\begin{definition}
		Following Frohman-Sikora \cite[\S 1,12]{MR4359515frohmansikora}, for a surface-with-boundary $\surfbord$ we \underline{define} the \textit{algebra $\mathbb{C}[\mathscr{R}_{\mathrm{SL}_3(\mathbb{C})}(\surfbord)]$ of regular functions on the $\mathrm{SL}_3(\mathbb{C})$-character variety} to be the commutative (reduced skein) algebra $\mathscr{S}^{1}(\surfbord)$ of  \cite[\S 3]{MR4359515frohmansikora}, where we have taken the specialization $q=a=1$ of their deformation parameters.  
\end{definition}

In particular, a web $W$ on $\surfbord$ represents an element of $\mathscr{S}^1(\surfbord)$.  

\begin{remark}
	When $\surfbord = \surf$, Sikora \cite{SikoraTrans01} proved that the trace functions $\mathrm{Tr}_W$ on the character variety $\mathscr{R}_{\mathrm{SL}_3(\mathbb{C})}(\surf)$ furnish a natural isomorphism $\mathscr{S}^1(\surf) \cong \mathbb{C}[\mathscr{R}_{\mathrm{SL}_3(\mathbb{C})}(\surf)]$.  
\end{remark}

\begin{proof}[Proof of Corollary \ref{cor:application-of-the-first-version}]
	We prove a more general statement.  Let $\mathscr{S}^{q,a}(\surfbord)$ be the \textit{reduced $\mathrm{SL}_3$-skein algebra} of \cite[\S 3]{MR4359515frohmansikora} for deformation parameters $q, a \in \mathbb{C} - \left\{ 0 \right\}$.  So, $\mathscr{S}^1(\surfbord) = \mathscr{S}^{1,1}(\surfbord)$.  
	
	Higgins \cite{MR4609753higgins} defined a \textit{$\mathrm{SL}_3$-stated skein algebra} $\mathscr{S}^q_\mathrm{st}(\surfbord)$ generalizing the $\mathrm{SL}_2$-stated skein algebra of \cite{MR4493620costantinole}.  More precisely, we define $\mathscr{S}^q_\mathrm{st}(\surfbord)$ to be Kim's \cite[\S 5]{KimArxiv20} adaptation of Higgins' stated skein algebra. 
	
	When $a = 1$, inclusion provides a natural algebra homomorphism $\iota : \mathscr{S}^{q, 1}(\surfbord) \to \mathscr{S}^q_\mathrm{st}(\surfbord)$ from the Frohman-Sikora reduced skein algebra to the Higgins stated skein algebra; in fact, the mapping $\iota$ is onto the sub-algebra $\mathscr{S}^q_{\mathrm{st}=\mathrm{top}}(\surfbord)$ generated by webs with all-top-states on the boundary $\partial \surfbord$.  Put $\mathscr{S}^q(\surfbord) := \mathscr{S}^{q, 1}(\surfbord)$.  In summary, $\iota : \mathscr{S}^q(\surfbord) \twoheadrightarrow \mathscr{S}^q_{\mathrm{st}=\mathrm{top}}(\surfbord) \subset \mathscr{S}^q_\mathrm{st}(\surfbord)$.  
	
	Using the boundary orientation of $\partial \surfbord$ induced by the orientation of $\surfbord$, a web $W$ on $\surfbord$ lifts to an element of the skein algebra $\mathscr{S}^q(\surfbord)$.  Moreover, parallel-equivalent rung-less webs $W \sim W^\prime$ determine the same element of $\mathscr{S}^q(\surfbord)$ (by \cite[Figure 6]{MR4359515frohmansikora} since $a=1$).  We prove $[\webbasis{\surfbord}]$ forms a basis for $\mathscr{S}^q(\surfbord)$.  It is immediate by construction that $[\webbasis{\surfbord}]$ is spanning.

We mimic the strategy of \cite[\S 8]{BonahonGT11} in the $\mathrm{SL}_2$-case.  Fix an ideal triangulation $\idealtriang$ of $\surfbord$.  Building on \cite{DouglasThesis20, DouglasArxiv21b, DouglasArxiv21}, Kim \cite{KimArxiv20} defined a \textit{$\mathrm{SL}_3$-quantum trace map}, which in particular is an algebra homomorphism $\mathrm{Tr}^q_\idealtriang : \mathscr{S}^q_\mathrm{st}(\surfbord) \to \mathscr{T}^q_\idealtriang$ from the stated skein algebra $\mathscr{S}^q_\mathrm{st}(\surfbord)$ to a \textit{quantum torus} $\mathscr{T}^q_\idealtriang$ depending on $\idealtriang$.  More precisely, $\mathscr{T}^q_\idealtriang = \mathbb{C}[Z_1^{\pm 1},Z_2^{\pm 1}, \dots, Z_N^{\pm 1}]^q$ is a non-commutative $q$-deformation of the algebra of Laurent polynomials in variables $Z_i$, which no longer commute but $q$-commute (according to a quiver drawn on the triangulated surface).  Here, $N$ is the number of coordinates in Theorem \ref{thm:main-theorem-v1}.  (When $q=1$, the variables $Z_i=X_i^{1/3}$ can be thought of as formal cube roots of the Fock-Goncharov coordinates $X_i$.)

	By \cite[Proposition 5.80]{KimArxiv20} (and \cite[Proposition 3.15]{kim2021mutation}), the quantum trace map $\mathrm{Tr}^q_\idealtriang$ satisfies the property that the polynomial $\mathrm{Tr}^q_\idealtriang(\iota(W))$, obtained by evaluating a rung-less essential web $W$ in $[\webbasis{\surfbord}]$, has a  \textit{highest term} $ Z_1^{a_1} Z_2^{a_2} \cdots Z_N^{a_N}$ (omitting the power of $q$ coefficient) whose exponents are  the coordinates $(a_1, a_2, \dots, a_N) = \themap{\idealtriang}^\mathrm{FG}(W) \in \Zpos^N$ of Theorem \ref{thm:main-theorem-v1}.  (Here, by highest term, we mean that if a monomial $Z_1^{a^\prime_1} Z_2^{a^\prime_2} \cdots Z_N^{a^\prime_N}$ also appears in $\mathrm{Tr}^q_\idealtriang(\iota(W))$, then $a^\prime_i \leq a_i$ for all $i = 1, 2, \dots, N$.)
	
	It follows that each $W$ is nonzero in $\mathscr{S}^q(\surfbord)$, that $\iota$ is injective on $[\webbasis{\surfbord}]$, that $\iota([\webbasis{\surfbord}])$ is linearly independent in $\mathscr{S}^q_\mathrm{st}(\surfbord)$, and lastly that $\iota : \mathscr{S}^q(\surfbord) \overset{\sim}{\to} \mathscr{S}^q_{\mathrm{st}=\mathrm{top}}(\surfbord) \subset \mathscr{S}^q_\mathrm{st}(\surfbord)$ is an isomorphism.  In particular, we gather $[\webbasis{\surfbord}]$ is independent, hence a basis of $\mathscr{S}^q(\surfbord)$.
\end{proof}

		\subsection{Boundary-fixed essential webs; second version of the boundary result}
		\label{ssec:boundary-fixed-essential-webs-second-version-of-the-result}

		\subsubsection{Boundary-fixed essential webs}
		\label{sssec:boundary-fixed-essential-webs}

For a boundary edge $E$ of the surface $\surfbord$, a \textit{strand-set} $S_E$ is a (possibly empty) set $S_E = \{ s \}$ of disjoint oriented strands $s$ located on $E$ (compare Definition \ref{def:ladder-web}); see Figures \ref{fig:ladder-construction-boundary-2} and \ref{fig:ladder-construction-boundary-2-twoways}, where the strands are indicated by white-headed arrows.  A \textit{strand-set $S_{\partial \surfbord} = \{ S_E \}$ for $\surfbord$} is a collection of strand-sets $S_E$ varying over all $E \subset \partial \surfbord$.  

\begin{definition}
	A \textit{boundary-fixed web} $W$ with respect to a strand-set $S_{\partial \surfbord}$ for the surface $\surfbord$ is a web $W$ whose end-strands match the strand-set $S_{\partial \surfbord}$; see Figures \ref{fig:ladder-construction-boundary-2} and \ref{fig:ladder-construction-boundary-2-twoways}.  
\end{definition}

If $W$ is boundary-fixed for a strand-set $S_{\partial \surfbord}$, then $W$ is not boundary-fixed for any strand-set $S^\prime_{\partial \surfbord}$ obtained by swapping two oppositely oriented strands of $S_{\partial \surfbord}$ on a boundary edge.  

For boundary-fixed webs, global parallel-moves can only be performed across embedded annuli, in contrast to rung-less  webs (\S \ref{sssec:rung-less-essential-webs}).  Global parallel-moves preserve the property of being essential.  We denote by $[\webbasis{\surfbord}](S_{\partial \surfbord})$ the collection of \textit{parallel-equivalence classes of boundary-fixed essential webs} for the strand-set $S_{\partial \surfbord}$.  (Note, by definition,  $[W] = [\emptyset] \in [\webbasis{\surfbord}](S_{\partial \surfbord})$ if and only if $S_{\partial \surfbord} = \emptyset$.)

		\subsubsection{Boundary-fixed Knutson-Tao cone}
		\label{sssec:boundary-fixed-knutson-tao-cone}

By Figure \ref{fig:triangle-hilbert-basis} (recall also property (2) of Definition \ref{def:local-coordinate-function}), a strand-set $S_E$ determines two local coordinates on a boundary edge $E$ of $\surfbord$.  More generally, a strand-set $S_{\partial \surfbord}$ for the surface fixes $2 * \#\{\text{boundary edges }E\}$  coordinates on the boundary $\partial \surfbord$.  See Figure \ref{fig:ladder-construction-boundary-2} for an example, where the fixed coordinates are colored~red.

\begin{definition}
	The \textit{boundary-fixed Knutson-Tao cone} $\KTcone{\idealtriang}(S_{\partial \surfbord}) \subset \KTcone{\idealtriang} \subset \Zpos^N$ with respect to a strand-set $S_{\partial \surfbord}$ is the subset of $\KTcone{\idealtriang}$ (as defined in \S \ref{sssec:knutson-tao-cone-associated-to-a-triangulation--boundarysetting}) consisting of points whose boundary coordinates agree with those determined by $S_{\partial \surfbord}$.  
\end{definition}

Note that, in contrast to boundary-fixed webs (\S \ref{sssec:boundary-fixed-essential-webs}), the boundary-fixed Knutson-Tao cone $\KTcone{\idealtriang}(S_{\partial \surfbord}) \subset \KTcone{\idealtriang}$ is independent of permuting the boundary strands of $S_{\partial \surfbord}$.  This is because the coordinates on a boundary component only depend on the number of in- and out-strands, not on their ordering along the edge (see property (2) in Definition \ref{def:local-coordinate-function}).

		\subsubsection{Coordinates for boundary-fixed essential webs}
		\label{sssec:coordinates-for-boundary-fixed-essential-webs}
		
The minimal position of a boundary-fixed web $W$ with respect to an ideal triangulation $\idealtriang$ is defined as in Definition \ref{def:minimal position}.  Proposition \ref{prop:minimal-position} holds word for word, except ``non-elliptic'' is replaced by ``boundary-fixed essential''.  	

Modified H-moves take boundary-fixed essential webs in good position to webs of the same type.  Proposition \ref{prop:good-position} holds, except ``non-elliptic'' is replaced by ``boundary-fixed essential''.  	

Given a strand-set $S_{\partial \surfbord}$ and an ideal triangulation $\idealtriang$, we define the Fock-Goncharov global coordinate function
	$\themap{\idealtriang}^\mathrm{FG}(S_{\partial \surfbord}) : [\webbasis{\surfbord}](S_{\partial \surfbord}) \to \KTcone{\idealtriang}(S_{\partial \surfbord}) \subset \KTcone{\idealtriang}$
 as in \S \ref{ssec:global-coordinates-from-local-coordinate-functions}; see Definition \ref{def:fock-goncharov-global-coordinate-function}.  	

\begin{theorem}[Second boundary result]
\label{thm:main-theorem-2}
	The Fock-Goncharov global coordinate function
	\begin{equation*}
		\themap{\idealtriang}^\mathrm{FG}(S_{\partial \surfbord}) : [\webbasis{\surfbord}](S_{\partial \surfbord}) \overset{\sim}{\longrightarrow} \KTcone{\idealtriang}(S_{\partial \surfbord}) \subset \KTcone{\idealtriang} \subset \Zpos^N
	\end{equation*}
	with respect to the strand-set $S_{\partial \surfbord}$ is a bijection of sets,  identifying parallel-equivalence classes of boundary-fixed essential webs with points of the boundary-fixed Knutson-Tao cone.  
\end{theorem}

\begin{proof}
	As in the proof of Theorem \ref{thm:main-theorem-1}, the strategy is to construct an explicit inverse
	\begin{equation*}
		\invmap{\idealtriang}^\mathrm{FG}(S_{\partial \surfbord}) :  \KTcone{\idealtriang}(S_{\partial \surfbord})  \longrightarrow   [\webbasis{\surfbord}](S_{\partial \surfbord}). 
	\end{equation*}
The mapping $\invmap{\idealtriang}^\mathrm{FG}(S_{\partial \surfbord})$ is defined via the ladder gluing construction followed by removing internal elliptic faces, as explained in \S \ref{ssec:ladder-gluing-construction}-\ref{ssec:inverse-mapping-correcting-an-elliptic-web}.  In contrast to the rung-less setting (\S \ref{sssec:coordinates-for-rung-less-essential-webs--boundarysetting}), no new reductions are required.  For examples, see Figures \ref{fig:ladder-construction-boundary-2} and \ref{fig:ladder-construction-boundary-2-twoways}; compare the empty-boundary case, Figures \ref{fig:coordinates-example2} and \ref{fig:elliptic-web-non-example}, and the rung-less boundary case, Figures \ref{fig:ladder-construction-boundary-1} and \ref{fig:ladder-construction-boundary-1-twoways}.  

We also need the analogue of Main Lemma \ref{lem:main-lemma}, saying that two boundary-fixed essential webs resulting from the ladder gluing construction are parallel-equivalent.  The proof of the main lemma is essentially unchanged from \S \ref{sec:proof-of-main-lemma}.  To say a word about it, for the proof of Corollary \ref{cor:same-number-of-intersection-points}, if a shared-route for $W$ ending on the boundary $\partial \surfbord$ is crossing, then the corresponding shared-route for $W^\prime$ is also crossing, by the boundary-fixed condition.  
\end{proof}

\begin{corollary}
	Let two strand-sets $S_{\partial \surfbord}$ and $S^\prime_{\partial \surfbord}$ be the same up to permuting strands lying on the same boundary edge.  Then, there is a natural one-to-one correspondence
	\begin{equation*}
	\themap{\idealtriang}^\mathrm{FG}(S^\prime_{\partial \surfbord})^{-1} \circ \themap{\idealtriang}^\mathrm{FG}(S_{\partial \surfbord}) :
	[\webbasis{\surfbord}](S_{\partial \surfbord}) \overset{\sim}{\longrightarrow} \KTcone{\idealtriang}(S_{\partial \surfbord}) = 
	\KTcone{\idealtriang}(S^\prime_{\partial \surfbord}) \overset{\sim}{\longrightarrow}   [\webbasis{\surfbord}](S^\prime_{\partial \surfbord})
	\end{equation*}
	sending parallel-equivalence classes of boundary-fixed essential webs $W$ for $S_{\partial \surfbord}$ to parallel-equivalence classes of boundary-fixed essential webs $W^\prime$ for $S^\prime_{\partial \surfbord}$.  Here, \textit{natural} means that the resulting bijection $[\webbasis{\surfbord}](S_{\partial \surfbord}) \to [\webbasis{\surfbord}](S^\prime_{\partial \surfbord})$ is independent of the choice of  triangulation~$\idealtriang$.  
\end{corollary}

\begin{proof}
	Let $W$ be a boundary-fixed essential web for $S_{\partial \surfbord}$, and let ${\idealtriang_1}$ and ${\idealtriang_2}$ be two ideal triangulations.  We claim that there are webs $W_{\idealtriang_1}$ and $W_{{\idealtriang_2}}$ isotopic to $W$ and in good position for the split ideal triangulation $\splitidealtriang_1$ and $\splitidealtriang_2$, respectively, such that $W_{\idealtriang_1}$ and $W_{{\idealtriang_2}}$ have the same ladders in the boundary biangles $\biang$ facing $\partial \surfbord$.  
	
	Indeed, by pushing as many H's of $W$ as possible into the boundary biangles $\biang$, we may assume that the web $\widetilde{W}$ obtained from $W$ by chopping off the boundary biangles $\biang$ is rung-less essential.  By \S \ref{sssec:coordinates-for-rung-less-essential-webs--boundarysetting}, there exist webs $\widetilde{W}_{\idealtriang_1}$ and $\widetilde{W}_{{\idealtriang_2}}$ isotopic to $\widetilde{W}$ that are in good position for $\splitidealtriang_1$ and $\splitidealtriang_2$, respectively.  Let $W_{\idealtriang_1}$ and $W_{{\idealtriang_2}}$ be obtained by re-attaching the ladders from the cut-off boundary biangles $\biang$ to $\widetilde{W}_{\idealtriang_1}$ and $\widetilde{W}_{{\idealtriang_2}}$, respectively.  This proves the claim.  
	
	To finish, the map  $[\webbasis{\surfbord}](S_{\partial \surfbord}) \to [\webbasis{\surfbord}](S^\prime_{\partial \surfbord})$ for ${\idealtriang_1}$ is computed via the following steps:  (1)  erase the ladders of $W_{\idealtriang_1}$ from the boundary biangles $\biang$; (2)  replace $S_{\partial \surfbord}$ with $S^\prime_{\partial \surfbord}$ by permuting boundary strands;  (3)  insert the unique ladders into the boundary biangles $\biang$ matching this new boundary data;  (4)  eliminate elliptic faces.  Similarly for the map with respect to ${\idealtriang_2}$.  By the claim, the webs for ${\idealtriang_1}$ and ${\idealtriang_2}$ resulting after step (3) are isotopic.  
\end{proof}
		
\begin{figure}[t]
	\centering
	\includegraphics[width=.56\textwidth]{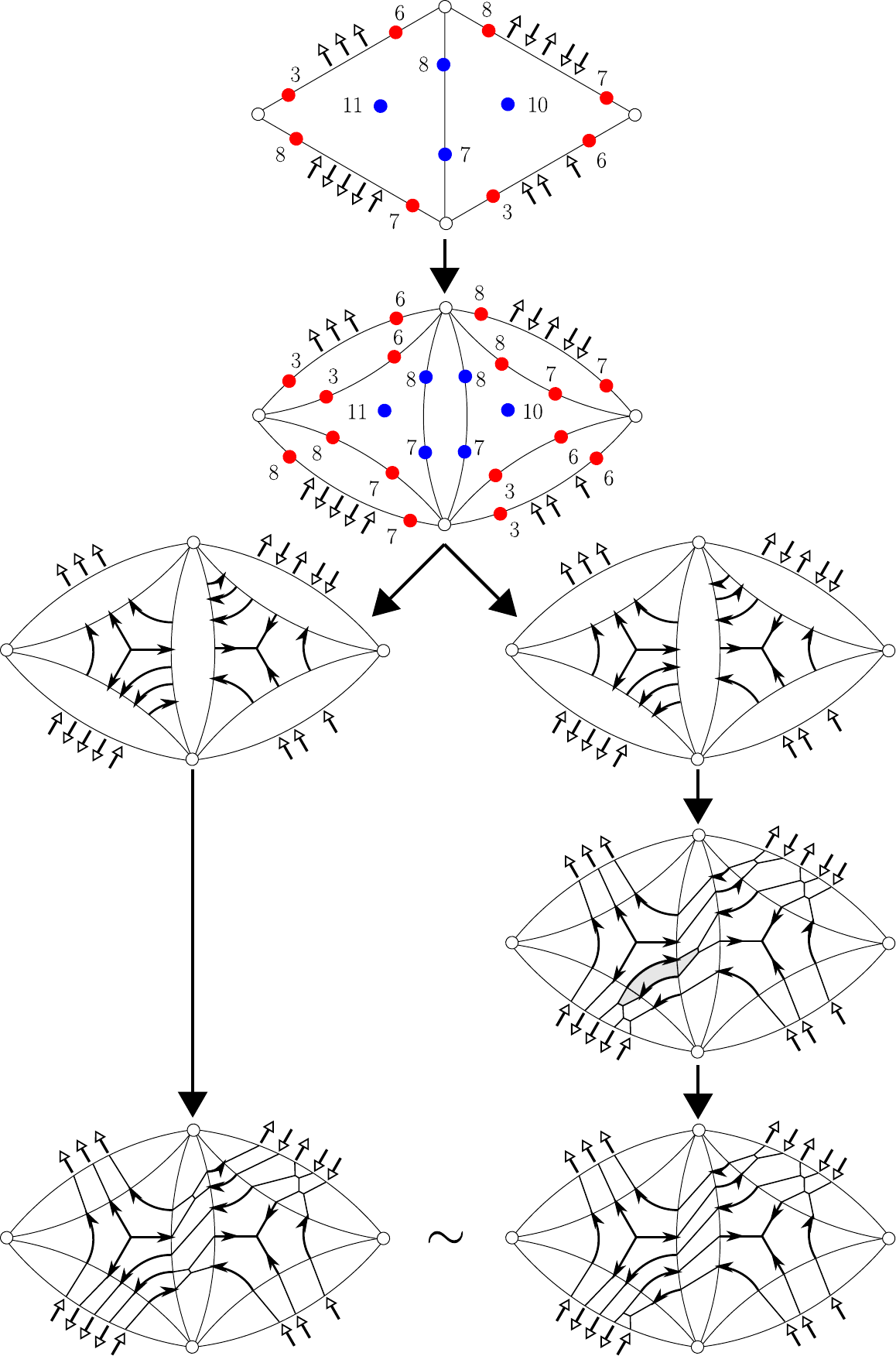}
	\caption{Ladder construction for boundary-fixed essential webs: 1 of 2 (on the ideal square)}
	\label{fig:ladder-construction-boundary-2}
\end{figure}

\begin{figure}[htb]
	\centering
	\includegraphics[width=.6\textwidth]{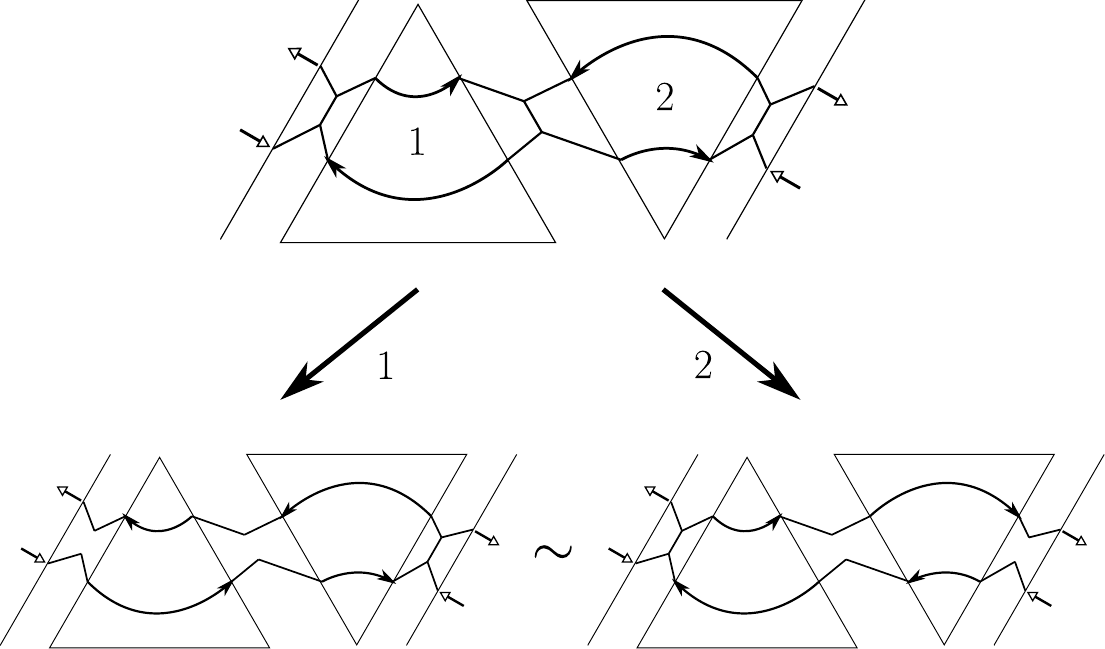}
	\caption{Ladder construction for boundary-fixed essential webs: 2 of 2}
	\label{fig:ladder-construction-boundary-2-twoways}
\end{figure}

		\subsection{Application: representation theory of the Lie group \texorpdfstring{$\mathrm{SL}_3(\mathbb{C})$}{SL3}}
		\label{ssec:application-representation-theory-of-the-lie-group-sl3c}
		
As a consequence of the second version of the result, Theorem \ref{thm:main-theorem-2}, we make a connection to Kuperberg's famous theorem relating webs in the disk to the representation theory of $\mathrm{SL}_3(\mathbb{C})$.  
		
The finite-dimensional irreducible representations of $\mathrm{SL}_3(\mathbb{C})$ are in one-to-one correspondence with ordered pairs $(n^\mathrm{in}, n^\mathrm{out}) \in \Zpos^2$. For example, we may say that $(1, 0)$ corresponds to the defining vector representation $V$ and $(0, 1)$ corresponds to its dual representation $V^*$.  
		
We assign to each strand-set $S_{\partial \surfbord}$ a tensor product $V(S_{\partial \surfbord}) = \otimes_E V_E$ of finite-dimensional irreducible representations $V_E$ of $\mathrm{SL}_3(\mathbb{C})$, varying over boundary edges $E$ of $\surfbord$, as follows.  If $n^\mathrm{in}_E$ (resp. $n^\mathrm{out}_E$) is the number of in-strands (resp. out-strands) of $S_E$, where $S_{\partial \surfbord} = \{ S_E \}_E$, then we define $V_E$ to be the irreducible representation corresponding to $(n^\mathrm{in}_E, n^\mathrm{out}_E)$.  
		
Note $V(S_{\partial \surfbord}) = V(S^\prime_{\partial \surfbord})$ if $S_{\partial \surfbord}$ and $S^\prime_{\partial \surfbord}$ are the same up to permuting strands on an $E$.  
		
For a representation $V$ of $\mathrm{SL}_3(\mathbb{C})$, let $V^{\mathrm{SL}_3(\mathbb{C})} \subset V$ be the sub-space of invariant vectors.    
		
\begin{theorem}[{\cite[Theorem 6.1]{KuperbergCommMathPhys96}}]
\label{thm:kuperberg-theorem}
	For $\surfbord = \poly_k$ ($k \geq 1$) the ideal polygon with $k$ boundary edges ({\upshape\S \ref{ssec:ideal-polygons}}), the collection $[\webbasis{\poly_k}](S_{\partial \poly_k})$ of classes $[W]$ of boundary-fixed essential webs with respect to a strand-set $S_{\partial \poly_k}$ indexes a linear basis for the invariant space $V(S_{\partial \poly_k})^{\mathrm{SL}_3(\mathbb{C})}$.  
\end{theorem}

Note that, for $\surfbord = \poly_k$, a parallel-equivalence class $[W] \in [\webbasis{\poly_k}](S_{\partial \poly_k})$ is an isotopy class.    

From Kuperberg's theorem, together with Theorem \ref{thm:main-theorem-2}, we immediately obtain:

\begin{corollary}[Application of the second boundary result]
\label{cor:sl3-clebsch-gordan}
	For $\surfbord = \poly_k$ ($k \geq 3$), a strand-set $S_{\partial \poly_k}$, and an ideal triangulation $\idealtriang$ of $\poly_k$, the boundary-fixed Knutson-Tao cone $\KTcone{\idealtriang}(S_{\partial \poly_k}) \subset \KTcone{\idealtriang} \subset \Zpos^N$ indexes a linear basis for the invariant space $V(S_{\partial \poly_k})^{\mathrm{SL}_3(\mathbb{C})}$.  \qed
\end{corollary}

\newpage		
\begin{remark}\label{rem:lastrem}      $  $
\begin{enumerate}
	\item
	This corollary is reminiscent of results about the Knutson-Tao hive model \cite{KnutsonJAmerMathsoc99, BuchEnseignMath00} for the general linear group $\mathrm{GL}_n(\mathbb{C})$, where the Littlewood-Richardson coefficients $c_{\lambda \mu}^\nu$ associated to highest weights $\lambda, \mu, \nu$ provide the multiplicities of irreducible representations $V_\nu$ in $V_\lambda \otimes V_\mu$.  Certain multiplicities $c_{\lambda \mu}^\nu$ can be computed as the number of solutions of the Knutson-Tao rhombus inequalities, without $n$-congruence conditions (see Remark \ref{rem:modulo-3-congruence-conditions}(\ref{subrem:GS1})), on the dotted $n$-triangle matching certain fixed boundary conditions determined by the weights $\lambda, \mu, \nu$.  (Possibly related, see \cite{MageeProcLondMathSoc20}.)
	\item\label{subrem:kup2}
	Corollary \ref{cor:sl3-clebsch-gordan} was the result of Kuperberg's theorem combined with Theorem \ref{thm:main-theorem-2}.  We would like to have gone in the other direction.  That is, we would like to give an alternative geometric proof of Kuperberg's theorem, as a consequence of Theorem \ref{thm:main-theorem-2} and Corollary \ref{cor:sl3-clebsch-gordan}.  Indeed, this was the spirit of Kuperberg's proof for the $\mathrm{SL}_2(\mathbb{C})$-version of his result \cite[Theorem 2.4]{KuperbergCommMathPhys96}, where the $\mathrm{SL}_2(\mathbb{C})$-analogue	of Corollary \ref{cor:sl3-clebsch-gordan} is a simple consequence of the Clebsch-Gordan theorem.    It is natural then to ask:
\end{enumerate}  
\end{remark}

\begin{question}
	Is there an alternative purely representation theoretic proof of Corollary \ref{cor:sl3-clebsch-gordan}?
\end{question}

\begin{question}
Is there a representation theoretic interpretation of Theorem \ref{thm:main-theorem-2} for any surface-with-boundary $\surfbord$, generalizing Kuperberg's theorem in the case $\surfbord = \poly_k$?  (We ask this question also for $\mathrm{SL}_2(\mathbb{C})$.)  Possible clues may lie in \cite{FominPNAS14, MR4493620costantinole, GoncharovArxiv19}.  
\end{question}

\bibliographystyle{alpha}
\bibliography{references.bib}
\end{document}